\documentclass{article}

\usepackage[utf8]{inputenc}
\usepackage[english]{babel}

\usepackage{geometry}
\geometry{a4paper,top=3cm,bottom=3cm,left=3cm,right=3cm}
\usepackage{fancyhdr}
\usepackage{titlesec}
\usepackage{indentfirst}
\usepackage{float}

\usepackage{amsmath,comment}
\usepackage{amssymb}
\usepackage{mathrsfs}
\usepackage{amsthm}
\usepackage{amsfonts}
\usepackage{ifthen}

\usepackage[linktoc=all]{hyperref}
\usepackage{cleveref}

\usepackage{tikz}

\theoremstyle{plain}
\newtheorem{thm}{Theorem}[section]
\newtheorem{prop}[thm]{Proposition}
\newtheorem{lem}[thm]{Lemma}
\newtheorem{cor}[thm]{Corollary}
\newtheorem{defn}[thm]{Definition}
\newtheorem{question}[thm]{Question}

\newtheorem{introthm}{Theorem}

\newtheorem{introcor}[introthm]{Corollary}

\theoremstyle{remark}

\newtheorem*{remark*}{Remark}
\newtheorem{remark}[thm]{Remark}

\DeclareMathOperator{\rank}{rk}

\newcommand{\bA}{\mathbf{A}}

\newcommand{\ba}{\mathbf{a}}
\newcommand{\bb}{\mathbf{b}}
\newcommand{\bc}{\mathbf{c}}
\newcommand{\bd}{\mathbf{d}}
\newcommand{\bt}{\mathbf{t}}
\newcommand{\bu}{\mathbf{u}}

\newcommand{\bw}{\mathbf{w}}
\newcommand{\bx}{\mathbf{x}}

\newcommand{\bz}{\mathbf{z}}

\newcommand{\bbN}{\mathbb{N}}
\newcommand{\bbZ}{\mathbb{Z}}
\newcommand{\bbQ}{\mathbb{Q}}

\newcommand{\cA}{\mathcal{A}}

\newcommand{\cG}{\mathcal{G}}
\newcommand{\cH}{\mathcal{H}}

\newcommand{\cP}{\mathcal{P}}
\newcommand{\cR}{\mathcal{R}}

\newcommand{\fD}{\mathfrak{D}}
\newcommand{\fF}{\mathfrak{F}}

\newcommand{\rar}{\rightarrow}

\newcommand{\ol}[1]{\overline{#1}}
\newcommand{\ot}[1]{\widetilde{#1}}
\newcommand{\oc}[1]{\widehat{#1}}
\newcommand{\abs}[1]{\left|#1\right|}
\newcommand{\sgr}{\le}

\newcommand{\gen}[1]{\langle #1 \rangle}
\newcommand{\pres}[2]{\langle #1 | #2 \rangle}

\newcommand{\supp}[1]{\mathrm{supp}(#1)}
\newcommand{\divides}{\bigm|}

\newcommand{\GL}[2]{\mathrm{GL}_{#1}(#2)}

\newcommand{\pA}{\mathbf{A}^+}

\newcommand{\FG}[1]{\mathrm{FG}(#1)}
\newcommand{\cnj}{\sim_{\mathrm{c}}}
\newcommand{\cnjclass}[1]{[#1]_{\mathrm{c}}}

\newcommand{\xdash}[1][3em]{\rule[0.5ex]{#1}{0.55pt}}
\newcommand{\edgedash}[1]{\ \xdash[#1]\ }
\newcommand{\edge}{\edgedash{0.6cm}}
\newcommand{\cyclicV}[1]{V_{=\bbZ}(#1)}
\newcommand{\notcyclicV}[1]{V_{\not=\bbZ}(#1)}

\newcommand{\notcyclicR}[1]{\cR_{\not=\bbZ}(#1)}
\newcommand{\out}[1]{\mathrm{Out}(#1)}

\title{On the isomorphism problem for cyclic JSJ decompositions: vertex elimination}

\author{
Dario Ascari\\
{\small \textit{Department of Mathematics, University of the Basque Country,}}\\
{\small \textit{Barrio Sarriena, Leioa, 48940, Spain}}\\
{\small e-mail: \texttt{ascari.maths@gmail.com}}\\
\and
Montserrat Casals-Ruiz\\
{\small \textit{Ikerbasque - Basque Foundation for Science and Matematika Saila,}}\\
{\small \textit{UPV/EHU,  Sarriena s/n, 48940, Leioa - Bizkaia, Spain}}\\
{\small e-mail: \texttt{montsecasals@gmail.com}}\\
\and
Ilya Kazachkov\\
{\small \textit{Ikerbasque - Basque Foundation for Science and Matematika Saila,}}\\
{\small \textit{UPV/EHU,  Sarriena s/n, 48940, Leioa - Bizkaia, Spain}}\\
{\small e-mail: \texttt{ilya.kazachkov@gmail.com}}
}

\date{}

\sloppy
\begin{document}

\maketitle

\begin{abstract}
    We introduce two new moves on graphs of groups with cyclic edge groups that preserve the fundamental group. These moves allow us to address the isomorphism problem without the use of expansions, therefore keeping the number of vertices and edges constant along sequences of moves witnessing an isomorphism between two groups. We further show that the isomorphism problem for a large family of cyclic JSJ decompositions reduces to the case of generalized Baumslag-Solitar groups (GBS), and that among GBSs, it suffices to consider one-vertex graphs. As an application of the methods, we solve the isomorphism problem for a broad class of flexible GBSs. Finally, we discuss potential further applications of our techniques.
\end{abstract}




\section{Introduction}

The isomorphism problem, formulated by Dehn around the turn of the 20th century, is a fundamental decision problem in group theory. It asks whether there exists an algorithm that, given two groups defined by finite presentations, can decide whether the groups are isomorphic. Although this problem is undecidable in general, it remains of fundamental interest for specific, relevant classes of groups. Even when restricted to a particular family of groups, the isomorphism problem is generally considered to be one of the hardest algorithmic problems in group theory. Only a few instances of successful solutions are known, and they usually rely on some rigidity assumption, either algebraic or geometric, on the family of groups taken into consideration.

On the algebraically rigid side, the most notable result is the decidability of the isomorphism problem for polycyclic groups established by Segal in \cite{Segal}. We note that the isomorphism problem for solvable groups is undecidable, see \cite{KirRe}, and it is an open question for finitely presented metabelian groups.

On the geometrically rigid side, one of the most important decidability results was established in the context of 3-manifolds. Given two (triangulations of) closed, oriented 3-manifolds, there is an algorithm to decide whether or not they are homeomorphic. This algorithm is based on several steps of \emph{surgery}: the 3-manifold is first cut along non-trivial spheres (irreducible decomposition) and then along a canonical family of incompressible tori (JSJ decomposition) in order to obtain a decomposition into simpler pieces that can be classified and endowed with exactly one of the eight Thurston's geometries, according to the Geometrization Conjecture. While performing the surgery, by van Kampen's theorem, the fundamental group $G$ of the manifold decomposes as a free product $G = G_1 \ast G_2$ (corresponding to a cut along a non-trivial sphere) or as an amalgamated product $G = G_1 \ast_{\mathbb Z^2} G_2$ (corresponding to a cut along a separating incompressible torus), or, more generally, as a graph of groups. Motivated by this approach, it is natural, for an abstract group $G$, to study the family of all possible splittings of $G$ as a graph of groups (with edge groups lying in some prescribed family). This idea leads to a rich and fruitful theory, known as the JSJ decomposition for groups \cite{RS97,DS99,FP06,GL17}. One of the most striking applications of the theory of JSJ decomposition is the solution of the isomorphism problem for hyperbolic groups \cite{Sel95,DG11,DT19}, in analogy with the case of $3$-manifolds.

For irreducible closed, oriented $3$-manifolds, the JSJ decomposition is unique in the strongest possible way: the JSJ tori are uniquely determined up to isotopy, see \cite{Hat}. For one-ended hyperbolic groups, a similar strong uniqueness holds, even though the construction is more delicate, based on the tree of cylinders, see \cite{GL11}. Therefore, any isomorphism essentially preserves the unique JSJ decomposition and the problem reduces to the isomorphism of the pieces (relative to the edge groups). In contrast, for other families of groups, the JSJ decomposition can exhibit significantly greater flexibility: not necessarily in the pieces obtained through cutting, but at least in the way in which they are glued together. In \cite{For02}, Forester explained how one can go from one gluing pattern of a group to any other using a certain set of moves, called \textit{elementary deformations}. In this way, given a splitting $T$ of a group $G$ (i.e. an action of $G$ on a tree $T$), the \textit{deformation space} $\fD_G(T)$ is the family of all splittings of $G$ that are related to $T$ by elementary deformations. The structure of the deformation spaces represents one of the main obstructions to solving the isomorphism problem (and to describing the automorphism group) for large families of groups. In this regard, the most basic and important example is the class of generalized Baumslag-Solitar groups.

A \textbf{generalized Baumslag-Solitar group} (GBS) is the fundamental group of a graph of groups where all vertex groups and edge groups are isomorphic to $\bbZ$. Every GBS can be represented as a finite graph labeled with two numbers at each edge determined by the index of the inclusion of the edge group into an adjacent vertex group. In this representation, Forester's elementary deformations translate into combinatorial moves that change the graph and the labels. The isomorphism problem is thus reduced to determining, given two labeled graphs, whether there is a sequence of elementary deformations going from one to the other. However, in general, there is no known (computable) bound on the number of vertices or edges in the graphs, or on the edge labels throughout the sequence. Providing such bounds would immediately resolve the isomorphism problem for GBSs, as one could then perform a brute-force search for a sequence of moves connecting any two given labeled graphs.

In this paper, we provide an explicit (computable) bound on the number of vertices and edges for the graphs along the sequence. This deals with the first of the two obstructions to solving the isomorphism problem. At the same time, our results also allow us to produce new isomorphism invariants, providing deeper insight into the structure of these groups. With these new tools, we are able to classify a large family of very flexible GBSs.

\paragraph{Vertex elimination.} One of the key observations, due to Forester \cite{For06}, is that some vertices in a GBS graph of groups have a stabilizer which is contained (up to conjugation) in the stabilizers of vertices in a different orbit; Forester refers to such vertices as \textit{horizontal}, while we prefer to call them \textit{redundant}, to emphasize the fact that they contain a repetition of information. Forester explains how to remove redundant vertices, in order to obtain a \textit{fully reduced} labeled graph. However, the vertex-elimination procedure described in \cite{For06} is not canonical. Studying different vertex-elimination procedures and the non-uniqueness of their outcomes leads us to the definition of two completely new moves - the \textbf{swap move} and the \textbf{connection move} - that can be applied to change a labeled graph into another one (preserving, up to isomorphism, the fundamental group of the associated graphs of groups).

The main result of the paper is the following \Cref{introthm:sequence-new-moves}: in order to decide if two GBSs are isomorphic, one only has to deal with sequences of slides, swaps, and connections (plus possibly sign-changes and inductions, which can be performed at the beginning). The key feature is that none of these moves changes the number of vertices or edges in the graph.

\begin{introthm}[\Cref{thm:sequence-new-moves}]\label{introthm:sequence-new-moves}
Let $(\Gamma,\psi),(\Delta,\phi)$ be totally reduced GBS graphs, with isomorphic fundamental groups {\rm(}and different from $\bbZ,\bbZ^2$ and the Klein bottle group{\rm)}. Then $\abs{V(\Gamma)}=\abs{V(\Delta)}$ and there is a sequence of slides, swaps, connections, sign-changes, and inductions going from $(\Delta,\phi)$ to $(\Gamma,\psi)$. Moreover, all the sign-changes and inductions can be performed at the beginning of the sequence.
\end{introthm}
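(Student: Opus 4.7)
The starting point is Forester's theorem, which asserts that any two GBS graphs of groups with isomorphic fundamental groups are connected by a finite sequence of elementary deformations, namely slides, expansions, collapses, sign-changes, and inductions. The plan is to upgrade this into a sequence that avoids expansions and collapses altogether, replacing them by the new moves (swap and connection) introduced earlier in the paper. Since swaps, connections, and slides all preserve the number of vertices and edges, this will force $\abs{V(\Gamma)} = \abs{V(\Delta)}$ as a byproduct.

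First, I would fix a Forester sequence $(\Gamma,\psi) = (\Gamma_0,\psi_0) \to (\Gamma_1,\psi_1) \to \cdots \to (\Gamma_n,\psi_n) = (\Delta,\phi)$ and identify the maximal subsegments that leave the totally reduced locus. Since the endpoints are totally reduced, every expansion in the sequence must eventually be undone by a collapse; grouping moves into such ``excursions'' reduces the problem to analyzing a single block which starts totally reduced, performs one expansion, then some combination of slides/expansions/collapses/sign-changes/inductions, and finally returns to a totally reduced graph after a matching collapse. The first non-trivial step is to show that each such excursion can be replaced by a sequence entirely within totally reduced graphs. This is where the concept of redundant vertices and the two new moves should do the real work: an expansion creates a redundant vertex, the intermediate slides transport it along the graph, and the final collapse removes it along a potentially different edge. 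The net effect on the underlying reduced graph is exactly the data that a swap or a connection is designed to record.

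The core technical step, which I expect to be the main obstacle, is a case analysis of these excursions. One needs a normal form showing that after commuting slides past expansions and collapses (and possibly shuffling sign-changes and inductions), every excursion shortens to a single expansion immediately followed by a sequence of slides and then a single collapse at a vertex adjacent to the redundant one. For such short excursions, one can inspect the two possibilities depending on whether the collapsing edge coincides with the expanded edge or not: in the first case the excursion is a no-op (or a slide), and in the second case it is exactly a swap or a connection, depending on the combinatorics of the edge whose attachment has been moved. Here the hypotheses that the fundamental group is not $\bbZ$, $\bbZ^2$, or the Klein bottle group are essential to avoid pathological small graphs where redundancy becomes ambiguous and where the moves may degenerate.

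Finally, to push all sign-changes and inductions to the beginning, I would establish commutation relations: each of these two moves affects only the local labeling data at a single vertex or edge, so commuting them past a slide, swap, or connection amounts to re-interpreting the labels involved in the subsequent move. A straightforward induction on the number of slides/swaps/connections appearing before a given sign-change or induction then rearranges the sequence so that all sign-changes and inductions occur first, followed exclusively by slides, swaps, and connections. Combining this with the previous step yields the sequence asserted by \Cref{introthm:sequence-new-moves}, and the equality $\abs{V(\Gamma)} = \abs{V(\Delta)}$ follows from the invariance of the vertex count under each of the remaining moves.
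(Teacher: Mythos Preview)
The core gap is your normal-form claim: that each excursion reduces to a single expansion, a block of slides, and a single collapse, and that this pattern is ``exactly a swap or a connection.'' This is too optimistic on both counts. The paper's own derivations (Lemmas \ref{swap-from-others} and \ref{connection-from-others}) show that realizing a single swap or connection via elementary moves already requires \emph{two} nested extra vertices (an intermediate induction inside the block, which itself needs an additional vertex), so the single expand--slides--collapse template cannot encode these moves. More fundamentally, when a redundant vertex is removed in two different ways, the relationship between the resulting graphs is not a single swap or connection but a potentially long \emph{sequence} of edge sign-changes, slides, swaps, and connections; establishing this (Proposition \ref{prop:indep-data}) is the main technical content of Section \ref{sec:coarse-projection-map} and rests on the controlled linear-algebra Theorem \ref{thm:controlled}. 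Your proposal treats this as a routine case analysis, which it is not. Separately, your plan to push inductions to the front via commutation is not justified: the paper never claims inductions commute past slides, swaps, or connections, and it is not clear that they do.

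The paper's actual argument is structurally different. It does not pair expansions with collapses at all. Instead it first strips \emph{all} contractions from the Forester sequence by simply leaving the extra vertices in place (Proposition \ref{no-contractions}), so that the final graph contains $(\Delta,\phi)$ as an elementary subgraph. It then applies the \emph{coarse projection map}: each redundant vertex is eliminated simultaneously from every graph in the sequence via the projection move of Section \ref{sec:projection}, and Proposition \ref{prop:projection-of-moves} guarantees that consecutive projected graphs remain linked by edge sign-changes, slides, swaps, and connections. The totally-reduced hypothesis---specifically the controlling-edge Condition \ref{itm:controlling-edge} of Definition \ref{def:totally-reduced}---is used at the end to handle the residual vertices in the symmetric difference $V(\Gamma)\triangle V(\Delta)$, and this is precisely where the inductions and vertex sign-changes arise, naturally at one end of the sequence rather than by commutation.
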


\begin{remark*}
In the above \Cref{introthm:sequence-new-moves}, the assumption that a GBS graph is totally reduced is not restrictive, as one can algorithmically compute a sequence of moves that transforms any GBS graph into a totally reduced one (see \Cref{prop:compute-totally-reduced}). This notion is a variant of Forester’s concept of fully reduced GBS graph (see \cite{For06}), and is introduced for technical reasons. 
\end{remark*}

The proof of \Cref{introthm:sequence-new-moves} relies on systematically eliminating redundant vertices from a sequence of moves. Starting with a sequence of Forester’s elementary deformations, we identify a redundant vertex in one of the intermediate graphs and apply a vertex-elimination procedure to remove it. Although the outcome of this procedure is not unique, we show that any two resulting sequences differ only by slides, swaps, and connections. By repeating this process and progressively eliminating all redundant vertices from the graphs in the sequence, we arrive at the conclusion of \Cref{introthm:sequence-new-moves}. Using the description of the new moves in terms of Forester's elementary deformations, \Cref{introthm:sequence-new-moves} yields the following

\begin{introcor}[\Cref{cor:sequence-now-moves}]\label{introcor:sequence-Forester-moves}
Let $(\Gamma,\psi),(\Delta,\phi)$ be totally reduced GBS graphs, with isomorphic fundamental groups {\rm(}and different from $\bbZ,\bbZ^2$ and the Klein bottle group{\rm)}. Then there is a sequence of Forester's elementary deformations going from $(\Delta,\phi)$ to $(\Gamma,\psi)$ in such a way that every graph along the sequence has at most $\abs{V(\Gamma)}+2$ vertices.
\end{introcor}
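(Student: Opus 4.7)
The plan is to derive this corollary from \Cref{introthm:sequence-new-moves} by unwinding each new move (swap and connection) into an explicit sequence of Forester's elementary deformations, while carefully tracking the vertex count along the way.

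First, I would invoke \Cref{introthm:sequence-new-moves} to produce a sequence of slides, swaps, connections, sign-changes, and inductions from $(\Delta,\phi)$ to $(\Gamma,\psi)$. Since every move of these types preserves both the vertex set and the edge set, each intermediate graph in this sequence has exactly $\abs{V(\Gamma)}=\abs{V(\Delta)}$ vertices.

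Second, I would substitute each swap and each connection by its realization as a sequence of Forester's elementary deformations, using the definitions of these two new moves given earlier in the paper. Each such realization proceeds by first performing one or two expansion moves, then some slides, and finally the corresponding collapses that remove the auxiliary vertices; crucially, the realization never uses more than two additional vertices simultaneously before the collapses restore the original count. The remaining moves in the sequence---slides, sign-changes, and inductions---already are Forester's elementary deformations and do not alter the number of vertices.

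Splicing these realizations into the sequence from \Cref{introthm:sequence-new-moves} yields a sequence of Forester's elementary deformations from $(\Delta,\phi)$ to $(\Gamma,\psi)$. At any point along the resulting sequence, we are either between two consecutive new moves (and the current graph has $\abs{V(\Gamma)}$ vertices) or in the middle of implementing one of them (and the current graph has at most $\abs{V(\Gamma)}+2$ vertices), which gives the desired bound.

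The main obstacle is the bookkeeping verification that each swap and each connection can indeed be realized with at most two auxiliary vertices in use at once; this is what pins down the constant $2$ in the statement. Once this is read off from the original definitions of the two moves, the corollary follows immediately from \Cref{introthm:sequence-new-moves}.
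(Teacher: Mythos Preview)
Your approach matches the paper's, which simply cites \Cref{introthm:sequence-new-moves} together with the lemmas showing that induction, swap, and connection are derived moves. However, there is a genuine gap in your second step: you assert that ``slides, sign-changes, and inductions already are Forester's elementary deformations and do not alter the number of vertices.'' This is false for inductions. Induction is \emph{not} one of Forester's elementary deformations (expansion, contraction, slide, sign-change); it is itself a derived move, and Lemma~\ref{induction-from-others} is needed to realize it as a sequence of elementary expansions, contractions, and slides, introducing at most one auxiliary vertex along the way.

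The fix is easy and does not disturb the bound: unwind inductions via Lemma~\ref{induction-from-others} (cost $+1$), swaps via Lemma~\ref{swap-from-others} (cost $+2$), and connections via Lemma~\ref{connection-from-others} (cost $+2$). The maximum among these is $+2$, so the corollary follows. Your write-up just needs to include induction in the list of moves to be unwound and cite the corresponding lemma rather than treating it as already elementary.
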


\begin{remark*}
    Note that in both \Cref{introthm:sequence-new-moves} and \Cref{introcor:sequence-Forester-moves}, the sequences of moves happen at the level of finite labeled graphs, and not at the level of Bass-Serre trees.
\end{remark*}

\paragraph{Relating the isomorphism problem for GBS groups to other families of groups.}

We further investigate how the elementary deformations defined for GBS groups can be extended to other families of groups, thereby reducing the isomorphism problem for these families to the isomorphism problem for GBSs.

Firstly, we apply these ideas to compare the isomorphism problem within the family of GBSs, showing that it (essentially) suffices to solve the problem for graphs with one vertex and only positive labels different from 1 (and using sequences of moves that involve only such graphs).

\begin{introthm}[Theorem \ref{thm:one-vertex} and Proposition \ref{prop:one-vertex-positive}] \label{introthm:one-vertex-positive}
    Let $(\Gamma_0,\psi_0)$ and $(\Gamma_1,\psi_1)$ be totally reduced GBS graphs and let $b:V(\Gamma_1)\rar V(\Gamma_0)$ be a bijection. Then there are one-vertex GBS graphs $(\Delta_0,\phi_0),(\Delta_1,\phi_1)$, with labels which are positive and $\not=1$, such that the following are equivalent:
    \begin{enumerate}
        \item There is a sequence of slides, swaps, and connections going from $(\Gamma_0,\psi_0)$ to $(\Gamma_1,\psi_1)$ and such that the vertex $v\in V(\Gamma_1)$ corresponds to the vertex $b(v)\in V(\Gamma_0)$.
        \item There is a sequence of slides, swaps, and connections going from $(\Delta_0,\phi_0)$ to $(\Delta_1,\phi_1)$.
    \end{enumerate}
    Moreover, $(\Delta_0,\phi_0),(\Delta_1,\phi_1)$ can be constructed algorithmically from $(\Gamma_0,\psi_0),(\Gamma_1,\psi_1)$, and $b$.
\end{introthm}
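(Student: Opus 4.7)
The plan is to construct $(\Delta_0,\phi_0)$ and $(\Delta_1,\phi_1)$ by a tagging-and-folding procedure: each vertex of $\Gamma_i$ is marked with rigid loops carrying a private prime label, and then all vertices are merged into a single vertex while the tags preserve the combinatorial role previously played by each original vertex. Condition (1) of the theorem already invokes only slides, swaps, and connections, so the fact that these moves preserve vertex and edge counts (by \Cref{introthm:sequence-new-moves}) is what makes a prescribed vertex bijection $b$ meaningful in the first place.

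First, I would enumerate $V(\Gamma_0)=\{v_1,\dots,v_n\}$ and choose pairwise distinct primes $p_1,\dots,p_n$, each larger than every integer appearing in $\psi_0$ or $\psi_1$. For each $j$, I would attach to $v_j\in V(\Gamma_0)$, and to $b^{-1}(v_j)\in V(\Gamma_1)$, a fixed family of tag loops whose labels are positive powers of $p_j$ different from $1$. The tags at $v_j$ are coprime to every other label in the ambient graph, which by inspection of the three moves prevents any slide, swap, or connection from transferring a tag loop away from $v_j$ or merging it with an edge at another vertex. Call the resulting tagged graphs $(\Gamma_0',\psi_0')$ and $(\Gamma_1',\psi_1')$: they are still totally reduced, and every sequence of slides, swaps, and connections from $(\Gamma_0,\psi_0)$ to $(\Gamma_1,\psi_1)$ respecting $b$ corresponds to a sequence from $(\Gamma_0',\psi_0')$ to $(\Gamma_1',\psi_1')$, and conversely.

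Second, I would define $(\Delta_i,\phi_i)$ as the one-vertex graph obtained by collapsing the underlying graph of $(\Gamma_i',\psi_i')$ to a single vertex, turning every former edge into a loop that inherits its pair of labels. The tag loops then form one distinguished family per prime $p_j$, and each original edge of $\Gamma_i$ becomes a loop whose two label-positions are attested by the $p_j$-tag families at its former endpoints. I would verify the equivalence $(1)\Leftrightarrow(2)$: the forward direction is automatic, since a move on the multi-vertex graph translates identically to a move on the one-vertex graph; the converse uses the rigidity of the tags to argue that any sequence of moves on $(\Delta_0,\phi_0)\to(\Delta_1,\phi_1)$ must preserve the prime-classification of loops, and so lifts to a sequence on the multi-vertex graphs that respects $b$. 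The positive-and-$\neq 1$ condition on the labels of $\Delta_i$ is achieved by choosing tag exponents $\geq 2$ and, if necessary, pre-processing the original labels of $\Gamma_i$ by initial slides against the tags to absorb any residual $\pm 1$ label.

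The main obstacle is the rigidity statement underlying $(2)\Rightarrow(1)$: one must verify, for each of the three moves, that no loop of $\Delta_i$ carrying $p_j$-labels can be combined, shuffled, or displaced by a move involving a loop of a different prime class. For slides this reduces to a divisibility check; for swaps and connections, whose definitions involve coordinated changes of several edge labels at once, the verification is more delicate, and the tag data (number of loops per vertex and their exponents) must be calibrated finely enough to rule out all spurious interactions. The construction is algorithmic because the primes, exponents, and collapse recipe are all explicitly computable from the input $(\Gamma_0,\psi_0),(\Gamma_1,\psi_1),b$.
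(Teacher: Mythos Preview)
Your approach has a genuine gap in the direction $(2)\Rightarrow(1)$. The tag loops you attach carry the new primes $p_j$, but the \emph{original} edges of $\Gamma_i$ keep their old labels unchanged. After you collapse all vertices to one, two original edges that lived at completely different vertices of $\Gamma_i$ become loops at the same vertex $*$, and nothing prevents them from interacting. Concretely: if $e_1$ was an edge at $v_1$ with a label $n$, and $e_2$ was an edge at $v_3$ with a label divisible by $n$, then in $(\Delta_i,\phi_i)$ you can slide the endpoint of $e_2$ along $e_1$. This move has no lift to $(\Gamma_i',\psi_i')$, since the endpoints were at different vertices. Your tag loops are bystanders in this slide; their coprimality guarantees only that \emph{they} cannot be moved, not that the original edges are constrained. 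The phrase ``prime-classification of loops'' in your argument applies only to the tags, and you have given no mechanism that assigns a prime class to the non-tag loops.

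The paper's construction avoids this by \emph{not adding any tag loops at all}. Instead it multiplies each existing label $\psi(e)$ by the prime $p_{\tau(e)}$ of the vertex where that endpoint sits, setting $\phi(e)=p_{a(\tau(e))}\cdot\psi(e)$. Now every label in $\Delta_i$ carries exactly one vertex-prime, and the divisibility condition for a slide (or the analogous conditions for swap and connection) can only be satisfied when the vertex-primes match --- which is exactly the condition that the move lifts to $\Gamma_i$. This gives a clean bijection between moves on the two graphs with no rigidity lemma needed. The positivity step is then handled separately (Proposition~\ref{prop:one-vertex-positive}) by introducing two further primes $q,r$: the prime $q$ encodes the sign (odd power for negative, even for positive), an auxiliary loop with labels $(r,rq^2)$ allows free adjustment by $q^2$ to simulate $(-1)^2=1$, and the factor $r^2$ on all other labels keeps the auxiliary loop from participating in anything but slides. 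Your suggestion to ``absorb residual $\pm1$ labels by slides against the tags'' does not obviously work and is not what the paper does.
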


Secondly, we show that the isomorphism problem for many cyclic JSJ decompositions can be reduced to the case of GBSs. This means that generalized Baumslag-Solitar groups are not only the easiest possible example, but they actually encode the complexity of the isomorphism problem for large families of graphs of groups.

\begin{introthm}[Theorem \ref{thm:isomorphism-other-families}]\label{introthm:isomorphism-other-families}
    Let $\fF$ be a family of finitely presented torsion-free groups. Suppose that we have the following conditions:
    \begin{enumerate}
        \item $\fF$ has algorithmic conjugacy problem {\rm(Definition \ref{def:algorithmic-conjugacy})}.
        \item $\fF$ has algorithmic isomorphism problem with cyclic peripherals {\rm(Definition \ref{def:algorithmic-isomorphism})}.
        \item $\fF$ is $\bbZ$-algorithmic {\rm(Definition \ref{def:Zalgorithmic})}.
        \item $\fF$ has algorithmic unique roots {\rm(Definition \ref{def:algorithmic-unique-roots})}.
    \end{enumerate}
    If there is an algorithm to decide the isomorphism problem for GBSs, then there is an algorithm to decide the isomorphism problem for graphs of groups with vertex groups in $\fF$ and cyclic edge groups.
\end{introthm}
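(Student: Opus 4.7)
The plan is to reduce the isomorphism problem for graphs of groups with vertex groups in $\fF$ and cyclic edge groups to a finite collection of GBS isomorphism queries, by isolating the rigid $\fF$-pieces and concentrating all of the remaining flexibility inside maximal cyclic subgraphs. The starting observation is that in such a graph of groups $(\Gamma,\cG)$, any vertex whose stabilizer is non-cyclic lies in $\fF$ and is rigid under elementary deformations (its orbit is not created or destroyed), while vertices with cyclic stabilizer form GBS-like subgraphs where essentially all of the deformation-space flexibility concentrates. Consequently, any isomorphism of the two fundamental groups must induce a bijection between rigid vertex orbits and pair up the corresponding $\fF$-vertex groups by an isomorphism compatible with the incident cyclic edge inclusions.

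To carry this out algorithmically I would proceed in three stages. First, use algorithmic unique roots (assumption 4) together with algorithmic conjugacy (assumption 1) to normalize both $(\Gamma_0,\cG_0)$ and $(\Gamma_1,\cG_1)$ into a canonical totally-reduced form in which redundant cyclic vertices are eliminated and the inclusion of each cyclic edge group into an adjacent $\fF$-vertex is represented by a canonical primitive element. Second, enumerate the finite combinatorial data for a candidate matching: a bijection $b$ between the sets of rigid $\fF$-vertices of $\Gamma_0$ and $\Gamma_1$, a bijection between incident half-edges at matched vertices, and, invoking assumption 2, a candidate isomorphism between paired vertex groups. The $\bbZ$-algorithmic assumption 3 then lets us decide whether such candidate data is consistent, i.e.\ whether the proposed isomorphism between paired rigid vertices sends the peripheral cyclic subgroup of each incident edge to a conjugate (with the correct orientation and root) of the corresponding peripheral on the other side. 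Third, once a consistent matching is fixed, form auxiliary GBS graphs $(\Delta_0,\phi_0),(\Delta_1,\phi_1)$ by replacing each rigid $\fF$-vertex with a \emph{GBS stub} recording exactly the data that vertex exposes to the cyclic world, namely the conjugacy classes inside the vertex group of the images of incident edge groups, encoded as labels on cyclic vertices. Call the GBS isomorphism oracle on $(\Delta_0,\phi_0),(\Delta_1,\phi_1)$ and accept iff some choice of matching data yields a positive answer.

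The principal obstacle will be proving that this reduction is both sound and complete. Soundness requires that every isomorphism detected by the oracle on the auxiliary GBS graphs genuinely lifts to an isomorphism of the original fundamental groups; concretely, a sequence of slides, swaps, and connections on the auxiliary graph must be realizable, move by move, on the ambient graph of groups without disturbing the chosen matching of rigid parts. Completeness is the converse: every isomorphism of the ambient groups produces, for some choice of matching data, a sequence of moves whose effect on the GBS pieces is visible to the oracle. The subtle point in both directions is the behavior of moves that cross the interface between a cyclic subgraph and an adjacent rigid $\fF$-vertex; here the $\bbZ$-algorithmic assumption is precisely what is needed to decide compatibility of cyclic subgroups across the interface, and algorithmic unique roots ensures that the finite list of labels on stub vertices is unambiguous. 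With this interface analysis in place, the final algorithm is a finite enumeration of matching data combined with a single call to the GBS isomorphism oracle for each candidate.
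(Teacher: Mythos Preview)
Your high-level strategy matches the paper's: normalize, enumerate matchings of the rigid pieces, and reduce the cyclic part to a GBS isomorphism query. However, there is a genuine gap in your first stage. You are handed arbitrary graphs of groups with vertex groups in $\fF$ and cyclic edge groups, and such a graph of groups is \emph{not} in general a JSJ decomposition: a non-cyclic vertex group $G_v\in\fF$ may admit a further cyclic splitting relative to its peripheral structure, placing the two inputs in different deformation spaces even when their fundamental groups are isomorphic. Forester's theorem then gives no sequence of elementary deformations connecting them, and your enumeration-plus-oracle procedure will return a false negative. This is precisely what the $\bbZ$-algorithmic hypothesis is for: it lets you algorithmically refine each input to a cyclic JSJ decomposition (Propositions~\ref{prop:get-maximal}, \ref{prop:JSJ-from-maximal}, \ref{prop:algorithmic-JSJ}), after which both inputs live in the same canonical deformation space. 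You instead spend this hypothesis on ``deciding interface compatibility'', which is not its role here; that task is handled by the conjugacy, unique-roots, and isomorphism-with-peripherals hypotheses.

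A second, smaller gap lies in your completeness argument. Showing that every isomorphism of the ambient groups is witnessed by your GBS oracle requires a $\bbZ$-graph analogue of the paper's main theorem: any two totally reduced $\bbZ$-graphs in the same deformation space are connected by twists, slides, swaps, and connections (with sign-changes, coordinate-changes, and inductions at the start). This is Theorem~\ref{thm:sequence-new-moves-2} in the paper, and establishing it requires redoing the entire coarse-projection machinery for $\bbZ$-graphs rather than just an ``interface analysis''. Once both gaps are filled, the encoding of the non-cyclic vertex data into GBS labels proceeds essentially as you sketch; the paper's version (Theorem~\ref{thm:GBS-to-Zgraphs}) uses one distinguishing prime per conjugacy class of root in the non-cyclic vertex groups, and enumerates bijections of these root classes rather than of half-edges.
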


If we want to solve the isomorphism problem for graphs of groups with vertex groups in $\fF$ and edge groups isomorphic to $\bbZ$, then requiring that $\fF$ has solvable conjugacy and isomorphism problem is very natural. The condition of being $\bbZ$-algorithmic is the most delicate one, and it means that we need to be able to compute the cyclic JSJ decomposition for groups in $\fF$, relative to any prescribed peripheral structure. Finally, the condition of having algorithmic unique roots is needed for technical reasons; however, we believe this assumption could be weakened, encoding the isomorphism problem for even broader families of groups into the combinatorics of GBSs. In particular, we have the following.

\begin{introcor}[\Cref{{cor:reduction_iso_relative_hyp_nilpotent}}]\label{introthm:reduction_iso_relatively_hyp}
If there is an algorithm to decide the isomorphism problem for GBSs, then there is an algorithm to decide the isomorphism problem for graphs of groups, whose vertex groups are finitely presented, torsion-free, relatively hyperbolic with nilpotent parabolic subgroups, and whose edge groups are cyclic.
\end{introcor}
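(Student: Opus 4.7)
The proof is a direct application of \Cref{introthm:isomorphism-other-families} to the family $\fF$ consisting of finitely presented, torsion-free groups that are relatively hyperbolic with nilpotent parabolic subgroups. The task thus reduces to verifying that $\fF$ satisfies the four algorithmic conditions in the hypothesis of the theorem.

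Conditions 1 and 4 are comparatively classical. For the conjugacy problem, one combines Mal'cev's solution in finitely generated nilpotent groups with the fact that a relatively hyperbolic group has decidable conjugacy problem as soon as its parabolics do (Bumagin). For unique roots, finitely generated torsion-free nilpotent groups embed into unitriangular matrix groups and hence admit unique roots when they exist; in a torsion-free relatively hyperbolic group with such parabolics, any two roots of a common element must either both lie in a single parabolic (reducing to the nilpotent case) or both be loxodromic with the same axis, forcing them to coincide. Effective extraction of roots then follows from the decidable word and conjugacy problems, together with algorithmic bounds on the length of short conjugators in the relatively hyperbolic setting.

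The isomorphism problem with cyclic peripherals (condition 2) is provided by the known solutions of the isomorphism problem for torsion-free relatively hyperbolic groups with nilpotent parabolics (Dahmani--Groves in the toral case, extended by Dahmani--Touikan and by Touikan in the nilpotent-parabolics setting). The relativisation to prescribed cyclic peripherals can be handled either by a marker construction — attaching to each marked peripheral a rigid auxiliary piece, for instance a surface group, that pins down the peripheral conjugacy class while keeping the enlarged group within the class in which the absolute isomorphism problem is solvable — or by directly relativising the algorithms, which is standard for procedures based on canonical hierarchies.

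The main obstacle is condition 3, namely the $\bbZ$-algorithmicity of $\fF$: one must effectively compute the cyclic JSJ decomposition of a group $G \in \fF$ relative to any prescribed peripheral structure. For toral relatively hyperbolic groups such an algorithm is provided by Dahmani (with further contributions by Touikan and by Cashen--Manning), while the nilpotent-parabolics setting is accessible via the splitting and accessibility machinery developed by Touikan and collaborators. The remaining work is to package these procedures so that they accept an arbitrary peripheral structure and output the canonical cyclic JSJ in the finite labeled-graph format required by the hypothesis of \Cref{introthm:isomorphism-other-families}. Since the JSJ is canonical once the peripheral structure is fixed, no new structural theorem is needed, but effectivising the existing constructions uniformly in the peripheral data, and verifying that the output is in the combinatorial form used throughout the paper, is the technical heart of the argument.
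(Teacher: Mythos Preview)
Your proposal is correct and follows exactly the same strategy as the paper: apply \Cref{introthm:isomorphism-other-families} and verify the four conditions for the family $\fF$. The paper's verification is terser---it cites Osin \cite{O06} and Blackburn \cite{Bl65} for the conjugacy problem (where you invoke Bumagin and Mal'cev), Osin \cite{O06} together with \cite{KM79,MMNV22} for algorithmic unique roots, Dahmani--Touikan \cite{DT19} for the isomorphism problem, and Touikan \cite{Tou18} for effective Grushko and JSJ decompositions---and in particular it does not spell out the marker construction or the packaging of the JSJ algorithm relative to arbitrary peripherals that you flag as the technical heart; the paper simply defers these points to the cited references.
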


\begin{remark*}
In the subsequent paper \cite{ACK-out} we further explore this relationship and, using different techniques, we demonstrate how the (outer) automorphism groups are related.
\end{remark*}

\paragraph{New isomorphism invariants for flexible configurations.} The isomorphism problem for GBS groups has received considerable attention, leading to several results that resolve it for certain restricted subclasses of GBSs \cite{For06, Lev07, CF08, Dud17, CRKZ21, Wan25}. However, these results rely on strong rigidity assumptions (such as the group being 2-generated, admitting only finitely many reduced graphs, or having at most one mobile edge), which significantly limit the potential interactions between edges. In contrast, the new moves we introduce allow us to handle configurations with rich flexibility, involving numerous ascending and virtually ascending loops arranged in a way that permit a lot of interaction with each other.

In view of \Cref{introthm:one-vertex-positive}, it is natural to focus on the case of one-vertex GBS graphs. A one-vertex GBS graph $(\Gamma,\psi)$ is called \textit{controlled} if there is an edge with labels $n$ and $n\ell$ (for some $n,\ell\in\bbZ\setminus\{0\}$) such that every other label is a multiple of $n$ and divisor of $n\ell^k$ for some $k\in\bbN$. We prove the following:

\begin{introthm}[\Cref{cor:iso-controlled}]\label{introthm:iso-controlled}
    There is an algorithm that, given as input a one-vertex controlled GBS graph $(\Gamma,\psi)$ and a GBS graph $(\Delta,\phi)$, decides whether the corresponding GBS groups are isomorphic.
\end{introthm}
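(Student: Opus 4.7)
The plan has three stages. First, I reduce the general comparison to a combinatorial question on one-vertex graphs. Apply \Cref{prop:compute-totally-reduced} to $(\Delta,\phi)$ to obtain a totally reduced form; by \Cref{introthm:sequence-new-moves}, if this form has more than one vertex (or if the group is $\bbZ$, $\bbZ^2$, or the Klein bottle group, a situation handled separately by inspection), then isomorphism fails or is decidable directly. Otherwise, apply \Cref{introthm:one-vertex-positive} with the unique bijection on singleton vertex sets to replace $(\Gamma,\psi)$ and the reduced $(\Delta,\phi)$ by one-vertex graphs $(\Delta_0,\phi_0)$, $(\Delta_1,\phi_1)$ with positive labels different from $1$. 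The problem is then reduced to deciding whether these two labeled graphs are connected by a sequence of slides, swaps, and connections.

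Second, I exploit the controlled hypothesis to bound the search space. The distinguished edge with labels $(n,n\ell)$ and the divisibility conditions imply that every other label lies in the finite set $\{m\in\bbN : n\mid m, \ m\mid n\ell^k\}$. The key technical step is to show that slides, swaps, and connections preserve a controlled structure with uniformly bounded parameters: the distinguished edge continues to play its role (possibly after a label update within a controlled range), and all auxiliary labels remain inside an enlarged but still finite divisor set depending computably on $(n,\ell,k)$ and the edge count. This forces every graph appearing along a witnessing sequence to live in an algorithmically enumerable finite family of one-vertex labeled graphs.

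Third, the algorithm is a finite reachability search: starting from $(\Delta_1,\phi_1)$, iteratively generate all graphs obtainable by a single slide, swap, or connection that remain in the bounded family, and test whether $(\Delta_0,\phi_0)$ is produced. Termination is guaranteed by the finiteness from the previous step, and correctness follows from \Cref{introthm:sequence-new-moves} together with \Cref{introthm:one-vertex-positive}: any group-theoretic isomorphism is witnessed by such a sequence of moves, and conversely any such sequence preserves the fundamental group.

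The main obstacle is the invariance step. Slides act on loop labels in a way that visibly respects divisibility by $n$ and an upper bound by $n\ell^k$, so on their own they pose little difficulty. Swaps and connections, by contrast, allow labels on different loops to interact in less transparent ways, and verifying that the \emph{controlled} character survives these interactions, after at most a computable enlargement of $k$, requires careful bookkeeping of how each move rewrites the labels of both the distinguished edge and the auxiliary loops. This is where the precise combinatorial descriptions of the swap and connection moves developed earlier in the paper must be used in full detail.
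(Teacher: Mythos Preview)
Your second stage contains a genuine gap that cannot be repaired: the claim that ``all auxiliary labels remain inside an enlarged but still finite divisor set'' is false. Even in the simplest controlled situation---say a single vertex with a controlling loop of labels $(2,4)$ and one other loop---repeated slides of an endpoint along the controlling edge send that endpoint from $\bb$ to $\bb+\bw$, then to $\bb+2\bw$, and so on, producing labels $2,4,8,16,\dots$ without bound. Swaps and connections do not improve matters. The paper notes explicitly in the introduction that bounding the labels along a witnessing sequence is exactly the second obstruction to the general isomorphism problem, and nothing about the controlled hypothesis removes it. Consequently your reachability search in stage three does not terminate.

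The paper's proof (Proposition~\ref{prop:isomorphism-controlled} together with Theorem~\ref{thm:sequence-new-moves}) takes a completely different route: rather than bounding the search, it identifies a \emph{complete} and computable set of invariants. Working in the affine representation with controlling edge $\ba\edge\ba+\bw$ and remaining edges $\bb_i\edge\bc_i$, the relevant invariants are the subgroup $H=\gen{\bw,\bb_1-\bc_1,\dots,\bb_k-\bc_k}\sgr\bA$ and, for each coset of $H$, the number of edge-endpoints lying in that coset. One checks directly that slides, swaps, and connections preserve these data. The nontrivial converse---that agreement of these invariants \emph{implies} the existence of a connecting sequence of moves---is exactly the content of Theorem~\ref{thm:controlled} (the controlled linear algebra theorem), whose proof occupies Section~\ref{sec:controlled-linear-algebra}. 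Sign-changes and inductions are then handled by a finite case analysis at the end. Your reduction via \Cref{introthm:one-vertex-positive} is also unnecessary here (and in fact the extra edge $f$ introduced in Proposition~\ref{prop:one-vertex-positive}, with labels $(r,rq^2)$, destroys the controlled property).
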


\begin{remark*}
    The above \Cref{introthm:iso-controlled} gives as a particular case the result of \cite{CRKZ21}. They consider the family of finite index subgroups $H$ of the GBS groups $G_{1,p}^d$ for $p,d\ge1$ integers (see \cite{CRKZ21} for the definitions). But each such subgroup $H$ can be represented by a one-vertex controlled GBS graph.
\end{remark*}

For the classification result of \Cref{introthm:iso-controlled}, we introduce two new invariants: the set of minimal points and a linear algebra invariant. The minimal points are the smallest integers (in the divisibility sense) that appear as edge labels in a fully reduced GBS graph; these depend only on the isomorphism class of the group, not on the particular graph. The linear algebra invariant is given by the subgroup generated by a $k$-tuple of vectors, one for each edge, inside a certain abelian group. Despite the $k$-tuple of vectors being derived from the GBS graph, the subgroup generated ultimately depends only on its isomorphism type.

\begin{remark*}
We emphasize that the techniques and invariants developed here have applications well beyond the class of controlled one-vertex GBS graphs. In the forthcoming papers \cite{ACK-iso2, ACK-iso3}, we introduce two additional invariants, \emph{rigid vectors} and \emph{angles}, and demonstrate how these can be used to classify broader families of GBSs. These developments bring us much closer to a strategy for solving the isomorphism problem in the general case.
\end{remark*}

\paragraph{Potential further developments.} We conclude by briefly discussing two natural questions that arise from the results of this paper.

The first question concerns the nature of our sequences of moves. Unlike Forester's deformations, our sequences of moves (as in \Cref{introthm:sequence-new-moves} and \Cref{introthm:one-vertex-positive}) take place at the level of finite labeled graphs rather than at the level of trees. This means that we are not describing the deformation space $\fD(G)$ of a GBS group $G$, but only its quotient $\fD(G)/\out{G}$ by the action of the outer automorphism group. 

\begin{question}
    Can our results be extended to sequenes of moves happening at the level of trees? If so, what are the consequences on the structure of $\out{G}$ for a GBS group $G$?
\end{question}

The second question concerns the potential for generalizing (some of) the results of this paper to graphs with bigger edge-stabilizers instead of GBSs. A first step in this direction would be to define a vertex-elimination procedure in this broader context: if there are two different orbits of vertices, and the stabilizers from the first orbit are contained in the stabilizers from the second orbit, then we would like to eliminate the first orbit of vertices by means of a sequence of moves. However, we present an example of a graph of $\bbZ^2$ where the inclusion between stabilizers holds, yet we conjecture that no sequence of moves can eliminate the corresponding orbit of vertices.

\begin{question}
    Can the vertex elimination technique be extended to graphs of $\bbZ^2$? Can it be extended to graphs of virtually cyclic groups?
\end{question}

\paragraph{Guide for the reader.} Section \ref{sec:preliminaries} sets up the notation and summarizes the background literature. In Section \ref{sec:GBSs}, we introduce the \textit{affine representation}, a graphical way of representing GBSs. In Section \ref{sec:moves}, we define the new swap and connection moves. In Section \ref{sec:controlled-linear-algebra}, we show how these moves can be used for significant manipulations of GBS graphs. In Section \ref{sec:coarse-projection-map}, we define the \textit{coarse projection map}, which plays a central role in the vertex-elimination arguments. In Section \ref{sec:sequences-of-moves}, we prove the main \Cref{introthm:sequence-new-moves}, and we deduce \Cref{introcor:sequence-Forester-moves}, \Cref{introthm:one-vertex-positive} and \Cref{introthm:iso-controlled}. In Section \ref{sec:comparing-deformation-spaces}, we address graphs of groups with arbitrary vertex groups and we prove \Cref{introthm:isomorphism-other-families} and \Cref{introthm:reduction_iso_relatively_hyp}. Finally, in Section \ref{sec:questions}, we discuss open questions and directions for future research.

\subsection*{Acknowledgements}

This work was supported by the Basque Government grant IT1483-22. The second author was supported by the Spanish Government grant PID2020-117281GB-I00, partly by the European Regional Development Fund (ERDF), the MICIU /AEI /10.13039/501100011033 / UE grant PCI2024-155053-2.

\section{Preliminaries}\label{sec:preliminaries}

In this section, we recall the fundamental concepts and set up the notation that we are going to use on graphs of groups and JSJ decompositions. We point out that Section \ref{sec:preliminaries} is completely expository, based on the already existing literature, and the (few) proofs provided are well-known standard arguments. For the Bass-Serre correspondence, we refer the reader to \cite{Ser77}. For a general and in-depth treatment of the theory of JSJ decomposition, we refer the reader to \cite{GL17}. We also remark that we are mostly going to apply the results of this section to finitely presented torsion-free groups and to splittings over the trivial group or over infinite cyclic groups.

\subsection{Graphs}

We consider graphs as combinatorial objects, following the notation of \cite{Ser77}. A \textbf{graph} is a quadruple $\Gamma=(V,E,\ol{\cdot},\iota)$ consisting of a set $V=V(\Gamma)$ of \textit{vertices}, a set $E=E(\Gamma)$ of \textit{edges}, a map $\ol{\cdot}:E\rar E$ called \textit{reverse} and a map $\iota:E\rar V$ called \textit{initial vertex}; we require that for every edge $e\in E$, we have $\ol{e}\not=e$ and $\ol{\ol{e}}=e$. For an edge $e\in E$, we denote by $\tau(e)=\iota(\ol{e})$ the \textit{terminal vertex} of $e$. A \textbf{path} in a graph $\Gamma$, with \textit{initial vertex} $v\in V(\Gamma)$ and \textit{terminal vertex} $v'\in V(\Gamma)$, is a sequence $\sigma=(e_1,\dots,e_\ell)$ of edges $e_1,\dots,e_\ell\in E(\Gamma)$ for some integer $\ell\ge0$, with the conditions $\iota(e_1)=v$ and $\tau(e_\ell)=v'$ and $\tau(e_i)=\iota(e_{i+1})$ for $i=1,\dots,\ell-1$. The terminal and initial vertices will often be referred to as \textit{endpoints}. A path $(e_1,\dots,e_\ell)$ is \textbf{reduced} if $e_{i+1}\not=\ol{e}_i$ for $i=1,\dots,\ell-1$. A graph is \textbf{connected} if for every two vertices there is a path going from one to the other. For a connected graph $\Gamma$, we define its \textbf{rank} $\rank{\Gamma}\in\bbN\cup\{+\infty\}$ as the rank of its fundamental group (which is a free group). A graph is a \textbf{tree} if for every pair of vertices, there is a unique reduced path going from one to the other.

\subsection{Graphs of groups}\label{sec:graphs-of-groups}

\begin{defn}[Graph of groups]\label{def:graph-of-groups}
A \textbf{graph of groups} is a quadruple
$$\cG=(\Gamma,\{G_v\}_{v\in V(\Gamma)},\{G_e\}_{e\in E(\Gamma)},\{\psi_e\}_{e\in E(\Gamma)})$$
consisting of a connected graph $\Gamma$, a group $G_v$ for each vertex $v\in V(\Gamma)$, a group $G_e$ for every edge $e\in E(\Gamma)$ with the condition $G_e=G_{\ol{e}}$, and an injective homomorphism $\psi_e:G_e\rar G_{\tau(e)}$ for every edge $e\in E(\Gamma)$.
\end{defn}

\begin{defn}[Universal group]\label{def:universal-group}
Let $\cG=(\Gamma,\{G_v\},\{G_e\},\{\psi_e\})$ be a graph of groups. Define the \textbf{universal group} $\FG{\cG}$ as the quotient of the free product $(*_{v\in V(\Gamma)}G_v)*F(E(\Gamma))$ by the relations
\[
    \ol{e}=e^{-1} \qquad\qquad \psi_{\ol{e}}(g)\cdot e=e\cdot\psi_e(g)
\]
for $e\in E(\Gamma)$ and $g\in G_e$.
\end{defn}

A \textbf{sound writing} for an element $x\in\FG{\cG}$ is given by a path $(e_1,\dots,e_\ell)$ in $\Gamma$ and by elements $g_i\in G_{\tau(e_i)}=G_{\iota(e_{i+1})}$ for $i=0,\dots,\ell$, such that $x=g_0e_1g_1e_2g_2\dots g_{\ell-1}e_\ell g_\ell$ in $\FG{\cG}$. An element $x\in\FG{\cG}$ is called \textbf{sound} if it admits a sound writing; in that case, the vertices $v=\iota(e_1),v'=\tau(e_\ell)\in V(\Gamma)$ do not depend on the writing (nor does the homotopy class relative to the endpoints of the path $(e_1,\dots,e_\ell)$ in $\Gamma$), and we say that $x$ is \textbf{$(v,v')$-sound}. We denote by $\pi_1(\cG,v,v')$ the set of all $(v,v')$-sound elements of $\FG{\cG}$.

Given a vertex $v_0\in V(\Gamma)$, we have a subgroup $\pi_1(\cG,v_0):=\pi_1(\cG,v_0,v_0)\sgr\FG{\cG}$ which we call \textbf{fundamental group} of $\cG$ with basepoint $v_0$; different choices of the basepoint $v_0$ produce conjugate (and in particular isomorphic) subgroups of $\FG{\cG}$. Given an element $g$ inside a vertex group $G_v$, we can take a path $(e_1,\dots,e_\ell)$ from $v_0$ to $v$: we define the conjugacy class $[g]:=[e_1\dots e_\ell g\ol{e}_\ell\dots \ol{e}_1]$ in $\pi_1(\cG,v_0)$, and notice that this does not depend on the chosen path.

\subsection{Bass-Serre correspondence}

An action of a group $G$ on a tree $T$ is called \textbf{without inversions} if $ge\not=\ol{e}$ of any $g\in G$ and $e\in E(T)$. If $G$ is acting on a tree $T$, then we can take the first barycentric subdivision $T^{(1)}$ of $T$, and the action of $G$ on $T^{(1)}$ is without inversions; thus it is not restrictive to limit ourselves to actions without inversions. \textit{From now on, every action of a group $G$ on a tree $T$ is assumed to be without inversions.} Given an action of a group $G$ on a tree $T$, we denote with $G_{\ot v}\sgr G$ the stabilizer of a vertex $\ot{v}\in V(T)$ and with $G_{\ot e}\sgr G$ the stabilizer of an edge $\ot{e}\in E(T)$.

Let $G$ be a group acting on a tree $T$ and let $\Gamma$ be the quotient graph. We make the following choices: for every vertex $v\in V(\Gamma)$ we choose a lifting $\ot v\in V(T)$; for every edge $e\in E(\Gamma)$ we choose a lifting $\ot e\in E(T)$; for every edge $e\in E(\Gamma)$ with $\tau(e)=v$ we choose an element $a_e\in G$ such that $\tau(a_e\cdot\ot e)=\ot v$. We define the graph of groups $\cG$ with underlying graph $\Gamma$; at each vertex $v\in V(\Gamma)$ we put the stabilizer $G_{\ot v}$ of the chosen lifting; at every edge $e\in E(\Gamma)$ we put the stabilizer $G_{\ot e}$ of the chosen lifting; for every edge $e\in E(\Gamma)$ we define the map $\psi_e:G_{\ot e}\rar G_{\ot v}$ given by $x\mapsto a_exa_e^{-1}$. In addition, we define the homomorphism $\Phi:\FG{\cG}\rightarrow G$, consisting of the natural inclusions $G_{\ot v}\rightarrow G$ for $v\in V(\Gamma)$, and given by $\Phi(e)=a_{\ol e}^{-1}a_e$ for $e\in E(\Gamma)$.

If $\cG=(\Gamma,\{G_v\},\{G_e\},\{\psi_e\})$ and $\cG'=(\Gamma,\{G_v'\},\{G_e'\},\{\psi_e'\})$ are graphs of groups obtained from the same action on a tree, but performing different choices, then there are isomorphisms $G_v\rar G_v'$ and $G_e\rar G_e'$ such that all the diagrams with the maps $\psi_e,\psi_e'$ commute up to conjugation. Thus the resulting graph of groups is essentially unique (up to changing the maps $\psi_e:G_e\rar G_{\tau(e)}$ by post-composing with conjugation in the vertex group $G_{\tau(e)}$; the conjugating element is allowed to depend on the chosen edge $e$).

\begin{prop}[Bass-Serre]\label{prop:Bass-Serre}
    For a group $G$, we have the following:
    \begin{itemize}
        \item For every action of $G$ on a tree $T$, the corresponding homomorphism $\Phi:\FG{\cG}\rightarrow G$ induces an isomorphism $\Phi:\pi_1(\cG)\rar G$ on the fundamental group (for all basepoints).
        \item Suppose that we are given a graph of groups $\cG'$ and a homomorphism $\Phi':\FG{\cG'}\rar G$ inducing an isomorphism $\Phi':\pi_1(\cG')\rar G$ on the fundamental group. Then there is an action of $G$ on a tree $T$, inducing a graph of groups $\cG$ and a homomorphism $\Phi:\FG{\cG}\rar G$, such that $\cG$ is isomorphic to $\cG'$ (the isomorphism commuting with the maps $\Phi,\Phi'$). The action is unique up to $G$-equivariant isomorphism.
    \end{itemize}
\end{prop}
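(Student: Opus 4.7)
The plan is to follow the classical two-part strategy for the Bass-Serre correspondence. For part (1), I would first exploit the freedom in the choices by fixing a spanning tree $T_0\subseteq\Gamma$ and, after adjusting the liftings $\ot v$ and the elements $a_e$, arranging that $a_e=1$ for every $e\in E(T_0)$ (pick a root lift, then propagate the lifts along the lifted edges of $T_0$). With this normalization, surjectivity of $\Phi:\pi_1(\cG,v_0)\rar G$ becomes transparent: the image of $\Phi$ contains every stabilizer $G_{\ot v}$ together with all the elements $\Phi(e)=a_{\ol e}^{-1}a_e$, and the subgroup these generate acts transitively on $V(T)$ (since $\Gamma$ is connected) while containing the full stabilizer $G_{\ot{v_0}}$; hence it exhausts $G$.

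For injectivity, I would invoke the normal form for fundamental groups of graphs of groups. Any $(v_0,v_0)$-sound element admits a representative of the form $g_0 e_1 g_1\cdots e_\ell g_\ell$ which is \emph{reduced}, meaning that whenever $e_{i+1}=\ol{e_i}$ the middle letter $g_i$ does not lie in $\psi_{\ol{e_{i+1}}}(G_{e_{i+1}})$. I would then check by induction on $\ell$ that the image under $\Phi$ sends $\ot{v_0}$ to a vertex reached by a non-backtracking path of combinatorial length $\ell$ in $T$: the reduction condition is precisely what prevents the $(i+1)$-th lifted edge from being the reverse of the $i$-th after translation by the running partial product. If $\ell\ge 1$, the image therefore does not fix $\ot{v_0}$, so cannot be $1\in G$; if $\ell=0$, the word is a single element of $G_{\ot{v_0}}$ mapped by inclusion, which is injective. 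Well-definedness of the normal form (independence of sound writing up to reduction) comes from the defining relations of $\FG{\cG}$, and it is a routine combinatorial argument.

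For part (2), given $(\cG',\Phi')$, I would build the Bass-Serre tree $T$ directly: the vertex and edge sets are $\bigsqcup_{v\in V(\Gamma)}G/\Phi'(G_v)$ and $\bigsqcup_{e\in E(\Gamma)}G/\Phi'(G_e)$ respectively, with reversal $g\Phi'(G_e)\mapsto g\Phi'(e)^{-1}\Phi'(G_{\ol e})$ and boundary maps induced by the inclusions $G_e\hookrightarrow G_{\iota(e)}$ built into $\cG'$. The $G$-action by left multiplication is without inversions by construction. Connectedness follows because $\Phi'$ is surjective on $\pi_1$ and every generator of $\FG{\cG'}$ is realised by an edge path in $T$; simple-connectedness is again the normal form argument applied to loops based at $\ot{v_0}$. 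Running the construction of part (1) on this action visibly recovers $\cG'$ up to the allowed post-composition of the maps $\psi_e$ by inner automorphisms of the vertex groups. Uniqueness of $T$ up to $G$-equivariant isomorphism is then forced, since the $G$-orbits of vertices and edges of any such $T$ are canonically parametrised by $V(\Gamma)$ and $E(\Gamma)$ with the prescribed stabilizers, and the incidence data are determined by $\cG$.

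The main obstacle is the injectivity part of (1), which is essentially the normal form theorem for graphs of groups: the combinatorial bookkeeping required to show that a reduced sound word of positive length cannot have trivial image in $G$ is the delicate step, but once the reduction condition is translated into the non-backtracking property of the induced edge path in $T$, the conclusion becomes geometric and the remaining verifications for both parts reduce to routine diagram chasing.
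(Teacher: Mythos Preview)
The paper does not give a proof of this proposition at all; it simply writes ``See \cite{Ser77}.'' Your sketch is the standard Bass--Serre argument as found in Serre's book (normalize lifts along a spanning tree, use the normal form theorem for injectivity, and build the tree from cosets for the converse), so your proposal is correct and in line with what the paper defers to.
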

\begin{proof}
    See \cite{Ser77}.
\end{proof}

An action of a group $G$ on a tree $T$ will be called a \textbf{splitting} of $G$. In virtue of the Bass-Serre correspondence of Proposition \ref{prop:Bass-Serre}, a morphism $\Phi:\cG\rar G$ inducing an isomorphism at the level of the fundamental groups will also be called a \textbf{splitting} of $G$. A splitting $T$ is \textbf{trivial} if there is a global fixed point, i.e. if there is a vertex $v_0\in V(T)$ with $G_{v_0}=G$. If $\cG$ is the corresponding graph of groups with underlying graph $\Gamma$, being trivial means that $\Gamma$ is a tree and contains a vertex $v_0\in V(\Gamma)$ satisfying the following property: for every $e\in E(\Gamma)$, if $\iota(e)$ is nearer to $v_0$ than $\tau(e)$, then $\psi_e:G_e\rar G_{\tau(e)}$ is an isomorphism.

\subsection{Minimal splittings}

A splitting $T$ of $G$ is \textbf{minimal} if $T$ contains no proper $G$-invariant sub-tree. Given a non-trivial splitting $T$ of $G$, there is a unique $G$-invariant sub-tree $T'$ which is minimal by inclusion (this is given by the intersection of all the $G$-invariant sub-trees); the action of $G$ on $T'$ is minimal. Notice that there is a $G$-equivariant retraction $T\rar T'$.

\begin{lem}
    Let $G$ be a finitely generated group. Suppose that we are given a minimal action of $G$ on a tree $T$. Then $T/G$ is a finite graph {\rm(}i.e. the corresponding graph of groups is finite{\rm)}.
\end{lem}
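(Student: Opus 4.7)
The plan is to use a standard argument based on the finite generating set. Fix a finite generating set $S$ of $G$ and choose any vertex $v_0\in V(T)$. Let $T_0\sgr T$ be the finite subtree obtained as the union of the (unique) geodesic paths from $v_0$ to $sv_0$, as $s$ ranges over $S$. Since $S$ is finite and each geodesic segment is finite, $T_0$ contains only finitely many vertices and edges. Now set $U:=\bigcup_{g\in G}gT_0$, which is $G$-invariant by construction.

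The first key step is to show that $U$ is a subtree of $T$, i.e.\ that $U$ is connected. Any $g\in G$ can be written as $g=s_1\cdots s_n$ with $s_i\in S\cup S^{-1}$; consider the sequence of vertices $v_0,s_1v_0,s_1s_2v_0,\dots,gv_0$. For each consecutive pair $h v_0,hs_{i+1}v_0$ (with $h=s_1\cdots s_i$), the geodesic from $v_0$ to $s_{i+1}v_0$ lies in $T_0$ (by construction, up to replacing $s_{i+1}$ by $s_{i+1}^{-1}$ and using that the geodesic from $v_0$ to $s^{-1}v_0$ is the $s^{-1}$-translate of the one from $v_0$ to $sv_0$), hence its $h$-translate lies in $hT_0\sgr U$ and connects $hv_0$ to $hs_{i+1}v_0$. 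Chaining these, $v_0$ is connected to $gv_0$ inside $U$ for every $g\in G$, so $U$ is connected; being a connected subgraph of the tree $T$, it is a subtree.

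The second step is to invoke minimality. Since $U$ is a non-empty $G$-invariant subtree of $T$ and $T$ is minimal, we must have $U=T$. Consequently, the quotient map $T\rar T/G$ restricted to $T_0$ is surjective, so $T/G$ is the image of the finite graph $T_0$ under the quotient projection, and is therefore finite.

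The only mild subtlety is the connectivity argument for $U$, and the observation that the geodesic from $v_0$ to $s^{-1}v_0$ is automatically contained in $T_0$ because it coincides with $s^{-1}$ applied to the geodesic from $sv_0$ to $v_0$, which is inside $s^{-1}T_0$; alternatively one can replace $S$ by $S\cup S^{-1}$ from the outset, which is still finite, and avoid this point altogether. Everything else is essentially bookkeeping, and no further hypothesis beyond finite generation and minimality is needed.
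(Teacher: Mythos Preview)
Your proof is correct and follows essentially the same approach as the paper: take a finite generating set, form the finite union of geodesics from a basepoint to its translates by generators, and observe that the $G$-orbit of this finite tree is a $G$-invariant subtree with finite quotient, hence equals $T$ by minimality. The paper's version is terser and does not spell out the connectivity argument or the $s^{-1}$ issue, but the underlying idea is identical; your suggestion to replace $S$ by $S\cup S^{-1}$ from the outset cleanly resolves the minor sloppiness in the inline $s^{-1}$ case.
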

\begin{proof}
    Let $g_1,\dots,g_n$ be generators for $G$. Take a vertex $v\in V(T)$ and consider the reduced path $\sigma_i$ going from $v$ to $g_i\cdot v$, for $i=1,\dots,n$. The union of the images of the paths $g\cdot\sigma_i$, for $g\in G$ and $i=1,\dots,n$, defines a sub-tree $T'$ which is $G$-invariant and which has finite quotient $T'/G$.
\end{proof}

\subsection{Elliptic subgroups}

Let $G$ be a group acting on a tree $T$. We say that a subgroup $H\sgr G$ is \textbf{elliptic in $T$} if the induced action of $H$ on $T$ has a global fixed point; this means that $H$ is contained in the stabilizer $G_v$ for some vertex $v\in V(T)$. Notice that a subgroup $H$ is elliptic in $T$ if and only if all of its conjugates are elliptic in $T$. If $H$ is finitely generated, then it is elliptic in $T$ if and only if all of its elements are elliptic in $T$, as follows.

\begin{lem}[Serre]\label{lem:Serre-trick}
    Let $G$ be a group acting on a tree $T$ and let $H=\gen{h_1,\dots,h_n}\sgr G$ be a finitely generated subgroup. If $h_i,h_ih_j$ are elliptic in $T$ for all $i,j=1,\dots,n$, then $H$ is elliptic in $T$.
\end{lem}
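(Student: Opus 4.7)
The plan is to exploit two classical facts about group actions on trees: the dichotomy between elliptic and hyperbolic isometries, and the Helly property for finite families of pairwise intersecting subtrees.

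For each generator $h_i$, let $F_i := \Fix(h_i) \subseteq T$ denote its fixed-point set. By hypothesis $F_i$ is non-empty, and since the action is without inversions and $h_i$ pointwise fixes the unique reduced path between any two of its fixed vertices, $F_i$ is a (non-empty) subtree of $T$. The crucial step is to show that $F_i \cap F_j \neq \emptyset$ for all $i,j$. Suppose for contradiction that $F_i \cap F_j = \emptyset$ for some $i \neq j$. Then the bridge between $F_i$ and $F_j$ is a non-degenerate geodesic segment, and a standard analysis of products of two elliptic tree isometries with disjoint fixed-point sets shows that $h_i h_j$ is a hyperbolic isometry of $T$ with translation length $2\,d(F_i, F_j) > 0$. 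In particular $h_i h_j$ has no fixed point in $T$, contradicting the hypothesis that it is elliptic.

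Thus $F_1, \dots, F_n$ is a finite family of pairwise intersecting subtrees of $T$, and I would then invoke the Helly property: any finite collection of pairwise intersecting subtrees of a tree has non-empty common intersection. This is proved by a short induction that reduces to the case of three subtrees, where the median of three pairwise intersection points lies simultaneously in all of them (using that the geodesic between any two points of a subtree stays in the subtree). Applying this to $F_1, \dots, F_n$ yields a vertex $v \in T$ fixed by every $h_i$, and hence by the entire subgroup $H = \gen{h_1, \dots, h_n}$. This shows that $H$ is elliptic.

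The main technical input is the classification of tree automorphisms into elliptic and hyperbolic, together with the formula for the translation length of a product of two elliptic elements with disjoint fix sets. Both are classical and treated in detail in \cite{Ser77}, so I would cite them rather than re-derive them; everything else in the argument is a short verification.
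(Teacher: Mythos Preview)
Your argument is correct and is precisely the standard proof behind the cited result; the paper itself does not give an argument but simply refers to Corollary~2 in Section~6.5 of \cite{Ser77}, whose proof proceeds exactly along the lines you sketch (product of elliptics with disjoint fixed trees is hyperbolic, then Helly for subtrees).
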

\begin{proof}
    This is Corollary 2 in Section 6.5 of \cite{Ser77}.
\end{proof}

\begin{lem}\label{lem:elliptic-finite-index}
    Let $G$ be a group acting on a tree $T$, and let $H'\sgr H\sgr G$ be two subgroups such that $H'$ has finite index in $H$. Then $H'$ is elliptic in $T$ if and only if $H$ is elliptic in $T$.
\end{lem}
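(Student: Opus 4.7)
The direction $(\Leftarrow)$ is immediate: if $H$ fixes a vertex $v\in V(T)$, then so does any subgroup, in particular $H'$.

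For the converse, my plan is to reduce to the case of a finite-index \emph{normal} subgroup of $H$, so that its fixed-point set is automatically $H$-invariant, and then to invoke the classical fact that a finite group acting on a tree without inversions fixes a vertex. Concretely, I would let $N=\bigcap_{h\in H}hH'h^{-1}$ be the normal core of $H'$ in $H$. The number of $H$-conjugates of $H'$ equals $[H:N_H(H')]$, which is at most $[H:H']<\infty$ since $H'\sgr N_H(H')$; hence $N$ is a finite intersection of finite-index subgroups of $H$, so $[H:N]<\infty$. Because $N\sgr H'$, the subgroup $N$ is elliptic, and its fixed-point set $F:=\Fix(N)\subseteq T$ is a non-empty subtree.

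Next I would verify that $F$ is $H$-invariant: for $h\in H$ and $v\in F$, every $n\in N$ satisfies $n(hv)=h(h^{-1}nh)v=hv$, because $h^{-1}nh\in N$ by normality and $N$ fixes $v$; thus $hv\in F$. The finite quotient $H/N$ therefore acts on the tree $F$, without inversions (inherited from the ambient action on $T$), and by the classical theorem this action fixes a vertex: pick any $w\in F$, note that the orbit $(H/N)\cdot w$ is finite, take its convex hull inside $F$, and observe that the center of this finite subtree---which is a vertex, since inversions are excluded---is fixed. Pulling back, this vertex is fixed by all of $H$, so $H$ is elliptic. The only delicate point in the argument is arranging that a suitable auxiliary subgroup is simultaneously normal in $H$ and of finite index; the normal-core construction accomplishes both, and after that the rest is routine.
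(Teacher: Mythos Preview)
Your argument is correct. The normal-core reduction is exactly the right maneuver: it gives you a finite-index subgroup that is both elliptic and normal, so its fixed-point set is an $H$-invariant non-empty subtree, and then the classical fact that a finite group acting without inversions on a tree fixes a vertex finishes the job. Your justification of that last step via the center of the convex hull of a finite orbit is fine; note that if the center happens to be an edge rather than a vertex, the no-inversions hypothesis still forces both endpoints to be fixed, so a fixed vertex exists either way.

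As for comparison with the paper: the paper does not give its own argument at all---it simply cites \cite{GL17}. So your proof is strictly more informative than what appears in the text, supplying a short self-contained argument in place of an external reference. The standard proof one finds in the literature (e.g.\ via Serre's Lemma~\ref{lem:Serre-trick}, showing every element of $H$ is elliptic because some power lies in $H'$, then using that finitely many pairwise-commuting-up-to-$H'$ generators have a common fixed point) is a slightly different route, but yours is just as clean and arguably more conceptual.
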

\begin{proof}
    See \cite{GL17}.
\end{proof}

\subsection{Domination and deformation spaces}\label{sec:deformation-spaces}

Let $G$ be a group acting on two trees $T,T'$. We say that $T$ \textbf{dominates} $T'$ if every vertex stabilizer of $T$ is elliptic in $T'$; this is equivalent to requiring that there is a $G$-equivariant map $f:T\rar T'$ sending each vertex to a vertex and each edge to a (possibly trivial) edge path.

Let $G$ be a group acting on a tree $T$. The \textbf{deformation space} $\fD_G(T)$ is the set of actions of $G$ on a tree $T'$ such that $T$ dominates $T'$ and $T'$ dominates $T$ (considered up to $G$-equivariant isomorphism). The terminology \textit{deformation space} is due to the fact that two trees lie in the same deformation space if and only if it is possible to pass from one to the other by a sequence of \textit{deformations}, as we now explain.

Let $G$ be a group acting on a tree $T$ and let $\cG=(\Gamma,\{G_v\},\{G_e\},\{\psi_e\})$ be the corresponding graph of groups. Let $e\in E(\Gamma)$ be an edge such that $\psi_e:G_e\rar G_{\tau(e)}$ is an isomorphism and $\iota(e)\not=\tau(e)$. Consider the quotient that collapses each edge in the orbit corresponding to $e\in E(\Gamma)$ to a point. Such a collapse is called \textbf{contraction}; the inverse of a contraction is called \textbf{expansion} (see Figure \ref{fig:contraction-expansion} for the effect of such moves on the corresponding graph of groups).

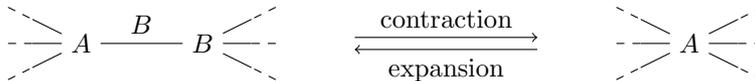
\begin{figure}[H]
\centering
\begin{tikzpicture}[scale=0.8]

\begin{scope}[shift={(0,0)}]
\node (v) at (0,0) {$A$};
\draw[-,dashed] (-1.2,0.6) to (-0.8,0.4);
\draw[-] (-0.8,0.4) to (v);
\draw[-,dashed] (-1.2,0) to (-0.8,0);
\draw[-] (-0.8,0) to (v);
\draw[-,dashed] (-1.2,-0.6) to (-0.8,-0.4);
\draw[-] (-0.8,-0.4) to (v);

\node (w) at (2,0) {$B$};
\draw[-,dashed] (3.2,0.6) to (2.8,0.4);
\draw[-] (2.8,0.4) to (w);
\draw[-,dashed] (3.2,0) to (2.8,0);
\draw[-] (2.8,0) to (w);
\draw[-,dashed] (3.2,-0.6) to (2.8,-0.4);
\draw[-] (2.8,-0.4) to (w);

\draw[-] (v) to node[above]{$B$} (w);
\end{scope}

\draw[->] (4.5,0.1) to node[above]{contraction} (7.5,0.1);
\draw[->] (7.5,-0.1) to node[below]{expansion} (4.5,-0.1);

\begin{scope}[shift={(10,0)}]
\node (v) at (0,0) {$A$};
\draw[-,dashed] (-1.2,0.6) to (-0.8,0.4);
\draw[-] (-0.8,0.4) to (v);
\draw[-,dashed] (-1.2,0) to (-0.8,0);
\draw[-] (-0.8,0) to (v);
\draw[-,dashed] (-1.2,-0.6) to (-0.8,-0.4);
\draw[-] (-0.8,-0.4) to (v);

\draw[-,dashed] (1.2,0.6) to (0.8,0.4);
\draw[-] (0.8,0.4) to (v);
\draw[-,dashed] (1.2,0) to (0.8,0);
\draw[-] (0.8,0) to (v);
\draw[-,dashed] (1.2,-0.6) to (0.8,-0.4);
\draw[-] (0.8,-0.4) to (v);
\end{scope}

\end{tikzpicture}
\caption{The contraction and expansion moves. The two endpoints of the edge to be collapsed are required to be distinct. The edge group $B$ is required to be isomorphic to one of the adjacent vertex groups; on the other side we allow for a proper inclusion $B\sgr A$.}
\label{fig:contraction-expansion}
\end{figure}

\begin{thm}[Forester \cite{For02}]\label{thm:Forester}
    Let $G$ be a group acting on two trees $T,T'$. Then the following are equivalent:
    \begin{itemize}
        \item The two trees $T$ and $T'$ lie in the same deformation space.
        \item There is a finite sequence of contractions and expansions going from $T$ to $T'$.
    \end{itemize}
\end{thm}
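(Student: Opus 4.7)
The reverse implication is routine. If $T'$ is obtained from $T$ by a contraction collapsing an orbit of edges whose edge group $G_e$ maps isomorphically to the vertex group $G_{\tau(e)}$, then the stabilizer of the new (amalgamated) vertex in $T'$ equals $G_{\iota(\widetilde e)}$, while the stabilizers of other vertices are unchanged. Hence every vertex stabilizer of $T$ is contained in a vertex stabilizer of $T'$ and vice versa, so $T$ and $T'$ are in the same deformation space. The same holds for expansions, as they are just inverses of contractions, and deformation spaces are preserved under finite compositions.

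For the forward direction, I plan to go via \emph{common refinements}. Given $T,T'$ in the same deformation space, $T$ dominates $T'$, so there is a $G$-equivariant map $f\colon T\to T'$ sending vertices to vertices and edges to edge-paths. By $G$-equivariantly subdividing the edges of $T$ at the preimages of midpoints of edges of $T'$, I can build a tree $\widehat T$ that admits two $G$-equivariant \emph{collapse maps} $p\colon \widehat T\to T$ and $q\colon \widehat T\to T'$, each of which sends every edge either isomorphically to an edge or to a single vertex. (The subdivisions add edges whose edge group equals a vertex group of $T$, so they are inverses of contractions; this reduces the problem to the case where $\widehat T$ collapses onto both $T$ and $T'$.) It then suffices to show that any such collapse map from a tree $\widehat T$ to a tree $T$ within one deformation space can be realized by a finite sequence of contractions.

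The main step is therefore the following claim: if $S$ is obtained from $\widehat S$ by equivariantly collapsing a $G$-invariant sub-forest $F\subset \widehat S$, and $\widehat S,S$ lie in the same deformation space, then $F$ can be collapsed one edge-orbit at a time, at each stage with an edge whose edge group equals one endpoint vertex group. To prove this, I would argue as follows. Pick an edge-orbit $[\widetilde e]$ in $F$. Because $\widehat S$ and $S$ have the same elliptic subgroups, the stabilizer $G_{\widetilde v}$ of one endpoint $\widetilde v$ of $\widetilde e$ is elliptic in $S$, hence it is contained (up to conjugation) in a vertex stabilizer of $S$, which corresponds to the $F$-component $C$ containing $\widetilde v$. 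Using ellipticity and the tree structure of $C$, one can choose $\widetilde e$ terminal in $C$ so that $G_{\widetilde v}=G_{\widetilde e}$, making the collapse of $[\widetilde e]$ a legitimate contraction. After this contraction, the resulting tree $\widehat S_1$ still collapses to $S$ with a smaller sub-forest, so we induct on the number of edge-orbits in $F$ (the finiteness of this number uses the finiteness of the quotient graph, which follows from minimality).

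The technical obstacle will be ensuring the terminal-edge choice in the inductive step can always be made, i.e.\ that the ellipticity of all vertex stabilizers of $\widehat S$ in $S$ forces some edge-orbit in $F$ to have $G_{\widetilde e}=G_{\widetilde v}$ at one end. One must also handle minimality carefully: if $T$ is not minimal, pass to the minimal subtree (which equals the union of axes of hyperbolic elements, invariant under deformation) and observe that the complementary edges can be introduced by a sequence of expansions from a vertex-group-equal-to-edge-group side. Once these points are in place, stringing the inductive contractions from $\widehat T$ to $T$, and the reverse sequence (expansions followed by contractions) from $T$ to $\widehat T$ to $T'$, yields the desired finite sequence.
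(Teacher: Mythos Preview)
The paper does not give its own proof; it simply cites Forester's original article. So there is no detailed argument to compare against, and I will evaluate your sketch on its merits.

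Your reverse implication and your inductive argument for the claim ``a genuine collapse $\widehat S\to S$ within one deformation space factors as a finite sequence of contractions'' are essentially correct: the ellipticity of the image vertex stabiliser in $\widehat S$ pins down a vertex of maximal stabiliser in each collapsed component, and any leaf distinct from it gives an edge with $G_{\widetilde e}=G_{\widetilde v}$.

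The real gap is earlier, in the construction of $\widehat T$. After subdividing $T$ so that $q\colon\widehat T\to T'$ sends each edge to an edge or a vertex, you call $q$ a \emph{collapse map} and then, in the next paragraph, silently assume it is obtained by collapsing a $G$-invariant subforest. These are not the same thing: a simplicial $G$-map that is edge-to-edge-or-vertex may \emph{fold}, i.e.\ identify two distinct edges $e_1,e_2$ with $\iota(e_1)=\iota(e_2)$ onto a single edge of $T'$. Such folds genuinely occur between trees in the same deformation space (slides are the basic example), and your inductive step, which begins ``if $S$ is obtained from $\widehat S$ by equivariantly collapsing a $G$-invariant sub-forest $F$'', does not apply to them. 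Showing that folds arising within a deformation space can themselves be realised by sequences of expansions and contractions is precisely the substantive content of Forester's theorem; the Remark immediately following the statement in the paper flags exactly this point. Your outline therefore reduces the theorem to its own hard case rather than proving it.
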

\begin{proof}
    This is Theorem 1.1 in \cite{For02}.
\end{proof}

\begin{remark}
    Suppose that we are given two trees $T,T'$ such that $T$ dominates $T'$. This means that there is an (equivariant) map of trees $f:T\rar T'$ sending each edge to an edge path. Up to (equivariantly) subdividing the edges of $T$, we can assume that, for every edge $e\in E(T)$, either $f(e)$ is a single edge of $T'$, or $f(e)$ is a vertex of $T'$ (i.e. $e$ is collapsed to a point). Then we can perform (equivariant) folding moves on $T$: whenever we see two edges $e,e'\in E(T)$ with a common endpoint $\iota(e)=\iota(e')$ and with common image $f(e)=f(e')$, we can fold them to a single edge, and the map to $T'$ remains well-defined. For a detailed analysis of the effect of these folding moves on the graph of groups structure, we refer to \cite{BF91}. Notice that, if we add the condition that $T'$ dominates $T$ (i.e. $T,T'$ lie in the same deformation space), this imposes strong restrictions on which behaviours can occur during the folding sequence.
\end{remark}

\subsection{Peripheral structures and refinements}

A \textbf{peripheral structure} on a group $G$ is a family $\cH$ of subgroups of $G$. A \textbf{splitting relative to the peripheral structure} is an action of $G$ on a tree $T$ such that every subgroup $H\in\cH$ is elliptic in $T$. For an action of $G$ on a tree $T$, every vertex stabilizer $G_v$ for $v\in V(T)$ has a natural peripheral structure, given by the edge stabilizers $G_e$ for $e\in E(T)$ with $\tau(e)=v$. Similarly, for a graph of groups $\cG=(\Gamma,\{G_v\},\{G_e\},\{\psi_e\})$, there is a natural peripheral structure on each vertex group $G_v$ for $v\in V(\Gamma)$, given by the images $\psi_e(G_e)$ for $e\in E(\Gamma)$ with $\tau(e)=v$.

Let $G$ be a group acting on a tree $\oc{T}$. Given a $G$-invariant subset of edges of $\oc{T}$, we can collapse each of those edges to a point; we obtain another tree $T$, together with an action of $G$ on $T$ and a $G$-equivariant quotient map $q:\oc{T}\rar T$. We say that $T$ is obtained from $\oc{T}$ by a \textbf{collapse}. We say that a tree $\oc{T}$ is a \textbf{refinement} of $T$ if there is a collapse $q:\oc{T}\rar T$.

Suppose that we are given a refinement $q:\oc{T}\rar T$. For every vertex $v\in V(T)$, we have an action of $G_v$ on the tree $q^{-1}(v)$: this induces a splitting of $G_v$ relative to the peripheral structure. Conversely, suppose that $\cG=(\Gamma,\{G_v\},\{G_e\},\{\psi_e\})$ is a graph of groups, and suppose that for every vertex $v\in V(\Gamma)$ we are given a splitting of $G_v$ relative to the peripheral structure. Then, for the corresponding action on a tree $T$, it is not hard to construct a refinement $q:\oc{T}\rar T$ inducing those splittings. Therefore, taking a refinement of a tree corresponds to taking the vertex groups of the induced graph of groups, and splitting each of them relative to the respective peripheral structure.

A refinement $q:\oc{T}\rar T$ is \textbf{proper} if there is a vertex $v\in V(T)$ such that the induced action of $G_v$ on $q^{-1}(v)$ is a non-trivial splitting. If $\oc{T}$ is a refinement of $T$, then $\oc{T}$ dominates $T$; moreover, $T$ and $\oc{T}$ are in the same deformation space if and only if the refinement is non-proper. We say that a splitting $S$ is \textbf{reduced} if every collapse $q:S\rar S'$ is a proper refinement. This is equivalent to requiring that no contraction move can be performed on $S$ (see Figure \ref{fig:contraction-expansion}).

\subsection{Accessibility}

\begin{prop}[Dunwoody's accessibility]\label{prop:Dunwoody-accessibility}
    Let $G$ be a finitely presented group. Let $T_0\leftarrow T_1\leftarrow T_2\leftarrow\dots $ be a sequence of refinements of trees. Then there is a tree $S$, with edge and vertex stabilizers finitely generated, such that $S$ dominates $T_k$ for all $k\ge0$.
\end{prop}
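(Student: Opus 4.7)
The plan is to translate the sequence of refinements of $G$-trees into an ascending sequence of \emph{patterns} (systems of disjoint two-sided tracks) on a fixed finite $2$-complex modelling $G$, and then apply Dunwoody's combinatorial accessibility bound on the number of parallel classes of tracks.

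Since $G$ is finitely presented, I fix a finite $2$-complex $K$ with $\pi_1(K)\cong G$, and let $\tilde{K}$ be its universal cover with the deck action of $G$. For each $k$, I build a $G$-equivariant map $f_k:\tilde{K}\to T_k$ which, after a suitable equivariant subdivision, is simplicial on the $1$-skeleton and transverse on each $2$-cell. The preimage of the midpoints of the edges of $T_k$ under $f_k$ is a $G$-invariant pattern $\tilde{\Lambda}_k\subset\tilde{K}$ descending to a finite pattern $\Lambda_k$ on $K$. Using that $T_{k+1}$ refines $T_k$ via a collapse $q_k$, I choose the maps $f_k$ compatibly so that $f_k=q_k\circ f_{k+1}$; then $\tilde{\Lambda}_k$ sits inside $\tilde{\Lambda}_{k+1}$ up to isotopy, and the additional tracks in $\tilde{\Lambda}_{k+1}\setminus\tilde{\Lambda}_k$ correspond to the edges created by the refinement.

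Next I invoke the combinatorial heart of Dunwoody's accessibility: in the finite $2$-complex $K$ there is a uniform bound $C=C(K)$ on the size of any collection of pairwise disjoint, pairwise non-parallel essential tracks. Since the $\Lambda_k$ form an increasing sequence of disjoint track systems, only finitely many new parallel classes can ever appear; after passing to reduced patterns (deleting parallel duplicates, which only collapses trivial edges in the dual trees), the sequence $\Lambda_k$ stabilizes in finitely many steps. I then define $S$ to be the Bass--Serre dual tree of the stabilized pattern $\tilde{\Lambda}$: vertices are connected components of $\tilde{K}\setminus\tilde{\Lambda}$ and edges are tracks, with the natural $G$-action. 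By construction $S$ comes with $G$-equivariant maps $S\to T_k$ which are collapses, so $S$ refines (and in particular dominates) every $T_k$. The vertex and edge stabilizers of $S$ are the stabilizers of the components and tracks in $\tilde{K}$; each has finite quotient in $K$ and is thus the fundamental group of a finite subcomplex of $K$ (respectively a finite subgraph), hence finitely generated.

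The main obstacle and combinatorial crux of the argument is the uniform track bound invoked in the third paragraph, which is precisely Dunwoody's original accessibility theorem for finitely presented groups and requires a delicate surgery argument on patterns. The remaining work is the careful equivariant choice of the maps $f_k$ so that the translation between refinements of $T_k$ and insertions of tracks in $\tilde{\Lambda}_k$ is consistent, and the standard verification that stabilizers in a dual tree of a finite pattern on $K$ are finitely generated.
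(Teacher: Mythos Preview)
The paper does not give a proof; it simply cites Proposition~2.17 in \cite{GL17}. Your sketch is the classical Dunwoody track argument that underlies that reference, so you are reproducing the standard proof rather than proposing an alternative.

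Your outline is essentially correct, but two points would need tightening before it becomes a proof. First, the stabilization step: once every new track in $\Lambda_{k+1}\setminus\Lambda_k$ is parallel to an existing one, you should argue explicitly that adding a parallel track corresponds to a non-proper refinement of the dual tree (the new edge has an endpoint whose stabilizer equals the edge stabilizer), so that the dual trees $S_k$ of $\tilde\Lambda_k$ lie in a single deformation space for all large $k$. Taking $S$ in that deformation space, domination of $T_k$ for small $k$ follows from the inclusion $\Lambda_k\subset\Lambda_N$, and for large $k$ from the deformation-space relation together with the map $S_k\to T_k$ induced by $f_k$. Second, your justification for finitely generated stabilizers is not quite right: the stabilizer of a component $C$ of $\tilde K\setminus\tilde\Lambda$ is not literally the fundamental group of a finite subcomplex of $K$, because $\overline{C}$ need not be simply connected. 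The clean argument is that the stabilizer acts cocompactly on $\overline{C}$, which is connected and locally finite, and any group with such an action is finitely generated; the same reasoning handles track stabilizers.
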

\begin{proof}
    See Proposition 2.17 in \cite{GL17}.
\end{proof}

\begin{prop}[Bestvina-Feighn \cite{BF91}]\label{prop:BestvinaFeighn-accessibility}
    Let $G$ be a finitely presented group. Then there is an integer $\gamma(G)\ge0$ satisfying the following property: for every reduced tree $T$ such that no edge stabilizer contains a non-abelian free group, we have that $T/G$ has at most $\gamma(G)$ vertices.
\end{prop}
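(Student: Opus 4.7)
My plan is to follow the strategy of Bestvina–Feighn via tracks in a finite 2-complex, which is the standard approach to this type of accessibility result. Fix a finite presentation of $G$ and let $X$ be the associated finite presentation 2-complex, with universal cover $\widetilde X$. The idea is that any splitting $T$ of $G$ with sufficiently controlled edge stabilizers can be ``encoded'' by a finite $G$-invariant pattern of tracks in $\widetilde X$, and such patterns are themselves controlled by the combinatorics of $X$ alone.

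First, given an action of $G$ on a reduced tree $T$ as in the hypothesis, I would build a $G$-equivariant simplicial map $\varphi\colon\widetilde X\to T$. This is constructed by picking a $G$-orbit representative for each vertex of $\widetilde X$ and sending it to a vertex of $T$ arbitrarily, then extending over edges by sending each edge to the geodesic between the images of its endpoints, and finally extending over 2-cells (after a preliminary equivariant subdivision) so that the map is transverse to the midpoints of the edges of $T$. The preimages of the midpoints of edges of $T$ then form a finite $G$-invariant collection of \emph{tracks} in $\widetilde X$, which descends to a pattern of tracks in the finite 2-complex $X$. The number of $G$-orbits of edges of $T$ is the number of tracks in the quotient pattern.

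The next step, and the technical heart of the argument, is to bound the number of tracks in such a pattern by a constant depending only on $X$. Here is where the hypothesis that no edge stabilizer contains $F_2$ enters: this ``smallness'' condition forces the tracks (after a suitable homotopy, combined with the reducedness of $T$) to form a pattern with no arcs that are parallel through a 2-cell and no null-homotopic closed tracks, so that after collapsing parallel families one obtains a pattern whose combinatorial complexity, i.e.\ the number of arcs of intersection with each 2-cell of $X$, is bounded a priori by the number of sides of the 2-cells of $X$. The reducedness of $T$ and the fact that $T/G$ cannot have ``trivial'' vertices of valence $\le 1$ attached by an isomorphism to the edge group (this is ruled out by \Cref{fig:contraction-expansion} / the definition of reduced) then bounds the number of vertex orbits by a linear function of the number of edge orbits, giving a uniform bound $\gamma(G)$ on $|V(T/G)|$.

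The hard part will be arranging the pattern so that this combinatorial bound applies, that is, eliminating parallel families of arcs in each 2-cell of $X$ without introducing new tracks, and showing that such a simplification is possible precisely because edge stabilizers are small. In the presence of an edge group containing $F_2$, one can in fact construct patterns with arbitrarily many tracks (this is the classical obstruction to accessibility over all subgroups), so the smallness hypothesis is essential and must be used at exactly this step. Rather than redoing the full track calculus, I would quote the main technical lemma of \cite{BF91} to the effect that a minimal pattern in a finite 2-complex encoding a splitting with small edge stabilizers has complexity bounded by the number of 2-cells and the maximal length of their boundary words, and then the conclusion follows.
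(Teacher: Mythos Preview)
The paper does not give its own proof of this proposition: it is stated with attribution to \cite{BF91} and used as a black box in the surrounding arguments. Your sketch outlines precisely the standard Bestvina--Feighn track argument from that reference (equivariant map $\widetilde X\to T$, tracks as preimages of edge midpoints, the smallness hypothesis to control parallel families, reducedness to pass from an edge bound to a vertex bound), so there is nothing to compare against within this paper, and your approach matches the cited source.
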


\subsection{Universally elliptic splittings and JSJ decomposition}

\begin{lem}[Standard refinements]\label{lem:standard-refinement}
    Let $G$ be a group acting on two trees $T,T'$. Suppose that every edge stabilizer of $T$ is elliptic in $T'$. Then there is a refinement $q:\oc{T}\rar T$ such that $\oc{T}$ dominates $T'$. Moreover, it can be chosen in such a way that every edge stabilizer of $\oc{T}$ is contained in an edge stabilizer of either $T$ or $T'$.
\end{lem}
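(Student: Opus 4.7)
The plan is to exploit the blowup correspondence recalled just before the lemma: constructing a refinement of $T$ amounts to providing, for each vertex $v$ of the quotient graph of groups $\cG$, a splitting of $G_v$ relative to its peripheral structure $\{\psi_e(G_e) : \tau(e)=v\}$. Such a splitting is immediately furnished by the restriction of the $G$-action on $T'$ to $G_v$: by hypothesis every edge stabilizer of $T$ is elliptic in $T'$, so each peripheral subgroup $\psi_e(G_e)$ fixes some vertex in $T'$, which is precisely what is required for the restricted $G_v$-action on $T'$ to be a splitting relative to the peripheral structure.

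Assembling these splittings vertex by vertex gives a graph of groups $\widehat{\cG}$ whose underlying graph is obtained from $T/G$ by replacing each vertex $v$ by the quotient graph $T'/G_v$ (with its inherited graph-of-groups structure coming from the restricted action), and then attaching each edge $e$ of $T/G$ incident to $v$ at the image in $T'/G_v$ of a chosen $G_e$-fixed vertex $p_e\in V(T')$. The corresponding $G$-tree $\widehat{T}$ admits a natural collapse $q:\widehat{T}\rar T$, namely the one that contracts all edges introduced by the blowups, and so is a refinement of $T$ by construction.

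It then remains to check the two conclusions. For domination, the vertex stabilizers of $\widehat{T}$ are exactly the vertex stabilizers of the $G_v$-actions on $T'$; any such subgroup fixes a vertex of $T'$ for the restricted action, hence also for the full $G$-action, so it is elliptic in $T'$ and $\widehat{T}$ dominates $T'$. For the edge-stabilizer statement, partition the edges of $\widehat{T}$ into two classes: those that survive $q$ correspond bijectively to edges of $T$ and retain the same stabilizers, while those contracted by $q$ come from edges of the restricted $G_v$-actions and therefore have stabilizers that are $G_v$-edge-stabilizers in $T'$, hence contained in edge stabilizers of $T'$.

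The only real technicality is carrying out the blowup $G$-equivariantly. Concretely, one picks a single $G_e$-fixed vertex $p_e\in V(T')$ for each $G$-orbit of edges of $T$ and propagates this choice by the $G$-action; this is consistent because, under the running convention that all actions are without inversions, the setwise $G$-stabilizer of $e$ coincides with $G_e$, which fixes $p_e$ by construction. Once these choices are fixed, the graph-of-groups description above produces the required $\widehat{T}$, and the verification of the two conclusions reduces to the observations in the previous paragraph.
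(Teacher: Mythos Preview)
Your argument is correct and is precisely the standard construction of the refinement (often called the ``blowup'' or ``tree of trees'' construction). The paper does not give its own proof of this lemma: it simply cites Proposition~2.2 of \cite{GL17}, where the same construction is carried out. So there is nothing to compare against beyond noting that your sketch matches the referenced source.
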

\begin{proof}
    This is Proposition 2.2 in \cite{GL17}.
\end{proof}

Let $\cA$ be a family of subgroups of $G$ which is stable under taking subgroups. An \textbf{$\cA$-splitting} is an action of $G$ on a tree $T$ such that all the edge stabilizers belong to $\cA$. Let $T$ be an $\cA$-splitting: we say that an edge stabilizer $G_e$, for $e\in E(T)$, is \textbf{$\cA$-universally elliptic} if it is elliptic in every $\cA$-splitting; we say that $T$ is \textbf{$\cA$-universally elliptic} if all its edge stabilizers are $\cA$-universally elliptic. Roughly speaking, this means that the splitting $T$ is ``canonical'': it is compatible with every other splitting $T'$, in the sense that we can always find a refinement $\oc{T}$ over $\cA$ that dominates $T'$ (by Lemma \ref{lem:standard-refinement}).

\begin{defn}[JSJ decomposition]
    A \textbf{JSJ decomposition} for $G$ over $\cA$ is an $\cA$-universally elliptic tree $T$ which is maximal by domination {\rm (}i.e. if $T'$ is $\cA$-universally elliptic and $T'$ dominates $T$, then $T$ dominates $T'${\rm)}.
\end{defn}

\begin{thm}[Existence of JSJ]\label{thm:JSJ-existence}
    Let $G$ be a finitely presented group. Let $\cA$ be a family of subgroups of $G$ which is stable under taking subgroups. Then $G$ has a JSJ decomposition over $\cA$. 
\end{thm}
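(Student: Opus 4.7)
The plan is to produce a JSJ decomposition as a maximum of the poset of $\cA$-universally elliptic splittings ordered by domination, via an iterative enlargement argument whose termination is enforced by Dunwoody's accessibility.

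I would start from the trivial splitting $T_0$ consisting of a single vertex with stabilizer $G$, which is vacuously $\cA$-universally elliptic. Inductively, suppose $T_k$ is $\cA$-universally elliptic; if $T_k$ is maximal under domination among $\cA$-universally elliptic splittings, then $T_k$ itself is the desired JSJ decomposition. Otherwise, choose an $\cA$-universally elliptic splitting $T_k'$ that is not dominated by $T_k$. Since $T_k$ is $\cA$-universally elliptic, every edge stabilizer of $T_k$ is elliptic in the $\cA$-splitting $T_k'$, so Lemma \ref{lem:standard-refinement} produces a refinement $T_{k+1} \to T_k$ such that $T_{k+1}$ dominates $T_k'$ and every edge stabilizer of $T_{k+1}$ is contained in an edge stabilizer of $T_k$ or of $T_k'$. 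The closure of $\cA$ under taking subgroups ensures that $T_{k+1}$ is an $\cA$-splitting, and the observation that a subgroup of an $\cA$-universally elliptic subgroup is again $\cA$-universally elliptic ensures that $T_{k+1}$ itself is $\cA$-universally elliptic. By construction $T_{k+1}$ strictly dominates $T_k$.

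The main obstacle is showing that this process cannot continue indefinitely. Assuming for contradiction an infinite sequence of refinements $T_0 \leftarrow T_1 \leftarrow T_2 \leftarrow \dots$, each step strictly dominating, Dunwoody's accessibility (Proposition \ref{prop:Dunwoody-accessibility}), applicable since $G$ is finitely presented, furnishes a tree $S$ with finitely generated vertex and edge stabilizers dominating every $T_k$. The delicate point is that $S$ is only known to \emph{dominate} the $T_k$, rather than to be \emph{comparable} to them in the refinement order. To bridge this, one applies Lemma \ref{lem:standard-refinement} once more to upgrade $S$ to a common refinement $\hat S$ of every $T_k$, whose edge stabilizers lie in $\cA$; this bounds the number of orbits of edges of each $T_k$ uniformly in $k$ by the finite number of edge orbits of $\hat S / G$. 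Since a strict refinement either increases the number of edge orbits or induces a new non-trivial splitting on a vertex group, and since the induced splittings on vertex groups are again $\cA$-universally elliptic relative to the incident edge groups, a descending induction on the complexity of vertex stabilizers closes the argument. The terminal tree of the (now forced to be finite) sequence is thus maximal among $\cA$-universally elliptic splittings, i.e.\ a JSJ decomposition.
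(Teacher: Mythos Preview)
The paper does not prove this theorem; it simply cites Theorem~2.16 of \cite{GL17}. Your outline follows the strategy used there: build a chain $T_0\leftarrow T_1\leftarrow\cdots$ of $\cA$-universally elliptic trees by iterated standard refinements (Lemma~\ref{lem:standard-refinement}), then invoke accessibility to terminate. The inductive construction of the $T_k$ and the verification that each $T_{k+1}$ is again an $\cA$-universally elliptic $\cA$-tree are correct.

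The gap is in your termination paragraph. You write that one ``applies Lemma~\ref{lem:standard-refinement} once more to upgrade $S$ to a common refinement $\hat S$ of every $T_k$'', but that lemma takes \emph{two} trees and outputs a refinement of the first dominating the second; it does not manufacture a single tree with finite quotient that collapses onto every member of an infinite sequence. Indeed, if $|E(T_k/G)|$ were unbounded then no tree with finite quotient could refine all the $T_k$, so the existence of such an $\hat S$ is exactly the point in dispute. Concretely: applying the lemma with $T=S$ and $T'=T_k$ gives a refinement $\hat S_k$ of $S$ dominating $T_k$, but $S$ already dominated $T_k$ and $\hat S_k$ depends on $k$; applying it with the roles reversed would require the edge groups of $T_k$ to be elliptic in $S$, which you do not know since $S$ from Proposition~\ref{prop:Dunwoody-accessibility} need not be an $\cA$-tree. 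The final appeal to a ``descending induction on the complexity of vertex stabilizers'' is not an argument. You have correctly identified the delicate point---passing from domination to a uniform combinatorial bound---but not resolved it; for the actual mechanism one really does need to consult \cite{GL17}.
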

\begin{proof}
    This is Theorem 2.16 in \cite{GL17}. 
\end{proof}

We point out that the proof of Theorem \ref{thm:JSJ-existence} is based on Dunwoody's accessibility (see \cite{Dun85}). Theorem \ref{thm:JSJ-existence} becomes false if we only require $G$ to be finitely generated (see \cite{Dun93}). It is immediate from the definition that, if a JSJ decomposition exists, then the set of all JSJ decompositions is a deformation space.

\begin{defn}[Rigid and flexible vertices]
    Let $J$ be a JSJ decomposition of $G$ over $\cA$. We say that a vertex $v\in V(J)$ is \textbf{rigid} if its stabilizer $G_v$ is $\cA$-universally elliptic, and \textbf{flexible} otherwise.
\end{defn}

If $J$ is a JSJ decomposition of $G$ over $\cA$, then a vertex $v\in V(J)$ is flexible if and only if its stabilizer $G_v$ has a non-trivial $\cA$-splitting relative to its induced peripheral structure. In that case, every non-trivial such splitting has a non-$\cA$-universally elliptic edge stabilizer.

\subsection{JSJ decomposition and maximal splittings}

In Proposition \ref{prop:algorithmic-JSJ}, we are going to algorithmically compute a JSJ decomposition for groups in a given family; in order to do this, we show that we can start with our favourite splitting, then refine it to a maximal splitting, and finally collapse some of its edges (to be precise: the ones which are not universally elliptic).

Let $G$ be a finitely presented group. Let $\cA$ be a family of subgroups of $G$ which is stable under taking subgroups. Denote with $\cA'=\{A'\sgr G : A'$ is contained with infinite index in some $A\in\cA\}$.

\begin{prop}[No proper refinement]\label{prop:get-maximal}
    Consider the following operations on an $\cA$-splitting $T$:
    \begin{enumerate}
        \item\label{itm:contract} If $T$ is not reduced, perform a contraction {\rm(}see {\rm Figure \ref{fig:contraction-expansion})}.
        \item\label{itm:refine} Substitute $T$ with a proper refinement $\oc{T}$ over $\cA$ which has exactly one orbit of edges more than $T$.
    \end{enumerate}
    Suppose that $\cA$ does not contain any non-abelian free group. Suppose that $G$ has no non-trivial $\cA'$-splitting. Then, for every $\cA$-splitting $T$, every sequence of moves {\rm\ref{itm:contract}} and {\rm\ref{itm:refine}} terminates, and the result is a reduced $\cA$-splitting $S$ which has no proper refinement over $\cA$.
\end{prop}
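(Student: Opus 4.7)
The statement has two parts: every terminal tree is reduced with no proper $\cA$-refinement, and the process terminates. I would address them in order, as the first is essentially formal while the second is the core of the argument.

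For the terminal properties: if the process halts at some $S$, then move \ref{itm:contract} is inapplicable, so $S$ is reduced; and move \ref{itm:refine} is inapplicable, so there is no proper single-edge-orbit $\cA$-refinement of $S$. To upgrade this to ``no proper refinement at all'', given any proper refinement $\oc{T}\rar S$ with $k$ additional edge orbits, I would decompose it into a chain of single-edge-orbit expansions $S = S^{(0)} \leftarrow S^{(1)} \leftarrow \dots \leftarrow S^{(k)} = \oc{T}$, then select the smallest $j$ for which $S^{(j)}$ leaves the deformation space $\fD(S)$. This produces a proper single-edge-orbit refinement of $S^{(j-1)}$, which transports back to a proper single-edge-orbit refinement of $S$ via Theorem \ref{thm:Forester} (as $S^{(j-1)}$ and $S$ lie in the same deformation space), contradicting inapplicability of move \ref{itm:refine}.

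For termination I would argue by contradiction: suppose some sequence of moves is infinite. Since move \ref{itm:contract} strictly decreases the number of orbits of edges, move \ref{itm:refine} is applied infinitely often. Let $W_0, W_1, W_2, \dots$ denote the subsequence of trees just before successive applications of move \ref{itm:refine}. Contractions preserve the deformation space, while each application of move \ref{itm:refine} is by definition a proper refinement, so we obtain a strictly ascending chain $\fD(W_0) \prec \fD(W_1) \prec \fD(W_2) \prec \dots$ of deformation spaces of $\cA$-splittings. My plan is then to extract from this chain a non-trivial $\cA'$-splitting of $G$, contradicting the hypothesis. I would build inductively a chain of refinements $U_0 \leftarrow U_1 \leftarrow U_2 \leftarrow \dots$ of $\cA$-splittings with $U_i \in \fD(W_i)$, using Lemma \ref{lem:standard-refinement} at each step together with the proper refinements $\oc{W_i}$ provided by move \ref{itm:refine}. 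Dunwoody's accessibility (Proposition \ref{prop:Dunwoody-accessibility}) then yields a tree $S^{\infty}$ with finitely generated edge stabilizers dominating every $U_i$, whose edge stabilizers all lie in $\cA$ since $\cA$ is closed under subgroups. Bestvina-Feighn (Proposition \ref{prop:BestvinaFeighn-accessibility}) applied to reduced representatives of the $\fD(W_i)$ gives a uniform bound on the number of orbits of edges, so only finitely many of the successive inclusions of edge stabilizers along the refinement chain can be of finite index; infinitely many must therefore be of infinite index, and collapsing $S^{\infty}$ accordingly yields the desired non-trivial $\cA'$-splitting.

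The main technical obstacle is the inductive construction of the refinement chain $U_i$: Lemma \ref{lem:standard-refinement} requires the edge stabilizers of $U_i$ to be elliptic in the next tree we wish to dominate, which is not automatic across a jump in deformation spaces. I expect this to require modifying $U_i$ within $\fD(W_i)$ by expansions and contractions (via Theorem \ref{thm:Forester}) before applying the lemma, with careful bookkeeping to ensure that all edge stabilizers introduced remain in $\cA$ throughout. A secondary challenge is the final extraction of the $\cA'$-splitting, which requires identifying the precise orbits of edges of $S^{\infty}$ whose stabilizers sit with infinite index inside edge stabilizers of some $U_i$.
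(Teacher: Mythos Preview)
Your treatment of the terminal properties is correct in outcome, but the invocation of Theorem~\ref{thm:Forester} is both unnecessary and not obviously valid as stated. You already have that $S^{(j-1)}$ is a (non-proper) refinement of $S$, since it sits in the chain $S^{(j-1)}\to\dots\to S^{(0)}=S$. So just collapse in $S^{(j)}$ the $j-1$ ``old'' edge orbits: the result is a one-orbit $\cA$-refinement of $S$, and a short diagram chase with the two commuting collapses $S^{(j)}\to S^{(j-1)}$ and $S^{(j)}\to S^{*}\to S$ shows it is proper. No deformation-space transport is needed.

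For termination, your route diverges from the paper's, and the obstacle you flag is exactly the one the paper's argument is designed to avoid. The paper does not attempt to build the refinement chain $U_i$ abstractly via Lemma~\ref{lem:standard-refinement}; that lemma would require the edge stabilizers of $U_i$ to be elliptic in $W_{i+1}$, and since refining \emph{shrinks} the collection of elliptic subgroups, there is no reason this holds. Instead the paper uses a reordering trick: whenever a contraction is immediately followed by a refinement, one may perform the refinement first. This is precisely where the hypothesis ``$G$ has no non-trivial $\cA'$-splitting'' enters (together with Lemma~\ref{lem:elliptic-finite-index}), and it lets one push all contractions out of the sequence. After also observing that HNN-type refinements are bounded by the rank of $G$, one is left with an infinite sequence of non-trivial amalgam refinements; since these commute, they can be rearranged into a single nested chain $v_0\mapsto(v_1,w_1)\mapsto(v_2,w_2)\mapsto\dots$ at one vertex. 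This nested chain \emph{is} your desired refinement chain, obtained for free.

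From that concrete chain the extraction of the $\cA'$-splitting is also more direct than your sketch suggests: one does not argue abstractly about indices of inclusions, but rather reads off an explicit infinite strictly descending sequence of edge stabilizers $G_{e_{\ell_1}}>G_{e_{\ell_2}}>\dots$ along the chain (using Bestvina--Feighn to rule out the alternative that the chain stays bounded). Dunwoody accessibility then gives a single tree dominating all stages, and any edge of it whose stabilizer contains infinitely many terms of the descending sequence furnishes the non-trivial $\cA'$-splitting.
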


\begin{proof}
    Suppose that the inital graph of groups is reduced. Suppose by contradiction that there is an infinite sequence of moves \ref{itm:contract} and \ref{itm:refine}.

    The rank of the quotient graph $T/G$ is bounded by the number of generators for $G$; in particular, moves of type \ref{itm:refine} which refine a vertex group by an HNN extension can only happen boundedly many times along the process. Without loss of generality, we can assume that they never occur. Suppose that at some point we perform a move of type \ref{itm:contract} that collapses an edge followed by a move of type \ref{itm:refine} that splits a vertex as an amalgam. Then we can choose to perform the move of type \ref{itm:refine} first: this is granted by the fact that $G$ has no non-trivial splitting over $\cA'$, and by Lemma \ref{lem:elliptic-finite-index}. Thus, without loss of generality, we can assume that each move is of type \ref{itm:refine}, splitting some vertex as a non-trivial amalgam.
    
    Since two moves of type \ref{itm:refine} commute, we can assume that the sequence of moves is of the following form: the first move splits a vertex $v_0$ into two vertices $v_1,w_1$; the second move splits the vertex $v_1$ into two vertices $v_2,w_2$; and so on, the $i$-th move splitting the vertex $v_{i-1}$ into two vertices $v_i,w_i$. Let us also call $e_i$ the edge joining $v_i$ and $w_i$.

    Consider the distance of $v_n$ from $w_1$ using only the edges $e_1,\dots,e_n$. If this distance is bounded for $n\rar\infty$, then we can obtain reduced splittings for $G$ with arbitrarily many vertices, which contradicts Proposition \ref{prop:BestvinaFeighn-accessibility}. If this distance goes to infinity as $n\rar\infty$, then we find a subsequence $\ell_1<\ell_2<\ell_3<\dots$ such that, if we go far enough in the sequence of moves, then $e_{\ell_i}$ connects $w_{\ell_i}$ and $w_{\ell_{i+1}}$. Since every move gives a non-trivial amalgam, we must have $G_{e_{\ell_i}}\not=G_{w_{\ell_i}}$. Therefore we must have that $G_{e_{\ell_i}}=G_{w_{\ell_{i+1}}}$ for all but finitely many $i$, as otherwise, we get a contradiction by Proposition \ref{prop:BestvinaFeighn-accessibility}. This way, we obtain an infinite descending chain of edge stabilizers $\dots>G_{e_{\ell_i}}>G_{e_{\ell_{i+1}}}>\dots$. Finally, by Proposition \ref{prop:Dunwoody-accessibility} we find a (minimal) splitting $S$ that dominates all the splittings of the sequence, and in particular, some edge stabilizer of $S$ must contain infinitely many subgroups of the sequence $G_{e_{\ell_i}}$; such edge gives a non-trivial splitting of $G$ over a subgroup in $\cA'$, contradicting the assumptions.
\end{proof}

\begin{prop}[JSJ splitting]\label{prop:JSJ-from-maximal}
    Suppose that $G$ has no non-trivial $\cA'$-splitting. Let $T$ be a reduced $\cA$-splitting which has no proper refinement over $\cA$. Then there is a collapse map $q:T\rar J$ such that $J$ is a JSJ decomposition for $G$ over $\cA$.
\end{prop}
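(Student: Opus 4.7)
The plan is to take $J$ to be the quotient of $T$ obtained by collapsing the $G$-invariant set of non-$\cA$-universally-elliptic edges, and to verify that the resulting collapse map $q : T \to J$ produces a JSJ decomposition. By construction each edge stabilizer of $J$ equals a UE edge stabilizer of $T$, so $J$ is an $\cA$-universally elliptic splitting over $\cA$. The real content is showing that $J$ is maximal by domination, i.e., that $J$ dominates every $\cA$-universally elliptic splitting $J'$.

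The first step is to show that $T$ itself dominates $J'$. Since $J'$ is $\cA$-universally elliptic, its edge stabilizers are elliptic in $T$, so Lemma~\ref{lem:standard-refinement} produces a refinement $\hat T$ of $T$ over $\cA$ that dominates $J'$. By hypothesis $T$ admits no proper refinement, so $\hat T$ is a non-proper refinement and therefore lies in the same deformation space as $T$; in particular $T$ dominates $\hat T$, and hence $T$ dominates $J'$. Let $f : T \to J'$ be the associated $G$-equivariant map sending vertices to vertices and edges to edge-paths.

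The key step is to promote domination from $T$ to $J$. Fix a vertex $w \in V(J)$ with stabilizer $G_w$ and set $T_w = q^{-1}(w)$; this is a connected $G_w$-invariant subtree of $T$ whose edges are, by definition of $q$, exactly non-UE edges of $T$. The restriction $f|_{T_w}$ is $G_w$-equivariant, and for any edge $e$ of $T_w$ the path $f(e) \subset J'$ is fixed pointwise by $G_e$ (since $G_e$ fixes $e$ and $f$ is equivariant). If $f(e)$ were non-degenerate, $G_e$ would sit inside an edge stabilizer of $J'$, which is UE, and hence $G_e$ itself would be UE (subgroups of UE subgroups are UE). This contradicts the fact that $e$ is a non-UE edge of $T$. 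Therefore $f$ crushes every edge of $T_w$ to a point; by connectedness of $T_w$ this forces $f(T_w) = \{p\}$ for a single point $p \in J'$, and by $G_w$-equivariance $G_w$ fixes $p$. Thus $G_w$ is elliptic in $J'$, which is exactly the statement that $J$ dominates $J'$.

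The main obstacle is this last dichotomy-based argument: the entire proof hinges on the clean split ``collapsed edges of $T$ are non-UE, surviving edges are UE'', combined with the observation that a single non-collapsed edge of $T_w$ under $f$ would trap the non-UE subgroup $G_e$ inside a UE subgroup. Care must be taken to record that ``UE'' is closed under passage to subgroups, that the restriction $f|_{T_w}$ remains $G_w$-equivariant and lands in a genuine subtree of $J'$, and that the hypothesis ``$T$ has no proper refinement'' is exactly what converts the refinement $\hat T$ produced by Lemma~\ref{lem:standard-refinement} into actual domination of $J'$ by $T$.
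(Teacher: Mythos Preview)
Your argument in the ``key step'' (promoting domination from $T$ to $J$) is clean and essentially matches the paper's final paragraph. The gap is earlier, in your first step.

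You claim: since $J'$ is $\cA$-universally elliptic, its edge stabilizers are elliptic in $T$, so Lemma~\ref{lem:standard-refinement} produces a refinement $\hat T$ of $T$ dominating $J'$. But the lemma runs the other way. It says: if edge stabilizers of the \emph{first} tree are elliptic in the \emph{second}, then there is a refinement of the \emph{first} dominating the \emph{second}. From ``edge stabilizers of $J'$ are elliptic in $T$'' you only get a refinement $\hat{J'}$ of $J'$ dominating $T$, not a refinement of $T$ dominating $J'$. To obtain the latter you would need the opposite hypothesis, namely that every edge stabilizer of $T$ is elliptic in $J'$; and this is false in general, precisely because $T$ may have non-UE edges.

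This is not a cosmetic slip: it is exactly where the hypothesis ``$G$ has no non-trivial $\cA'$-splitting'' enters, and your proof never uses it. The paper's route is to first fix a JSJ tree $I$, build (via Lemma~\ref{lem:standard-refinement}) a minimal refinement $\hat I$ of $I$ dominating $T$, and then argue that for every edge $e$ of $T$ the image edge $e'$ in $\hat I$ has $G_{e'}$ of \emph{finite} index in $G_e$ --- because infinite index would put $G_{e'}\in\cA'$, giving a non-trivial $\cA'$-splitting. Finite index plus Lemma~\ref{lem:elliptic-finite-index} then makes $G_e$ elliptic in $\hat I$, and only now can Lemma~\ref{lem:standard-refinement} be applied in the direction you wanted, producing $\hat T$ and hence $T$ dominating $I$. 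Once that is in place, your collapsing argument (or the paper's equivalent factorization argument) finishes the proof.
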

\begin{proof}
    The following argument is a straightforward generalization of \cite{Bar18}. By Theorem \ref{thm:JSJ-existence}, there is a JSJ decomposition $I$ of $G$ over $\cA$. We can assume that $I$ is non-trivial (otherwise the statement is obvious) and minimal (up to taking the minimal invariant sub-tree). We use Lemma \ref{lem:standard-refinement} to build a refinement $\oc{I}$ of $I$ that dominates $T$, and such that $\oc{I}$ is an $\cA$-splitting. Since $I$ is minimal, we can assume that $\oc{I}$ is minimal too (as the minimal invariant sub-tree of $\oc{I}$ must surject onto $I$).

    Since $T$ is minimal, the map from $\oc{I}$ to $T$ is surjective. In particular, every edge stabilizer $G_e$ for $e\in E(T)$ contains an edge stabilizer $G_{e'}$ for some $e'\in E(\oc{I})$. If $G_{e'}$ has infinite index in $G_e$, then by hypothesis $e'$ must induce the trivial splitting of $G$; but this contradicts the minimality of $\oc{I}$. Thus $G_{e'}$ has finite index in $G_e$, yielding that $G_e$ is elliptic in $\oc{I}$ (by Lemma \ref{lem:elliptic-finite-index}). By Lemma \ref{lem:standard-refinement}, there is a refinement $\oc{T}$ of $T$ which dominates $\oc{I}$, and such that $\oc{T}$ is an $\cA$-splitting. By hypothesis, the refinement $\oc{T}$ of $T$ is not a proper refinement, and thus $T$ dominates $\oc{T}$, and in particular, $T$ dominates $I$.
    
    If an edge stabilizer $G_e$, for $e\in E(T)$, is not universally elliptic, then it cannot be contained in any edge stabilizer of $I$, and thus every map $T\rar I$ must collapse $e$ to a point. Let $T\rar J$ be the map that collapses to a point all the edges except the universally elliptic ones. It follows that every map $T\rar I$ factors through a map $J\rar I$. But the edge stabilizers of $J$ are universally elliptic, and $I$ is a JSJ decomposition; thus $J$ must be a JSJ decomposition too. The statement follows.
\end{proof}

\subsection{Flexible vertex groups for cyclic JSJ}

The following Proposition \ref{prop:flexible-vertices} is taken from \cite{GL17} and gives a characterization of the flexible groups for the cyclic JSJ decomposition of a finitely presented torsion-free group.

\begin{prop}[Flexible vertices in cyclic JSJ, see \cite{GL17}]\label{prop:flexible-vertices}
    Let $G$ be a finitely presented torsion-free group, and let $\cA$ be the family given by the trivial subgroup and the infinite cyclic subgroups of $G$. Let $J$ be a JSJ decomposition for $G$ over $\cA$, and let $v\in V(J)$ be a flexible vertex. Then one of the following holds:
    \begin{enumerate}
        \item $G=G_v$ is isomorphic to the fundamental group of a {\rm(}possibly non-orientable{\rm)} closed surface.
        \item There is a compact connected {\rm(}possibly non-orientable{\rm)} surface with non-empty boundary $(\Sigma,\partial\Sigma)$, containing at least one essential simple closed curve {\rm(}i.e. not a sphere, disk, annulus, pair of pants{\rm)}, and an isomorphism $\pi_1(\Sigma)\rar G_v$. Every subgroup in the induced peripheral structure at $G_v$ is contained in $\pi_1(\gamma)$ for some $\gamma\in\partial\Sigma$ {\rm(}up to conjugation{\rm)}. For every $\gamma\in\partial\Sigma$ there is a subgroup in the induced peripheral structure that has finite index in $\pi_1(\gamma)$ {\rm(}up to conjugation{\rm)}.
    \end{enumerate}
\end{prop}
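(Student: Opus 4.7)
The plan is to follow the standard route through actions on $\bbR$-trees and the Rips--Sela structure theorem. First I would unpack what flexibility means: by definition, $G_v$ is not $\cA$-universally elliptic, so there is a cyclic splitting $S$ of $G$ in which $G_v$ acts non-elliptically. Applying \Cref{lem:standard-refinement} to $J$ and $S$ yields a refinement of $J$ whose restriction to $v$ is a non-trivial cyclic splitting $\Lambda$ of $G_v$ relative to its induced peripheral structure $\cH_v$. Maximality of $J$ by domination among universally elliptic splittings then forces the following elliptic--hyperbolic dichotomy: for any cyclic $C \le G_v$ appearing as an edge stabilizer in any such $\Lambda$, there exists another relative cyclic splitting of $G_v$ in which $C$ acts hyperbolically. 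Indeed, otherwise $C$ would be $\cA$-universally elliptic and refining $J$ at $v$ using $\Lambda$ would produce a proper universally elliptic refinement, contradicting the JSJ maximality.

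Next I would use this dichotomy to construct a rich family of relative cyclic splittings of $G_v$ and pass to a Bestvina--Paulin style limit, obtaining a non-trivial minimal action of $G_v$ on an $\bbR$-tree $T_v$ with cyclic arc stabilizers, in which every subgroup of $\cH_v$ is elliptic. The main obstacle, and the heart of the argument, is to upgrade this to an action with \emph{trivial} arc stabilizers and stable in the sense of Rips: any non-trivial cyclic arc stabilizer would provide a relative cyclic splitting of $G_v$ that is universally elliptic in $G$, directly contradicting the dichotomy of the previous paragraph. Stability is a consequence of accessibility (\Cref{prop:Dunwoody-accessibility} and \Cref{prop:BestvinaFeighn-accessibility}) together with the torsion-free hypothesis, since descending chains of cyclic arc stabilizers must terminate.

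Now the relative Rips--Sela structure theorem (in Guirardel's version, see \cite{GL17}) decomposes $T_v$ as a graph of actions whose pieces are of axial, Levitt, or surface (Seifert) type. Because $G_v$ is torsion-free, finitely presented, and the arc stabilizers are trivial while the edge stabilizers of the decomposition are cyclic, axial pieces are excluded (they would produce a non-cyclic abelian edge group) and Levitt (exotic) pieces are excluded (their vertex groups are not finitely presented relative to $\cH_v$). Thus only surface-type pieces occur, and $G_v$ is identified with $\pi_1(\Sigma)$ for a compact surface $\Sigma$ whose peripheral subgroups are the fundamental groups of the boundary circles. The conclusion then splits into cases: if $v$ has no incident edges in $J$, then $\cH_v = \emptyset$ and minimality of $J$ forces $G = G_v$ with $\Sigma$ closed, giving~(1); otherwise $\cH_v \neq \emptyset$, the boundary is non-empty, and each peripheral subgroup sits inside some $\pi_1(\gamma)$ with $\gamma \in \partial \Sigma$. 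Finally, essentiality (no sphere/disk/annulus/pair of pants) and the fact that every boundary circle $\gamma$ must be represented with finite index in the peripheral structure both follow from the JSJ maximality: a missed or infinite-index boundary circle would allow a further non-trivial relative cyclic splitting of $G_v$ to be attached to $J$, contradicting that $J$ is already maximal.
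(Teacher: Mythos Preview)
The paper does not actually prove this proposition: it cites Theorems~6.2 and~6.6 of \cite{GL17} as a black box, and the only additional content is a two-line check that the slender flexible groups arising there ($\bbZ$, $D_\infty$, $\bbZ^2$, $K$, $\bbZ\times D_\infty$) either have torsion or are already closed surface groups. Your proposal, by contrast, is a sketch of the Rips--Sela $\bbR$-tree approach that underlies (one version of) those theorems. That is a legitimate strategy, but the sketch has genuine gaps.

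The most concrete error is the exclusion of Levitt (thin) components. With trivial arc stabilizers, a Levitt piece has a \emph{free} vertex group, which is certainly finitely presented; ``not finitely presented relative to $\cH_v$'' is simply false as a reason. The correct exclusion is that a Levitt piece produces a non-trivial free splitting of $G_v$ relative to $\cH_v$, whose (trivial) edge group is automatically $\cA$-universally elliptic, contradicting the JSJ maximality of $J$. Similarly, your exclusion of axial pieces is off: with trivial arc stabilizers an axial piece gives a free abelian group of rank $\ge 2$, and $\bbZ^2$ \emph{is} a closed surface group, so it should land in case~(1), not be excluded. This is exactly the slender case that the paper handles separately and that your argument never isolates.

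There are also two softer gaps. First, the Bestvina--Paulin limit: you need to specify what sequence of simplicial trees you are rescaling and why translation lengths diverge; ``a rich family of relative cyclic splittings'' is not by itself a convergent sequence. Second, the claim that a non-trivial arc stabilizer in the limit $\bbR$-tree ``would provide a relative cyclic splitting that is universally elliptic'' skips a real step: an arc stabilizer is just a subgroup elliptic in every tree of your chosen sequence, not in every cyclic splitting of $G$, so you still have to promote that to universal ellipticity before invoking the dichotomy.
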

\begin{proof}
    This follows from Theorems 6.2 and 6.6 in \cite{GL17}, see also \cite{RS97}. We denote by $D_\infty=\pres{a,b}{a^2=b^2=1}=\pres{c,t}{tc=\ol{c}t,t^2=1}$ the infinite dihedral group, and by $K=\pres{a,b}{a^2=b^2}=\pres{c,t}{tc=\ol{c}t}$ the Klein bottle group. If a slender group has a non-trivial splitting over $1$, then it must be isomorphic to either $\bbZ$ or $D_\infty$. If a slender group has a non-trivial splitting over $\bbZ$, then it has to be isomorphic to either $\bbZ^2,K$ or $\bbZ\times D_\infty$. Notice that $D_\infty$ has torsion, so it cannot appear under our hypothesis.
\end{proof}

\begin{lem}[Theorem III.2.6 in \cite{MS84}, see also Proposition 5.4 in \cite{GL17}] 
	\label{lem:splitting-surface}
Let $\Sigma$ be a compact hyperbolic surface. Assume that $\pi_1(\Sigma)$ acts on a tree $T$ non-trivially, without inversions, minimally, with cyclic edge stabilizers, and with all boundary subgroups elliptic.

Then $T$ is equivariantly isomorphic to the Bass-Serre tree of the splitting dual to a family of disjoint essential simple closed geodesics of $\Sigma$.

In particular, $\pi_1(\Sigma)$ has a non-trivial splitting relative to the boundary subgroups if and only if $\Sigma$ contains an essential simple closed geodesic.
\end{lem}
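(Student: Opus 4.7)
The plan is to extract from the action on $T$ a family of disjoint essential simple closed geodesics in $\Sigma$ whose dual Bass-Serre tree recovers $T$; this is a classical ``transverse map'' or ``track'' construction in the style of Morgan--Shalen.

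First, fix an auxiliary triangulation $K$ of $\Sigma$ and lift it equivariantly to a triangulation of the universal cover $\ot\Sigma$. Build a $\pi_1(\Sigma)$-equivariant map $\ot f:\ot\Sigma\rar T$ as follows: on $0$-simplices, send each vertex $\ot v$ to a point of $T$ fixed by the stabilizer of $\ot v$ whenever such a fixed point exists (in particular for vertices on $\partial\ot\Sigma$, since boundary subgroups are elliptic by hypothesis), choosing equivariantly otherwise; extend to $1$-simplices by mapping each edge to the geodesic segment in $T$ between the images of its endpoints; extend to $2$-simplices so that $\ot f$ is transverse to the midpoints of edges of $T$. Since boundary vertices map to vertices of $T$, the preimage $\ot\Lambda\susbet\ot\Sigma$ of the midpoints of $E(T)$ is a properly embedded $1$-submanifold that avoids $\partial\ot\Sigma$, and descends to a compact $1$-submanifold $\Lambda\susbet\Sigma$ in the interior of $\Sigma$. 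Each component of $\ot\Lambda$ separates $\ot\Sigma$ into two half-spaces which $\ot f$ sends to the two sides of a single edge of $T$, so the stabilizer of that component is contained in the corresponding edge stabilizer of $T$, hence is trivial or infinite cyclic.

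Next, simplify $\Lambda$: discard components that bound disks in $\Sigma$ (they can be removed equivariantly and correspond to edges collapsible without losing the domination), and apply the standard bigon / innermost-disk argument to make the remaining curves disjoint and embedded, then tighten each to its geodesic representative. This yields a disjoint family $\cC\susbet\Sigma$ of essential simple closed geodesics. Let $T'$ be the Bass-Serre tree dual to $\cC$. The map $\ot f$ descends to a $\pi_1(\Sigma)$-equivariant simplicial map $T'\rar T$ sending each complementary-region vertex of $T'$ to the (constant) image of the corresponding region and each curve-edge to the corresponding edge of $T$. Since both actions are minimal and without inversions, and edge stabilizers match up by construction, a domination/minimality argument forces this map to be an equivariant isomorphism. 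The ``in particular'' statement is then immediate: any non-trivial cyclic splitting of $\pi_1(\Sigma)$ relative to the boundary yields, via the above, an essential simple closed geodesic in $\Sigma$; conversely any essential simple closed geodesic gives a non-trivial cyclic splitting of $\pi_1(\Sigma)$ by cutting along it.

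The main obstacle I anticipate is the simplification step: passing from the raw transverse preimage $\Lambda$ to a genuinely disjoint family of simple closed curves while preserving the edge-to-edge correspondence with $T$, and then verifying that the induced map $T'\rar T$ is a bijection rather than merely a domination. This requires careful bookkeeping of how intersection resolution interacts with the stabilizer structure, leveraging both the cyclic constraint on edge stabilizers inherited from $T$ and the fact that tight geodesic representatives on a hyperbolic surface are unique and realize the minimal geometric self- and mutual intersection numbers.
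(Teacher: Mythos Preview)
The paper does not supply its own proof of this lemma; it simply records the statement with the attributions to Morgan--Shalen and Guirardel--Levitt. Your sketch is precisely the classical transverse-map (track) construction from \cite{MS84}, so in spirit you are reproducing the cited argument rather than offering an alternative.

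That said, your simplification step is slightly miscast. The preimage $\Lambda$ of edge-midpoints is already an embedded disjoint $1$-submanifold by construction, so there are no bigons or self-intersections to resolve; the ``bigon/innermost-disk argument'' is not the right tool here. The genuine simplifications are (i) removing components of $\Lambda$ that bound disks in $\Sigma$ (achieved by homotoping $\ot f$ across the lifted disk), and (ii) dealing with parallel components, i.e.\ annular complementary regions. Step (ii) is where the cyclic hypothesis on edge stabilizers does its work: an annular region has cyclic $\pi_1$, so its image in $T$ is a single vertex and the two bounding curves map to the same edge, allowing a fold that identifies them. After these reductions the induced map $T'\rar T$ is simplicial and locally injective at every vertex (no further folds are possible), hence an embedding of trees; minimality of $T$ then forces surjectivity. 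You correctly flagged this last isomorphism as the delicate point, but the mechanism is fold-elimination via the annulus/cyclic-stabilizer interaction rather than intersection resolution.
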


\subsection{Algorithmic computation of the JSJ decomposition}

A \textbf{free splitting} of a group $G$ is an action of $G$ on a tree $T$ such that every edge stabilizer is trivial. A group $G$ has a non-trivial free splitting if and only if it can be decomposed as $G=G_1*G_2$ for some $G_1,G_2\not=1$. A group is called \textbf{freely indecomposable} if it has no non-trivial free splitting.

\begin{prop}[Grushko decomposition]\label{prop:Grushko}
    Let $G$ be a finitely generated group. Then we have the following:
    \begin{enumerate}
        \item There is a decomposition $G\cong G_1*\dots *G_r*F_s$ for some $r,s\ge0$, where $G_i$ are freely indecomposable not isomorphic to $1$ or $\bbZ$, and $F_s$ is a free group of rank $s$.
        \item Every such decomposition has the same $r,s$ and the same factors {\rm(}up to permutation and isomorphism{\rm)}.
    \end{enumerate}
    This is called the \textbf{Grushko decomposition} of $G$.
\end{prop}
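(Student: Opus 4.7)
The plan is to treat existence and uniqueness separately, with existence reduced to a classical rank argument and uniqueness reduced to a Bass–Serre / deformation-space analysis already developed in the preliminaries.

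For existence, I would proceed by iterated refinement, controlled by Grushko's classical rank formula $d(A*B)=d(A)+d(B)$, where $d(-)$ is the minimal number of generators. Starting from the trivial decomposition $G=G$, at each step I look for a non-trivial free splitting of one of the current non-free factors $H$: if $H=H_1*H_2$ with $H_1,H_2\not=1$, I replace $H$ by this splitting; any factor isomorphic to $\bbZ$ that appears is absorbed into the free part. The sum over all current non-trivial non-cyclic factors of $d(G_i)$ plus the rank of the accumulated free part is always equal to $d(G)$, so the procedure terminates after at most $d(G)$ steps. The outcome is a decomposition $G\cong G_1*\dots*G_r*F_s$ in which each $G_i$ is, by construction, freely indecomposable and not isomorphic to $1$ or $\bbZ$.

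For uniqueness, suppose $G\cong G_1*\dots*G_r*F_s \cong H_1*\dots*H_{r'}*F_{s'}$ are two such decompositions. Via \Cref{prop:Bass-Serre}, they correspond to actions of $G$ on trees $T$ and $T'$ with trivial edge stabilizers, the non-trivial vertex stabilizers being (conjugates of) the $G_i$ and the $H_j$ respectively. Since each $G_i$ is freely indecomposable, it is elliptic in every free splitting of $G$, hence in $T'$; symmetrically for the $H_j$ in $T$. Thus $T$ and $T'$ lie in the same deformation space, and by Forester's \Cref{thm:Forester} they are joined by a sequence of contractions and expansions whose edges have trivial stabilizer. Such moves can only create, destroy, or rearrange vertices with trivial stabilizer and can never alter the conjugacy class of a non-trivial vertex stabilizer. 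It follows that the multisets $\{[G_i]\}$ and $\{[H_j]\}$ agree up to conjugation and isomorphism, so $r=r'$ and $G_i\cong H_{\sigma(i)}$ for some permutation $\sigma$; abelianizing then gives $s=s'$.

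The main obstacle is the rank formula $d(A*B)=d(A)+d(B)$ underlying the existence step: this is Grushko's original theorem and genuinely requires work (Stallings-style folding, or the classical topological proof). In the setting of this paper I would invoke it as a known result rather than reprove it, since everything else is a straightforward bookkeeping exercise on top of the Bass–Serre correspondence and Forester's theorem that are already in hand.
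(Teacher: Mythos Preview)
Your argument is correct. The paper itself gives no proof at all: it simply writes ``See \cite{Gru40}'' and moves on, treating the Grushko decomposition as a background fact.

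Your approach is therefore strictly more informative than what appears in the paper. The existence argument is the standard one, and you rightly identify Grushko's rank formula as the only substantive input. For uniqueness, your route through \Cref{thm:Forester} is a pleasant alternative to the classical Kurosh-subgroup-theorem argument: since edge groups are trivial, the only edges that can be contracted are those with a trivial vertex at one end, so expansions and contractions never touch the conjugacy classes of non-trivial vertex stabilizers. This recycles machinery already set up in the preliminaries rather than importing a separate theorem, which is arguably tidier in this context. One small remark: to conclude that each freely indecomposable non-cyclic $G_i$ is elliptic in $T'$, you are implicitly using that $G_i$ is finitely generated (so that a non-trivial action with trivial edge stabilizers forces a genuine free-product decomposition); this follows from the rank formula you already invoked, but it is worth making explicit.
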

\begin{proof}
   See \cite{Gru40}.
\end{proof}

The Grushko decomposition of a finitely generated group $G$ is essentially the JSJ decomposition of $G$ over the family of subgroups $\cA=\{1\}$. To be precise, if $r,s\ge0$ and $G_1,\dots,G_r$ are freely indecomposable not isomorphic to $1$ or $\bbZ$, then a JSJ decomposition for the group $G_1*\dots *G_r*F_s$ over $\cA=\{1\}$ is given by the graph of groups in Figure \ref{fig:Grushko}.

\begin{figure}[H]
	\centering

\includegraphics[width=0.35\textwidth]{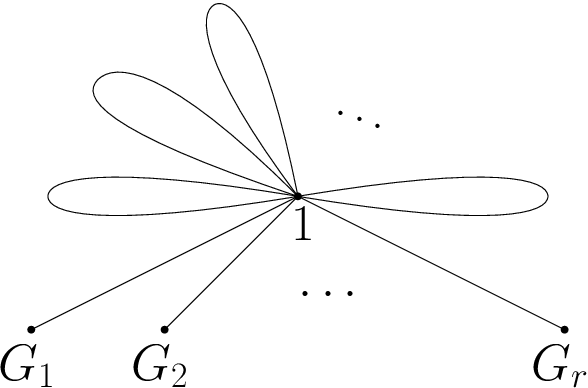}
    
	 \caption{The graph of groups associated with a Grushko decomposition. All the edge groups are trivial.}
	 \label{fig:Grushko}
\end{figure}

We are interested in algorithmically computing the Grushko decomposition of a given group; this can be done in a very general setting, see \cite{Tou18}. However, we prefer to provide a result adapted to our needs; Proposition \ref{prop:Diao-Feighn} below is one of the main theorems in \cite{DF05}.

\begin{defn}[1-algorithmic]\label{def:1algorithmic}
    A family $\fF$ of finitely presented torsion-free groups is called \textbf{$1$-algorithmic} if it satisfies the following conditions:
    \begin{enumerate}
        \item If $G\in\fF$ and $G=G_1*G_2$, then $G_1,G_2\in\fF$.
        \item\label{itm:detect-1splitting} There is an algorithm that, given $G\in\fF$ and $h_1,\dots,h_n\in G$, decides whether $G$ has a non-trivial free splitting relative to the peripheral structure $\cH=\{\gen{h_1},\dots,\gen{h_n}\}$ and, in case it does, computes such a splitting.
        \item\label{itm:detect-Z} There is an algorithm that, given $G\in\fF$, decides whether $G$ it is isomorphic to $1$ or $\bbZ$ and, in case it is, also computes an isomorphism.
    \end{enumerate}
\end{defn}

\begin{prop}[Diao-Feighn \cite{DF05}]\label{prop:Diao-Feighn}
    Let $\fF$ be a family of finitely presented torsion-free groups. Suppose that $\fF$ is $1$-algorithmic. Then we have the following:
    \begin{enumerate}
        \item If $\cG$ is a graph of groups with vertex groups in $\fF$ and infinite cyclic edge groups, then the Grushko decomposition of $\pi_1(\cG)$ is $\pi_1(\cG)=\pi_1(\cG_1)*\dots *\pi_1(\cG_r)*F_s$ for some graphs of groups $\cG_1,\dots,\cG_r$ with vertex groups in $\fF$ and infinite cyclic edge groups.
        \item There is an algorithm that, given $\cG$, computes the integers $r,s\ge0$ and the graphs of groups $\cG_1,\dots,\cG_r$.
    \end{enumerate}
\end{prop}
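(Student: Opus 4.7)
The plan is to iteratively refine $\cG$ using the algorithm from condition (\ref{itm:detect-1splitting}) to detect free splittings of the vertex groups relative to the peripheral structure induced by the incident cyclic edges. At each step, if some vertex group $G_v$ admits a non-trivial free splitting relative to its peripheral structure, I refine $\cG$ at $v$: this replaces $G_v$ by a graph of groups with trivial edge groups, and the new vertex groups remain in $\fF$ by condition (1) of being 1-algorithmic. After the refinement, $\cG$ has additional trivial edges, while all pre-existing edge groups remain infinite cyclic. One should also contract any edges that become collapsible (in the sense of \Cref{fig:contraction-expansion}), so that the graph of groups stays reduced.

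Termination of this process follows from Bestvina-Feighn accessibility (\Cref{prop:BestvinaFeighn-accessibility}): the edge groups are either trivial or infinite cyclic, hence contain no non-abelian free subgroup, so the number of vertices in any reduced splitting along the process is bounded by a constant depending only on $\pi_1(\cG)$. This is analogous to the termination argument in \Cref{prop:get-maximal}, applied with $\cA = \{1\}$. Since every step of the refinement is effective by condition (\ref{itm:detect-1splitting}), this produces an algorithm that outputs, after finitely many steps, a graph of groups $\widehat{\cG}$ whose vertex groups are in $\fF$ and are all freely indecomposable relative to their peripheral structure. Using condition (\ref{itm:detect-Z}), I then identify which vertex groups are trivial or isomorphic to $\bbZ$, and simplify $\widehat{\cG}$ accordingly (removing trivial vertex groups and folding $\bbZ$-vertex groups into adjacent cyclic edges or into the free part).

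From $\widehat{\cG}$ the Grushko decomposition is extracted combinatorially. Consider the subgraph $\widehat{\cG}^{\text{cyc}}$ consisting of the cyclic edges together with all remaining vertices; its connected components yield graphs of groups $\cG_1, \ldots, \cG_r$ with vertex groups in $\fF$ and infinite cyclic edge groups. Collapsing each $\cG_j$ to a point leaves a graph whose edges are all trivial, producing the free product decomposition
\[
    \pi_1(\cG) \;=\; \pi_1(\cG_1) \ast \cdots \ast \pi_1(\cG_r) \ast F_s,
\]
where $s$ is the rank of the quotient graph.

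The main obstacle is to verify that each factor $\pi_1(\cG_j)$ is genuinely freely indecomposable and not isomorphic to $\bbZ$. The $\bbZ$ case is handled by condition (\ref{itm:detect-Z}) (and by the simplification step above, which ensures the remaining vertex groups each have rank at least $2$). For freely indecomposability, the naive argument — restricting an hypothetical free splitting $S$ of $\pi_1(\cG_j)$ to each vertex group — is delicate because the cyclic edge stabilizers of $\cG_j$ need not act elliptically on $S$. The idea is to apply \Cref{lem:standard-refinement} together with a folding/accessibility argument à la Bestvina-Feighn: one obtains a refinement of the Bass-Serre tree of $\widehat{\cG}$ that dominates $S$ and whose new edges carry trivial stabilizers, and then argues that such a refinement must reveal a free splitting of some vertex group relative to its peripheral structure, contradicting the maximality achieved at termination. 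This is the technical core of the Diao-Feighn result; once established, the algorithmic conclusions follow immediately from the explicit nature of every step in the construction.
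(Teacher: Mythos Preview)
Your approach is essentially the same as the paper's: both reduce everything to the single hard fact that $\pi_1(\cG)$ is freely indecomposable if and only if no vertex group admits a non-trivial free splitting relative to its induced peripheral structure, and both defer to \cite{DF05} for this. The paper does so more directly, simply invoking Theorem~7.2 of \cite{DF05} (the characterization via I-, II-, III-simplifications and blow-ups) and observing that in the torsion-free cyclic-edge setting those operations can only occur when a vertex group splits freely relative to its peripherals. Your write-up instead spells out the iterative refinement procedure and the termination argument via Bestvina--Feighn, and only at the end identifies the same fact as ``the technical core of the Diao--Feighn result''.

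One point to correct: your appeal to \Cref{lem:standard-refinement} at the crucial step does not go through as stated. That lemma requires the edge stabilizers of the first tree to be elliptic in the second, but the infinite cyclic edge groups of $\cG_j$ can act hyperbolically on a free splitting $S$ --- indeed, you flag exactly this obstruction a few lines earlier. Handling this hyperbolic case is precisely what the Diao--Feighn simplification moves are for, so your sketch should invoke their Theorem~7.2 directly rather than route through \Cref{lem:standard-refinement}.
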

\begin{proof}
    By Theorem 7.2 in \cite{DF05}, we have that $\pi_1(\cG)$ has a non-trivial free splitting if and only if there is a sequence of I-, II- and III-simplifications, followed by a blow-up, such that the resulting graph of groups has an edge with trivial stabilizer. Since we are working with torsion-free groups, the simplifications and the blow-up must be over the trivial group. Since the edge groups of $\cG$ are isomorphic to $\bbZ$, a non-trivial simplification or blow-up can only happen if there is a vertex group that admits a non-trivial free splitting relative to its induced peripheral structure. This means that $\cG$ has a non-trivial free splitting if and only if there is a vertex group that admits a non-trivial free splitting relative to its induced peripheral structure. The statement follows from the assumption that $\fF$ is $1$-algorithmic.
\end{proof}

A \textbf{$\bbZ$-splitting} of a group $G$ is an action of $G$ on a tree $T$ such that every edge stabilizer is infinite cyclic. A group $G$ has a non-trivial $\bbZ$-splitting if and only if it has a non-trivial decomposition as an amalgamated free product $G=G_1*_\bbZ G_2$ or as an HNN extension $G=G_1*_\bbZ$. For the algorithmic computation of the cyclic JSJ decomposition, there is no result as general as for the Grushko decomposition (but see \cite{KM05,DG11,CM11,Cas16,Bar18,Tou18}). We will need the following result.

\begin{defn}[$\bbZ$-algorithmic]\label{def:Zalgorithmic}
    A family $\fF$ of finitely presented torsion-free groups is called \textbf{$\bbZ$-algorithmic} if it satisfies the following conditions:
    \begin{enumerate}
        \item $\fF$ is $1$-algorithmic, see {\rm Definition \ref{def:1algorithmic}}.
        \item If $G\in\fF$ and $G=G_1*_\bbZ G_2$ {\rm(}resp. $G=G_1*_\bbZ${\rm)}, then $G_1,G_2\in\fF$ {\rm(}resp. $G_1\in\fF${\rm)}.
        \item\label{itm:detect-Zsplitting} There is an algorithm that, given $G\in\fF$ and $h_1,\dots,h_n\in G$, decides whether $G$ has a non-trivial $\bbZ$-splitting relative to the peripheral structure $\cH=\{\gen{h_1},\dots,\gen{h_n}\}$ and, in case it does, computes such a splitting. 
        \item\label{itm:detect-surface} There is an algorithm that, given $G\in\fF$, decides whether $G$ is isomorphic to a free group and, in case it is, computes a basis for $G$.
        \item $\fF$ contains all finitely generated free groups.
    \end{enumerate}
\end{defn}

\begin{prop}[Algorithmic JSJ decomposition]\label{prop:algorithmic-JSJ}
    Let $\fF$ be a family of finitely presented torsion-free groups. Suppose that $\fF$ is $\bbZ$-algorithmic. Then there is an algorithm that, given a freely indecomposable graph of groups $\cG$ with vertex groups in $\fF$ and cyclic edge groups, does the following:
    \begin{enumerate}
        \item Decides whether $\pi_1(\cG)$ is isomorphic to the fundamental group of a closed surface and, in case it is, computes the surface.
        \item In case it is not, computes a graph of groups $\cG'$ with vertex groups in $\fF$ and cyclic edge groups, such that $\pi_1(\cG)\cong\pi_1(\cG')$ and $\cG'$ is a JSJ decomposition for $\pi_1(\cG')$ over the family of its cyclic subgroups.
    \end{enumerate}
\end{prop}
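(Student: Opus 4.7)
The plan is to algorithmically execute the procedure of Proposition~\ref{prop:get-maximal} to obtain a maximal reduced $\cA$-splitting, then use Proposition~\ref{prop:JSJ-from-maximal} to collapse to a JSJ, while running a parallel check for the closed surface case. Here $\cA$ denotes the family of trivial and cyclic subgroups of $G := \pi_1(\cG)$. Since $G$ is torsion-free, any subgroup of $\bbZ$ of infinite index is trivial, so $\cA' = \{1\}$; and because $\cG$ is freely indecomposable, $G$ has no non-trivial $\cA'$-splitting, verifying the hypothesis of Proposition~\ref{prop:get-maximal}.

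First, starting from $\cG$, I iterate the following: if the current graph of groups is not reduced, perform a contraction (a purely combinatorial move); otherwise, for each vertex group $G_v$, invoke item~\ref{itm:detect-Zsplitting} of Definition~\ref{def:Zalgorithmic} to decide whether $G_v$ admits a non-trivial $\bbZ$-splitting relative to its induced peripheral structure. If some vertex does, refine by inserting this splitting; the new vertex groups remain in $\fF$ by the closure assumption on $\bbZ$-algorithmic families. Proposition~\ref{prop:get-maximal} guarantees termination, yielding a reduced $\cA$-splitting $T$ admitting no proper refinement over $\cA$.

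By Proposition~\ref{prop:JSJ-from-maximal}, there is a collapse $T \rar J$ onto a JSJ, so it remains to identify which edges of $T$ are not universally elliptic. I appeal to Proposition~\ref{prop:flexible-vertices}: the flexible vertex groups of $J$ are either all of $G$ (in which case $G$ is a closed surface group) or surface groups with boundary. Since $T$ has no proper refinement and by Lemma~\ref{lem:splitting-surface}, each flexible vertex of $J$ gets replaced in $T$ by a subgraph corresponding to a decomposition of the underlying surface along a maximal family of disjoint essential simple closed curves (into pants, annuli, Möbius bands), whose vertex groups are all free. I then search, using item~\ref{itm:detect-surface}, for maximal connected subgraphs of $T$ whose vertex groups are free with peripheral structure compatible with an elementary surface piece, verify combinatorially that their collapse reassembles a surface group with the correct boundary peripherals, and collapse these subgraphs to obtain the desired $\cG'$. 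If the entire $T$ forms a single such subgraph whose reassembly is a closed surface, I instead report that $G$ is a closed surface group and output that surface.

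The main obstacle is the final identification step: given a collection of free vertex groups glued along cyclic edges, we must distinguish a genuine surface decomposition from a rigid configuration that happens to consist of free groups. This is handled combinatorially, by matching each candidate free vertex group (together with the conjugacy classes of its peripheral subgroups) against the short list of elementary surface pieces, checking that the gluing pattern across cyclic edges is compatible with a surface via Euler characteristic, orientability and boundary identification, and verifying that peripheral subgroups corresponding to edges leaving the subgraph are precisely the boundary components of the reassembled surface. Since all of these checks involve only finitely many combinatorial possibilities at each bounded step, the algorithm terminates and outputs either the closed surface or the JSJ decomposition $\cG'$.
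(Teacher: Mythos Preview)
Your proof is correct and follows essentially the same route as the paper: refine to a maximal reduced $\cA$-splitting via Proposition~\ref{prop:get-maximal}, then collapse the non-universally-elliptic edges using Proposition~\ref{prop:JSJ-from-maximal} and the surface characterization of flexible vertices in Proposition~\ref{prop:flexible-vertices}. The only notable difference is in the identification step: the paper invokes Whitehead's algorithm directly to test whether a free vertex group with its peripheral structure is a surface with boundary, and then iteratively merges two adjacent surface vertices whenever the connecting edge is the unique edge on each of the corresponding boundary components; your version instead matches against the finite list of elementary surface pieces (pants, M\"obius bands) that arise after maximal refinement and reassembles them via combinatorial checks---this is equivalent but slightly more elaborate than needed.
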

\begin{proof}
    By Proposition \ref{prop:get-maximal}, and using the fact that $\fF$ is $\bbZ$-algorithmic, we can algorithmically compute a reduced graph of groups $\cG'$, with vertex groups in $\fF$ and cyclic edge groups, such that $\cG'$ has no proper refinement over $\bbZ$. By Proposition \ref{prop:JSJ-from-maximal}, in order to obtain a JSJ, we only have to understand which edges of $\cG'$ have to be collapsed.
    
    Since $\fF$ is $\bbZ$-algorithmic, we can recognize which vertex groups are free groups, and for each of them, using Whitehead's algorithm \cite{Whi36}, we can determine whether they are surfaces with boundary given by their respective peripheral structure. Whenever we see two surfaces connected by an edge, and there is no other edge glued on those boundary components of the two surfaces, then we merge them together (as the splitting can not be canonical). By Proposition \ref{prop:flexible-vertices}, all flexible vertices vertices of a JSJ are surfaces; therefore, after this process, we obtain a JSJ decomposition for our initial graph of groups, as desired.
\end{proof}

\section{Generalized Baumslag-Solitar groups}\label{sec:GBSs}

In this section, we recall the definition of generalized Baumslag-Solitar group, which will be the main object of interest along the subsequent Sections \ref{sec:moves}, \ref{sec:controlled-linear-algebra}, \ref{sec:coarse-projection-map}, \ref{sec:sequences-of-moves}. We also present different ways of representing generalized Baumslag-Solitar groups. Firstly, as labeled graphs, which is standard in the literature. Secondly, the \textit{affine representation} (see Section \ref{sec:affine-representation}); this is equivalent to the representation as labeled graphs: it is its ``logarithmic version", which linearizes the operations on the exponents. The affine representation will be instrumental in explaining all the subsequent arguments as it makes several properties more tractable and is amenable for tools in linear algebra. Finally, we include some discussion about conjugacy classes of elliptic elements.

\begin{defn}[GBS group]
A \textbf{GBS graph of groups} is a finite graph of groups such that each vertex group and each edge group is $\bbZ$. A \textbf{Generalized Baumslag-Solitar group} is a group $G$ isomorphic to the fundamental group of some GBS graph of groups.
\end{defn}

\begin{thm}[Forester \cite{For03}]\label{thm:GBS-JSJ}
    Let $\cG$ be a GBS graph of groups and suppose that $\pi_1(\cG)\not\cong\bbZ,\bbZ^2,K$ {\rm(}the Klein bottle group{\rm)}. Then $\cG$ is a JSJ decomposition for $\pi_1(\cG)$ over the family of its cyclic subgroups.
\end{thm}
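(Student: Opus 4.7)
The plan is to verify the two defining properties of a JSJ decomposition, using the Bass–Serre tree $T$ associated with $\cG$: namely, that $T$ is $\cA$-universally elliptic (where $\cA$ denotes the family of cyclic subgroups, including the trivial one), and that $T$ is maximal by domination among such splittings.

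\textbf{Maximality by domination.} This is the easier half. Let $T''$ be any $\cA$-universally elliptic splitting of $G$. Since $T$ is (once shown to be) $\cA$-universally elliptic, every edge stabilizer of $T$ is elliptic in $T''$, so Lemma \ref{lem:standard-refinement} produces a common refinement $\oc{T} \to T$ dominating $T''$, with edge stabilizers cyclic. But each vertex group of $\cG$ is isomorphic to $\bbZ$, and $\bbZ$ has no non-trivial cyclic splitting (any cyclic subgroup is of finite index, so $\bbZ$ acts with a global fixed point on any tree where its generator is elliptic; otherwise the action is on a line, giving a free splitting). Hence the refinement of each vertex group of $T$ used to build $\oc{T}$ must be trivial, so $\oc{T} = T$, and $T$ dominates $T''$.

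\textbf{Universal ellipticity.} Suppose toward a contradiction that some edge stabilizer $G_e$ is hyperbolic in a cyclic splitting $T'$ of $G$. For $v = \tau(e)$, the inclusion $G_e \subseteq G_v$ is of finite index (both are $\bbZ$), so by Lemma \ref{lem:elliptic-finite-index}, $G_v$ is also hyperbolic in $T'$. By connectedness of $\Gamma$ and the fact that any two adjacent vertex groups in $\cG$ share an edge group of finite index in each, all vertex and edge stabilizers of $T$ lie in a single commensurability class of infinite cyclic subgroups of $G$. Since commensurable cyclic subgroups in an action on a tree share the same axis, all vertex and edge stabilizers of $T$ have a common axis $\ell \subseteq T'$.

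Now, for any $g \in G$ and any vertex $v$ of $T$, the conjugate $gG_vg^{-1} = G_{g\cdot v}$ is again commensurable with $G_v$, so its axis in $T'$ coincides with $\ell$. On the other hand, the axis of $gG_vg^{-1}$ is $g\cdot\ell$. Hence $g\cdot\ell = \ell$ for every $g \in G$, so $\ell$ is $G$-invariant. By minimality of $T'$ (passing to the minimal invariant subtree if needed), $T' = \ell$ is a line. The main obstacle is now to conclude that a torsion-free group acting cocompactly and non-trivially on a line, with cyclic edge stabilizers, must be isomorphic to $\bbZ$, $\bbZ^2$, or the Klein bottle group $K$. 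This follows by analyzing the short exact sequence $1 \to K_0 \to G \to Q \to 1$, where $K_0$ is the pointwise stabilizer of $\ell$ (cyclic, as a subgroup of any edge stabilizer) and $Q$ is a discrete cocompact subgroup of $\mathrm{Isom}(\bbR)$, hence $Q \cong \bbZ$ or $D_\infty$; torsion-freeness of $G$ then forces one of the three listed groups, contradicting our hypothesis on $\pi_1(\cG)$. Therefore every edge stabilizer of $\cG$ is $\cA$-universally elliptic, completing the proof.
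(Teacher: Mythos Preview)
Your argument is correct and essentially reproduces Forester's original proof, whereas the paper simply cites \cite{For03} and \cite[Section~3.5]{GL17} without giving details. The commensurability argument for universal ellipticity is exactly the standard one: all vertex and edge stabilizers of the Bass--Serre tree $T$ are pairwise commensurable infinite cyclic subgroups, hence share a common axis $\ell$ in any splitting $T'$ where one of them is hyperbolic; the $G$-invariance of $\ell$ then reduces $G$ to a torsion-free group acting minimally on a line with cyclic edge stabilizers, and the short exact sequence analysis correctly pins $G$ down to $\bbZ$, $\bbZ^2$, or $K$.

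Two small points. First, in the maximality half, the conclusion ``$\oc{T}=T$'' is slightly too strong: what you have shown is that each vertex group $G_v\cong\bbZ$ is elliptic in the preimage $\oc{T}_v$ (since a finite-index edge group is elliptic there, and you apply Lemma~\ref{lem:elliptic-finite-index}), so the refinement $\oc{T}\to T$ is \emph{non-proper}. This places $T$ and $\oc{T}$ in the same deformation space, which is exactly what you need to conclude that $T$ dominates $T''$. Second, the parenthetical ``$\bbZ$ has no non-trivial cyclic splitting'' is not literally true ($\bbZ$ acts freely on a line), but this does not matter: the relevant fact, which you use correctly, is that the splitting of each $G_v$ induced by the refinement must be trivial \emph{relative to its peripheral structure}, and the finite-index edge inclusions force this.
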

\begin{proof}
    See \cite{For03} (and also Section 3.5 in \cite{GL17}).
\end{proof}

\subsection{GBS graphs}

In this section, we recall the representation of generalized Baumslag-Solitar groups in terms of labelled graphs.

\begin{defn}[GBS graph]
A \textbf{GBS graph} is a pair $(\Gamma,\psi)$, where $\Gamma$ is a finite graph and $\psi:E(\Gamma)\rar\bbZ\setminus\{0\}$ is a function.
\end{defn}

Given a GBS graph of groups $\cG=(\Gamma,\{G_v\}_{v\in V(\Gamma)},\{G_e\}_{e\in E(\Gamma)},\{\psi_e\}_{e\in E(\Gamma)})$, the map $\psi_e:G_e\rar G_{\tau(e)}$ is an injective homomorphism $\psi_e:\bbZ\rar\bbZ$, and thus coincides with multiplication by a unique non-zero integer $\psi(e)\in\bbZ\setminus\{0\}$. We define the GBS graph associated to $\cG$ as $(\Gamma,\psi)$ associating to each edge $e$ the factor $\psi(e)$ characterizing the homomorphism $\psi_e$, see Figure \ref{fig:GBS-graph}. Giving a GBS graph of groups is equivalent to giving the corresponding GBS graph. In fact, the numbers on the edges are enough to reconstruct the injective homomorphisms and thus the graph of groups.

\begin{figure}[H]
\centering

\includegraphics[width=0.4\textwidth]{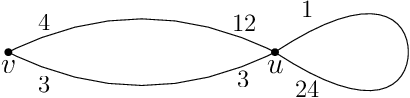}

\caption{In the figure we can see a GBS graph $(\Gamma,\psi)$ with two vertices $v,u$ and three edges $e_1,e_2,e_3$ (and their reverses). The edge $e_1$ goes from $v$ to $u$ and has $\psi(\ol{e}_1)=4$ and $\psi(e_1)=12$. The edge $e_2$ goes from $v$ to $u$ and has $\psi(\ol{e}_2)=\psi(\ol{e}_2)=3$. The edge $e_3$ goes from $u$ to $u$ and has $\psi(\ol{e}_3)=1$ and $\psi(e_3)=24$.}
\label{fig:GBS-graph}
\end{figure}

Let $\cG$ be a GBS graph of groups and let $(\Gamma,\psi)$ be the corresponding GBS graph. The universal group $\FG{\cG}$ has a presentation with generators $V(\Gamma)\cup E(\Gamma)$, the generator $v\in V(\Gamma)$ representing the element $1$ in $\bbZ=G_v$. The relations are given by $\ol{e}=e^{-1}$ and $u^{\psi(\ol{e})}e=ev^{\psi(e)}$ for every edge $e\in E(\Gamma)$ with $\iota(e)=u$ and $\tau(e)=v$.

\subsection{Affine representation of a GBS graph}\label{sec:affine-representation}

In this section, we introduce the affine representation of a generalized Baumslag-Solitar group. This representation will be essential in our study of generalized Baumslag-Solitar groups since it is amenable to methods and tools from linear algebra to both formulate and solve problems, see, for instance, Section \ref{sec:controlled-linear-algebra}.

Consider the abelian group
\begin{equation}\label{defn:group_A}
\bA:=\bbZ/2\bbZ\oplus\bigoplus\limits_{i\ge 1}\bbZ
\end{equation}
whose elements are sequences $\ba=(a_0,a_1,a_2,\dots)$ with $a_0\in\bbZ/2\bbZ$ and $a_i\in\bbZ$ for $i\ge 1$. We denote with $\mathbf{0}\in\bA$ the neutral element $\mathbf{0}=(0,0,0,\dots)$. For an element $\ba=(a_0,a_1,\dots)\in\bA$, we denote $\ba\ge\mathbf{0}$ if $a_i\ge 0$ for all $i\ge1$; notice that we are not requiring any condition on $a_0$. We define the positive cone $\pA:=\{\ba\in\bA : \ba\ge\mathbf{0}\}$.

\begin{defn}[Affine representation]
Let $(\Gamma,\psi)$ be a GBS graph. Define its \textbf{affine representation} to be the graph $\Lambda=\Lambda(\Gamma,\psi)$ given by:
\begin{enumerate}
\item $V(\Lambda)=V(\Gamma)\times\pA$ is the disjoint union of copies of $\pA$, one for each vertex of $\Gamma$.
\item $E(\Lambda)=E(\Gamma)$ is the same set of edges as $\Gamma$, and with the same reverse map.
\item For an edge $e\in E(\Lambda)=E(\Gamma)$, we write the unique prime factorization $\psi(e)=(-1)^{a_0}2^{a_1}3^{a_2}5^{a_3}\dots $ and we define the terminal vertex $\tau_\Lambda(e)=(\tau_\Gamma(e),(a_0,a_1,a_2,\dots))$, see {\rm Figure \ref{fig:aff-rep}}.
\end{enumerate}
For a vertex $v\in V(\Gamma)$ we denote $\pA_v:=\{v\}\times\pA$ the corresponding copy of $\pA$.
\end{defn}

If $\Lambda$ contains an edge going from $p$ to $q$, then we denote $p\edge q$. If $\Lambda$ contains edges from $p_i$ to $q_i$ for $i=1,\dots,m$, then we denote
$$
\begin{cases}
p_1\edge q_1\\
\dots \\
p_m\edge q_m.
\end{cases}
$$
We remark that the above notation does not mean that $p_1\edge q_1,\dots,p_m\edge q_m$ is the set of all the edges of $\Lambda$, but rather that we focus on that subset of edges. If we are focusing on a specific copy $\pA_v$ of $\pA$, for some $v\in V(\Gamma)$, and we have edges $(v,\ba_i)\edge (v,\bb_i)$ for $i=1,\dots,m$, then we say that $\pA_v$ contains edges
$$
\begin{cases}
\ba_1\edge \bb_1\\
\dots \\
\ba_m\edge \bb_m
\end{cases}
$$
omitting the vertex $v$.

\begin{remark}
Components of the abelian group $\bA$ correspond to prime numbers. Since we will work with finite sequences of GBS graphs, at each time we will only have a finite number of prime numbers involved in the factorizations of the numbers on the edges. This means that at each time we will have only a finite number of relevant components in $\bA$. We thus strongly encourage the reader to think of $\bA$ and $\pA$ as finite-dimensional (as if we had only finitely many $\bbZ$ summands). In the figures we will usually only draw the relevant components.
\end{remark}

\begin{figure}[H]
\centering

\includegraphics[width=0.85\textwidth]{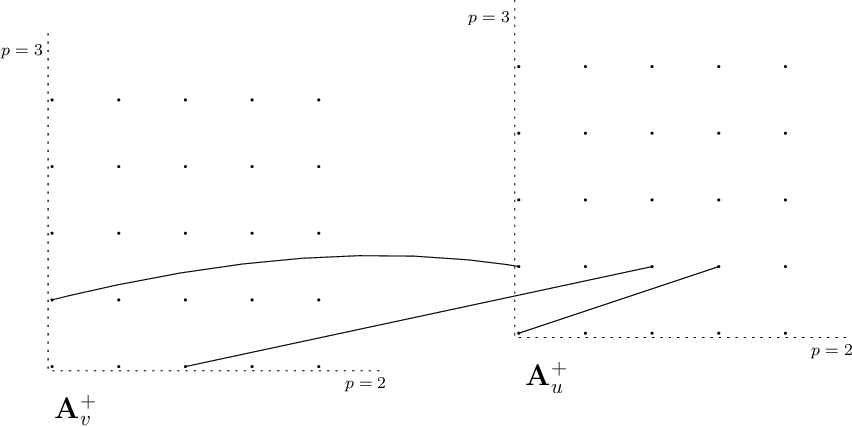}

\caption{The affine representation $\Lambda$ of the GBS graph $(\Gamma,\psi)$ of Figure \ref{fig:GBS-graph}. The set of vertices consists of two copies $\pA_v$ and $\pA_u$ of the positive affine cone $\pA$, associated with the two vertices $v$ and $u$ respectively. The edge $e_1$ going from $v$ to $u$ was labeled with $(\psi(\ol{e}_1),\psi(e_1))=(4,12)=(2^2 3^0,2^2 3^1)$, and thus now it goes from the point $(2,0)$ in $\pA_v$ to the point $(2,1)$ in $\pA_u$. Similarly for $e_2$ and $e_3$.}
\label{fig:aff-rep}
\end{figure}

\subsection{Conjugacy classes}

In this section, we describe the conjugacy problem for elliptic elements in a generalized Baumslag-Solitar group as an equivalence relation on the corresponding vertices in the affine representation. We then recall that the conjugacy problem for elliptic elements is decidable.

Let $(\Gamma,\psi)$ be a GBS graph and let $\Lambda$ be its affine representation. Given a vertex $p=(v,\ba)\in V(\Lambda)$ and an element $\bw\in\pA$, we define the vertex $p+\bw:=(v,\ba+\bw)\in V(\Lambda)$. For two vertices $p,p'\in V(\Lambda)$ we denote $p'\ge p$ if $p'=p+\bw$ for some $\bw\in\pA$; in particular this implies that both $p,p'$ belong to the same $\pA_v$ for some $v\in V(\Gamma)$. 

\begin{defn}[Affine path]\label{defn:affine path}
An \textbf{affine path} in $\Lambda$, with \textit{initial vertex} $p\in V(\Lambda)$ and \textit{terminal vertex} $p'\in V(\Lambda)$, is a sequence $(e_1,\dots,e_\ell)$ of edges $e_1,\dots,e_\ell\in E(\Lambda)$ for some $\ell\ge0$, such that there exist $\bw_1,\dots,\bw_\ell\in\pA$ satisfying the conditions $\iota(e_1)+\bw_1=p$ and $\tau(e_\ell)+\bw_\ell=p'$ and $\tau(e_i)+\bw_i=\iota(e_{i+1})+\bw_{i+1}$ for $i=1,\dots,\ell-1$. 
\end{defn}

\begin{figure}[H]
\centering

\includegraphics[width=0.85\textwidth]{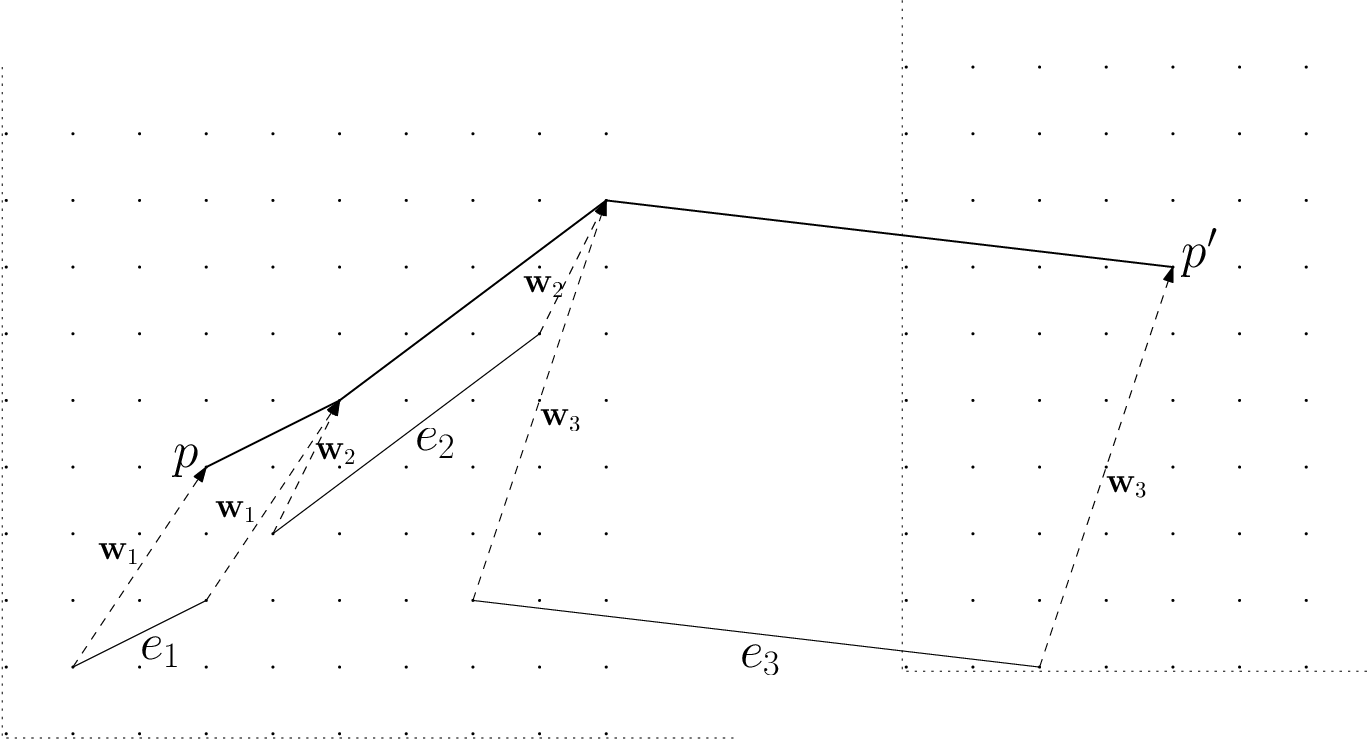}

\caption{An example of an affine path.}
\label{fig:aff-path}
\end{figure}

The elements $\bw_1,\dots,\bw_\ell$ are called \textit{translation coefficients} of the path; if they exist, then they are uniquely determined by the path and the endpoints, and they can be computed algorithmically. They represent the fact that an edge $e\in E(\Lambda)$ connecting $p$ to $q$ also allows us to travel from $p+\bw$ to $q+\bw$ for every $\bw\in\pA$. If $\cG$ is a GBS graph of groups and $(\Gamma,\psi)$ is the corresponding GBS graph, then to the vertex $p=(v,(a_0,a_1,a_2,\dots))$ of the affine representation we associate the element $v^{(-1)^{a_0}2^{a_1}3^{a_2}\dots }$ of the universal group $\FG{\cG}$. An affine path $(e_1,\dots,e_\ell)$ from $p=(v,(a_0,a_1,\dots))$ to $p'=(v',(a_0',a_1',\dots))$ reflects the identity
$$v^{(-1)^{a_0}2^{a_1}3^{a_2}\dots }e_1e_2\dots e_\ell=e_1e_2\dots e_\ell(v')^{(-1)^{a_0'}2^{a_1'}3^{a_2'}\dots }$$
in the universal group $\FG{\cG}$.

\begin{defn}[Conjugate points]\label{def:conjugacy}
Given two vertices $p,p'\in V(\Lambda)$ we say that $p$ is \textbf{conjugate} to $p'$ {\rm(}denoted $p\cnj p'${\rm)} if there is an affine path going from $p$ to $p'$, see {\rm Definition \ref{defn:affine path}}.
\end{defn}

Definition \ref{def:conjugacy} induces an equivalence relation on the set $V(\Lambda)$: we denote by $\cnjclass{p}$ the conjugacy class of $p\in V(\Lambda)$. If $\cG$ is a GBS graph of groups and $(\Gamma,\psi)$ is the corresponding GBS graph, then to the vertex $p=(v,(a_0,a_1,a_2,\dots))$ of the affine representation we associate the conjugacy class $[v^{(-1)^{a_0}2^{a_1}3^{a_2}\dots }]$ in the fundamental group $\pi_1(\cG)$: we have $p\cnj p'$ if and only if the corresponding conjugacy classes coincide. In other words, the conjugacy problem in the group is equivalent to the reachability by affine paths in the affine representation.

The conjugacy problem in generalized Baumslag-Solitar groups was proven to be decidable in \cite{L92, B15}, see also \cite{W16} for complexity bounds. In \cite{ACK-iso2}, 
we give a self-contained proof of the decidability of the conjugacy problem. For completeness, we next explain how to deduce the result using the fact that conjugacy classes are semilinear sets.

Notice that if $p\cnj p'$ then $p+\bw\cnj p'+\bw$ for all $\bw\in\pA$, that is $\cnj$ is a congruence on $\pA$ and so $\pA\slash \cnj$ is a commutative monoid. Furthermore, deciding whether two elements are conjugate is equivalent to solving the word problem in this monoid. Mal'cev \cite{Mal58} and Emelichev \cite{Eme62} showed that the word problem for finitely generated commutative monoids is decidable – even if the congruence is part of the input. In \cite[Theorem II]{ES69} Eilenberg and Schützenberger showed that every congruence on $\mathbb N^m$ is a semilinear subset of $\mathbb N^m \times \mathbb N^m$ (this follows also from the results that congruences are definable by Presburger formulas, see \cite{Tai68}, and that Presburger definable sets are semilinear, see \cite{GS66} – for definition of all these notions we refer to the respective papers). Therefore, the conjugacy class of a point can be computed algorithmically. For the record, we state these results below.

\begin{prop}[Conjugacy classes are semilinear sets]\label{compute-conjugacy-class}
Let $(\Gamma,\psi)$ be a GBS graph and let $p$ be a vertex of its affine representation $\Lambda$. Then there are vertices $p_1,\dots,p_n\in V(\Lambda)$ and elements $\bw_1,\dots,\bw_m\in\pA$ such that, for a vertex $q$ of $\Lambda$, the following are equivalent:
\begin{enumerate}
\item $q\cnj p$
\item $q=p_i+k_1\bw_1+\dots +k_m\bw_m$ for some $i\in\{1,\dots,n\}$ and for some integers $k_1,\dots,k_m\ge0$.
\end{enumerate}
Moreover there is an algorithm that, given $(\Gamma,\psi)$ and the vertex $p$, computes a set of vertices $p_1,\dots,p_n$ and elements $\bw_1,\dots,\bw_m$.
\end{prop}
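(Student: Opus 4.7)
The strategy is to observe that $\cnj$ is a finitely generated congruence on the commutative monoid of vertices of $\Lambda$, and then invoke the classical Eilenberg--Sch\"utzenberger description of such congruences as semilinear sets.

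First I would reduce to a finite-dimensional setting. Since the GBS graph $(\Gamma,\psi)$ is finite, only finitely many primes $q_1,\dots,q_N$ occur in the factorizations of the edge labels, so every affine path starting at $p$ leaves all coordinates beyond the first $N+1$ fixed. We may therefore replace $\bA$ by $\bA_N:=\bbZ/2\bbZ\oplus\bbZ^N$ and work in the submonoid $\bbZ/2\bbZ\times\bbN^N\subset\bA_N$. After stratifying $V(\Lambda)=V(\Gamma)\times\pA$ according to the finite data given by the vertex of $\Gamma$ and the value of the $\bbZ/2\bbZ$ coordinate, the ambient object becomes a disjoint union of finitely many copies of $\bbN^N$.

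Next I would verify the congruence property. Directly from \Cref{defn:affine path}, if a sequence $(e_1,\dots,e_\ell)$ with translation coefficients $\bw_1,\dots,\bw_\ell\in\pA$ realizes an affine path from $p$ to $p'$, then for any $\bw\in\pA$ the same sequence with coefficients $\bw_1+\bw,\dots,\bw_\ell+\bw$ realizes an affine path from $p+\bw$ to $p'+\bw$. Hence $\cnj$ is compatible with the translation action of $\pA$, and is generated, as a congruence, by the finitely many edge-relations of $\Lambda$. Applying the Eilenberg--Sch\"utzenberger theorem stratum by stratum (or, equivalently, observing that the predicate ``there is an affine path from $p$ to $q$'' is Presburger-definable and appealing to the Ginsburg--Spanier representation of Presburger sets as semilinear sets), the relation $\cnj$ is a semilinear subset of $V(\Lambda)\times V(\Lambda)$, and therefore so is each slice $\cnjclass{p}=\{q:q\cnj p\}$. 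This yields the decomposition
\begin{equation*}
\cnjclass{p}=\bigcup_{i=1}^{n}\bigl(p_i+\bbN\bw_1+\cdots+\bbN\bw_m\bigr)
\end{equation*}
asserted in the statement.

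For the algorithmic conclusion, both routes above are effective: from the finite list of edge-relations one can explicitly write down a Presburger formula defining $\cnj$, and then the effective semilinear synthesis of Ginsburg--Spanier produces the base-points $p_i$ and the periods $\bw_j$. The point requiring some care is the translation between our multi-sorted, $\bbZ/2\bbZ$-twisted setting and the classical framework of congruences on $\bbN^m$; once this is carried out by stratifying along the finite data, the external references do all the substantive work, which is why I expect the only real obstacle to be bookkeeping rather than anything conceptual.
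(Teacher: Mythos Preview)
Your proposal is correct and follows essentially the same approach as the paper: both observe that $\cnj$ is a congruence compatible with the $\pA$-translation action and then invoke the Eilenberg--Sch\"utzenberger theorem (with Presburger/Ginsburg--Spanier as an alternative route to effectiveness). You have simply made explicit the bookkeeping---the reduction to finitely many primes and the stratification by vertex of $\Gamma$ and $\bbZ/2\bbZ$-coordinate---that the paper leaves to the reader.
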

\begin{proof}
 The relation $\cnj$ is a congruence and so by \cite[Theorem II]{ES69}, we have that the equivalence class is a computable semilinear set. The result follows.
\end{proof}

\begin{lem}[Decidability of conjugacy]\label{compare-conjugacy-class}
There is an algorithm that, given a GBS graph $(\Gamma,\psi)$ and two vertices $p,q$ of its affine representation $\Lambda$, determines whether or not $p\cnj q$, and in case of affirmative answer also outputs an affine path from $p$ to $q$.
\end{lem}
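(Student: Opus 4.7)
The plan is to deduce the lemma directly from \Cref{compute-conjugacy-class}. First, I would apply that proposition to the input pair $((\Gamma,\psi), p)$ to obtain vertices $p_1, \dots, p_n \in V(\Lambda)$ and vectors $\bw_1, \dots, \bw_m \in \pA$ such that $\cnjclass{p} = \{p_i + k_1\bw_1 + \dots + k_m\bw_m : 1 \le i \le n,\ k_j \in \bbN\}$. To decide whether $p \cnj q$, it then suffices to test whether $q$ lies in this semilinear set. Since the copies $\pA_v$ for $v \in V(\Gamma)$ are pairwise disjoint, I may first discard those $p_i$ whose vertex coordinate differs from that of $q$; for each remaining $p_i$, deciding the existence of $k_1, \dots, k_m \in \bbN$ with $q - p_i = \sum_j k_j \bw_j$ is a standard decidable problem in integer linear programming over the natural numbers. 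This yields the decidability statement.

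To produce an explicit affine path from $p$ to $q$ in the affirmative case, I would run two semi-algorithms in parallel. The first is the decision procedure just described. The second systematically enumerates all finite sequences $(e_1, \dots, e_\ell)$ of edges of $\Lambda$ in order of increasing length and, for each such sequence, solves the linear system determining whether there exist translation coefficients $\bw_1, \dots, \bw_\ell \in \pA$ realizing it as an affine path from $p$ to some terminal vertex in the sense of \Cref{defn:affine path}, then checks whether that terminal vertex equals $q$. By the very definition of $\cnj$, this enumeration halts with a valid path whenever $p \cnj q$, while the parallel decision procedure prevents it from running forever in the negative case.

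The main potential obstacle is extracting the path witness directly from the semilinear data output by \Cref{compute-conjugacy-class}, which as stated only certifies that the conjugacy class is a computable semilinear set and does not retain explicit path information. The parallel-enumeration trick sidesteps this cleanly, at the cost of providing no effective bound on the length of the returned path. A more quantitative alternative would be to refine the underlying algorithm so as to associate to each representative $p_i$ an affine path $\sigma_i$ from $p$ to $p_i$, and to each period $\bw_j$ a closed affine path $\eta_{i,j}$ based at $p_i$ with net translation $\bw_j$; once the decomposition $q = p_i + \sum_j k_j \bw_j$ is found, concatenating $\sigma_i$ with $k_j$ traversals of each $\eta_{i,j}$ (absorbing the necessary shifts into the translation coefficients) produces the required affine path. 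Either route completes the proof.
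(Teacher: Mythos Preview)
Your proposal is correct and follows essentially the same approach as the paper: reduce the decision problem to membership in the semilinear set produced by \Cref{compute-conjugacy-class}, and obtain the witnessing affine path by brute-force enumeration once the answer is known to be affirmative. The paper phrases the membership test via Presburger definability rather than integer linear programming, but these are equivalent here, and it likewise defers to enumeration (with a parenthetical remark that sharper bounds exist) for the path output.
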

\begin{proof}
To determine whether or not $p\cnj q$ is equivalent to deciding whether or not $q$ belongs the conjugacy class of $p$. By Proposition \ref{compute-conjugacy-class}, this in turn is equivalent to the decidability of the membership problem for a semilinear set. The latter problem is decidable, since semilinear sets are definable Presburguer, see \cite{GS66} and \cite[Theorem 1]{IJCR91}, hence so is the conjugacy problem. If the vertices are conjugate, one can use brute force enumeration to determine an affine path (there are bounds on the length of an affine path and other more efficient methods to describe it, see for instance \cite
{ACK-iso2}).
\end{proof}

\section{Moves on GBS graphs}\label{sec:moves}

In this section, we define several moves that, given a GBS graph, produce another GBS graph with isomorphic GBS group. The sign-change, elementary expansion, elementary contraction, slide, and induction moves are standard knowledge in the literature (see \cite{For02,For06,CF08}). On the other hand, the swap move and the connection move are new.

\subsection{Sign-change move}

Let $(\Gamma,\psi)$ be a GBS graph. Let $v\in V(\Gamma)$ be a vertex. Define the map $\psi':E(\Gamma)\rar\bbZ\setminus\{0\}$ such that $\psi'(e)=-\psi(e)$ if $\tau(e)=v$ and $\psi'(e)=\psi(e)$ otherwise. We say that the GBS graph $(\Gamma,\psi')$ is obtained from $(\Gamma,\psi)$ by a \textbf{vertex sign-change}. If $\cG$ is the GBS graph of groups associated to $(\Gamma,\psi)$, then the vertex sign change move corresponds to changing the chosen generator for the vertex group $G_v$; in particular it induces an isomorphism at the level of the universal group $\FG{\cG}$ and of the fundamental group $\pi_1(\cG)$. In terms of the affine representation, the vertex sign-change corresponds to the map $\pA_v \to \pA_v$ that sends $(a_0, a_1, \dots) \mapsto (a_0 + 1, a_1, \dots)$, i.e. the move permutes the hyperplanes $(0, a_1, \dots)$ and $(1, a_1, \dots)$.

Let $(\Gamma,\psi)$ be a GBS graph. Let $d\in E(\Gamma)$ be an edge. Define the map $\psi':E(\Gamma)\rar\bbZ\setminus\{0\}$ such that $\psi'(e)=-\psi(e)$ if $e=d,\ol{d}$ and $\psi'(e)=\psi(e)$ otherwise. We say that the GBS graph $(\Gamma,\psi')$ is obtained from $(\Gamma,\psi)$ by an \textbf{edge sign-change}. If $\cG$ is the GBS graph of groups associated to $(\Gamma,\psi)$, then the vertex sign change move corresponds to changing the chosen generator for the edge group $G_d$; in particular it induces an isomorphism at the level of the universal group $\FG{\cG}$ and of the fundamental group $\pi_1(\cG)$. In the affine representation $\Lambda$ the two endpoints $\iota_\Lambda(d)=(\iota_\Gamma(d),(a_0,a_1,a_2,\dots))$ and $\tau_\Lambda(d)=(\tau_\Gamma(d),(b_0,b_1,b_2,\dots))$ are modified by adding $1$ to the $\bbZ/2\bbZ$ components $a_0$ and $b_0$.

\subsection{Elementary expansion and elementary contraction moves}

Let $(\Gamma,\psi)$ be a GBS graph. Let $v$ be a vertex, let $k$ be a non-zero integer. Define the graph $\Gamma'$ by adding a new vertex $v_0$ and a new edge $e_0$ going from $v_0$ to $v$ (and its reverse $\ol{e}_0$); we set $\psi(\ol{e}_0)=1$ and $\psi(e_0)=k$. We say that the GBS graph $(\Gamma',\psi)$ is obtained from $(\Gamma,\psi)$ by an \textbf{elementary expansion} (see Figure \ref{fig:elem-exp}).

Let $\cG$ be the GBS graph of groups associated with $(\Gamma,\psi)$. At the level of the universal group $\FG{\cG}$ we are adding generators $e_0,v_0$ and the relation $v_0e_0=e_0v^k$. In particular, an elementary expansion induces an isomorphism at the level of the fundamental group $\pi_1(\cG)$.

At the level of the affine representation, we add a new copy $\pA_{v_0}$ of $\pA$ corresponding to the new vertex, together with an edge connecting the origin $(v_0,\mathbf{0})\in\pA_{v_0}$ to another vertex in another copy of $\pA$ (see Figure \ref{fig:elem-exp}).

Let $(\Gamma,\psi)$ be a GBS graph. Let $v_0$ be a vertex, and suppose there is a unique edge $e_0$ with $\iota(e_0)=v_0$; suppose also that $\tau(e_0)\not=v_0$ and $\psi(\ol{e}_0)=1$. Define the graph $\Gamma'$ by removing the vertex $v_0$ and the edge $e_0$ (and its reverse $\ol{e}_0$). We say that the GBS graph $(\Gamma',\psi)$ is obtained from $(\Gamma,\psi)$ by an \textbf{elementary contraction}. The elementary contraction move is the inverse of the elementary expansion move.

At the level of the affine representation, we will see a copy $\pA_{v_0}$ of $\pA$ containing only one endpoint of only one edge $e_0$, and in such a way that $\tau_\Lambda(e_0)=(v_0,\mathbf{0})$. The contraction move has the effect of removing all of $\pA_{v_0}$ and the edge $e_0$ from $\Lambda$.

\begin{figure}[H]
\centering

\includegraphics[width=0.7\textwidth]{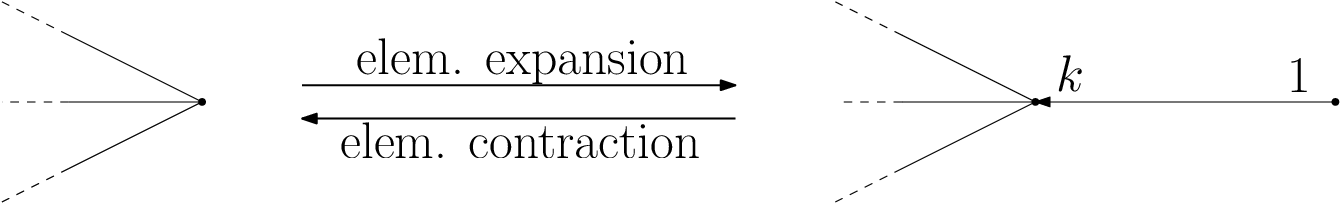}

\vspace{1cm}

\includegraphics[width=\textwidth]{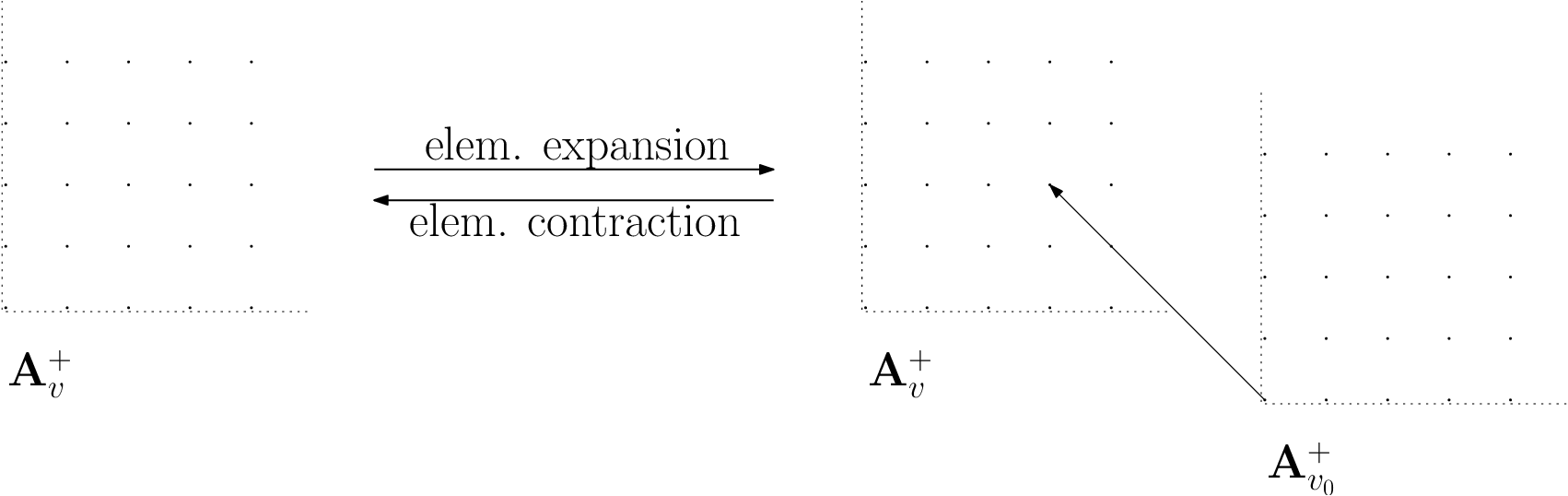}

\caption{An example of an elementary expansion and contraction. Above you can see the GBS graphs. Below you can see the corresponding affine representations.}
\label{fig:elem-exp}
\end{figure}

\subsection{Slide move}\label{subsec:slide}

Let $(\Gamma,\psi)$ be a GBS graph. Let $d,e$ be distinct edges with $\tau(d)=\iota(e)=u$ and $\tau(e)=v$; suppose that $\psi(\ol{e})=n$ and $\psi(e)=m$ and $\psi(d)=\ell n$ for some $n,m,\ell\in\bbZ\setminus\{0\}$ (see Figure \ref{fig:slide}). Define the graph $\Gamma'$ by replacing the edge $d$ with an edge $d'$; we set $\iota(d')=\iota(d)$ and $\tau(d')=v$; we set $\psi(\ol{d}')=\psi(\ol{d})$ and $\psi(d')=\ell m$. We say that the GBS graph $(\Gamma',\psi)$ is obtained from $(\Gamma,\psi)$ by a \textbf{slide}.

Let $\cG,\cG'$ be the GBS graph of groups associated to $(\Gamma,\psi),(\Gamma',\psi)$ respectively. At the level of the universal groups $\FG{\cG},\FG{\cG'}$ we have an isomorphism defined by
$$\begin{cases}
d'=de
\end{cases}
\qquad\text{ with inverse }\qquad
\begin{cases}
d=d'\ol{e}
\end{cases}$$
In particular, a slide move induces an isomorphism at the level of the fundamental group $\pi_1(\cG)$.

At the level of the affine representation, we have an edge $p\edge  q$ and we have another edge with an endpoint at $p+\ba$ for some $\ba\in\pA$. The slide has the effect of moving the endpoint from $p+\ba$ to $q+\ba$ (see Figure \ref{fig:slide}).
$$\begin{cases}
p\edge  q\\
r\edge  p+\ba
\end{cases}
\xrightarrow{\text{slide}}\quad
\begin{cases}
p\edge  q\\
r\edge  q+\ba
\end{cases}$$
\begin{remark}
In the definition of slide move, we also allow for some of the vertices $u,v,\iota(d)$ to coincide.
\end{remark}

\begin{figure}[H]
\centering

\includegraphics[width=0.75\textwidth]{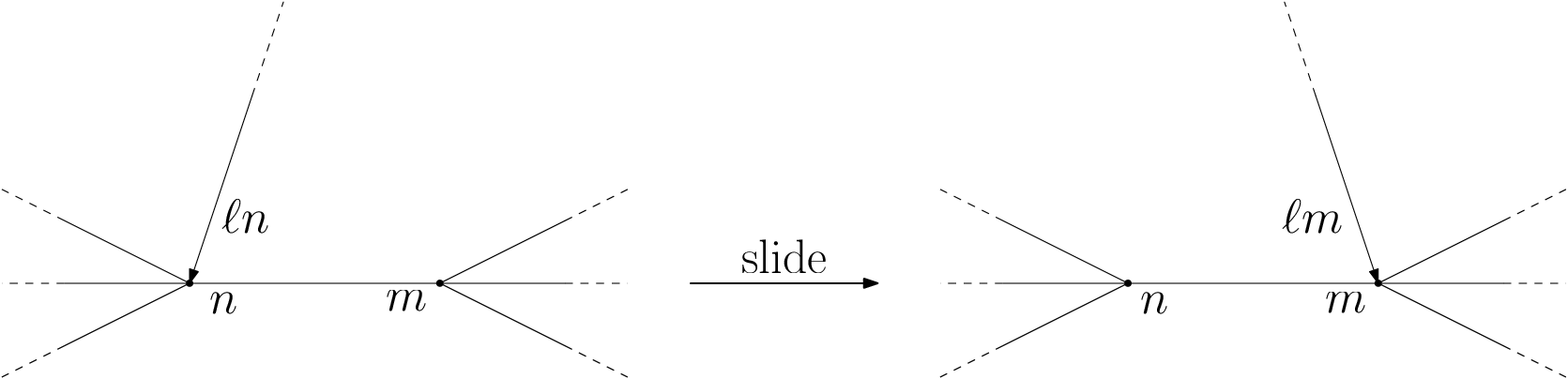}

\vspace{1cm}

\includegraphics[width=\textwidth]{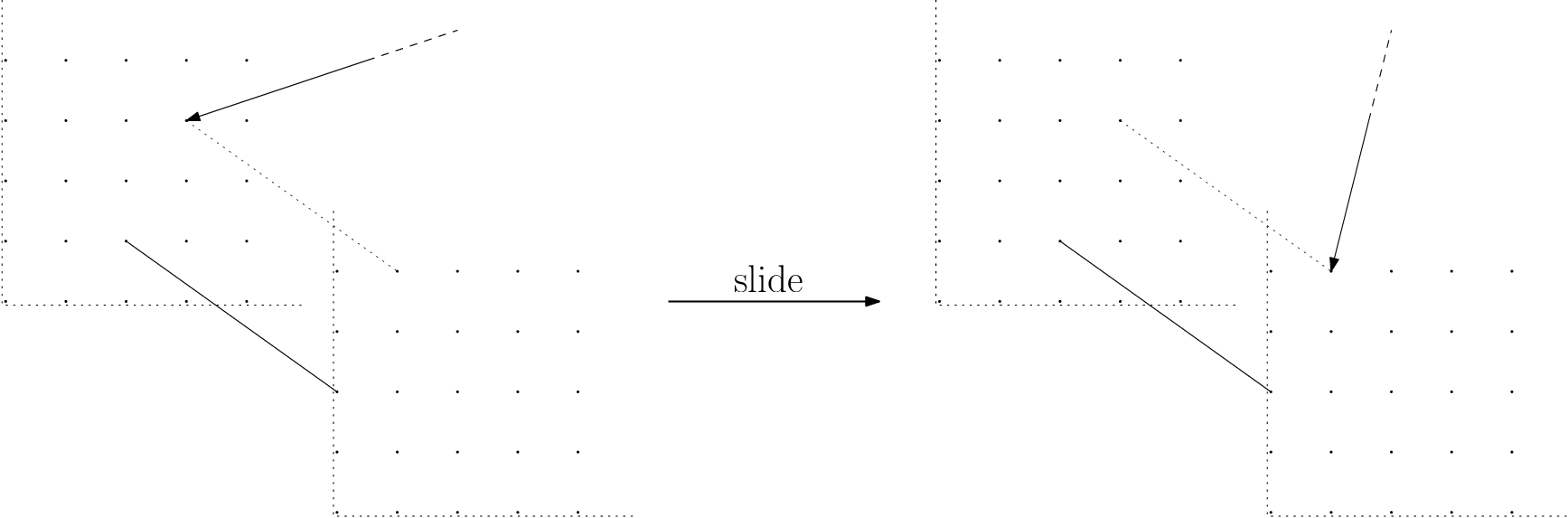}

\caption{An example of a slide move. Above you can see the GBS graphs. Below you can see the corresponding affine representations.}
\label{fig:slide}
\end{figure}

\subsection{Induction move}

Let $(\Gamma,\psi)$ be a GBS graph. Let $e$ be an edge with $\iota(e)=\tau(e)=v$; suppose that $\psi(\ol{e})=1$ and $\psi(e)=n$ for some $n\in\bbZ\setminus\{0\}$, and choose $\ell\in\bbZ\setminus\{0\}$ and $k\in\bbN$ such that $\ell\divides n^k$. Define the map $\psi'$ equal to $\psi$ except on the edges $d\not=e,\ol{e}$ with $\tau(d)=v$, where we set $\psi'(d)=\ell\cdot\psi(d)$. We say that the GBS graph $(\Gamma,\psi')$ is obtained from $(\Gamma,\psi)$ by an \textbf{induction}.

Let $\cG,\cG'$ be the GBS graph of groups associated to $(\Gamma,\psi),(\Gamma,\psi')$ respectively. At the level of the universal groups $\FG{\cG},\FG{\cG'}$ we have an isomorphism defined by
$$\begin{cases}
v'=e^kv^{(n^k/\ell)}\ol{e}^k
\end{cases}
\qquad\text{ with inverse }\qquad
\begin{cases}
v=(v')^\ell
\end{cases}$$
In particular, an induction move induces an isomorphism at the level of the fundamental group $\pi_1(\cG)$.

At the level of the affine representation, we have an edge $(v,\mathbf{0})\edge(v,\bw)$ where $\bw\in\pA$. We choose $\bw_1\in\pA$ such that $\bw_1\le k\bw$ for some $k\in\bbN$; we take all the endpoints of other edges lying in $\pA_v$, and we translate them up by adding $\bw_1$ (see Figure \ref{fig:ind}). Note also that the inverse of an induction move can be obtained by composing an induction move and slide moves.

\begin{figure}[H]
\centering

\includegraphics[width=0.65\textwidth]{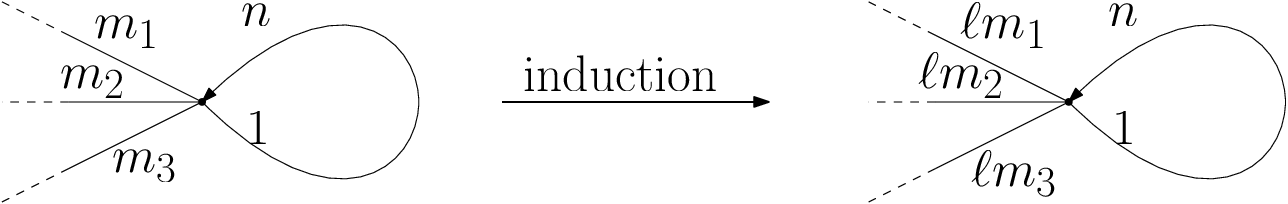}

\vspace{0.5cm}

\includegraphics[width=0.8\textwidth]{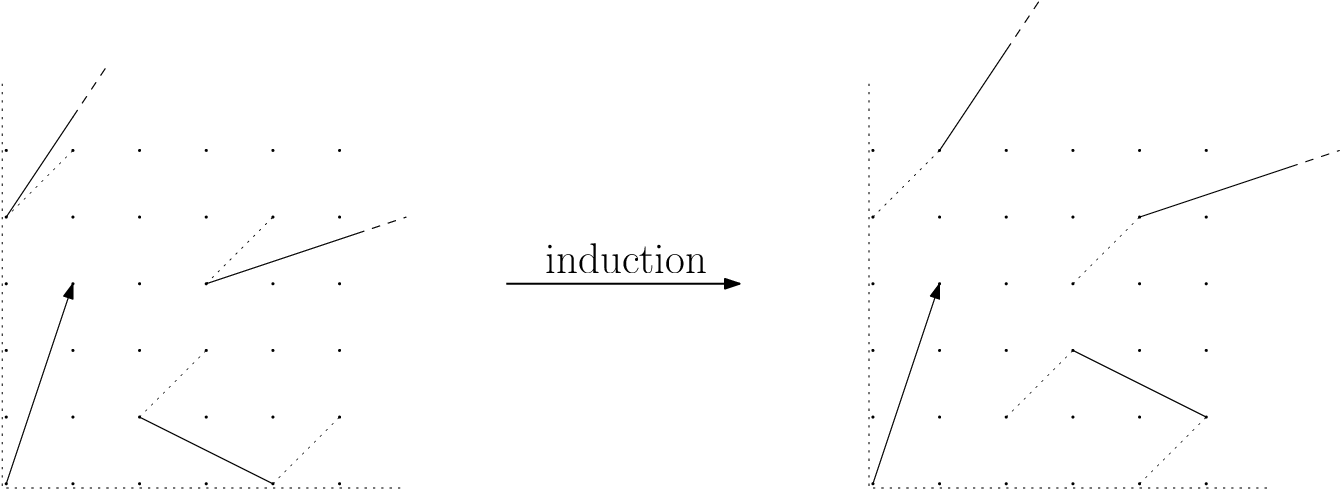}

\caption{An example of an induction move. Above you can see the GBS graphs; here $\ell\divides n^k$ for some integer $k\ge0$. Below you can see the corresponding affine representations.}
\label{fig:ind}
\end{figure}

\subsection{Swap move}

Let $(\Gamma,\psi)$ be a GBS graph. Let $e_1,e_2$ be distinct edges with $\iota(e_1)=\tau(e_1)=\iota(e_2)=\tau(e_2)=v$; suppose that $\psi(\ol{e}_1)=n$ and $\psi(e_1)=\ell_1 n$ and $\psi(\ol{e}_2)=m$ and $\psi(e_2)=\ell_2 m$ and $n\divides m$ and $m\divides \ell_1^{k_1} n$ and $m\divides \ell_2^{k_2} n$ for some $n,m,\ell_1,\ell_2\in\bbZ\setminus\{0\}$ and $k_1,k_2\in\bbN$ (see Figure \ref{fig:swap}). Define the graph $\Gamma'$ by substituting the edges $e_1,e_2$ with two edges $e_1',e_2'$; we set $\iota(e_1')=\tau(e_1')=\iota(e_2')=\tau(e_2')=v$; we set $\psi(\ol{e_1'})=m$ and $\psi(e_1')=\ell_1 m$ and $\psi(\ol{e_2'})=n$ and  $\psi(e_2')=\ell_2 n$. We say that the GBS graph $(\Gamma',\psi)$ is obtained from $(\Gamma,\psi)$ by a \textbf{swap move}.

Let $\cG,\cG'$ be the GBS graph of groups associated to $(\Gamma,\psi),(\Gamma',\psi)$ respectively. At the level of the universal groups $\FG{\cG},\FG{\cG'}$ we have an isomorphism defined by
$$\begin{cases}
e_1'=e_1^{k_1}\ol{e}_2^{k_2}e_1e_2^{k_2}\ol{e}_1^{k_1}\\
e_2'=e_1^{k_1}e_2\ol{e}_1^{k_1}
\end{cases}
\qquad\text{ with inverse }\qquad
\begin{cases}
e_1=(e_2')^{k_2}e_1'(\ol{e_2'})^{k_2}\\
e_2=(e_2')^{k_2}(\ol{e}_1')^{k_1}e_2'(e_1')^{k_1}(\ol{e}_2')^{k_2}
\end{cases}$$
In particular, a swap move induces an isomorphism at the level of the fundamental group $\pi_1(\cG)$.

Let $\bw_1,\bw_2\in\pA$ and $p,q\in V(\Lambda)$ be such that $p\le q\le p+k_1\bw_1$ and $p\le q\le p+k_2\bw_2$ for some $k_1,k_2\in\bbN$. At the level of the affine representation, we have an edge $e_1$ going from $p$ to $p+\bw_1$ and an edge $e_2$ going from $q$ to $q+\bw_2$. The swap has the effect of substituting them with $e_1'$ from $q$ to $q+\bw_1$ and with $e_2'$ from $p$ to $p+\bw_2$ (see Figure \ref{fig:swap}),
$$\begin{cases}
p\edge  p+\bw_1\\
q\edge  q+\bw_2
\end{cases}
\xrightarrow{\text{swap}}\quad
\begin{cases}
q\edge  q+\bw_1\\
p\edge  p+\bw_2
\end{cases}$$

\begin{figure}[H]
\centering

\includegraphics[width=0.65\textwidth]{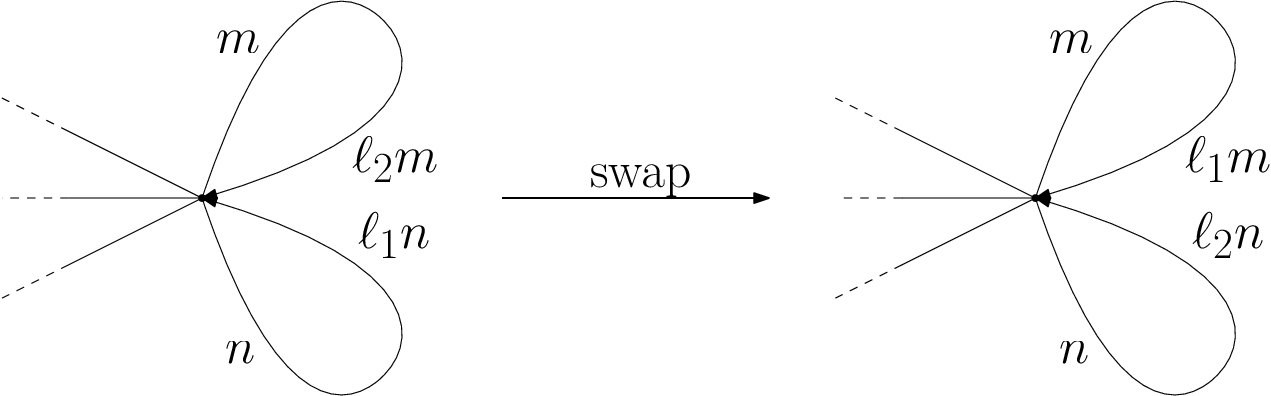}

\vspace{0.5cm}

\includegraphics[width=0.8\textwidth]{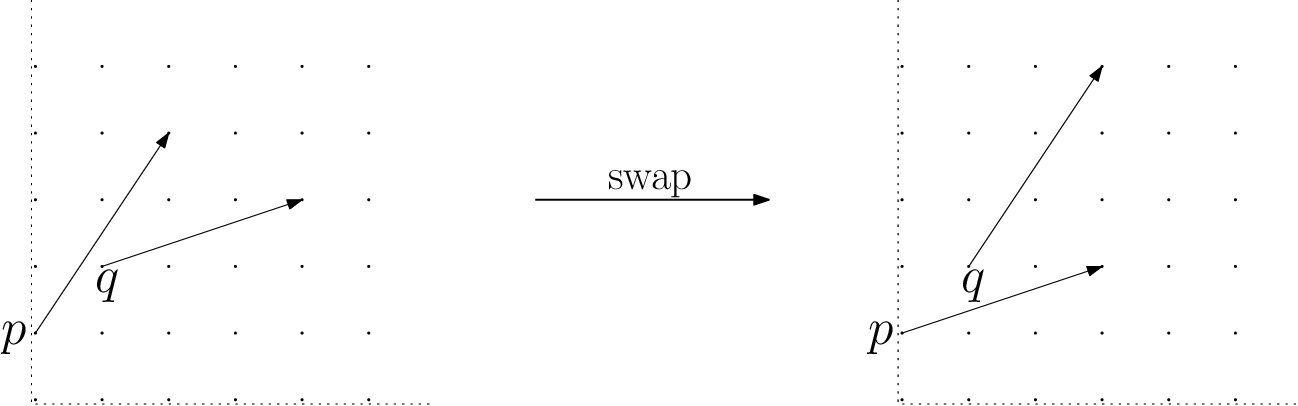}

\caption{An example of a swap move. Above you can see the GBS graphs; here $n\divides m\divides\ell_1^{k_1}n$ and $n\divides m\divides\ell_2^{k_2}n$ for some integers $k_1,k_2\ge0$. Below you can see the corresponding affine representations.}
\label{fig:swap}
\end{figure}

\subsection{Connection move}

Let $(\Gamma,\psi)$ be a GBS graph. Let $d,e$ be distinct edges with $\iota(d)=u$ and $\tau(d)=\iota(e)=\tau(e)=v$; suppose that  $\psi(\ol{d})=m$ and $\psi(d)=\ell_1 n$ and $\psi(\ol{e})=n$ and $\psi(e)=\ell n$ and $\ell_1\ell_2=\ell^k$ for some $m,n,\ell_1,\ell_2,\ell\in\bbZ\setminus\{0\}$ and $k\in\bbN$ (see Figure \ref{fig:connection}). Define the graph $\Gamma'$ by substituting the edges $d,e$ with two edges $d',e'$; we set $\iota(d')=v$ and $\tau(d')=\iota(e')=\tau(e')=u$; we set  $\psi(\ol{d}')=n$ and $\psi(d')=\ell_2 m$ and $\psi(\ol{e}')=m$ and $\psi(e')=\ell m$. We say that the GBS graph $(\Gamma',\psi)$ is obtained from $(\Gamma,\psi)$ by a \textbf{connection move}.

Let $\cG,\cG'$ be the GBS graph of groups associated to $(\Gamma,\psi),(\Gamma',\psi)$ respectively. At the level of the universal groups $\FG{\cG},\FG{\cG'}$ we have an isomorphism defined by
$$\begin{cases}
d'=e^k\ol{d}\\
e'=de\ol{d}
\end{cases}
\qquad\text{ with inverse }\qquad
\begin{cases}
d=(e')^k\ol{d}'\\
e=d'e'\ol{d}'
\end{cases}$$
In particular, a connection move induces an isomorphism at the level of the fundamental group $\pi_1(\cG)$.

Let $\bw,\bw_1,\bw_2\in\pA$ and $k\in\bbN$ be such that $\bw_1+\bw_2=k\cdot\bw$. At the level of the affine representation, we have two edges $q\edge  p+\bw_1$ and $p\edge  p+\bw$. The connection move has the effect of replacing them with two edges $p\edge  p+\bw_2$ and $q\edge  q+\bw$ (see Figure \ref{fig:connection}),
$$\begin{cases}
q\edge  p+\bw_1\\
p\edge  p+\bw
\end{cases}
\xrightarrow{\text{connection}}\quad
\begin{cases}
p\edge  q+\bw_2\\
q\edge  q+\bw.
\end{cases}$$

\begin{remark}
In the definition of connection move, we also allow the vertices $u,v$ to coincide.
\end{remark}

\begin{figure}[H]
\centering

\includegraphics[width=0.8\textwidth]{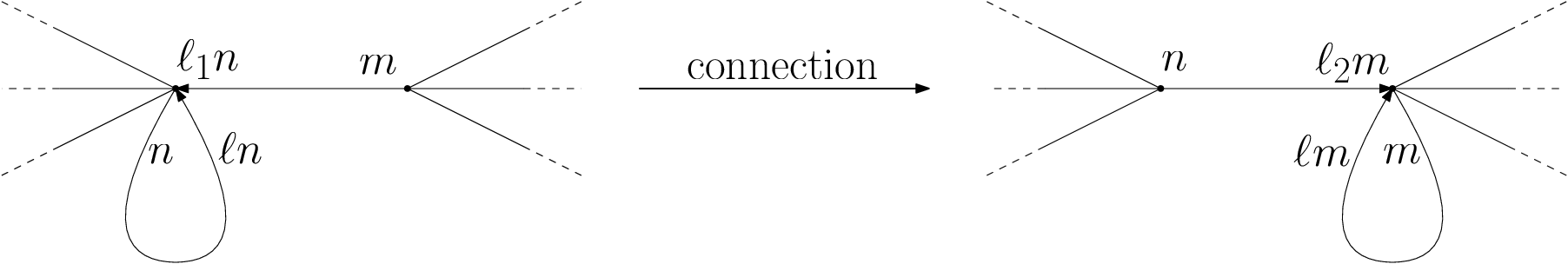}

\vspace{0.5cm}

\includegraphics[width=\textwidth]{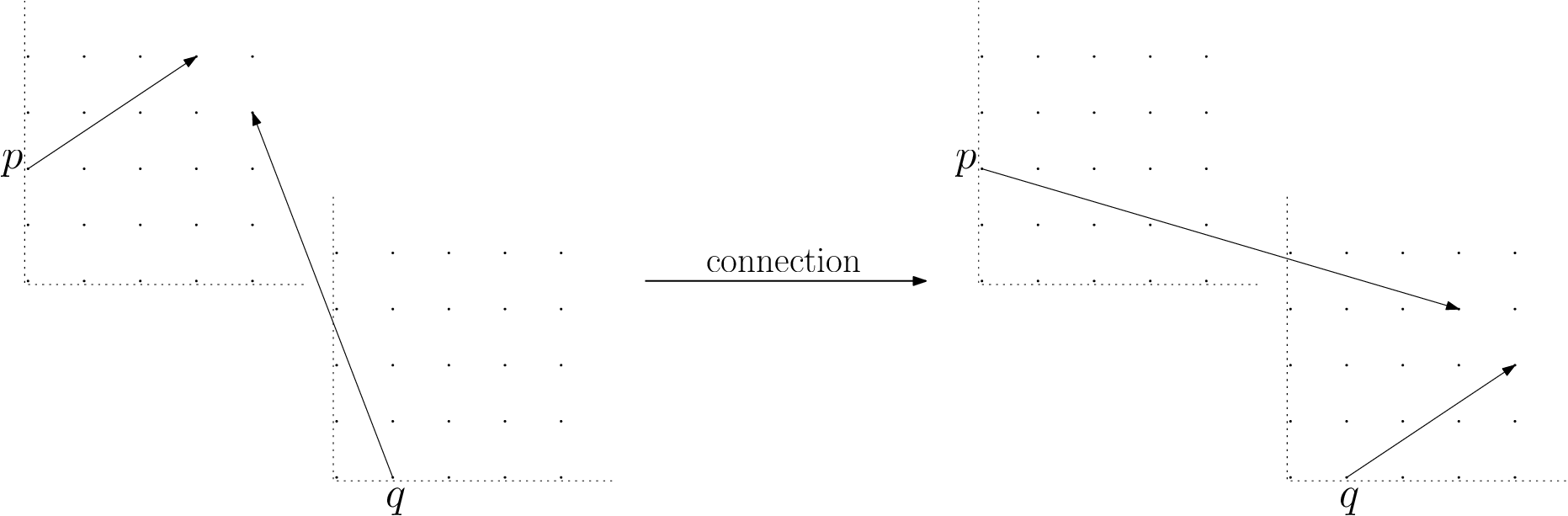}

\caption{An example of a connection move. Above you can see the GBS graphs; here $\ell_1\ell_2=\ell^k$ for some integer $k\ge0$. Below you can see the corresponding affine representations.}
\label{fig:connection}
\end{figure}

\subsection{Relations between the moves}

In this section, we show that the moves that we have introduced, induction, swap, and connection, are \emph{derived moves}, that is they can be obtained as a sequence of elementary contractions, elementary expansions, and slides. We also give explicit bounds on the number of vertices in the graphs that appear along the sequence.

\begin{lem}[Induction is a derived move]\label{induction-from-others}
Let $(\Gamma,\psi)$ be a GBS graph. Then, every induction move can be written as a sequence of elementary contractions, elementary expansions, and slides. This can be done in such a way that every graph along the sequence has at most $\abs{V(\Gamma)}+1$ vertices.
\end{lem}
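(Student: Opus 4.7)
The plan is to reduce an arbitrary induction to a sequence of ``prime inductions'' (those with $\ell$ prime and $k=1$), and to realise each of these as an explicit short sequence of elementary moves using exactly one auxiliary vertex. Since $\ell\divides n^k$, every prime dividing $\ell$ also divides $n$. Writing $\ell = p_1p_2\cdots p_s$ as a product of (not necessarily distinct) primes $p_i\divides n$, and noting that the induction move alters the labeled graph only through the multiplier $\ell$ while preserving the loop labels $(1,n)$, the inductions $(p_1,1),\dots,(p_s,1)$ performed in succession at the loop $e$ have the same cumulative effect on $(\Gamma,\psi)$ as the original induction $(\ell,k)$. It therefore suffices to realise an induction of the form $(p,1)$ with $p$ prime and $p\divides n$.

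For such a prime induction at a loop $e$ at $v$ with labels $(1,n)$, I propose the following six-step sequence:
\begin{enumerate}
\item Elementary expansion at $v$: add a new vertex $v'$ and an edge $g\colon v'\to v$ with $\psi(\ol{g})=1$ and $\psi(g)=n/p$. The graph now has $\abs{V(\Gamma)}+1$ vertices.
\item For every edge $d\neq e,\ol{e}$ with $\tau(d)=v$, slide $d$ along $e$ once; this multiplies $\psi(d)$ by $n$.
\item Slide each such $d$ along $\ol{g}$; the divisibility $n/p\divides\psi(d)$ now holds, yielding $\tau(d)=v'$ and $\psi(d)=p\cdot\psi_{\mathrm{old}}(d)$.
\item Slide the loop $e$ along $\ol{g}$; the divisibility $n/p\divides n$ is automatic, and $e$ is transformed into an edge $e'\colon v\to v'$ with labels $(1,p)$.
\item Slide $g$ along $e'$; the outcome is a loop $g'\colon v'\to v'$ with labels $(1,n)$.
\item The vertex $v$ now has $e'$ as its only incident edge, with label $1$ on the $v$-side; perform an elementary contraction, removing $v$ and $e'$.
\end{enumerate}
After relabelling $v'\mapsto v$ and $g'\mapsto e$, the resulting labeled graph is exactly the one produced from $(\Gamma,\psi)$ by the induction $(p,1)$. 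The vertex count returns to $\abs{V(\Gamma)}$ and never exceeded $\abs{V(\Gamma)}+1$ during the sub-sequence.

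Concatenating the $s$ prime sub-sequences realises the original induction as a sequence of elementary expansions, elementary contractions, and slides, with at most $\abs{V(\Gamma)}+1$ vertices at every intermediate stage. The main delicate point is step~(4): sliding the loop $e$ along $\ol{g}$ requires the divisibility $\psi(\ol{\ol{g}})\divides\psi(e)$, and this is precisely the reason for decomposing into prime inductions---attempting the induction $(\ell,k)$ directly would require the analogous condition $n^{k-1}\divides\ell$, which generally fails.
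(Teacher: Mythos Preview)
Your approach is essentially the same as the paper's: add one auxiliary vertex, slide every incident edge across to it, swap the roles of the old loop and the new edge, then contract the original vertex, iterating to build up the full multiplier. The paper phrases this in the affine representation and allows any divisor of $n$ (any $\bw_1\le\bw$) as a single step rather than only primes, but the mechanism is identical and the vertex bound is obtained in the same way.

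Two small points to tighten. First, the induction move allows $\ell<0$, and your factorisation $\ell=p_1\cdots p_s$ into primes only covers $\ell>0$; the fix is immediate, since your six-step procedure works verbatim with $p=-1$ (step~(1) gives $\psi(g)=-n$, step~(4) gives $\psi(e')=-1$, step~(5) restores the loop label $n$), so simply include a factor $-1$ when $\ell$ is negative. Second, in steps~(2)--(3) you should explicitly exclude the new edge $g$ from the collection of $d$'s being slid; as written, $g$ satisfies $\tau(g)=v$ and $g\neq e,\ol e$, but sliding it in step~(2) would spoil the label you need in step~(3).
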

\begin{proof}
Let $\Lambda$ be the affine representation of $(\Gamma,\psi)$ and let us say that $\Lambda$ contains an edge $e$ given by $(v,\mathbf{0})\edge (v,\bw)$ for some $v\in V(\Gamma)$ and $\bw\in\pA$. We choose $\bw_1\in\pA$ with $\bw_1\le\bw$. We perform an elementary expansion adding a vertex $v'$ and an edge $e'$ given by $(v',\mathbf{0})\edge (v,\bw-\bw_1)$.

For every other edge $f$ with an endpoint at $(v,\ba)$ for some $\ba\in\pA$, we perform a slide (along $e$) changing that endpoint into $(v,\ba+\bw)$ and then another slide (along $e'$) changing that endpoint into $(v',\ba+\bw_1)$.

Finally, we perform slides to change
$$\begin{cases}
(v,\mathbf{0})\edge(v,\bw)\\
(v,\bw-\bw_1)\edge(v',\mathbf{0})
\end{cases}
\xrightarrow{\text{slide}}\quad
\begin{cases}
(v,\mathbf{0})\edge(v',\bw_1)\\
(v,\bw-\bw_1)\edge(v',\mathbf{0})
\end{cases}
\xrightarrow{\text{slide}}\quad
\begin{cases}
(v,\mathbf{0})\edge(v',\bw_1)\\
(v',\bw)\edge(v',\mathbf{0})
\end{cases}$$
and then we perform a contraction move to remove the edge $(v,\mathbf{0})\edge(v',\bw_1)$ and the vertex $v$.

We remain with a vertex $v'$, an edge $(v',\bw)\edge(v',\mathbf{0})$; for every other edge $f$ which had an endpoint at $(v,\ba)$, that endpoint is now at $(v',\ba+\bw_1)$. By reiterating the same procedure, we can get any induction move. The statement follows.
\end{proof}

\begin{lem}[Swap is a derived move]\label{swap-from-others}
Let $(\Gamma,\psi)$ be a GBS graph. Then every swap move can be written as a sequence of elementary contractions, elementary expansions, and slides. This can be done in such a way that every graph along the sequence has at most $\abs{V(\Gamma)}+2$ vertices.
\end{lem}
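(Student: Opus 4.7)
Plan. In the affine representation, the swap takes loops $e_1\colon p\edge p+\bw_1$ and $e_2\colon q\edge q+\bw_2$ at $v$ to loops $e_1'\colon q\edge q+\bw_1$ and $e_2'\colon p\edge p+\bw_2$, where $\mathbf{b}:=q-p\in\pA$ satisfies $\mathbf{b}\le k_1\bw_1$ and $\mathbf{b}\le k_2\bw_2$. The plan is to adapt the argument of Lemma~\ref{induction-from-others} and apply its construction in parallel to both loops, which accounts for the bound $|V(\Gamma)|+2$: one auxiliary vertex per loop.

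I first perform two elementary expansions at $v$, adding auxiliary vertices $u_1,u_2$ and edges $f_1\colon(u_1,\mathbf{0})\edge(v,p)$ and $f_2\colon(u_2,\mathbf{0})\edge(v,q)$. Slides along $\ol{f_1},\ol{f_2}$ then migrate the two loops off $v$: $e_1$ becomes the loop $(u_1,\mathbf{0})\edge(u_1,\bw_1)$ and $e_2$ becomes $(u_2,\mathbf{0})\edge(u_2,\bw_2)$, each now in the canonical form of an edge whose $\iota$-endpoint sits at the origin of its copy of $\pA$, ready to be used for derived induction moves. Legality of the migration slides is immediate since $p+\bw_1\ge p$ and $q+\bw_2\ge q$ in $\pA$.

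The core of the derivation is a sequence of slides that reconfigures $f_1,f_2$ so that $f_1$ ends at $(v,q)$ and $f_2$ at $(v,p)$: this amounts to a shift by $+\mathbf{b}$ of $f_1$'s $v$-endpoint and by $-\mathbf{b}$ of $f_2$'s $v$-endpoint. The first shift is realized by a combination of slides of $f_1$'s $u_1$-endpoint around the canonical loop $e_1$ (using $\mathbf{b}\le k_1\bw_1$) together with further slides back to $v$; the second is realized symmetrically using the canonical loop $e_2$ at $u_2$ (using $\mathbf{b}\le k_2\bw_2$). Once this reconfiguration is complete, sliding the loops back along $f_1,f_2$ reattaches $e_1,e_2$ to $v$ at the swapped basepoints, and the two elementary contractions remove $u_1,u_2$ to yield precisely the graph produced by the swap move.

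The main obstacle is the scheduling of derived induction moves, each of which (by Lemma~\ref{induction-from-others}) introduces one extra temporary auxiliary vertex. Since the construction already uses two auxiliaries $u_1,u_2$, the two expansions must be carried out \emph{sequentially}: the first phase operates with $u_1$ alone (reaching at most $|V(\Gamma)|+2$ vertices when a derived induction is performed and then returning to $|V(\Gamma)|+1$), the first contraction brings the count back down, and only then is $u_2$ introduced for the symmetric second phase. This careful ordering keeps the vertex count within the bound $|V(\Gamma)|+2$ throughout, while the divisibility conditions $\mathbf{b}\le k_i\bw_i$ ensure the legality of all the required slides and inductions.
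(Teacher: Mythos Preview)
Your plan has a genuine gap at the ``reconfiguration'' step, and the scheduling fix you propose does not resolve it.

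Consider your Phase~1, where only $u_1$ is present. After moving $e_1$ to the loop $(u_1,\mathbf{0})\edge(u_1,\bw_1)$, you are left with $f_1:(u_1,\mathbf{0})\edge(v,p)$ and $e_2:(v,q)\edge(v,q+\bw_2)$ still at $v$. You now want to change $f_1$'s $v$-endpoint from $p$ to $q=p+\bb$. Slides of $f_1$'s $u_1$-endpoint along $e_1$ only alter that endpoint by multiples of $\bw_1$; they do nothing to the $v$-endpoint. To move the $v$-endpoint you would need an edge at $v$ whose relevant endpoint lies at or below $p$, but $e_2$ sits at $q\ge p$ (and $q+\bw_2\ge p$), so neither $e_2$ nor $\ol{e_2}$ is available for the slide. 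A derived induction at $u_1$ likewise only translates $f_1$'s $u_1$-endpoint; it cannot produce the shift by $\bb$ on the $v$-side, and $\bb$ is not in general a multiple of $\bw_1$. In short, isolating $e_1$ at $u_1$ severs exactly the interaction with $e_2$ that the swap requires.

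The paper's argument avoids this by moving \emph{both} loops to a \emph{single} auxiliary vertex $u$ (via one expansion with an edge $(v,\ba)\edge(u,\mathbf{0})$), obtaining $e_1:(u,\mathbf{0})\edge(u,\bw_1)$ and $e_2:(u,\bb-\ba)\edge(u,\bb-\ba+\bw_2)$ together at $u$. The two loops can then interact: a derived induction using $e_1$ brings $e_2$'s basepoint down to $\mathbf{0}$, and a second derived induction using $e_2$ pushes $e_1$'s basepoint up to $\bb-\ba$, effecting the swap. Each derived induction temporarily adds one further vertex, which is why the bound is $|V(\Gamma)|+2$ rather than $|V(\Gamma)|+1$. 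Your two persistent auxiliaries $u_1,u_2$ are not only unnecessary but actively obstruct the argument: with the loops separated, no legal slide or induction realises the required shift by $\bb$.
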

\begin{proof}
Let $\Lambda$ be the affine representation of $(\Gamma,\psi)$, and suppose $\Lambda$ contains two edges $(v,\ba)\edge(v,\ba+\bw_1)$ and $(v,\bb)\edge(v,\bb+\bw_2)$ with $v\in V(\Gamma)$ and $\ba,\bb,\bw_1,\bw_2\in\pA$ such that $\ba\le \bb\le \ba+k_1\bw_1$ and $\ba\le \bb\le \ba+k_2\bw_2$ for some $k_1,k_2\in\bbN$. We proceed as follows:
$$
\begin{cases}
(v,\ba)\edge (v,\ba+\bw_1)\\
(v,\bb)\edge (v,\bb+\bw_2)
\end{cases}
\xrightarrow{\text{el. exp.}}\quad
\begin{cases}
(v,\ba)\edge (v,\ba+\bw_1)\\
(v,\bb)\edge (v,\bb+\bw_2)\\
(v,\ba)\edge(u,\mathbf{0})
\end{cases}
$$
$$
\xrightarrow{\text{slides}}\quad
\begin{cases}
(u,\mathbf{0})\edge (u,\bw_1)\\
(u,\bb-\ba)\edge (u,\bb-\ba+\bw_2)\\
(v,\ba)\edge(u,\mathbf{0})
\end{cases}
\xrightarrow{\text{slides}}\quad
\begin{cases}
(u,\mathbf{0})\edge (u,\bw_1)\\
(u,\bb-\ba)\edge (u,\bb-\ba+\bw_2)\\
(v,\ba)\edge(u,k_1\bw_1+k_2\bw_2)
\end{cases}
$$
$$
\xrightarrow{\text{induction (using $\bw_1$)}}\quad
\begin{cases}
(u,\mathbf{0})\edge (u,\bw_1)\\
(u,\mathbf{0})\edge (u,\bw_2)\\
(v,\ba)\edge(u,k_1\bw_1+k_2\bw_2-(\bb-\ba))
\end{cases}
$$
$$
\xrightarrow{\text{induction (using $\bw_2$)}}\quad
\begin{cases}
(u,\bb-\ba)\edge (u,\bb-\ba+\bw_1)\\
(u,\mathbf{0})\edge (u,\bw_2)\\
(v,\ba)\edge(u,k_1\bw_1+k_2\bw_2)
\end{cases}
$$
$$
\xrightarrow{\text{slides}}\quad
\begin{cases}
(u,\bb-\ba)\edge (u,\bb-\ba+\bw_1)\\
(u,\mathbf{0})\edge (u,\bw_2)\\
(v,\ba)\edge(u,\mathbf{0})
\end{cases}
\xrightarrow{\text{slides}}\quad
\begin{cases}
(v,\bb)\edge (v,\bb+\bw_1)\\
(v,\ba)\edge (v,\ba+\bw_2)\\
(v,\ba)\edge(u,\mathbf{0})
\end{cases}
$$
$$
\xrightarrow{\text{el. contr.}}\quad
\begin{cases}
(v,\bb)\edge (v,\bb+\bw_1)\\
(v,\ba)\edge (v,\ba+\bw_2)
\end{cases}
$$
and the statement follows from Lemma \ref{induction-from-others}.
\end{proof}

\begin{lem}[Connection is a derived move]\label{connection-from-others}
Let $(\Gamma,\psi)$ be a GBS graph. Then every connection move can be written as a sequence of elementary contractions, elementary expansions, and slides. This can be done in such a way that every graph along the sequence has at most $\abs{V(\Gamma)}+2$ vertices.
\end{lem}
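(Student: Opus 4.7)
The strategy parallels the proof of Lemma \ref{swap-from-others}: introduce a single auxiliary vertex via an elementary expansion, use slides together with one induction move (itself a derived move by Lemma \ref{induction-from-others}) to relocate the loop from $v$ to $u$, and finish with an elementary contraction. Working in the affine representation, we begin with an edge $q \edge p + \bw_1$ and a loop $p \edge p + \bw$ satisfying $\bw_1 + \bw_2 = k \bw$, where $q \in \pA_u$ and $p \in \pA_v$.

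The first step is an elementary expansion adding a vertex $v'$ and an edge $f\colon (v', \mathbf{0}) \edge p$. Using $\ol f$ as the base, three slides transfer the two endpoints of the loop to $(v', \mathbf{0})$ and $(v', \bw)$, producing a loop at $v'$ anchored at the origin of $\pA_{v'}$, and move the $v$-endpoint of $d$ from $p + \bw_1$ to $(v', \bw_1)$. We are now poised for the key step.

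Apply an induction at $v'$ with shift $\bw_2$; this is valid because $\bw_2 \le k\bw$ by hypothesis. The induction sends the $v'$-endpoint of $d$ from $(v', \bw_1)$ up to $(v', \bw_1 + \bw_2) = (v', k\bw)$, while the $v'$-endpoint of $f$ moves from $(v', \mathbf{0})$ to $(v', \bw_2)$. Sliding $d$ along the reverse of the loop $k$ times then brings its endpoint down to $(v', \mathbf{0})$, so that $d$ takes the form $q \edge (v', \mathbf{0})$, an edge whose factor at $v'$ is now trivial.

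Finally, use $\ol d$ as the base for three further slides: two to transport the loop endpoints from $(v', \mathbf{0})$ and $(v', \bw)$ to $q$ and $q + \bw$ (yielding the desired loop at $u$), and one to move the $v'$-endpoint of $f$ from $(v', \bw_2)$ to $q + \bw_2$. An elementary contraction then removes $v'$ together with $d$, leaving exactly the configuration predicted by the connection move. The vertex bound $|V(\Gamma)| + 2$ comes from the one vertex added by the elementary expansion plus at most one further vertex introduced when realizing the induction as a sequence of elementary moves (Lemma \ref{induction-from-others}). I do not anticipate serious obstacles: the main technical point is the choice of $\bw_2$ as the induction shift, which is exactly what the hypothesis $\bw_1 + \bw_2 = k \bw$ enables.
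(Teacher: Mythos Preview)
Your proof is correct and follows essentially the same route as the paper's: an elementary expansion creating an auxiliary vertex, slides to transfer the loop and the endpoint of $d$ to that vertex, an induction with shift $\bw_2$, slides to bring $d$ down to the origin, slides along $\ol d$ to relocate everything to $q$, and a final elementary contraction. The only difference is cosmetic naming of the auxiliary vertex.
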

\begin{proof}
Let $\Lambda$ be the affine representation of $(\Gamma,\psi)$, and suppose $\Lambda$ contains two edges $(v,\ba)\edge(v,\ba+\bw)$ and $(v',\bb)\edge(v,\ba+\bw_1)$ with $v,v'\in V(\Gamma)$ and for some $\ba,\bb,\bw,\bw_1,\bw_2\in\pA$ satisfying $\bw_1+\bw_2=k\bw$ for some $k\in\bbN$. We proceed as follows:
$$
\begin{cases}
(v,\ba)\edge (v,\ba+\bw)\\
(v',\bb)\edge (v,\ba+\bw_1)
\end{cases}
\xrightarrow{\text{el. exp.}}\quad
\begin{cases}
(v,\ba)\edge (v,\ba+\bw)\\
(v',\bb)\edge (v,\ba+\bw_1)\\
(v,\ba)\edge (u,\mathbf{0})
\end{cases}
\xrightarrow{\text{slides}}\quad
\begin{cases}
(u,\mathbf{0})\edge (u,\bw)\\
(v',\bb)\edge (u,\bw_1)\\
(v,\ba)\edge (u,\mathbf{0})
\end{cases}
$$
$$
\xrightarrow{\text{induction}}\quad
\begin{cases}
(u,\mathbf{0})\edge (u,\bw)\\
(v',\bb)\edge (u,\bw_1+\bw_2)\\
(v,\ba)\edge (u,\bw_2)
\end{cases}
\xrightarrow{\text{slides}}\quad
\begin{cases}
(u,\mathbf{0})\edge (u,\bw)\\
(v',\bb)\edge (u,\mathbf{0})\\
(v,\ba)\edge (u,\bw_2)
\end{cases}
$$
$$
\xrightarrow{\text{slides}}\quad
\begin{cases}
(v',\bb)\edge (v',\bb+\bw)\\
(v',\bb)\edge (u,\mathbf{0})\\
(v,\ba)\edge (v',\bb+\bw_2)
\end{cases}
\xrightarrow{\text{el. contr.}}\quad
\begin{cases}
(v',\bb)\edge (v',\bb+\bw)\\
(v,\ba)\edge (v',\bb+\bw_2)
\end{cases}
$$
and the statement follows from Lemma \ref{induction-from-others}.
\end{proof}

\begin{figure}[H]
	\centering
    \includegraphics[width=\textwidth]{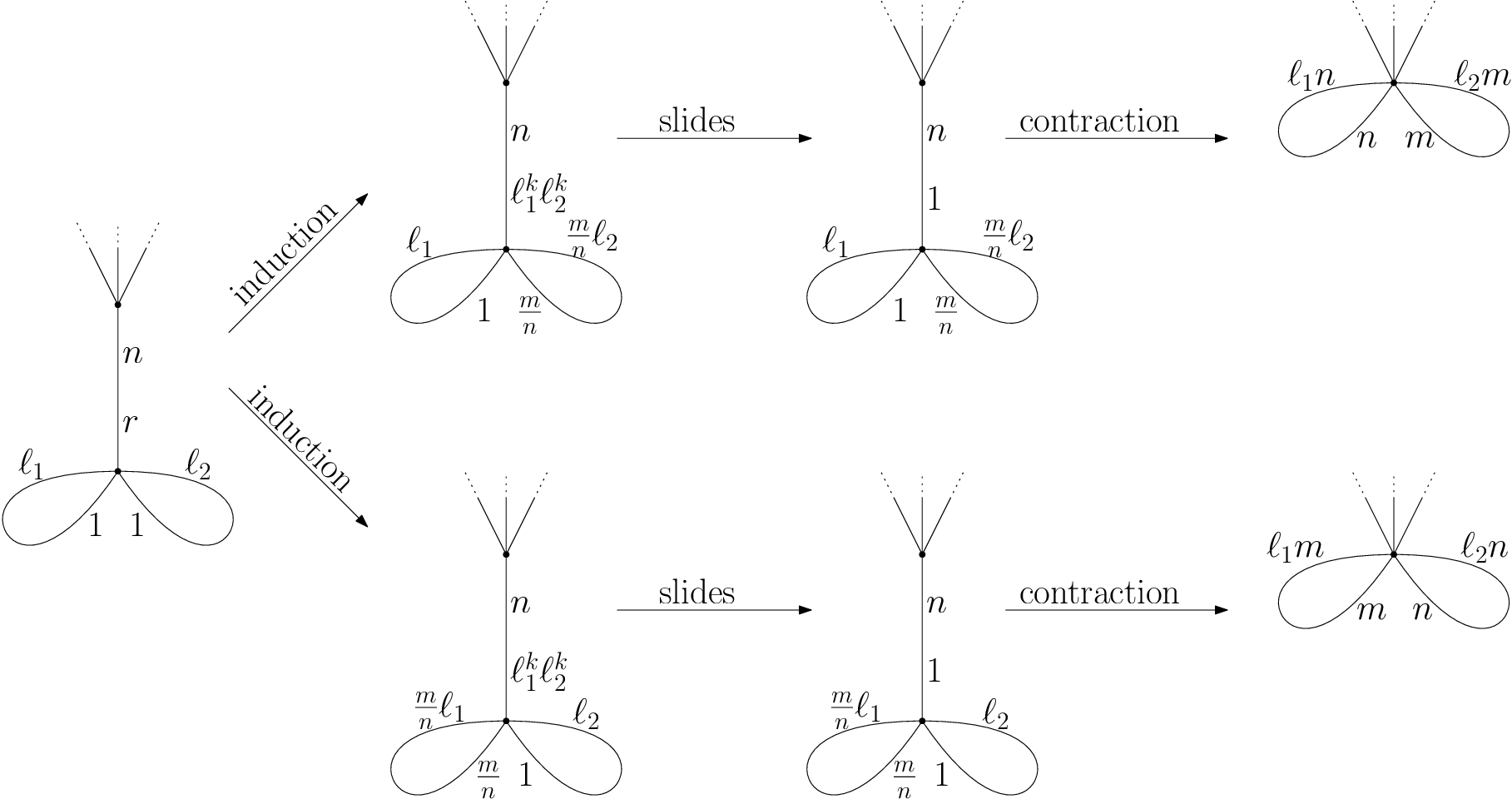}
	\caption{On the left we have a (portion of a) GBS graph, where $\ell_1,\ell_2,r,n \in \bbZ \setminus\{0\}$ 
	are such that $r\divides \ell_1^{k_1}$ and $r\divides\ell_2^{k_2}$ for some $k_1,k_2\ge0$; we also call $m=\ell_1^{k_1}\ell_2^{k_2}n/r$. The vertex below is ``redundant", as its generator can be conjugated inside the other vertex. However, there are two different ways of making this happen. 	 We can use $k_1$ times the edge that adds a factor $\ell_1$, and then move to the other vertex. Or we can use $k_2$ times the edge that gives a factor $\ell_2$, and then move to the other vertex. This gives us two different ways of collapsing the redundant vertex; one of them is the sequence of moves above, and the other is the sequence of moves below. The difference between these two sequences of moves is a swap move.
	}
	\label{fig:collapse-in-two-ways}
\end{figure}

\section{Controlled linear algebra}\label{sec:controlled-linear-algebra}

The main result of this section is Theorem \ref{thm:controlled}, stating that slide and swap moves allow to pass from a configuration to any other configuration that has the ``same linear algebra''. 
This is done under the hypothesis of having an edge that ``controls" all the others (see Definition \ref{def:control} below). The proof is quite technical and relies on the tools developed in \cite[Appendix B]{ACK-iso2}. We point out that the results of \cite[Appendix B]{ACK-iso2} are developed in more generality than what we are using here, as they are thought to be applied to a wider range of configurations. Moreover, the results of this section, combined with the subsequent Theorem \ref{thm:sequence-new-moves}, can be used to provide a complete description of the isomorphism problem for large families of GBSs (see \Cref{cor:iso-controlled}).

\begin{defn}[Support of a point]\label{def:support}
For $\ba=(a_0,a_1,a_2,\dots)\in\pA$ define its \textbf{support} as the set
$$\supp{\ba}:=\{i\ge1 : a_i\not=0\}\subseteq\bbN\setminus\{0\}.$$
\end{defn}
\begin{remark}
Notice that we omit the $\bbZ/2\bbZ$ component from the definition of support.
\end{remark}

\begin{defn}\label{def:control}
Let $\ba,\bb,\bw\in\pA$. We say that $\ba,\bw$ \textbf{controls} $\bb$ if any of the following equivalent conditions hold {\rm(}see also {\rm Figure \ref{fig:control})}:
\begin{enumerate}
\item We have $\ba\le \bb\le \ba+k\bw$ for some $k\in\bbN$.
\item We have $\bb-\ba\ge\mathbf{0}$ and $\supp{\bb-\ba}\subseteq\supp{\bw}$.
\end{enumerate}
\end{defn}

\begin{figure}[H]
	\centering
    
    \includegraphics[width=0.7\textwidth]{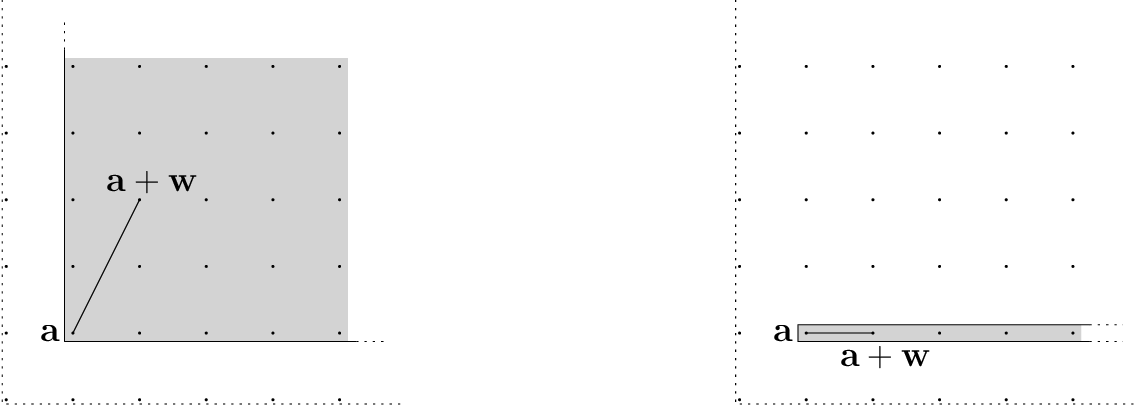}
    
    \caption{The region controlled by an edge $\ba\edge\ba+\bw$, in two different configurations.}
    \label{fig:control}
\end{figure}

\subsection{Generalized slide moves}

Let $(\Gamma,\psi)$ be a GBS graph and let $\Lambda$ be its affine representation. Fix a vertex $v\in V(\Gamma)$ and look at the copy $\pA_v$ inside $\Lambda$. The swap move allows us to change the two edges
$$
\begin{cases}
\ba\edge  \ba+\bw_1\\
\bb\edge  \bb+\bw_2
\end{cases}
\qquad\text{into}\qquad
\begin{cases}
\ba\edge  \ba+\bw_2\\
\bb\edge  \bb+\bw_1
\end{cases}
$$
provided that $\bw_1,\bw_2\in\pA$ and $\ba,\bw_1$ controls $\bb$ and $\ba,\bw_2$ controls $\bb$ (see Figure \ref{fig:controlled-swap}). We now show that actually, we can obtain much more flexibility, through the self-slide move (Lemma \ref{self-slide}, Figure \ref{fig:controlled-self-slide}) and the reverse slide move (Lemma \ref{reverse-slide}, Figure \ref{fig:controlled-reverse-slide}).

\begin{figure}[H]
	\centering

    \includegraphics[width=0.7\textwidth]{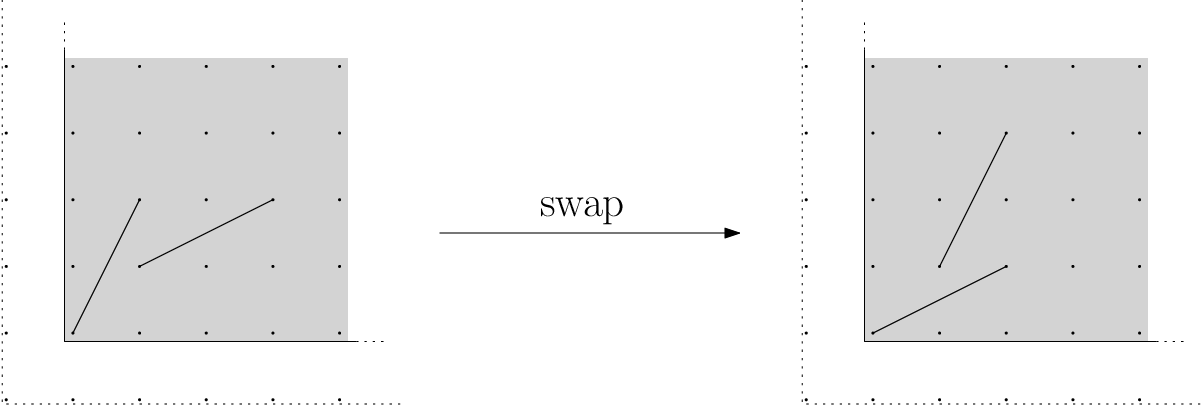}
    
	\caption{A swap move. In evidence the fact that $\ba,\bw_1$ and $\ba,\bw_2$ have to control $\bb$.}
	\label{fig:controlled-swap}
\end{figure}

\begin{lem}[Self-slide]\label{self-slide}
Let $\ba,\bb,\bw\in\pA$ and $\bx\in\bA$. Suppose that $\ba,\bw$ controls $\bb,\bb+2\bx$. Then we can change
$$
\begin{cases}
\ba\edge  \ba+\bw\\
\bb\edge  \bb+\bx
\end{cases}
\qquad\text{into}\qquad
\begin{cases}
\ba\edge  \ba+\bw\\
\bb+\bx\edge  \bb+2\bx
\end{cases}
$$
using a sequence of slides and swaps.
\end{lem}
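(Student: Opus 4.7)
The plan is to realize the self-slide by interleaving slides along the reference edge $e_1 = \ba \edge \ba + \bw$ with one or more swap moves, carefully positioned so the required support and control conditions are met at every step. At the group-theoretic level, the new generator $e_2'$ associated with the edge $\bb+\bx \edge \bb+2\bx$ is a specific word in $e_1$ and $e_2$ in the original fundamental group (not simply $e_2$ itself, unless $\bx \in \pA$), and the sequence of moves must trace out this change of generators combinatorially.

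First I would record a preliminary observation that follows from the hypothesis by averaging the two control inequalities coordinate-wise: in each coordinate $i$,
\[
(\bb+\bx-\ba)_i \;=\; \tfrac{1}{2}\bigl((\bb-\ba)_i+(\bb+2\bx-\ba)_i\bigr)\;\ge\;0,
\]
and similarly $\supp(\bb+\bx-\ba)\subseteq\supp\bw$. Hence $\ba,\bw$ also controls $\bb+\bx$, so $\bb+\bx\in\pA_v$ lies in the controlled region of $e_1$, and the target edge $\bb+\bx\edge\bb+2\bx$ is a legitimate edge of $\Lambda$.

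The main strategy is then to use slides along $e_1$ to shift the endpoints of $e_2$ into an auxiliary configuration where a swap move becomes legal. Concretely, when $\bx\in\pA$ and its support matches the ambient control, one can directly apply a swap to the pair $(e_1,e_2)$; this exchanges their translations at the two base points $\ba,\bb$, and a sequence of post-swap slides along the moved reference edge restores $e_1$ to its starting position while repositioning the $\bx$-edge so it is based at $\bb+\bx$. When $\bx\in\bA$ has mixed signs, the swap is not immediately applicable, and one must first ``push'' $e_2$ by a suitable multiple $k\bw$ using slides, so that the resulting translation $\bx+k\bw$ lands in $\pA$ and satisfies the swap's support/control requirements; one then applies the positive case and slides back by $-k\bw$ to recover the correct base positions.

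The main obstacle is the detailed bookkeeping: at every intermediate stage one must verify that all endpoints remain in $\pA$ (i.e.\ have nonnegative coordinates), and that the support and control conditions for each subsequent swap are still met. When $\bx$ mixes positive and negative components, balancing these contributions through repeated slides requires a systematic tool to organize the interleaved sequence of slides and swaps. This is precisely the role of the linear-algebraic framework from \cite[Appendix B]{ACK-iso2}, which is designed to handle exactly such ``controlled'' manipulations, and its invocation is what makes the otherwise short conceptual plan into a genuinely technical proof.
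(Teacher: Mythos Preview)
Your intuition is on the right track—sliding the second endpoint of $e_2$ by a large multiple $k\bw$ so that $\bx+k\bw\in\pA$ with full support is exactly how the paper begins—but your execution has a gap and your appeal to \cite[Appendix~B]{ACK-iso2} is misplaced. The self-slide lemma is elementary and is one of the basic building blocks used \emph{inside} the proof of Theorem~\ref{thm:controlled} (which is where Appendix~B is actually invoked); deferring to that machinery here inverts the logical order of the section and replaces a six-line computation with a black box.

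Concretely, a single swap does not suffice, contrary to your description of the ``positive case''. After the first swap you have $\ba\edge\ba+(\bx+k\bw)$ and $\bb\edge\bb+\bw$; any further slides of the first edge must use the second edge and can therefore only shift its endpoints by multiples of $\bw$, so the $\bx$-component of its translation cannot be removed by slides alone. The paper's proof uses \emph{two} swaps: after the first swap, slide both endpoints of the second edge along the (now long) first edge to reach $\bb+\bx+k\bw\edge\bb+\bx+k\bw+\bw$; then a second swap (legal because both $\ba,\bw$ and $\ba,\bx+k\bw$ control $\bb+\bx+k\bw$) restores the first edge to $\ba\edge\ba+\bw$ and leaves the second at $\bb+\bx+k\bw\edge\bb+2\bx+2k\bw$; finally slide both endpoints of the second edge back by $k\bw$. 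This is an explicit short sequence requiring no external references—write it out directly.
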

\begin{proof}
$$
\begin{cases}
\ba\edge  \ba+\bw\\
\bb\edge  \bb+\bx
\end{cases}
\xrightarrow{\text{slides}}\quad
\begin{cases}
\ba\edge  \ba+\bw\\
\bb\edge  \bb+\bx+k\bw
\end{cases}
\xrightarrow{\text{swap}}\quad
\begin{cases}
\ba\edge  \ba+\bx+k\bw\\
\bb\edge  \bb+\bw
\end{cases}
\xrightarrow{\text{slides}}
$$
$$
\xrightarrow{\text{slides}}\quad
\begin{cases}
\ba\edge  \ba+\bx+k\bw\\
\bb+\bx+k\bw\edge  \bb+\bx+k\bw+\bw
\end{cases}
\xrightarrow{\text{swap}}
$$
$$
\xrightarrow{\text{swap}}\quad
\begin{cases}
\ba\edge  \ba+\bw\\
\bb+\bx+k\bw\edge  \bb+2\bx+2k\bw
\end{cases}
\xrightarrow{\text{slides}}\quad
\begin{cases}
\ba\edge  \ba+\bw\\
\bb+\bx\edge  \bb+2\bx
\end{cases}
$$
\end{proof}

\begin{figure}[H]
	\centering

    \includegraphics[width=0.7\textwidth]{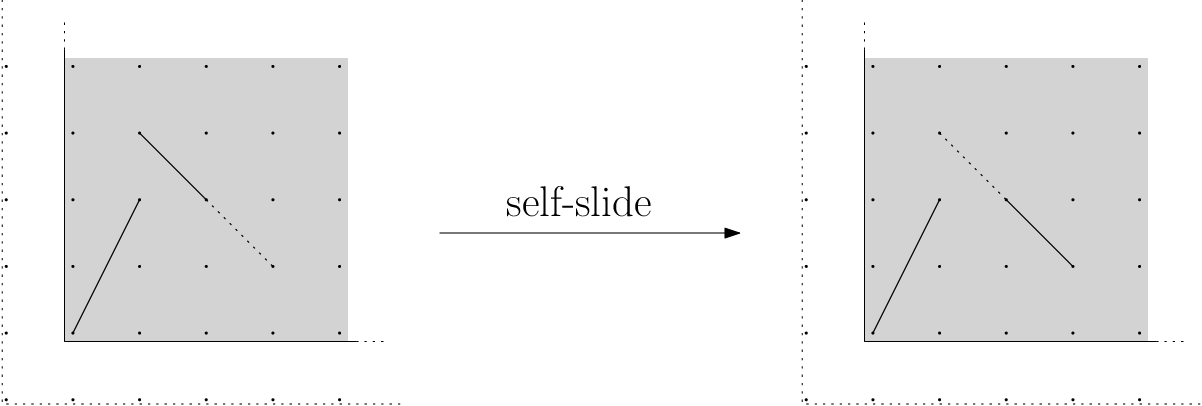}
    
	\caption{A self-slide move. In evidence the fact that $\ba,\bw$ has to control $\bb,\bb+2\bx$.}
	\label{fig:controlled-self-slide}
\end{figure}

\begin{lem}[Reverse slide]\label{reverse-slide}
Let $\ba,\bb,\bw\in\pA$ and $\bx\in\bA$ with $\bw+\bx\ge\mathbf{0}$. Suppose that $\ba,\bw$ controls $\bb,\bb+\bx$ and suppose that $\ba,\bw+\bx$ controls $\bb,\bb+\bx$. If $\pA_v$ contains edges $\ba\edge\ba+\bw$ and $\bb\edge\bb+\bx$, then we can change
$$
\begin{cases}
\ba\edge  \ba+\bw\\
\bb\edge  \bb+\bx
\end{cases}
\qquad\text{into}\qquad
\begin{cases}
\ba\edge  \ba+\bw+\bx\\
\bb\edge  \bb+\bx
\end{cases}
$$
by a sequence of slides and swaps.
\end{lem}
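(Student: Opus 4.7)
The plan is to follow a ``modify--swap--restore'' pattern. Before starting, I would record the following consequence of the hypotheses: $\supp{\bw}=\supp{\bw+\bx}$. Indeed, if some $i\in\supp{\bw}$ had $(\bw+\bx)_i=0$, then $\bx_i=-\bw_i<0$; but the inclusions $\supp{\bb-\ba}\subseteq\supp{\bw+\bx}$ and $\supp{\bb+\bx-\ba}\subseteq\supp{\bw+\bx}$ would force $(\bb-\ba)_i=(\bb+\bx-\ba)_i=0$, hence $\bx_i=0$, a contradiction. Similarly one checks $\supp{\bx}\subseteq\supp{\bw}$. This common-support property guarantees that $\bw+\bx$ plays the role of $\bw$ for all of the subsequent control-type conditions, and in particular that swap conditions phrased in terms of $\bw$ and in terms of $\bw+\bx$ will be interchangeable.

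For the modify step I would slide the endpoint $\bb+\bx$ of $e_2$ along $e_1$, moving it up to $\bb+\bx+\bw$; this slide is legal because $\bb+\bx-\ba\in\pA$ by hypothesis. The modified $e_2$ now runs from $\bb$ to $\bb+\bx+\bw$, with increment $\bw+\bx\in\pA$. The swap step is then the swap between $e_1$ (increment $\bw$) and the modified $e_2$ (increment $\bw+\bx$); the two required conditions, $\ba,\bw$ controls $\bb$ and $\ba,\bw+\bx$ controls $\bb$, are exactly two of our hypotheses. After the swap, $e_1$ carries the desired increment $\bw+\bx$, but $e_2$ has become $\bb\edge\bb+\bw$ rather than the target $\bb\edge\bb+\bx$.

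The hard part will be the restore step: transforming $e_2$ from $\bb\edge\bb+\bw$ back to $\bb\edge\bb+\bx$, using only the new $e_1$ (increment $\bw+\bx$) together with $e_2$ itself. A single slide along the new $e_1$ shifts endpoints by $\pm(\bw+\bx)$, which is generally not enough to realize the net change $\bx-\bw$ required on the $\tau$-endpoint of $e_2$; more delicately, because $\bx$ need not lie in $\pA$, one cannot directly invoke an analogous ``reverse slide'' recursively with roles exchanged. The restoration therefore has to proceed by an iterated combination of slides along the new $e_1$, additional swaps between $e_1$ and $e_2$, and self-slides (Lemma~\ref{self-slide}), choreographed so that every intermediate configuration still satisfies the appropriate control conditions. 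The precise orchestration is the main technical content, and I would carry it out by appealing to the combinatorial framework developed in Appendix~B of \cite{ACK-iso2}: its input conditions are tailored exactly to the two simultaneous control hypotheses (by $\bw$ and by $\bw+\bx$) that we have at our disposal, which is precisely why these hypotheses are included in the statement of the lemma.
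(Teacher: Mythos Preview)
Your modify--swap steps are exactly what the paper does, and your preliminary observation $\supp{\bw}=\supp{\bw+\bx}$ is correct (the paper derives the relevant inclusion $\bw\le (k+1)(\bw+\bx)$ from the same hypotheses). The gap is in your restore step: you punt to the external Appendix~B machinery of \cite{ACK-iso2} without giving an explicit sequence, and you overstate the difficulty. The restore is in fact a three-line computation using only Lemma~\ref{self-slide} (already proved in this section) and slides; no iterated swaps or heavy combinatorics are needed.

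Concretely, after your swap you have $e_1:\ba\edge\ba+\bw+\bx$ and $e_2:\bb\edge\bb+\bw$. Slide the $\bb$-endpoint of $e_2$ along the new $e_1$ to get $e_2:\bb+\bw+\bx\edge\bb+\bw$, which (reversing orientation) is $\bb+\bw\edge\bb+\bw+\bx$, an edge of increment $\bx$. Now a \emph{single} self-slide (Lemma~\ref{self-slide}) applies: one checks that $\ba,\bw+\bx$ controls both $\bb+\bw$ (this is the content of the inequality $\bw\le(k+1)(\bw+\bx)$) and $\bb+\bw+2\bx$ (immediate since $\bb+\bw+2\bx-\ba=(\bb+\bx-\ba)+(\bw+\bx)$ with both summands nonnegative and supported in $\supp{\bw+\bx}$). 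The self-slide yields $e_2:\bb+\bw+\bx\edge\bb+\bw+2\bx$; two further slides along $e_1$ (subtracting $\bw+\bx$ from each endpoint, which is legal since $\bb-\ba,\bb+\bx-\ba\ge\mathbf{0}$) bring $e_2$ to $\bb\edge\bb+\bx$, and you are done. So the missing idea is simply: one extra slide turns $e_2$ into an edge of increment $\pm\bx$, after which a single self-slide and two slides finish the job.
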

\begin{proof}
Since $\ba,\bw$ controls $\bb+\bx$ we have $-\bx\le\bb-\ba$. Since $\ba,\bw+\bx$ controls $\bb$ we have $\bb-\ba\le k(\bw+\bx)$ for some $k\in\bbN$. It follows that $-\bx\le k(\bw+\bx)$ meaning that $\bw\le (k+1)(\bw+\bx)$, and thus $\ba,\bw+\bx$ controls $\bb+\bw$. We now proceed as follows:
$$
\begin{cases}
\ba\edge  \ba+\bw\\
\bb\edge  \bb+\bx
\end{cases}
\xrightarrow{\text{slide}}\quad
\begin{cases}
\ba\edge  \ba+\bw\\
\bb\edge  \bb+\bw+\bx
\end{cases}
\xrightarrow{\text{swap}}\quad
\begin{cases}
\ba\edge  \ba+\bw+\bx\\
\bb\edge  \bb+\bw
\end{cases}
\xrightarrow{\text{slide}}
$$
$$
\xrightarrow{\text{slide}}\quad
\begin{cases}
\ba\edge  \ba+\bw+\bx\\
\bb+\bw+\bx\edge  \bb+\bw
\end{cases}
\xrightarrow{\text{self-slide}}\quad
\begin{cases}
\ba\edge  \ba+\bw+\bx\\
\bb+\bw+2\bx\edge  \bb+\bw+\bx
\end{cases}
\xrightarrow{\text{slides}}
$$
$$
\xrightarrow{\text{slides}}\quad
\begin{cases}
\ba\edge  \ba+\bw+\bx\\
\bb+\bx\edge  \bb
\end{cases}
\qquad\text{which is the same as}\quad
\begin{cases}
\ba\edge  \ba+\bw+\bx\\
\bb\edge  \bb+\bx
\end{cases}
$$
\end{proof}

\begin{figure}[H]
	\centering
    
    \includegraphics[width=0.7\textwidth]{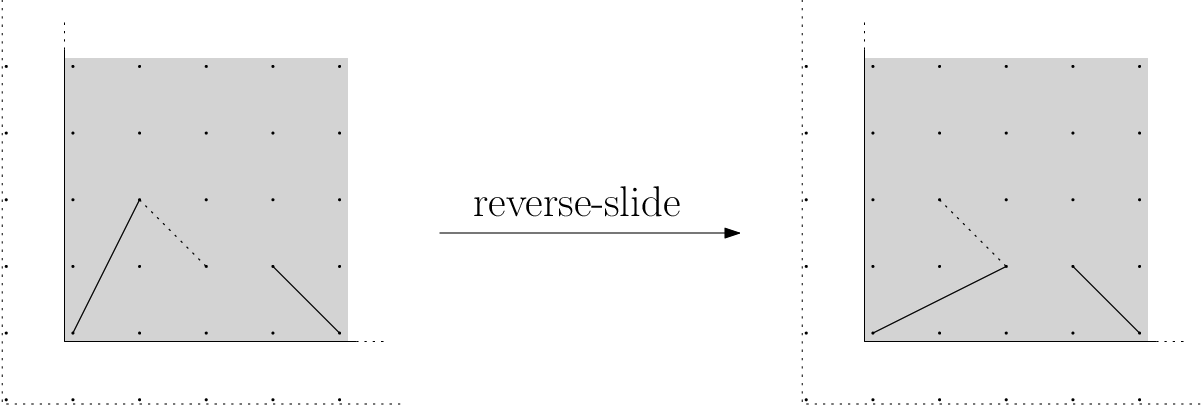}
    
    \caption{A reverse-slide move. In evidence the fact that $\ba,\bw$ and $\ba,\bw+\bx$ have to control $\bb,\bb+\bx$.}
	\label{fig:controlled-reverse-slide}
\end{figure}

\subsection{Controlled linear algebra}

\begin{lem}\label{controlled2}
Let $\ba,\bw,\bw',\bb,\bb'\in\pA$ and $\bx,\bx'\in\bA$. Suppose that we have the following:
\begin{enumerate}
\item $\ba,\bw$ controls $\bb,\bb+\bx$.
\item $\ba,\bw'$ controls $\bb',\bb'+\bx'$.
\item $\gen{\bw,\bx}=\gen{\bw',\bx'}\sgr\bA$.
\item $\bb'-\bb\in\gen{\bw,\bx}$.
\end{enumerate}
Then we can change
$$
\begin{cases}
\ba\edge  \ba+\bw\\
\bb\edge  \bb+\bx
\end{cases}
\qquad\text{into}\qquad
\begin{cases}
\ba\edge  \ba+\bw'\\
\bb'\edge  \bb'+\bx'
\end{cases}
$$
by a sequence of slides and swaps.
\end{lem}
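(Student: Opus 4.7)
The plan is first to deduce, from the two pairs of control hypotheses, that $\supp{\bx}, \supp{\bx'} \subseteq \supp{\bw} = \supp{\bw'}$, so that every relevant vector (the edge directions $\bw, \bx, \bw', \bx'$ and the displacements $\bb-\ba$, $\bb'-\ba$, $\bb'-\bb$) lives in the subgroup $H=\gen{\bw,\bx}=\gen{\bw',\bx'}$ and has support contained in the common set $\supp{\bw}$. Since both $(\bw,\bx)$ and $(\bw',\bx')$ generate $H$, they are related by a matrix $M\in\GL{2}{\bbZ}$, and the base displacement $\bb'-\bb$ is a $\bbZ$-linear combination of $\bw$ and $\bx$. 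In particular, the problem becomes a question of reachability under a finite set of elementary operations on the pair $(\bw,\bx)$ together with translations of the base $\bb$ inside the lattice $H$.

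My strategy is to realize the desired transformation as a composition of atomic moves of three types on the configuration $(\ba,\bw,\bb,\bx)$: (i) translating $\bb$ by $\bw$, via a slide along edge $1$; (ii) translating $\bb$ by $\bx$, via a self-slide (Lemma \ref{self-slide}); (iii) changing the pair $(\bw,\bx)\mapsto(\bw+\bx,\bx)$ (or its symmetric form $(\bw,\bw+\bx)$), via a reverse slide (Lemma \ref{reverse-slide}), and $(\bw,\bx)\mapsto(\bx,\bw)$, via a swap. Moves of type (iii), together with their inverses, realize all elementary matrices generating $\GL{2}{\bbZ}$, while moves of types (i) and (ii) realize any translation of the base by an element of $H$. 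Composing such atomic moves in the right order should bring $(\ba,\bw,\bb,\bx)$ to $(\ba,\bw',\bb',\bx')$.

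The main obstacle is that each atomic move requires its own control conditions, which are not automatically preserved along the sequence; in particular, Lemma \ref{reverse-slide} demands that $\ba,\bw+\bx$ also controls $\bb,\bb+\bx$, and some elementary $\GL{2}{\bbZ}$-steps may involve intermediate pairs whose first entry does not a priori lie in $\pA$. My remedy is a ``go-high'' trick: I first translate $\bb$ to $\bb+N\bw$ for a very large integer $N$, via $N$ slides along edge $1$. After this translation, the displacement $\bb-\ba$ has an arbitrarily large component in every direction of $\supp{\bw}=\supp{\bw'}$, which makes all subsequent control conditions trivially satisfied, regardless of the elementary basis-change being performed. With this ample room, I can apply any sequence of atomic moves of type (iii) to transform $(\bw,\bx)$ into $(\bw',\bx')$; inverses of reverse slides are themselves obtained by reversing the proof of Lemma \ref{reverse-slide}, which is a composition of reversible slides, swaps, and self-slides (each of whose control conditions is satisfied at the high base). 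Once the pair of edge directions is $(\bw',\bx')$, I slide down along the new $\bw'$-edge and apply self-slides along the new $\bx'$-edge to shift the base from its high position to the target $\bb'$; this is possible because $\bb'$ differs from the current base by an element of $H=\gen{\bw',\bx'}$, and any such translation is reachable from a sufficiently high starting point, concluding the argument.
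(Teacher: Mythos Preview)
Your overall strategy is sound and close in spirit to the paper's own argument: reduce to a $\GL{2}{\bbZ}$-change-of-basis on the pair of direction vectors, realize elementary steps by slides, swaps, and reverse slides, and use self-slides to move the base inside the lattice $H$. However, the ``go-high'' trick does not do what you claim it does, and this is where your argument has a real gap.

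Translating $\bb$ to $\bb+N\bw$ makes the displacement $\bb-\ba$ large in every coordinate of $S:=\supp{\bw}$, but this only takes care of half of the control hypotheses. To perform a swap $(\bw,\bx)\mapsto(\bx,\bw)$ you need $\ba,\bx$ to control $\bb$, i.e.\ $\bb-\ba\le k\bx$ for some $k$; after you have gone high this forces $\supp{\bx}=S$ and $\bx\in\pA$, which is a condition on the \emph{direction vector} $\bx$, not on the base. Likewise, a reverse slide (or its inverse) needs the new first direction $\bw\pm\bx$ to lie in $\pA$ with $\supp{\bw\pm\bx}=S$. None of these constraints is affected by how high $\bb$ sits, so ``going high'' does not make ``all subsequent control conditions trivially satisfied''. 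Concretely, along an arbitrary $\GL{2}{\bbZ}$-word in elementary matrices the intermediate first vector may leave $\pA$ or lose a support coordinate, and then the corresponding move is simply unavailable.

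What is missing is an argument that one can \emph{choose} a path of Nielsen moves from $(\bw,\bx)$ to $(\bw',\bx')$ along which the first direction stays in $\pA$ with full support $S$ throughout. This is precisely the non-trivial content of the paper's proof: it runs a carefully ordered Euclidean algorithm (casewise on whether $H\cong\bbZ$, $\bbZ\oplus\bbZ/2\bbZ$, or $\bbZ^2$) that manipulates the \emph{target} pair $(\bw',\bx')$ so that the controlling vector never drops out of $\pA$ or loses support. The analogous statement for longer tuples is exactly \cite[Theorem~B.1]{ACK-iso2}, which the paper invokes for Theorem~\ref{thm:controlled}; your sketch essentially presupposes that result without proving it for the two-edge case.
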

\begin{proof}
Since $\ba,\bw$ controls $\bb,\bb+\bx$, we must have $\supp{\bx}\subseteq\supp{\bw}$, and similarly $\supp{\bx'}\subseteq\supp{\bw'}$. Thus, the condition $\gen{\bw,\bx}=\gen{\bw',\bx'}$ implies that $\supp{\bw}=\supp{\bw'}$. The subgroup $\gen{\bw,\bx}=\gen{\bw',\bx'}$ is isomorphic to one of $1,\bbZ/2\bbZ,\bbZ,\bbZ\oplus(\bbZ/2\bbZ),\bbZ^2$. If it is isomorphic to $1$ or $\bbZ/2\bbZ$, then the statement is trivial. We deal with the other three cases.

CASE 1: Suppose $\gen{\bw,\bx}=\gen{\bw',\bx'}$ is isomorphic to $\bbZ$, and fix a generator $\bz$ so that $\bw=\lambda\bz$ and $\bx=\mu\bz$ and $\bw'=\lambda'\bz$ and $\bx'=\mu'\bz$. We can assume $\lambda,\lambda'>0$ (up to changing $\bz$ with $-\bz$). We can also assume $\mu>0$ (up to substituting $\bx$ with $\bx+k\bw$ with slide moves, for $k\in\bbN$ big enough). We can now perform the Euclidean algorithm on the two natural numbers $(\lambda,\mu)$ as follows. If $\lambda\le\mu$ then we change the edge $\bb\edge \bb+\bx$ into $\bb\edge \bb+\bx-\bw$ by slide, and we get the pair of natural numbers $(\lambda,\mu-\lambda)$. If $\lambda>\mu$ then we change the edge $\ba\edge \ba+\bw$ into $\ba\edge \ba+\bw-\bx$ by reverse slide (Lemma \ref{reverse-slide}), and we get the pair of natural numbers $(\lambda-\mu,\lambda)$. When $\mu=0$ we stop. Since $\gen{\bw,\bx}=\gen{\bz}$ this forces $\lambda=1$. We remain with the two edges $\ba\edge\ba+\bz$ and $\bb\edge\bb$. We proceed in the same way with the two edges $\ba\edge\ba+\bw'$ and $\bb'\edge\bb'+\bx'$ and we arrive at the configuration $\ba\edge\ba+\bz$ and $\bb'\edge\bb'$. Since $\bb'-\bb\in\gen{\bz}$, using slide moves we can change $\bb'$ into $\bb$. The statement follows.

CASE 2: Suppose $\gen{\bw,\bx}=\gen{\bw',\bx'}$ is isomorphic to $\bbZ\oplus\bbZ/2\bbZ$, and fix generators $\bz,\bt$ for $\bbZ\oplus\bbZ/2\bbZ$ where $\bt=(1,0,0,0,\dots)$. We write $\bw=\lambda\bz+\theta\bt$ and $\bx=\mu\bz+\rho\bt$ and we assume $\lambda>0$ (up to changing $\bz$ with $-\bz$). We now proceed as in Case 1, ignoring the components in $\bt$: we can assume $\mu>0$ and we perform the Euclidean algorithm on $(\lambda,\mu)$. Starting from $\ba\edge\ba+\bw$ and $\bb\edge\bb+\bx$ we arrive at the configuration $\ba\edge\ba+\bz$ and $\bb\edge\bb+\bt$. Similarly, from $\ba\edge\ba+\bw'$ and $\bb'\edge\bb'+\bx'$ we arrive at $\ba\edge\ba+\bz$ and $\bb'\edge\bb'+\bt$. Since $\bb'-\bb\in\gen{\bz,\bt}$, using slide moves and self-slide moves (Lemma \ref{self-slide}) we can change $\bb'$ into $\bb$. The statement follows.

CASE 3: Suppose that $\gen{\bw,\bx}=\gen{\bw',\bx'}$ is isomorphic to $\bbZ^2$. We can assume $\bx'\ge\bw'\ge\mathbf{0}$ (up to substituting $\bx'$ with $\bx'+k\bw'$ with slide moves, for $k\in\bbN$ big enough). In particular we have $\supp{\bx'}=\supp{\bw'}$. We write $\bw=\lambda\bw'+\mu\bx'$ for $\lambda,\mu\in\bbZ$ and we observe that at least one of $\lambda,\mu$ is $>0$; we can assume $\lambda>0$ (up to interchanging $\bw'$ and $\bx'$ with a swap move). If $\mu<0$ then we think of the edge $\bb'\edge \bb'+\bx'$ as an edge $(\bb'+\bx')\edge (\bb'+\bx')-\bx'$: we substitute $\bb'$ and $\bx'$ with $\bb'+\bx'$ and $-\bx'$ respectively, and $\mu$ with $-\mu$.

We thus assume $\bw=\lambda\bw'+\mu\bx'$ with $\lambda>0$ and $\mu\ge0$. We now perform the Euclidean algorithm on the two natural numbers $(\lambda,\mu)$ as follows. If $\lambda>\mu$ then we change the edge
$$\bb'\edge \bb'+\bx'$$
by slide into
$$\bb'\edge \bb'+\bx'+\bw'$$
and, since $\bw=(\lambda-\mu)\bw'+\mu(\bx'+\bw')$, we obtain the new pair $(\lambda-\mu,\mu)$. If instead $\lambda\le\mu$, we observe that $\mu(\bw'+\bx')=\bw+(\mu-\lambda)\bw'$ and thus $\bw'+\bx'\ge\mathbf{0}$ and $\supp{\bw'+\bx'}=\supp{\bw'}=\supp{\bw}$: we can change the edge
$$\ba\edge \ba+\bw'$$
using the reverse slide move of Lemma \ref{reverse-slide} into
$$\ba\edge \ba+(\bw'+\bx')$$
and, since $\bw=\lambda(\bw'+\bx')+(\mu-\lambda)\bx'$, we obtain the new pair $(\lambda,\mu-\lambda)$. When $\mu=0$ we stop.

Now we have $\bw=\lambda\bw'$ with $\lambda>0$, but since $\gen{\bw,\bx}=\gen{\bw',\bx'}\cong\bbZ^2$, this forces $\lambda=1$ and $\bw'=\bw$. Now the condition $\gen{\bw,\bx}=\gen{\bw,\bx'}$ forces $\bx=\bx'+\theta\bw$ for some $\theta\in\bbZ$. We take the edge $\bb'\edge \bb'+\bx'$ and we either add $\theta\bw$ to the second endpoint (if $\theta\ge0$) or we add $(-\theta)\bw$ to the first endpoint (if $\theta<0$). This has the effect of changing $\bx'$ into $\bx$. Finally, we observe that the condition $\bb'-\bb\in\gen{\bx,\bw}$ remained true along the whole procedure, and thus we can use slide moves and the self-slide move of Lemma \ref{self-slide} in order to get $\bb'=\bb$, as desired.
\end{proof}

\begin{thm}\label{thm:controlled}
Let $\ba,\bw,\bw',\bb_1,\dots,\bb_m,\bb_1',\dots,\bb_m'\in\pA$ and $\bx_1,\dots,\bx_m,\bx_1',\dots,\bx_m'\in\bA$ for $m\ge1$ integer. Suppose that we have the following:
\begin{enumerate}
\item $\ba,\bw$ controls $\bb_1,\bb_1+\bx_1,\dots,\bb_m,\bb_m+\bx_m$.
\item $\ba,\bw'$ controls $\bb_1',\bb_1'+\bx_1',\dots,\bb_m',\bb_m'+\bx_m'$.
\item $\gen{\bw,\bx_1,\dots,\bx_m}=\gen{\bw',\bx_1',\dots,\bx_m'}\sgr\bA$.
\item $\bb_1'-\bb_1,\dots,\bb_m'-\bb_m\in\gen{\bw,\bx_1,\dots,\bx_m}$.
\end{enumerate}
Then we can change
$$
\begin{cases}
\ba\edge\ba+\bw\\
\bb_1\edge\bb_1+\bx_1\\
\dots \\
\bb_m\edge\bb_m+\bx_m
\end{cases}
\qquad\text{into}\qquad
\begin{cases}
\ba\edge\ba+\bw'\\
\bb_1'\edge\bb_1'+\bx_1'\\
\dots \\
\bb_m'\edge\bb_m'+\bx_m'
\end{cases}
$$
by a sequence of slides and swaps.
\end{thm}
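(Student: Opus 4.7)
We proceed by induction on $m$, with the base case $m=1$ being exactly Lemma \ref{controlled2}.

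For the inductive step, assume the theorem holds for tuples of length $m-1$. The plan is first to use slides and swaps to match the last edge $\bb_m\edge\bb_m+\bx_m$ with its target $\bb_m'\edge\bb_m'+\bx_m'$ without destroying the hypotheses for the remaining $m-1$ edges, and then to invoke the induction hypothesis on the control edge together with those $m-1$ edges. The main difficulty is that $\gen{\bw,\bx_m}$ need not equal $\gen{\bw',\bx_m'}$, so Lemma \ref{controlled2} cannot be applied directly to just the pair $(\bw,\bx_m)$. To circumvent this, I would borrow span from the other $\bx_j$'s.

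To do so, the key intermediate step is the following row operation: for any $i\ne j$ and any $k\in\bbZ$, one can change $\bx_i$ into $\bx_i+k\bx_j$ using only slides and swaps. The implementation is (i) slide $\bx_j\to\bx_j+N\bw$ for $N\in\bbN$ so large that $\supp{\bx_j+N\bw}=\supp{\bw}$; (ii) swap $\bw\leftrightarrow(\bx_j+N\bw)$, which is now legal because the support equality implies $\ba,\bx_j+N\bw$ controls each $\bb_\ell,\bb_\ell+\bx_\ell$; (iii) use the new control edge $\ba\edge\ba+(\bx_j+N\bw)$ to slide $\bx_i$ by $k(\bx_j+N\bw)$; (iv) swap back; (v) undo the preliminary $\bw$-shifts in $\bx_j$ and remove the extra $kN\bw$ introduced into $\bx_i$. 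The net effect is $\bx_i\to\bx_i+k\bx_j$, with every intermediate configuration satisfying the control conditions demanded by the swap and slide moves employed.

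Combined with the operations already available, namely $\bx_i\to\bx_i+k\bw$ (slide), $\bw\to\bw+k\bx_i$ (reverse slide, Lemma \ref{reverse-slide}), and $\bw\leftrightarrow\bx_i$ (swap), the above row operation realizes the full set of $\GL{m+1}{\bbZ}$ elementary transformations on the generating tuple $(\bw,\bx_1,\dots,\bx_m)$. Since $\gen{\bw,\bx_1,\dots,\bx_m}=\gen{\bw',\bx_1',\dots,\bx_m'}=L$, a standard Smith-normal-form argument shows that both tuples are connected by a finite sequence of such elementary operations. Performing the corresponding sequence of moves, we reach a configuration in which $\bx_m=\bx_m'$; the hypothesis $\bb_m'-\bb_m\in L$ then lets us translate $\bb_m$ to $\bb_m'$ using slides (multiples of $\bw$) together with self-slides from Lemma \ref{self-slide} (multiples of the $\bx_i$'s). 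At this point the $m$-th edge matches its target, the remaining edges still satisfy the hypotheses of the theorem, and the induction hypothesis applied to the first $m-1$ edges completes the proof.

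The main obstacle will be the bookkeeping required to verify that every intermediate configuration satisfies the control conditions of each swap and reverse slide. The decisive observation, used repeatedly, is that once $\supp{\bx_j}$ has been enlarged to equal $\supp{\bw}$ via a slide $\bx_j\to\bx_j+N\bw$, all subsequent control hypotheses needed to interchange the roles of $\bw$ and $\bx_j$ follow automatically from the original hypothesis that $\ba,\bw$ controls each $\bb_\ell,\bb_\ell+\bx_\ell$, together with the containment $\supp{\bb_\ell-\ba}\subseteq\supp{\bw}=\supp{\bx_j+N\bw}$. Making this precise, and tracking how each $\bb_\ell$ is shifted under each elementary operation (so that the final translations to the target $\bb_\ell'$ remain within $L$), is the technical heart of the argument.
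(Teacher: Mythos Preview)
Your overall strategy—realize Nielsen-type operations on the tuple $(\bw,\bx_1,\dots,\bx_m)$ via slides, swaps, and reverse slides, then invoke connectivity of generating tuples of $L$—is precisely the spirit of the paper's proof. The inductive framework on $m$ is, however, unnecessary and slightly misleading: once the full Nielsen connectivity is used to bring the whole tuple to $(\bw',\bx_1',\dots,\bx_m')$, all that remains is to translate each $\bb_i$ to $\bb_i'$ by slides and self-slides, which the paper does directly with no induction. (If instead you intend to match only $\bx_m$ and then induct, note that the hypothesis $\gen{\bw,\bx_1,\dots,\bx_{m-1}}=\gen{\bw',\bx_1',\dots,\bx_{m-1}'}$ can fail.)

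The genuine gap is in the positivity/control conditions, which you correctly flag as the main obstacle but do not resolve. Your ``decisive observation'' (pump $\bx_j$ to full support before swapping) handles the single row operation $\bx_i\to\bx_i+k\bx_j$, but it does not handle $\bw\to\bw+k\bx_i$ for $k<0$, nor does it prevent the support of the control vector from shrinking along a chain of moves: for instance with $\bw=(0,3,3)$ and $\bx_i=(0,3,1)$ one has $\supp{\bw-\bx_i}\subsetneq\supp{\bw}$, so after such a step any edge with $\bb_\ell-\ba$ supported on the lost coordinate is no longer controlled and further swaps or reverse slides are blocked. Showing that one can navigate from one generating tuple to the other \emph{while every intermediate tuple remains in the controllable region} is exactly the content the paper outsources to \cite[Theorem~B.1]{ACK-iso2}: after lifting to the torsion-free cover $\bbZ^I$, that theorem furnishes a sequence of (relative) Nielsen moves in which every intermediate $(m{+}1)$-tuple is ``$1$-big'', and only then is each step realized as a slide or reverse slide. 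Your sketch does not supply an argument of comparable strength to replace this external result, so as written the proof is incomplete.
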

\begin{proof}
The proof is based on the results of \cite[Appendix B]{ACK-iso2}. The result for $m=1$ is the previous Lemma \ref{controlled2}, and thus we can assume $m\ge2$. Notice that $\supp{\bw}=\supp{\bw'}$. For $i=1,\dots,m$, and using slide moves, we can change $\bx_i$ to $\bx_i+C\bw$ for any $C\ge0$; thus we can assume $\supp{\bx_i}=\supp{\bw}$. Similarly we can assume $\supp{\bx_i'}=\supp{\bw'}$ for $i=1,\dots,m$.

Let $I=\supp{\bw}\cup\{0\}=\supp{\bw'}\cup\{0\}$. Consider the abelian group $\bbZ^I$ and consider the natural map $\pi:\bbZ^I\rar\bA$, and let $v=(2,0,0,\dots,0)\in\bbZ^I$ be the generator of $\ker\pi$. We choose liftings $w,w',x_i,x_i'\in\bbZ^I$ such that $\pi(w)=\bw,\pi(w')=\bw',\pi(x_i)=\bx_i,\pi(x_i')=\bx_i'$ for $i=1,\dots,m$; since $\supp{\bw}=\supp{\bw'}=\supp{\bx_i}=\supp{\bx_i'}=I\setminus\{0\}$, we can choose $w,w',x_i,x_i'$ in such a way that they are all $1$-big (according to \cite[Appendix B]{ACK-iso2}). By \cite[Theorem B.1]{ACK-iso2}, there is a sequence of Nielsen moves and Nielsen moves relative to $v$ going from $w,x_1,\dots,x_m$ to $w',x_1',\dots,x_m'$ such that every $(m+1)$-tuple along the sequence is $1$-big.

If there is a (relative) Nielsen move going from $u,y_1,\dots,y_m$ to $u',y_1',\dots,y_m'$, and if both the $(m+1)$-tuples are $1$-big, then we can change
$$
\begin{cases}
\ba\edge\ba+\pi(u)\\
\bb_1\edge\bb_1+\pi(y_1)\\
\dots \\
\bb_m\edge\bb_m+\pi(y_m)
\end{cases}
\quad=\qquad
\begin{cases}
\ba\edge\ba+\bw\\
\bb_1\edge\bb_1+\bx_1\\
\dots \\
\bb_m\edge\bb_m+\bx_m
\end{cases}
$$
into
$$
\begin{cases}
\ba\edge\ba+\pi(u')\\
\bb_1\edge\bb_1+\pi(y_1')\\
\dots \\
\bb_m\edge\bb_m+\pi(y_m')
\end{cases}
\quad=\qquad
\begin{cases}
\ba\edge\ba+\bw'\\
\bb_1\edge\bb_1+\bx_1'\\
\dots \\
\bb_m\edge\bb_m+\bx_m'
\end{cases}
$$
by slide moves and reverse slide moves (Lemma \ref{reverse-slide}). To conclude, for $i=1,\dots,m$ we change $\bb_i$ into $\bb_i'$ in the above writing, using slide moves and self-slide moves (Lemma \ref{self-slide}). The statement follows.
\end{proof}

\begin{figure}[H]
   	\centering

    \includegraphics[width=0.9\textwidth]{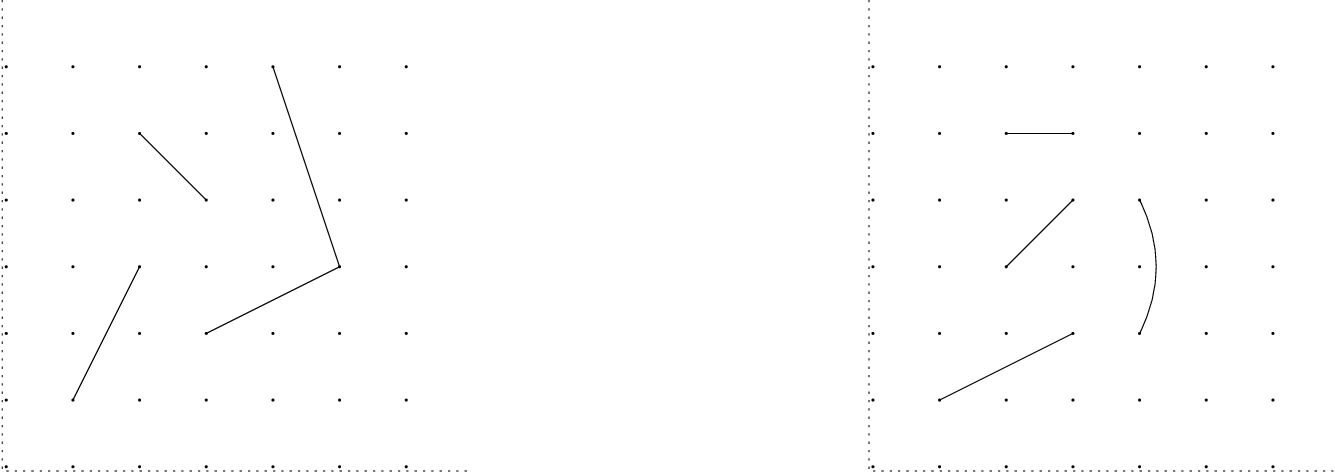}
    
    \caption{The two configurations on the figure are equivalent, provided that the two sets of edges have ``the same linear algebra" (i.e. if they generate the same subgroup, and if there are the same number of edges in each coset of such subgroup). If a GBS graph $(\Gamma,\psi)$ has only one vertex, and if there is one edge that controls all the others (as in the figure), then this gives us a complete description of the isomorphism problem for $(\Gamma,\psi)$; this will be made formal in \Cref{sec:sequences-of-moves}.}
    \label{fig:controlled-linear-algebra}
\end{figure}

\section{The coarse projection map}\label{sec:coarse-projection-map}

As already was observed by Forester \cite{For06}, some vertices in a given GBS graph are unnecessary, as their stabilizer is conjugated into the stabilizer of some other vertex group. Forester calls such vertices ``horizontal", while we refer to them as ``redundant" (see Definition \ref{def:redundant} below), to emphasize that they somehow do not bring any new information. In Section 4 of \cite{For06}, the author describes a procedure to eliminate a redundant vertex from a GBS graph, explaining how to produce a \textit{fully reduced} GBS graph. In Section \ref{sec:projection}, we describe a variant of the procedure that makes use of subsequent swap moves instead of subsequent induction moves - this helps us make the computations cleaner in the subsequent lemmas.

The crucial point is the following one: given a GBS graph, there might be several ways of eliminating a given redundant vertex, that is, different elimination processes leading to different resulting GBS graphs. The main aim of this section is to address the following question: what are all the possible resulting GBS graphs, obtained from a given one by eliminating a certain redundant vertex? The key result of this section is Proposition \ref{prop:indep-data}, which states the following: given a GBS graph $(\Gamma,\psi)$ and a redundant vertex $v\in V(\Gamma)$, all GBS graphs obtained from $(\Gamma,\psi)$ by eliminating $v$ are related by a sequence of sign-changes, slides, swaps and connections. In particular, we do not need to add any vertex to move from one to another; this will be crucial in the proof of the main Theorem \ref{thm:sequence-new-moves} in the next section.

\begin{defn}[Redundant vertex]\label{def:redundant}
Let $(\Gamma,\psi)$ be a GBS graph and let $\Lambda$ be its affine representation. A vertex $v\in V(\Gamma)$ is \textbf{redundant} if the origin $(v,\mathbf{0})\in\pA_v$ is conjugated to some point $p\not\in\pA_v$ {\rm(}see {\rm Definition \ref{def:conjugacy})}.
\end{defn}

\begin{remark}
Suppose $(\Gamma,\psi)$ is the GBS graph associated with the GBS graph of groups $\cG$. Then a vertex $v$ is redundant if and only if in the fundamental group $\pi_1(\cG)$ the conjugacy class $[v]$ coincides with the conjugacy class $[u^k]$ for some other vertex $u\not=v$ and some $k\in\bbZ$.
\end{remark}

\subsection{Sets of collapsing data}

\begin{defn}[Collapsing data]\label{def:collapsing-data}
Let $(\Gamma,\psi)$ be a GBS graph and let $\Lambda$ be its affine representation. A \textbf{set of collapsing data} for a vertex $v\in V(\Gamma)$ is given by pairwise distinct edges $e_1,\dots,e_n,e$ of $\Lambda$ satisfying the following properties:
\begin{enumerate}
\item For $j=1,\dots,n$ we have that $e_j$ goes from $(v,\ba_j)$ to $(v,\bb_j)$.
\item For $j=1,\dots,n$ we have $\supp{\ba_j}\subseteq(\supp{\bb_{j-1}}\cup\ldots \cup\supp{\bb_1})$.
\item The edge $e$ goes from $(v,\bc)$ to $(u,\bd)$ with $u\not=v$.
\item We have $\supp{\bc}\subseteq(\supp{\bb_n}\cup\ldots \cup\supp{\bb_1})$.
\end{enumerate}
\end{defn}

Notice that the element $\ba_1$ of the above Definition \ref{def:collapsing-data} has to be equal to either $(0,0,0,\dots)$ or $(1,0,0,\dots)$, since $\supp{\ba_1}=\emptyset$ (see Figure \ref{fig:collapsing-data}). The relevant edges in a set of collapsing data are the ones that make us gain new components in the support: if an edge $e_j$ is such that $\supp{\bb_j}\subseteq(\supp{\bb_{j-1}}\cup\ldots \cup\supp{\bb_1})$, then we can remove $e_j$ from the set of collapsing data to gain a shorter set of collapsing data $e_1,\dots,e_{j-1},e_{j+1},\dots,e_n,e$. Whenever we gain a new component in the support, we have to gain it with strictly positive increment: if $\ba_j=(a_{j0},a_{j1},a_{j2},\dots)$ and $\bb_j=(b_{j0},b_{j1},b_{j2},\dots)$, then for every $i\in\supp{\bb_j}\setminus(\supp{\bb_{j-1}}\cup\ldots \cup\supp{\bb_1})$ we must have $b_{ji}-a_{ji}\ge1$.

\begin{figure}[H]
	\centering

    \includegraphics[width=0.55\textwidth]{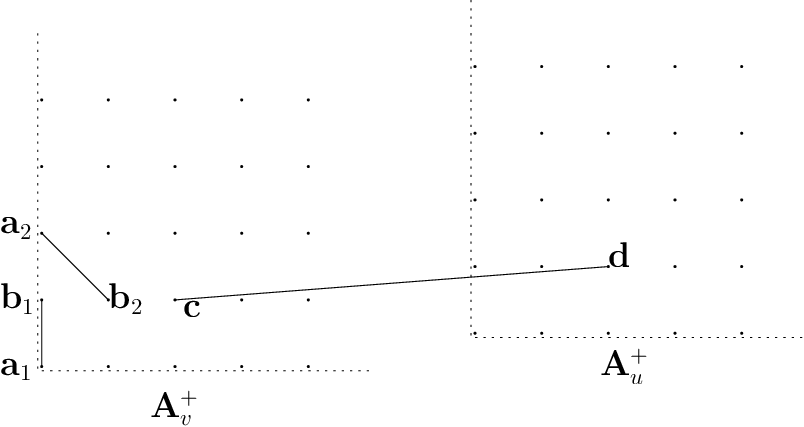}
    
	\caption{An example of a set of collapsing data for a vertex $v$ of a GBS graph.}
	\label{fig:collapsing-data}
\end{figure}

\begin{prop}\label{redundant-collapse}
If a vertex $v\in V(\Gamma)$ is redundant then it has a set of collapsing data.
\end{prop}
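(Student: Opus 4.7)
The plan is to take an affine path witnessing the redundancy of $v$ and read off the collapsing edges directly from it. By \Cref{def:redundant}, there exists an affine path $(f_1,\dots,f_\ell)$ in $\Lambda$ from $(v,\mathbf{0})$ to some point $p\notin\pA_v$, with translation coefficients $\bw_1,\dots,\bw_\ell\in\pA$. Let $j$ be the smallest index such that $\tau(f_j)\notin\pA_v$; such a $j$ exists because the path eventually leaves $\pA_v$. Then $f_1,\dots,f_{j-1}$ are edges with both endpoints in $\pA_v$, while $f_j$ has its initial endpoint in $\pA_v$ and its terminal endpoint in $\pA_u$ for some $u\neq v$. I will set $e:=f_j$ and construct $e_1,\dots,e_n$ as a pairwise distinct subsequence of $f_1,\dots,f_{j-1}$.

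The key computation is a straightforward induction verifying the support conditions. Writing $\iota(f_i)=(v,\alpha_i)$ and, for $i<j$, $\tau(f_i)=(v,\beta_i)$, with $\alpha_i,\beta_i\in\pA$, the defining equations of an affine path translate to $\alpha_1+\bw_1=\mathbf{0}$ and $\beta_i+\bw_i=\alpha_{i+1}+\bw_{i+1}$ for $i=1,\dots,j-1$. Since the sum of two elements of $\pA$ has support equal to the union of their supports, an induction on $i$ yields
\[
\supp{\alpha_i}\cup\supp{\bw_i}\subseteq\supp{\beta_1}\cup\dots\cup\supp{\beta_{i-1}}
\]
for every $i\le j$ (the right-hand side being $\emptyset$ when $i=1$).

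Finally, I will take $e_1,\dots,e_n$ to be the subsequence of $f_1,\dots,f_{j-1}$ obtained by keeping only the first occurrence of each distinct edge. Writing $\iota(e_r)=(v,\ba_r)$, $\tau(e_r)=(v,\bb_r)$, and $\iota(e)=(v,\bc)$, the inclusion above, together with the observation that discarding duplicates leaves the accumulated union of supports unchanged, yields the required bounds $\supp{\ba_r}\subseteq\supp{\bb_1}\cup\dots\cup\supp{\bb_{r-1}}$ and $\supp{\bc}\subseteq\supp{\bb_1}\cup\dots\cup\supp{\bb_n}$. Distinctness of $e$ from each $e_r$ is automatic, since $\tau(e)\notin\pA_v$ while $\tau(e_r)\in\pA_v$. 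The only mildly delicate step is the de-duplication, which is needed because the affine path may traverse the same edge more than once; beyond that, no geometric obstacle arises.
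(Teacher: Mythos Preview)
Your proof is correct and follows essentially the same approach as the paper's: the paper takes an affine path of minimum length (which forces all but the last edge to stay in $\pA_v$), while you take an arbitrary path and truncate at the first exit from $\pA_v$, but the subsequent support induction and de-duplication are identical. The only cosmetic difference is that the paper's minimality argument and your first-exit argument are two equivalent ways of arranging that $f_1,\dots,f_{j-1}$ have both endpoints in $\pA_v$.
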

\begin{remark}
The converse is also true, see Proposition \ref{collapse-redundant} below.
\end{remark}
\begin{proof}
If $v$ is redundant, then there is an affine path from $(v,\mathbf{0})$ to some point $p\not\in\pA_v$, see Definition \ref{defn:affine path}. Let us take such an affine path $(e_1,\dots,e_n,e)$ of minimum possible length, with translation coefficients
$\bw_1,\dots,\bw_n,\bw\in\pA$; this forces the endpoints of the edges $e_1,\dots,e_n$ to belong to $\pA_v$ (otherwise we can just truncate the path to find a shorter one). For $j=1,\dots,n$, let us call $(v,\ba_j)$ and $(v,\bb_j)$ the endpoints of $e_j$: since the path starts at the origin we must have $\supp{\ba_1}=\supp{\bw_1}=\emptyset$ and by induction we obtain that $\supp{\ba_j},\supp{\bw_j}\subseteq(\supp{\bb_{j-1}}\cup\ldots \cup\supp{\bb_1})$. If $e$ goes from $(v,\bc)$ to $(u,\bd)$ then we must have $\supp{\bc}\subseteq\supp{\bb_n}\cup\supp{\bw_n}\subseteq\supp{\bb_n}\cup\ldots \cup \supp{\bb_1}$. Finally, if an edge appears several times in the sequence $e_1,\dots,e_n$, we remove all the occurrences except the first. The result is a set of collapsing data for $v$.
\end{proof}

\subsection{Sets of collapsing constants}

Let $(\Gamma,\psi)$ be a GBS graph and let $\Lambda$ be its affine representation. Denote by
$$K=\max\{\abs{a_i} : \tau(e)=(v,(a_0,a_1,a_2,\dots)) \text{ for some } e\in E(\Lambda) \text{ and } i\ge1\}.$$

\begin{defn}[Collapsing constants]
Let $v\in V(\Gamma)$ be a redundant vertex and let $e_1,\dots,e_n,e$ be a set of collapsing data. A \textbf{set of collapsing constants} is given by integers $k_1,\dots,k_n$ which are strictly bigger than $K$.
\end{defn}

Let $v$ be a redundant vertex. Let $e_1,\dots,e_n,e$ be a set of collapsing data for $v$, where $e_j$ goes from $(v,\ba_j)$ to $(v,\bb_j)$, and $e$ goes from $(v,\bc)$ to $(u,\bd)$ for some $u\not=v$, as in Definition \ref{def:collapsing-data}. Let $k_1,\dots,k_n$ be a set of collapsing constants. For $j=1,\dots,n$ define inductively $\bw_j$ with $\bw_1=\bb_1-\ba_1$ and $\bw_j=(\bb_j-\ba_j)+k_{j-1}\bw_{j-1}$; this means that
$$\bw_j=(\bb_j-\ba_j)+k_{j-1}(\bb_{j-1}-\ba_{j-1})+k_{j-1}k_{j-2}(\bb_{j-2}-\ba_{j-2})+\dots +k_{j-1}k_{j-2}\dots k_1(\bb_1-\ba_1).$$

\begin{lem}\label{lemmaw}
For $j=1,\dots,n$ we have $\bw_j\ge\mathbf{0}$ and $\supp{\bw_j}\supseteq(\supp{\bb_j}\cup\ldots \cup\supp{\bb_1})$.
\end{lem}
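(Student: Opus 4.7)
The plan is to prove both claims simultaneously by induction on $j$, exploiting the facts that (i) the constants $k_{j-1}$ are chosen strictly larger than the bound $K$ controlling the size of the entries of all edge endpoints, and (ii) the support condition in the definition of collapsing data forces $\ba_j$ to vanish in any coordinate not already captured by $\supp{\bb_1}\cup\dots\cup\supp{\bb_{j-1}}$.

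For the base case $j=1$, by Definition \ref{def:collapsing-data} we have $\supp{\ba_1}\subseteq\emptyset$, so all coordinates $(\ba_1)_i$ with $i\ge 1$ vanish. Hence $(\bw_1)_i=(\bb_1)_i\ge 0$, with strict inequality exactly on $\supp{\bb_1}$, giving both claims.

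For the inductive step, assume $\bw_{j-1}\ge\mathbf{0}$ and $\supp{\bw_{j-1}}\supseteq\supp{\bb_{j-1}}\cup\dots\cup\supp{\bb_1}$. Fix $i\ge 1$ and analyze $(\bw_j)_i = (b_{ji}-a_{ji})+k_{j-1}(\bw_{j-1})_i$ in three cases. If $i\in\supp{\bb_{j-1}}\cup\dots\cup\supp{\bb_1}$, induction gives $(\bw_{j-1})_i\ge 1$, so $k_{j-1}(\bw_{j-1})_i\ge k_{j-1}>K\ge|a_{ji}|$, while $b_{ji}\ge 0$, forcing $(\bw_j)_i\ge 1$. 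If $i\in\supp{\bb_j}\setminus(\supp{\bb_{j-1}}\cup\dots\cup\supp{\bb_1})$, then $i\notin\supp{\ba_j}$ by the collapsing data condition, so $a_{ji}=0$, and combined with $b_{ji}\ge 1$ and $k_{j-1}(\bw_{j-1})_i\ge 0$ we obtain $(\bw_j)_i\ge 1$. Finally, if $i$ lies outside $\supp{\bb_j}\cup\dots\cup\supp{\bb_1}$, then $b_{ji}=0$, $a_{ji}=0$ (again by $\supp{\ba_j}\subseteq\supp{\bb_{j-1}}\cup\dots\cup\supp{\bb_1}$), and $(\bw_{j-1})_i\ge 0$ from induction, giving $(\bw_j)_i\ge 0$.

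No real obstacle arises; the only subtlety is the choice of the constant $K$, and verifying that $k_{j-1}>K$ is large enough to dominate the worst-case negative contribution $a_{ji}\ge -K$ coming from $\bb_j-\ba_j$. Since $\bw_{j-1}$ is non-negative (and strictly positive on the relevant coordinates) by induction, multiplying by $k_{j-1}$ gives us exactly the margin we need. The argument is essentially a careful bookkeeping on coordinates, split according to how $i$ compares with the cumulative support.
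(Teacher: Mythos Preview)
Your proof is correct and follows essentially the same inductive argument as the paper: split coordinates according to whether they lie in the previously accumulated support $\supp{\bb_{j-1}}\cup\dots\cup\supp{\bb_1}$, in the new part $\supp{\bb_j}$, or in neither, and use $k_{j-1}>K$ to absorb the possibly negative contribution of $-a_{ji}$ in the first case. If anything, your write-up is slightly more explicit than the paper's, since you spell out the third case (coordinates outside the full accumulated support), which the paper leaves implicit when concluding $\bw_j\ge\mathbf{0}$.
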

\begin{proof}
Since $\supp{\ba_1}=\emptyset$ we have $\bw_1=\bb_1-\ba_1\ge\mathbf{0}$ and the components of $\bw_1$ in $\supp{\bb_1}$ must be strictly positive. We now proceed by induction on $j$. By definition $\bw_j=(\bb_j-\ba_j)+k_{j-1}\bw_{j-1}$. Since $\supp{\ba_j}\subseteq(\supp{\bb_{j-1}}\cup\ldots \cup\supp{\bb_1})$, the components of $\bw_j$ in $\supp{\bb_j}\setminus(\supp{\bb_{j-1}}\cup\ldots \cup\supp{\bb_1})$ must be strictly positive. Since $k_{j-1}\ge K+1$, and since by inductive hypothesis the components of $\bw_{j-1}$ in $(\supp{\bb_{j-1}}\cup\ldots \cup\supp{\bb_1})$ are strictly positive, we obtain that the components of $\bw_j$ in $(\supp{\bb_{j-1}}\cup\ldots \cup\supp{\bb_1})$ are strictly positive. Thus $\supp{\bw_j}\supseteq(\supp{\bb_j}\cup\ldots \cup\supp{\bb_1})$ as desired.
\end{proof}

\begin{lem}\label{lemmaw-span}
For $j=1,\dots,n$ we have $\gen{\bw_1,\dots,\bw_j}=\gen{\bb_1-\ba_1,\dots,\bb_j-\ba_j}$.
\end{lem}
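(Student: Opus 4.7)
The plan is a straightforward induction on $j$, using the fact that the recursive definition of $\bw_j$ is invertible modulo the previously defined $\bw_i$'s.

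For the base case $j=1$, by definition $\bw_1 = \bb_1 - \ba_1$, so both subgroups coincide with $\gen{\bb_1 - \ba_1}$.

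For the inductive step, assume $\gen{\bw_1,\dots,\bw_{j-1}} = \gen{\bb_1-\ba_1,\dots,\bb_{j-1}-\ba_{j-1}}$. The defining recursion $\bw_j = (\bb_j - \ba_j) + k_{j-1}\bw_{j-1}$ immediately gives the inclusion $\bw_j \in \gen{\bb_j - \ba_j,\, \bw_{j-1}}$, which combined with the inductive hypothesis shows $\gen{\bw_1,\dots,\bw_j} \subseteq \gen{\bb_1-\ba_1,\dots,\bb_j-\ba_j}$. For the reverse inclusion, solve the recursion for $\bb_j - \ba_j = \bw_j - k_{j-1}\bw_{j-1} \in \gen{\bw_1,\dots,\bw_j}$, and apply the inductive hypothesis to the remaining generators $\bb_i - \ba_i$ for $i < j$.

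There is no real obstacle here: the statement is purely a linear-algebra identity about the triangular change of basis defined by the recursion, and does not use any property of the collapsing constants beyond their existence. In particular, Lemma \ref{lemmaw} is not needed; the conclusion holds regardless of the signs or supports of the $\bw_j$'s.
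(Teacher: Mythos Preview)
Your proof is correct and takes exactly the same approach as the paper, which simply writes ``Induction on $j$.'' You have merely spelled out the details of that induction, and your observation that Lemma \ref{lemmaw} is not needed here is accurate.
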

\begin{proof}
Induction on $j$.
\end{proof}

\begin{prop}\label{collapse-redundant}
If a vertex $v\in V(\Gamma)$ has a set of collapsing data, then it is redundant.
\end{prop}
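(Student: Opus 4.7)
The plan is to construct an explicit affine path from $(v,\mathbf{0})$ to some vertex in $\pA_u$ with $u \neq v$, using the collapsing data $e_1,\dots,e_n,e$ together with a set of collapsing constants $k_1,\dots,k_n$. I will define walks $W_j$ recursively and verify by induction that each yields a valid affine path starting at $(v,\mathbf{0})$ and ending at $(v,\bw_j)$, where the $\bw_j$ are the vectors already introduced just before Lemma \ref{lemmaw}.

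Concretely, set $W_1 = (e_1)$ and inductively $W_j = W_{j-1}^{k_{j-1}} \cdot (e_j)$. The base case is immediate because $\supp{\ba_1} = \emptyset$, so $e_1$ can be traversed from $(v,\mathbf{0})$ with translation coefficient $-\ba_1 \in \pA$ (the only nonzero coordinate is the $\bbZ/2\bbZ$ one, which causes no issue), landing us at $(v,\bb_1 - \ba_1) = (v,\bw_1)$. For the inductive step I use the obvious translation invariance of affine paths: if a sequence of edges realizes an affine path from $(v,\bp)$ to $(v,\bq)$, shifting all its translation coefficients by any $\bx \in \pA$ gives an affine path from $(v,\bp+\bx)$ to $(v,\bq+\bx)$. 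Applying this to $W_{j-1}$ with shifts $\mathbf{0}, \bw_{j-1}, 2\bw_{j-1}, \dots, (k_{j-1}-1)\bw_{j-1}$ and concatenating yields a valid path $W_{j-1}^{k_{j-1}}$ from $(v,\mathbf{0})$ to $(v,k_{j-1}\bw_{j-1})$. To append $e_j$ I need $k_{j-1}\bw_{j-1} \geq \ba_j$ coordinate-wise, which is precisely where Lemma \ref{lemmaw} and the assumption $k_{j-1} > K$ come in: $\supp{\ba_j} \subseteq \supp{\bw_{j-1}}$, every nonzero coordinate of $\bw_{j-1}$ is at least $1$, and the corresponding coordinate of $\ba_j$ has absolute value at most $K$. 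The end position is then $(v, k_{j-1}\bw_{j-1} + (\bb_j - \ba_j)) = (v,\bw_j)$.

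To leave $\pA_v$, I iterate $W_n$ one final time using the collapsing constant $k_n$, reaching $(v,k_n\bw_n)$; by the same support plus size argument, $k_n\bw_n \geq \bc$ coordinate-wise, because $\supp{\bc} \subseteq \supp{\bb_1} \cup \dots \cup \supp{\bb_n} \subseteq \supp{\bw_n}$ by Lemma \ref{lemmaw} and $k_n > K \geq |\bc_i|$ in each such coordinate. Therefore $e$ can be traversed with translation coefficient $k_n\bw_n - \bc \in \pA$, landing at the point $(u,\bd + k_n\bw_n - \bc) \in \pA_u$. Since $u \neq v$, this point lies outside $\pA_v$, so $(v,\mathbf{0})$ is conjugate to a vertex not in $\pA_v$, and $v$ is redundant by Definition \ref{def:redundant}.

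The only substantive verifications are the coordinate-wise inequalities $k_{j-1}\bw_{j-1} \geq \ba_j$ and $k_n\bw_n \geq \bc$, both of which follow directly from Lemma \ref{lemmaw} combined with the defining bound $k_j > K$; I expect this to be the only mildly delicate part of the argument, and it is handled uniformly by the same support-and-size estimate. Everything else is a clean recursion plus translation invariance of affine paths.
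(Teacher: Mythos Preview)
Your proof is correct and follows essentially the same approach as the paper: both construct an explicit affine path from $(v,\mathbf{0})$ to a point of $\pA_u$ using the collapsing data, a set of collapsing constants, and Lemma~\ref{lemmaw} for the coordinate-wise inequalities. The only difference is organizational: the paper writes the path as consecutive blocks $e_1^{k_n\cdots k_1} e_2^{k_n\cdots k_2}\cdots e_n^{k_n} e$, reaching $(v,k_n\cdots k_j\bw_j)$ after each block, whereas you build it recursively via $W_j=W_{j-1}^{k_{j-1}}e_j$ and then take $W_n^{k_n}e$; both routes arrive at $(v,k_n\bw_n)$ before the final edge and rely on the identical support-and-size estimate.
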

\begin{proof}
Suppose $e_1,\dots,e_n,e$ is a set of collapsing data for $v$, where $e_j$ goes from $(v,\ba_j)$ to $(v,\bb_j)$ for $j=1,\dots,n$ and where $e$ goes from $(v,\bc)$ to $(u,\bd)$. Then we choose a set of collapsing constants $k_1,\dots,k_n$ and we use them to construct a path going from $(v,\mathbf{0})$ to a vertex outside $\pA_v$, as follows. We start at $(v,\mathbf{0})$, we follow $k_n\dots k_1$ times $e_1$ and we arrive at $(v,k_n\dots k_1\bw_1)$ (which belongs to $\pA$ by Lemma \ref{lemmaw}). Then we follow $k_n\dots k_2$ times $e_2$ and we arrive at $(v,k_n\dots k_2\bw_2)$ (which belongs to $\pA$ by Lemma \ref{lemmaw} again). Then we follow $k_n\dots k_3$ times $e_3$ and we arrive at $(v,k_n\dots k_3\bw_3)$. And so on, until we arrive at $(v,k_n\bw_n)$. Since by Lemma \ref{lemmaw} $k_n\bw_n\ge\bc$, we can now use $e$ to move outside $\pA_v$, and the statement follows.
\end{proof}

\subsection{Projection move}\label{sec:projection}

In this section, we introduce a new move, the projection move, which removes a redundant vertex. The specifics of the move will become clearer in the proof of Proposition \ref{projection}, where we describe it as a sequence of basic moves (that preserve isomorphism).

\begin{defn}[Projection move]\label{defn:Projection_move}
Let $(\Gamma,\psi)$ be a GBS graph and let $\Lambda$ be its affine representation. Let $v\in V(\Gamma)$ be a redundant vertex and let $e_1,\dots,e_n,e$ and $k_1,\dots,k_n$ be a set of collapsing data and a set of collapsing constants. Following the notation of {\rm Definition \ref{def:collapsing-data}}, for $j=1,\dots,n$ we say that $e_j$ goes from $(v,\ba_j)$ to $(v,\bb_j)$, and $e$ goes from $(v,\bc)$ to $(u,\bd)$; moreover, we define $\bw_1,\dots,\bw_n$ as in {\rm Lemma \ref{lemmaw}}, and we call $\bw=k_n\bw_n-\bc\ge\mathbf{0}$.

We substitute the edges $e_1,\dots,e_n$ with edges $e_1',\dots,e_n'$ as follows:
$$
\begin{cases}
(v,\ba_1)\edge  (v,\bb_1)\\
(v,\ba_2)\edge  (v,\bb_2)\\
(v,\ba_3)\edge  (v,\bb_3)\\
\dots \\
(v,\ba_n)\edge  (v,\bb_n)
\end{cases}
\quad\text{becomes}\qquad
\begin{cases}
(u,\bd)\edge  (u,\bd+\bw_n)\\
(u,\bd+\bw+\ba_2)\edge  (u,\bd+\bw+\ba_2+\bw_1)\\
(u,\bd+\bw+\ba_3)\edge  (u,\bd+\bw+\ba_3+\bw_2)\\
\dots \\
(u,\bd+\bw+\ba_n)\edge  (u,\bd+\bw+\ba_n+\bw_{n-1})
\end{cases}
$$

We remove the edge $e$ from the affine representation. We identify $\pA_v$ with a subset of $\pA_u$ by identifying the point $(v,\bx)$ with $(u,\bd+\bw+\bx)$ for all $\bx\in\pA$ and use this identification to transfer the remaining edges in $\pA_v$ to $\pA_u$. The other $\pA_w$ for $w\ne u, v$ in $\Lambda$ remain unchanged. We obtain a new graph $\Lambda'$ which is the affine representation of a GBS graph $(\Gamma',\psi')$. We say that $(\Gamma',\psi')$ is obtained from $(\Gamma,\psi)$ by a \textbf{projection move} {\rm(}relative to the redundant vertex $v$, the collapsing data $e_1,\dots,e_n,e$ and the collapsing constants $k_1,\dots,k_n${\rm)}.
\end{defn}

\begin{prop}\label{projection}
Every projection move can be written as a composition of edge sign-changes, slides, swaps, and inductions, followed by an elementary contraction.
\end{prop}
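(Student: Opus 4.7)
My plan is to prove the proposition by explicit construction, exhibiting a sequence of edge sign-changes, slides, inductions, and swaps that realize the projection move, and ending with a single elementary contraction of the edge $e$. The construction will take place in the affine representation and will rely on the arithmetic from Lemma \ref{lemmaw}: the identity $\bw_j = (\bb_j - \ba_j) + k_{j-1} \bw_{j-1}$ and the support containment $\supp{\ba_j} \subseteq \supp{\bw_{j-1}}$, together with the dominance $k_j > K$ that makes all required shifts admissible.

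First, I would apply an edge sign-change to $e_1$ if necessary so that $\ba_1 = \mathbf{0}$; this is the unique normalization available, since $\supp{\ba_1} = \emptyset$ forces $\ba_1\in\{(0,0,\dots),(1,0,\dots)\}$. After this step, $e_1$ is the ascending loop $(v,\mathbf{0}) \edge (v,\bw_1)$ at the origin of $\pA_v$.

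Next, I would process the remaining loops $e_2, \dots, e_n$ iteratively. Assuming that $e_1,\dots,e_{j-1}$ have already been transformed into ascending loops based at the origin with widths $\bw_1,\dots,\bw_{j-1}$, I would apply an induction along $e_{j-1}$ with shift $\bw_{j-1}' := k_{j-1} \bw_{j-1} - \ba_j$. The hypothesis $k_{j-1} > K$ guarantees $\bw_{j-1}'\ge\mathbf{0}$, while $\supp{\ba_j}\subseteq\supp{\bw_{j-1}}$ guarantees $\supp{\bw_{j-1}'}\subseteq\supp{\bw_{j-1}}$, so the move is admissible. The induction translates $\iota(e_j)$ from $\ba_j$ to $k_{j-1}\bw_{j-1}$; subsequently, $k_{j-1}$ slides along $\ol{e_{j-1}}$ bring it down to the origin, leaving $\tau(e_j)$ at $\bw_j$, as required. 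Simultaneously, each induction shifts the endpoints of $e$ and of $e_{j+1},\dots,e_n$ (and of any other edge incident to $\pA_v$) by the same $\bw_{j-1}'$; this side-effect must be tracked throughout. Once all loops have been standardized, a final induction along the loop of width $\bw_n$ combined with $\ol{e_n}$-slides will translate $\iota(e)$ to $(v,\mathbf{0})$, which is possible precisely because $\supp{\bc}\subseteq\supp{\bw_n}$. At that point $\psi(\ol{e}) = \pm1$; I then slide every remaining edge with a $v$-endpoint across $e$ into $\pA_u$, performing any swaps needed to match the final base-points, and apply the elementary contraction to remove $v$ and $e$.

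The main obstacle I expect is the bookkeeping of simultaneous shifts and the realization of the cyclic width-permutation hidden inside the projection move: the original widths $(\bw_1, \bb_2-\ba_2,\dots,\bb_n-\ba_n)$ of the loops $e_1,\dots,e_n$ become, after the projection, the widths $(\bw_n, \bw_1,\dots,\bw_{n-1})$, and the final loops are positioned not at the origin but at offsets $\bw+\ba_j$ inside $\pA_u$. Verifying that the accumulated inductions, slides, and swaps place each loop $e_j'$ at exactly the prescribed position $(u, \bd + \bw + \ba_j) \edge (u, \bd + \bw + \ba_j + \bw_{j-1})$ (and $e_1'$ at $(u,\bd)\edge(u,\bd+\bw_n)$) requires repeatedly invoking the identity $\bw = k_n\bw_n - \bc$ and the recursive formula for $\bw_j$, and carefully bookkeeping how each move affects every other edge. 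A clean presentation will likely split the argument into local verifications, using the derived moves from Section \ref{sec:moves} (especially Lemma \ref{swap-from-others}) to absorb some of the coordination into already-established sequences.
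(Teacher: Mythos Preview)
Your approach has a genuine gap in the inductive step. You claim that after processing step $j$ the loops $e_1,\dots,e_{j-1}$ are all ascending loops based at the origin; but the induction move along $e_{j-1}$ translates \emph{every} endpoint in $\pA_v$ except those of $e_{j-1}$ itself. In particular it shifts the previously normalized loops $e_1,\dots,e_{j-2}$ away from the origin by $\bw_{j-1}'$, so the hypothesis is not preserved. At best only $e_{j-1}$ and $e_j$ sit at the origin after step $j$. If one corrects the hypothesis to track only $e_{j-1}$, the iteration still runs, but then a careful accounting shows that after the final induction the loop $e_j$ (for $1\le j\le n-1$) sits at position $\bw+\ba_{j+1}-k_j\bw_j$ in $\pA_v$, whereas the projection move places the corresponding $e_{j+1}'$ at $\bd+\bw+\ba_{j+1}$ in $\pA_u$. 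The discrepancy $k_j\bw_j$ cannot be repaired by a swap (swaps exchange widths, they do not translate base points), and sliding a loop along itself is not permitted; so your closing phrase ``performing any swaps needed to match the final base-points'' hides a real difficulty rather than a routine bookkeeping step.

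The paper's proof avoids all of this by reversing the roles of swap and induction. Instead of repeated inductions, it keeps every $e_j$ based at its original $\ba_j$ and uses slides followed by a swap with $e_1$ to pass the growing width $\bw_j$ onto $e_1$: after the $j$-th swap one has $e_1$ at $(\ba_1,\ba_1+\bw_j)$ and $e_j$ at $(\ba_j,\ba_j+\bw_{j-1})$. Since each swap touches only two edges, there is no cascading shift and the cyclic width permutation $(\bw_n,\bw_1,\dots,\bw_{n-1})$ appears for free. Only at the very end is a single induction along $e_1$ applied, with shift $\bw=k_n\bw_n-\bc$, placing every remaining endpoint exactly at $\bw+\ba_j$ as required; then the edge $e$ is slid to $(v,\mathbf{0})\edge(u,\bd)$, everything is transferred to $\pA_u$, a sign-change removes $\ba_1$, and the contraction finishes. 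The moral is that swaps are the right tool for rearranging widths because they are local; inductions are global and should be used sparingly, not as the engine of the recursion.
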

\begin{proof}
The proof is illustrated in Figure \ref{fig:projection-move}. We first focus on the edge $e_1,e_2$ inside $\pA_v$:
$$
\begin{cases}
\ba_1\edge  \ba_1+\bw_1\\
\ba_2\edge  \bb_2\\
\ba_3\edge  \bb_3\\
\dots 
\end{cases}
\xrightarrow{\text{slides}}\quad
\begin{cases}
\ba_1\edge  \ba_1+\bw_1\\
\ba_2\edge  \bb_2+k_1\bw_1\\
\ba_3\edge  \bb_3\\
\dots 
\end{cases}
\xrightarrow{\text{swap}}\quad
\begin{cases}
\ba_1\edge  \ba_1+\bw_2\\
\ba_2\edge  \ba_2+\bw_1\\
\ba_3\edge  \bb_3\\
\dots 
\end{cases}
$$
We now focus on the edges $e_1,e_3$:
$$
\begin{cases}
\ba_1\edge  \ba_1+\bw_2\\
\ba_2\edge  \ba_2+\bw_1\\
\ba_3\edge  \bb_3\\
\dots 
\end{cases}
\xrightarrow{\text{slides}}\quad
\begin{cases}
\ba_1\edge  \ba_1+\bw_2\\
\ba_2\edge  \ba_2+\bw_1\\
\ba_3\edge  \bb_3+k_2\bw_2\\
\dots 
\end{cases}
\xrightarrow{\text{swap}}\quad
\begin{cases}
\ba_1\edge  \ba_1+\bw_3\\
\ba_2\edge  \ba_2+\bw_1\\
\ba_3\edge  \bb_3+\bw_2\\
\dots 
\end{cases}
$$
We continue like this by induction until we arrive at
$$
\begin{cases}
\ba_1\edge  \ba_1+\bw_n\\
\ba_2\edge  \ba_2+\bw_1\\
\ba_3\edge  \ba_3+\bw_2\\
\dots \\
\ba_n\edge  \ba_n+\bw_{n-1}
\end{cases}
$$
We then apply an induction move, and we translate by $\bw=k_n\bw_n-\bc$ all the endpoints of edges that belong to $\pA_v$, except for the endpoints of the edge $(v,\ba_1)\edge  (v,\ba_1+\bw_n)$. The edge $e$ becomes $(v,k_n\bw_n)\edge  (u,\bd)$ and we slide it to change it into $(v,\mathbf{0})\edge  (u,\bd)$. Finally we use this edge $(v,\mathbf{0})\edge  (u,\bd)$ in order to move everything else from $\pA_v$ to $\pA_u$. The edges $e_1,\dots,e_n$ become
$$
\begin{cases}
(u,\bd+\ba_1)\edge  (u,\bd+\ba_1+\bw_n)\\
(u,\bd+\bw+\ba_2)\edge  (u,\bd+\bw+\ba_2+\bw_1)\\
(u,\bd+\bw+\ba_3)\edge  (u,\bd+\bw+\ba_3+\bw_2)\\
\dots \\
(u,\bd+\bw+\ba_n)\edge  (u,\bd+\bw+\ba_n+\bw_{n-1})
\end{cases}
$$
and any other endpoint $(v,\bx)$ of any other edge is moved to $(u,\bd+\bw+\bx)$. Since $\ba_1$ is either $(0,0,0,\dots)$ or $(1,0,0,\dots)$, we can apply an edge sign-change to remove the $\ba_1$ summands in the endpoints of the first edge. To conclude, we remove the vertex $v$ and the edge $e$ with an elementary contraction move.
\end{proof}

\begin{figure}[H]
	\centering

    \includegraphics[width=\textwidth]{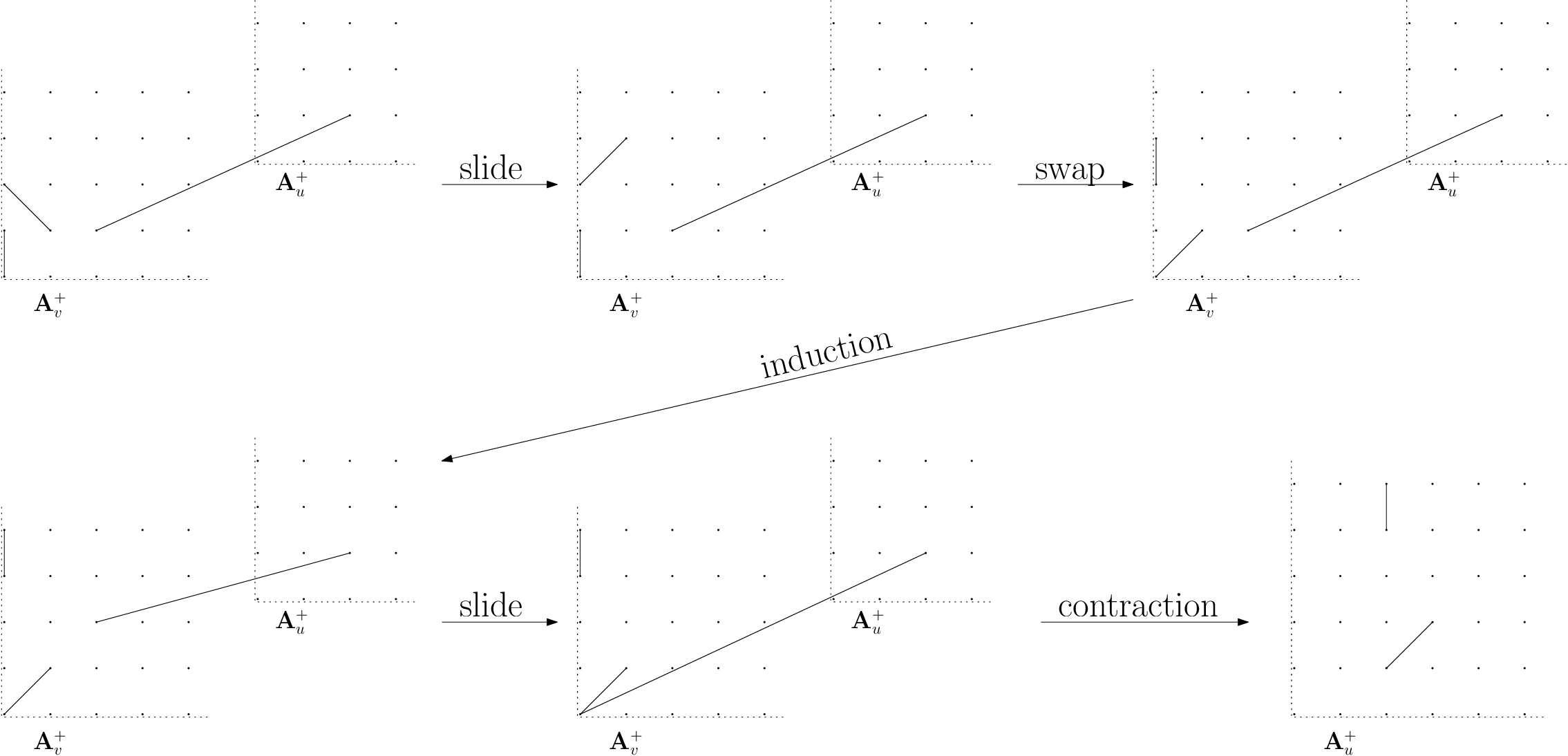}
    
	\caption{How to remove a redundant vertex using a set of collapsing data. From the last graph in the sequence, we can just slide everything out of the redundant vertex, and then perform a collapse.}
	\label{fig:projection-move}
\end{figure}

\subsection{The coarse projection map}

The goal of this section is to show that the projection move does not depend on the choice of the collapsing data and collapsing constants, up to edge sign-changes, slides, swaps, and connections. The proof relies on Theorem \ref{thm:controlled}.

\begin{prop}[Independence of the collapsing constants]\label{indep-constants}
Let $(\Gamma,\psi)$ be a GBS graph and let $v$ be a redundant vertex. Let $e_1,\dots,e_n,e$ be a collapsing data for $v$. Let $k_1,\dots,k_n$ and $k_1',\dots,k_n'$ be two sets of collapsing constants and let $(\Delta,\phi)$ and $(\Delta',\phi')$ be the GBS graphs obtained after applying the two projection moves with collapsing constants $k_1,\dots,k_n$ and $k_1',\dots,k_n'$, respectively. Then, there is a sequence of slides and swaps going from $(\Delta',\phi')$ to $(\Delta,\phi)$.
\end{prop}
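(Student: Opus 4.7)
The plan is to split the transformation into two stages. After the projection with constants $(k_j)$, the $n$ collapsing-data loops sit inside $\pA_u$ as
\[
L_1=\bd\edge\bd+\bw_n,\qquad L_j=\bd+\bw+\ba_j\edge\bd+\bw+\ba_j+\bw_{j-1}\quad(j=2,\dots,n),
\]
where $\bw=k_n\bw_n-\bc$, and every other edge originally touching $\pA_v$ has its $\pA_v$-endpoint translated by $\bw$. The primed data $(k_j')$ produces analogous loops $L_j'$ using the vectors $\bw_j'$ and translation $\bw'=k_n'\bw_n'-\bc$. Lemma \ref{lemmaw-span} furnishes the key equality
\[
\gen{\bw_1,\dots,\bw_n}=\gen{\bw_1',\dots,\bw_n'}=\gen{\bb_j-\ba_j:1\le j\le n},
\]
so in particular $\bw-\bw'=k_n\bw_n-k_n'\bw_n'$ lies in this common subgroup.

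For Stage~1 I would apply Theorem \ref{thm:controlled} with $\ba=\bd$, controlling vectors $\bw_n$ and $\bw_n'$, basepoints $\bb_j=\bd+\bw+\ba_j$ and $\bb_j'=\bd+\bw'+\ba_j$, and side-vectors $\bx_j=\bw_{j-1}$ and $\bx_j'=\bw_{j-1}'$ for $j=2,\dots,n$. The four hypotheses of that theorem follow from direct support computations: iterating Lemma \ref{lemmaw} together with $\supp{\ba_j}\subseteq\supp{\bb_1}\cup\cdots\cup\supp{\bb_{j-1}}$ gives $\supp{\bw_j}\subseteq\supp{\bw_n}$ for every $j\le n$, and the choice $k_n>K$ in the definition of collapsing constants yields $\supp{\bw}=\supp{\bw_n}$ (likewise for the primed data), which together ensure hypotheses~(1)--(2); hypothesis~(3) is Lemma \ref{lemmaw-span}; and hypothesis~(4) is the observation above. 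The slides and swaps produced by the theorem act only on the specified $n$-tuple of loops and therefore leave every other edge of the graph untouched.

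For Stage~2 I would shift each remaining $\pA_u$-endpoint originally transferred from $\pA_v$ from its current position $\bd+\bw+\by$ to the desired $\bd+\bw'+\by$. Writing $\bw'-\bw=\sum_{j=1}^n\alpha_j\bw_j'$ with $\alpha_j\in\bbZ$, each atomic shift $\pm\bw_j'$ can be realized as a slide against the loop $L_1'$ (if $j=n$) or $L_{j+1}'$ (if $j<n$). To ensure the positivity required by a slide against $L_{j+1}'$, I would first pre-shift the endpoint upward by a sufficiently large multiple of $\bw_n'$ via repeated slides against $L_1'$, then perform the desired shifts, and finally slide back down by the same pre-shift; since $\supp{\ba_{j+1}}$ and $\supp{\bw'}$ are both contained in $\supp{\bw_n'}$, a large enough pre-shift keeps every intermediate endpoint inside $\pA_u$. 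For an edge with both endpoints originally in $\pA_v$ the procedure is carried out on the two endpoints independently, which preserves the edge's intrinsic delta.

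The main technical step is verifying the control conditions (1)--(2) of Theorem \ref{thm:controlled} in Stage~1; once this is in place, the transformation of the collapsing-data loops follows from the theorem, and Stage~2 is essentially an implementation of a linear-algebraic displacement by a bounded sequence of slides, with pre-shifting used purely to keep every intermediate endpoint inside $\pA_u$.
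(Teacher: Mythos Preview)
Your proof is correct and follows essentially the same approach as the paper: apply Theorem~\ref{thm:controlled} to the $n$ collapsing-data loops (using Lemma~\ref{lemmaw-span} for the span equality and the support inclusions from Lemma~\ref{lemmaw} for the control hypotheses), then realign the remaining transferred endpoints by slides along the newly-positioned loops. The paper's proof is terser---it simply asserts that the hypotheses of Theorem~\ref{thm:controlled} hold and that slides suffice for the remaining endpoints---whereas you spell out the support computations and the pre-shift trick explicitly, but the underlying argument is the same.
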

\begin{proof}
We follow the notation of Definition \ref{def:collapsing-data} and of Lemma \ref{lemmaw}. For $j=1,\dots,n$, we say that the edge $e_j$ goes from $(v,\ba_j)$ to $(v,\bb_j)$, and that the edge $e$ goes from $(v,\bc)$ to $(u,\bd)$. Let $\bw_1=(\bb_1-\ba_1)$ and $\bw_j=(\bb_j-\ba_j)+k_{j-1}\bw_{j-1}$ and let $\bw=k_n\bw_n-\bc$. Similarly, let $\bw_1'=(\bb_1-\ba_1)$ and $\bw_j'=(\bb_j-\ba_j)+k_{j-1}'\bw_{j-1}'$ and let $\bw'=k_n'\bw_n'-\bc$.

We start with the graph $(\Delta',\psi')$. Since $(\Delta',\psi')$ has been obtained from a projection move, see Definition \ref{defn:Projection_move}, inside $\pA_u$ we have the edges
$$
\begin{cases}
\bd\edge  \bd+\bw_n'\\
\bd+\bw'+\ba_2\edge  \bd+\bw'+\ba_2+\bw_1'\\
\bd+\bw'+\ba_3\edge  \bd+\bw'+\ba_3+\bw_2'\\
\dots \\
\bd+\bw'+\ba_n\edge  \bd+\bw'+\ba_n+\bw_{n-1}'.
\end{cases}
$$
By Lemma \ref{lemmaw-span}, we have that
$$\gen{\bw_1',\dots,\bw_n'}=\gen{\bb_1-\ba_1,\dots,\bb_n-\ba_n}=\gen{\bw_1,\dots,\bw_n}$$
and
$$\bw'-\bw\in\gen{\bw_1,\dots,\bw_n}.$$
Therefore, we can apply Theorem \ref{thm:controlled}: using a sequence of slides and swaps we obtain the configuration
$$
\begin{cases}
\bd\edge  \bd+\bw_n\\
\bd+\bw+\ba_2\edge  \bd+\bw+\ba_2+\bw_1\\
\bd+\bw+\ba_3\edge  \bd+\bw+\ba_3+\bw_2\\
\dots \\
\bd+\bw+\ba_n\edge  \bd+\bw+\ba_n+\bw_{n-1}.
\end{cases}
$$
The projection move from $\Gamma$ to $\Delta'$ identified the point $(v,\bx)$ with $(u,\bd+\bw'+\bx)$ for all $\bx\in\pA$. However, since $\bw-\bw'\in\gen{\bw_n,\dots,\bw_1}$, we can use slide moves to identify $(v,\bx)$ with $(u,\bd+\bw+\bx)$ instead. The statement follows.
\end{proof}

\begin{lem}[Independence of edge ordering]\label{indep-swapedges}
Let $(\Gamma,\psi)$ be a GBS graph and let $v$ be a redundant vertex. Suppose that we are given two sets of collapsing data
$$e_1,\dots,e_{m-1},e_m,e_{m+1},e_{m+2},\dots,e_n,e$$
and
$$e_1,\dots,e_{m-1},e_{m+1},e_m,e_{m+2},\dots,e_n,e$$
for $v$. Let $(\Delta,\phi)$ and $(\Delta',\phi')$ be the GBS graphs obtained by applying two projection moves with the data above respectively. Then, we can go from $(\Delta',\phi')$ to $(\Delta,\phi)$ using a sequence of edge sign-changes, slides, and swaps.
\end{lem}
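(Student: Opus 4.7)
The strategy is to apply the controlled linear algebra of \Cref{thm:controlled} inside $\pA_u$. By \Cref{indep-constants} we may assume both projection moves use the same collapsing constants $k_1,\dots,k_n$. Set $W := \gen{\bb_1-\ba_1,\dots,\bb_n-\ba_n} \sgr \bA$. By \Cref{lemmaw-span}, $W = \gen{\bw_1,\dots,\bw_n} = \gen{\bw_1',\dots,\bw_n'}$, where $\bw_j'$ denotes the weight at position $j$ computed from the swapped ordering; in particular $\bw - \bw' = k_n(\bw_n-\bw_n') \in W$.

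Unpacking \Cref{defn:Projection_move} for both orderings, all images of $e_1,\dots,e_n$ land in $\pA_u$. In $\Delta$ we get a loop $\bd \edge \bd+\bw_n$ together with edges $\bd+\bw+\ba_j \edge \bd+\bw+\ba_j+\bw_{j-1}$ for $j=2,\dots,n$. In $\Delta'$ we get a loop $\bd \edge \bd+\bw_n'$, together with $e_{m+1}$ (now at position $m$) becoming $\bd+\bw'+\ba_{m+1} \edge \bd+\bw'+\ba_{m+1}+\bw_{m-1}'$, the edge $e_m$ (now at position $m+1$) becoming $\bd+\bw'+\ba_m \edge \bd+\bw'+\ba_m+\bw_m'$, and the remaining $e_j$ becoming $\bd+\bw'+\ba_j \edge \bd+\bw'+\ba_j+\bw_{j-1}'$. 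By \Cref{lemmaw}, both $\supp{\bw_n}$ and $\supp{\bw_n'}$ contain $\supp{\bb_1} \cup \dots \cup \supp{\bb_n}$ and hence contain the supports of $\bc,\bw,\bw',\ba_j,\bw_j,\bw_j'$ for every $j$. Therefore both $\bd,\bw_n$ and $\bd,\bw_n'$ control their respective configurations. Matching each edge $e_j$ in $\Delta$ with $e_j$ in $\Delta'$, the basepoint differences are $\mathbf{0}$ for $j=1$ and $\bw-\bw' \in W$ otherwise, so the hypotheses of \Cref{thm:controlled} are met.

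Applying \Cref{thm:controlled} with $\ba = \bd$, the source configuration coming from $\Delta'$ and the target from $\Delta$, we obtain a sequence of slides and swaps inside $\pA_u$ that transforms the $n$ projection edges of $\Delta'$ into those of $\Delta$. The remaining edges of $\Gamma$ (those outside $\{e_1,\dots,e_n,e\}$) agree in $\Delta$ and $\Delta'$ outside $\pA_u$, while their $\pA_u$-endpoints differ by $\bw-\bw' \in W$; these discrepancies are corrected by further slides (invoking the reverse-slide trick of \Cref{reverse-slide} when needed) along the repositioned projection edges, whose directions span $W$. A single edge sign-change on the loop may also be needed to align the $\bbZ/2\bbZ$-component of $\bw_n$ with that of $\bw_n'$, which can flip when $\bb_m-\ba_m$ and $\bb_{m+1}-\ba_{m+1}$ are permuted in the recursive definition of the $\bw_j$. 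The main technical obstacle is to verify that every intermediate slide and swap satisfies its hypotheses, which follows routinely from the support bounds of \Cref{lemmaw} and the choice $k_j > K$ of the collapsing constants.
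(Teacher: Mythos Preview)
Your approach is essentially the paper's: write out the two configurations of $n$ edges in $\pA_u$, verify the hypotheses of \Cref{thm:controlled}, and then slide the remaining transferred edges along the projection edges to correct the $\bw-\bw'$ offset. Two points deserve sharpening.

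First, your claim that ``the basepoint differences are $\mathbf{0}$ for $j=1$ and $\bw-\bw'\in W$ otherwise'' is only valid for $m\ge 2$. When $m=1$, the edge at position~$1$ (the controlling loop at $\bd$) is $e_1$ in $\Delta$ but $e_2$ in $\Delta'$, so you cannot simultaneously take $\ba=\bd$ and match $e_j$ with $e_j$. Matching by position instead, the position-$2$ edge has basepoint $\bd+\bw+\ba_2$ in $\Delta$ and $\bd+\bw'+\ba_1$ in $\Delta'$, giving a difference $(\bw-\bw')+(\ba_2-\ba_1)$. Since both orderings force $\supp{\ba_1}=\supp{\ba_2}=\emptyset$, the extra term $\ba_2-\ba_1$ lies in $\bbZ/2\bbZ\le\bA$ and need not belong to $W$. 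This, not any discrepancy between the $\bbZ/2\bbZ$-components of $\bw_n$ and $\bw_n'$, is the reason an edge sign-change may be required; it is applied to the position-$2$ edge (not the loop) to turn $\ba_1$ into $\ba_2$.

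Second, the reverse-slide of \Cref{reverse-slide} is not needed for the remaining edges: since the loop $\bd\edge\bd+\bw_n$ controls everything, one can first slide any endpoint up by a large multiple of $\bw_n$ and then use ordinary slides (in either orientation) along the projection edges, whose directions span $W$, to realise any element of $W$. With these corrections your argument matches the paper's.
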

\begin{proof}
We follow the notation of Definition \ref{def:collapsing-data} and of Lemma \ref{lemmaw}. For $j=1,\dots,n$, we say that $e_j$ goes from $(v,\ba_j)$ to $(v,\bb_j)$, and that $e$ goes from $(v,\bc)$ to $(u,\bd)$. Let $k_1,\dots,k_n$ be a sequence of collapsing constants for $e_1,\dots,e_n,e$, and thus also for $e_1,\dots,e_{m+1},e_m,\dots,e_n,e$. Let $\bw_1=(\bb_1-\ba_1)$ and $\bw_j=(\bb_j-\ba_j)+k_{j-1}\bw_{j-1}$ and let $\bw=k_n\bw_n-\bc$. Define $\bw_m'=(\bb_{m+1}-\ba_{m+1})-k_{m-1}\bw_{m-1}$ and $\bw_{m+1}'=(\bb_m-\ba_m)+k_m\bw_m'$; for $j=m+1,\dots,n$ define $\bw_j'=(\bb_j-\ba_j)+k_{j-1}\bw_{j-1}'$ and set $\bw'=k_n\bw_n'-\bc$.

In $\pA_u$ inside $\Delta'$, we have the edges
$$
\begin{cases}
\bd\edge  \bd+\bw_n'\\
\bd+\bw'+\ba_2\edge  \bd+\bw'+\ba_2+\bw_1\\
\dots \\
\bd+\bw'+\ba_{m-1}\edge  \bd+\bw'+\ba_{m-1}+\bw_{m-2}\\
\bd+\bw'+\ba_{m+1}\edge  \bd+\bw'+\ba_{m+1}+\bw_{m-1}\\
\bd+\bw'+\ba_m\edge  \bd+\bw'+\ba_m+\bw_m'\\
\bd+\bw'+\ba_{m+2}\edge  \bd+\bw'+\ba_{m+2}+\bw_{m+1}'\\
\dots \\
\bd+\bw'+\ba_n\edge  \bd+\bw'+\ba_n+\bw_{n-1}'.
\end{cases}
$$
By Lemma \ref{lemmaw-span}, we have
$$\gen{\bw_1',\dots,\bw_n'}=\gen{\bb_1-\ba_1,\dots,\bb_n-\ba_n}=\gen{\bw_1,\dots,\bw_n}$$
and
$$\bw'-\bw\in\gen{\bw_1,\dots,\bw_n}.$$
Therefore, we can apply Theorem \ref{thm:controlled}: using a sequence of slides and swaps, we obtain the configuration
$$
\begin{cases}
\bd\edge  \bd+\bw_n\\
\bd+\bw+\ba_2\edge  \bd+\bw+\ba_2+\bw_1\\
\dots \\
\bd+\bw+\ba_{m-1}\edge  \bd+\bw+\ba_{m-1}+\bw_{m-2}\\
\bd+\bw+\ba_{m+1}\edge  \bd+\bw+\ba_{m+1}+\bw_{m}\\
\bd+\bw+\ba_m\edge  \bd+\bw+\ba_m+\bw_{m-1}\\
\bd+\bw+\ba_{m+2}\edge  \bd+\bw+\ba_{m+2}+\bw_{m+1}\\
\dots \\
\bd+\bw+\ba_n\edge  \bd+\bw+\ba_n+\bw_{n-1}.
\end{cases}
$$
Notice that, in the particular case $m=1$, we have that both $\ba_1$ and $\ba_2$ are equal to either $(0,0,0,\dots)$ or $(1,0,0,\dots)$, and we need to change $\ba_1$ into $\ba_2$ in the second edge of the above list, and for this we might need to perform an edge sign-change. Finally, the projection move from $\Gamma$ to $\Delta'$ had identified the point $(v,\bx)$ with $(u,\bd+\bw'+\bx)$ for all $\bx\in\pA$. But since $\bw-\bw'\in\gen{\bw_n,\dots,\bw_1}$, we can use slide moves to identify $(v,\bx)$ with $(u,\bd+\bw+\bx)$ instead. The statement follows.
\end{proof}

\begin{lem}[Independence by extension of the collapsing data]\label{indep-uselessedge}
Let $(\Gamma,\psi)$ be a GBS graph and let $v$ be a redundant vertex. Let $e_1,\dots,e_n,e$ and $e_1,\dots,e_n,e_{n+1},e$ be two sets of collapsing data for $v$ and let $(\Delta,\phi)$ and $(\Delta',\phi')$ be the GBS graphs obtained by applying two projection moves with the data above respectively. Then, we can go from $(\Delta',\phi')$ to $(\Delta,\phi)$ using a sequence of slides and swaps.
\end{lem}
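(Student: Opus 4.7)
The plan is to compare the two outcomes directly in $\pA_u$ and to exhibit an explicit sequence of slides and swaps converting one into the other. By \Cref{indep-constants}, we may choose the collapsing constants freely up to further slides and swaps; we use the same $k_1,\ldots,k_n$ for both projection moves and a sufficiently large $k_{n+1}$ for the longer data. With the notation of \Cref{def:collapsing-data} and \Cref{lemmaw}, set $\bw=k_n\bw_n-\bc$ and $\bw'=k_{n+1}\bw_{n+1}-\bc$; note $\bw_{n+1}=(\bb_{n+1}-\ba_{n+1})+k_n\bw_n$ and $\supp{\bw_n}\subseteq\supp{\bw_{n+1}}$. A direct inspection shows that $\Delta$ and $\Delta'$ agree outside $\pA_u$, and inside $\pA_u$ they differ only in: (i) the controlling loop at $\bd$ has weight $\bw_n$ in $\Delta$ versus $\bw_{n+1}$ in $\Delta'$; (ii) $e_{n+1}$ appears in $\Delta$ as a transferred edge $\bd+\bw+\ba_{n+1}\edge\bd+\bw+\bb_{n+1}$ of weight $\bb_{n+1}-\ba_{n+1}$, while in $\Delta'$ it appears as a loop of weight $\bw_n$ at $\bd+\bw'+\ba_{n+1}$; (iii) every other edge in $\pA_u$ has its endpoints translated by $\bw$ in $\Delta$ versus $\bw'$ in $\Delta'$.

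The first step is to reconcile (i) and (ii). Starting from $\Delta$, I slide the endpoint $\bd+\bw+\bb_{n+1}$ of the transferred $e_{n+1}$ along the controlling loop $k_n$ times, turning $e_{n+1}$ into a loop at $\bd+\bw+\ba_{n+1}$ of weight $\bw_{n+1}$. I then swap this loop with the controlling loop; the swap conditions $\supp{\bw+\ba_{n+1}}\subseteq\supp{\bw_n}\cap\supp{\bw_{n+1}}$ follow directly from the definition of a set of collapsing data combined with $\supp{\bw_n}\subseteq\supp{\bw_{n+1}}$. After the swap, the controlling loop has weight $\bw_{n+1}$ and $e_{n+1}$ is a loop of weight $\bw_n$ at $\bd+\bw+\ba_{n+1}$; this matches $\Delta'$ except for the translation factor.

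It remains to translate every edge in $\pA_u$ other than the controlling loop by $\bw'-\bw=k_{n+1}\bw_{n+1}-k_n\bw_n$. For each endpoint of any such edge other than the swapped loop itself, $k_{n+1}$ slides along the controlling loop (each shifting by $+\bw_{n+1}$) followed by $k_n$ slides along the reverse of the swapped loop (each shifting by $-\bw_n$) yield the desired translation. For the swapped loop itself I use \Cref{self-slide} both in its original form (shifting the base by $+\bw_n$) and in the form obtained by applying the same lemma to the loop traversed in reverse (shifting the base by $-\bw_n$, which is permitted since the lemma admits $\bx\in\bA$); combined with slides by $\bw_{n+1}$ along the controlling loop, this shifts the base of the swapped loop by $\bw'-\bw$. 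The main obstacle is verifying the positivity constraints at each intermediate step: the sliding endpoint must remain componentwise above $\bd+\bw+\ba_{n+1}+\bw_n$ throughout. This is ensured by taking $k_{n+1}$ large enough that $k_{n+1}\bw_{n+1}$ dominates every bounded offset in $\supp{\bw_{n+1}}$, which is harmless by \Cref{indep-constants}. Once all translations are complete, the configuration coincides with $\Delta'$.
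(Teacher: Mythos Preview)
Your argument is correct and follows essentially the same route as the paper's proof, only run in the opposite direction (you go from $(\Delta,\phi)$ to $(\Delta',\phi')$, while the paper goes from $(\Delta',\phi')$ to $(\Delta,\phi)$) and with the swap performed before the bulk translation rather than after. The key ingredients are identical: a single swap exchanging the weights $\bw_n$ and $\bw_{n+1}$ on the controlling loop and on the $e_{n+1}$-loop, slides adjusting the weight of $e_{n+1}$ between $\bb_{n+1}-\ba_{n+1}$ and $\bw_{n+1}$, and slides plus self-slides (Lemma~\ref{self-slide}) achieving the translation by $\bw'-\bw=k_{n+1}\bw_{n+1}-k_n\bw_n$.

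One minor imprecision: when you write ``every other edge in $\pA_u$'' in items (iii) and in the translation step, you should mean only the edges \emph{arising from the projection} (the images of $e_2,\dots,e_n$ and the transferred endpoints of other edges of $\Gamma$ that originally lay in $\pA_v$); edges that already had endpoints in $\pA_u$ before the projection are untouched in both $\Delta$ and $\Delta'$ and must not be translated. This does not affect the validity of the argument.
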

\begin{proof}
Choose a set $k_1,\dots,k_n,k_{n+1}$ of collapsing constants for $e_1,\dots,e_n,e_{n+1},e$; in particular we have that $k_1,\dots,k_{n-1},k_n$ is also a set of collapsing constants for $e_1,\dots,e_n,e$. Following the notation of Definition \ref{def:collapsing-data}, for $j=1,\dots,n+1$, we say that $e_j$ goes from $(v,\ba_j)$ to $(v,\bb_j)$, and $e$ goes from $(v,\bc)$ to $(u,\bd)$. Let $\bw_1=(\bb_1-\ba_1)$ and $\bw_j=(\bb_j-\ba_j)+k_{j-1}\bw_{j-1}$ for $j=1,\dots,n+1$, as in Lemma \ref{lemmaw}. Let $\bw'=k_{n+1}\bw_{n+1}-\bc$ and let $\bw=k_n\bw_n-\bc$.

In $\pA_u$ inside $\Delta'$, we have the edges
$$
\begin{cases}
\bd\edge  \bd+\bw_{n+1}\\
\bd+\bw'+\ba_2\edge  \bd+\bw'+\ba_2+\bw_1\\
\bd+\bw'+\ba_3\edge  \bd+\bw'+\ba_3+\bw_2\\
\dots \\
\bd+\bw'+\ba_n\edge  \bd+\bw'+\ba_n+\bw_{n-1}\\
\bd+\bw'+\ba_{n+1}\edge  \bd+\bw'+\ba_{n+1}+\bw_n.
\end{cases}
$$
We observe that $\bw'-\bw=k_{n+1}\bw_{n+1}-k_n\bw_n\in\gen{\bw_1,\dots,\bw_{n+1}}$ and thus, using slide moves and self-slide moves, see Lemma \ref{self-slide}, we obtain
$$
\begin{cases}
\bd\edge  \bd+\bw_{n+1}\\
\bd+\bw+\ba_2\edge  \bd+\bw+\ba_2+\bw_1\\
\bd+\bw+\ba_3\edge  \bd+\bw+\ba_3+\bw_2\\
\dots \\
\bd+\bw+\ba_n\edge  \bd+\bw+\ba_n+\bw_{n-1}\\
\bd+\bw+\ba_{n+1}\edge  \bd+\bw+\ba_{n+1}+\bw_n.
\end{cases}
$$
We now look at the first and the last edge
$$
\begin{cases}
\bd\edge  \bd+\bw_{n+1}\\
\bd+\bw+\ba_{n+1}\edge  \bd+\bw+\ba_{n+1}+\bw_n
\end{cases}
$$
and we perform a swap move to obtain
$$
\begin{cases}
\bd\edge  \bd+\bw_n\\
\bd+\bw+\ba_{n+1}\edge  \bd+\bw+\ba_{n+1}+\bw_{n+1}
\end{cases}
=\ 
\begin{cases}
\bd\edge  \bd+\bw_n\\
\bd+\bw+\ba_{n+1}\edge  \bd+\bw+\bb_{n+1}+k_n\bw_n.
\end{cases}
$$
We next perform slide moves to obtain
$$
\begin{cases}
\bd\edge  \bd+\bw_n\\
\bd+\bw+\ba_{n+1}\edge  \bd+\bw+\bb_{n+1}.
\end{cases}
$$
Finally, if the collapse from $\Gamma$ to $\Delta'$ identified each point $(v,\bx)$ of $\pA_v$ with $(u,\bd+\bw'+\bx)$, we can now identify it with $(u,\bd+\bw+\bx)$ instead using slide moves. The statement follows.
\end{proof}

\begin{lem}[Independence of the connecting edge]\label{indep-lastedge}
Let $(\Gamma,\psi)$ be a GBS graph and let $v$ be a redundant vertex. Let $e_1,\dots,e_n,e$ and $e_1,\dots,e_n,e'$ be two sets of collapsing data for $v$ and let $(\Delta,\phi)$ and $(\Delta',\phi')$ be the GBS graphs obtained by applying two projection moves with the data above respectively. Then we can go from $(\Delta',\phi')$ to $(\Delta,\phi)$ with a sequence of slides, swaps, and connections.
\end{lem}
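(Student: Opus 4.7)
The plan is as follows. By Proposition \ref{indep-constants}, we may assume both projections use the same set of collapsing constants $k_1,\dots,k_n$. With the notation of Definition \ref{def:collapsing-data} and Lemma \ref{lemmaw}, let $\bw_1,\dots,\bw_n$ be as there, and set $\bw=k_n\bw_n-\bc$ and $\bw'=k_n\bw_n-\bc'$; in particular $\bw+\bc=\bw'+\bc'=k_n\bw_n$. After the two projection moves, $(\Delta',\phi')$ contains (in $\pA_{u'}$) the image of $e_1$ as a loop $\bd'\edge\bd'+\bw_n$, while the unused edge $e$ is translated to an edge from $(u',\bd'+\bw'+\bc)$ to $(u,\bd)$; dually, $(\Delta,\phi)$ contains the loop $\bd\edge\bd+\bw_n$ at $u$ and the translated $e'$ connecting $(u,\bd+\bw+\bc')$ to $(u',\bd')$.

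The key step is a single connection move applied in $(\Delta',\phi')$: take the loop $\bd'\edge\bd'+\bw_n$ as the ``loop input'' of the move (so $p=\bd'$ and $\bw=\bw_n$ in the affine notation from the definition of the connection move) and the edge $e$ as the ``other input'' (so $q=\bd$ and $\bw_1=\bw'+\bc$). Choose $k=2k_n$, giving $\bw_2=k\bw_n-\bw_1=k_n\bw_n+\bc'-\bc=\bw+\bc'$. The inclusions $\supp{\bc},\supp{\bc'}\subseteq\supp{\bw_n}$ together with the collapsing-constant inequality $k_n>K$ guarantee $\bw_1,\bw_2\in\pA$ (and with $\supp{\bw_1},\supp{\bw_2}\subseteq\supp{\bw_n}$), so the move is legal. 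Its output is a new loop $\bd\edge\bd+\bw_n$ at $u$ and a new edge from $(u',\bd')$ to $(u,\bd+\bw+\bc')$ — exactly the $e_1$-loop and the translated $e'$ of $(\Delta,\phi)$.

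To finish, we transfer the remaining content from $\pA_{u'}$ to $\pA_u$ by slides only. Every remaining edge — an image of some $e_j$ with $j\ge2$, or any other edge of $\Gamma$ originally with an endpoint in $\pA_v$ — has its $\pA_{u'}$-endpoint of the form $(u',\bd'+\bw'+\bx)$ for some $\bx\in\pA$. Sliding along the newly created edge $(u',\bd')\edge(u,\bd+\bw+\bc')$ moves that endpoint to $(u,\bd+\bw+\bc'+\bw'+\bx)=(u,\bd+\bw+k_n\bw_n+\bx)$, using $\bc'+\bw'=k_n\bw_n$; then $k_n$ successive slides along the new loop $\bd\edge\bd+\bw_n$, traversed in the $\bd+\bw_n\to\bd$ direction, bring the endpoint to $(u,\bd+\bw+\bx)$, which is precisely its location in $(\Delta,\phi)$. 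Edges of $\Gamma$ disjoint from $\pA_v$ are unchanged by both projections and need no adjustment. The argument is uniform whether $u\ne u'$ or $u=u'$, thanks to the remark in the definition of the connection move allowing its two vertices to coincide.

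The main obstacle is the verification of support and positivity hypotheses at every step — in particular that the choice $k=2k_n$ yields $\bw_2\in\pA$ with $\supp{\bw_2}\subseteq\supp{\bw_n}$, and that each intermediate slide is legal. All of these follow from the support inclusions in Definition \ref{def:collapsing-data} combined with $k_n>K$, but they require careful bookkeeping.
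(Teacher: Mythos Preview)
Your proof is correct and follows essentially the same route as the paper's: perform a single connection move using the loop $\bd'\edge\bd'+\bw_n$ and the translated edge $e$, then slide everything else into place via the new connecting edge and the new loop. You are somewhat more explicit than the paper in specifying $k=2k_n$ and in verifying $\bw_1,\bw_2\in\pA$ (the paper simply asserts the connection move and its output), but the structure and the key idea are identical.
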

\begin{proof}
Choose a set of collapsing constants $k_1,\dots,k_n$. Following the notation of Definition \ref{def:collapsing-data}, for $j=1,\dots,n$, we say that $e_j$ goes from $(v,\ba_j)$ to $(v,\bb_j)$, and let $\bw_1=(\bb_1-\ba_1)$ and $\bw_j=(\bb_j-\ba_j)+k_{j-1}\bw_{j-1}$ be as in Lemma \ref{lemmaw}. We say that $e$ goes from $(v,\bc)$ to $(u,\bd)$, and we set $\bw=k_n\bw_n-\bc$. Similarly, we say that $e'$ goes from $(v,\bc')$ to $(u',\bd')$, and we set $\bw'=k_n\bw_n'-\bc'$.

When we apply the projection move to obtain $\Delta'$, the edge $(v,\bc)\edge  (u,\bd)$ becomes $(u',\bd'+\bw'+\bc)\edge (u,\bd)$. We consider the affine representation of $\Delta'$ and perform the following connection move:
$$\begin{cases}
(u',\bd')\edge  (u',\bd'+\bw_n)\\
(u,\bd)\edge (u',\bd'+\bw'+\bc)
\end{cases}
\xrightarrow{\text{connection}}\quad
\begin{cases}
(u',\bd')\edge (u,\bd+\bw+\bc')\\
(u,\bd)\edge (u,\bd+\bw_n).
\end{cases}$$
We now use the edges $(u',\bd')\edge (u,\bd+\bw+\bc')$ and $(u,\bd)\edge (u,\bd+\bw_n)$ to slide the other edges from
$$
\begin{cases}
(u',\bd'+\bw'+\ba_2)\edge  (u',\bd'+\bw'+\ba_2+\bw_1)\\
(u',\bd'+\bw'+\ba_3)\edge  (u',\bd'+\bw'+\ba_3+\bw_2)\\
\dots \\
(u',\bd'+\bw'+\ba_n)\edge  (u',\bd'+\bw'+\ba_n+\bw_{n-1})
\end{cases}
$$
to
$$
\begin{cases}
(u,\bd+\bw+\bc'+\bw'+\ba_2)\edge  (u,\bd+\bw+\bc'+\bw'+\ba_2+\bw_1)\\
(u,\bd+\bw+\bc'+\bw'+\ba_3)\edge  (u,\bd+\bw+\bc'+\bw'+\ba_3+\bw_2)\\
\dots \\
(u,\bd+\bw+\bc'+\bw'+\ba_n)\edge  (u,\bd+\bw+\bc'+\bw'+\ba_n+\bw_{n-1})
\end{cases}
$$
and then to
$$
\begin{cases}
(u,\bd+\bw+\ba_2)\edge  (u,\bd+\bw+\ba_2+\bw_1)\\
(u,\bd+\bw+\ba_3)\edge  (u,\bd+\bw+\ba_3+\bw_2)\\
\dots \\
(u,\bd+\bw+\ba_n)\edge  (u,\bd+\bw+\ba_n+\bw_{n-1}).
\end{cases}
$$
With analogous slide moves, if the collapse from $\Gamma$ to $\Delta'$ identified a point $(v,\bx)$ of $\pA_v$ with $(u',\bd'+\bw'+\bx)$, we can now identify it with $(u,\bd+\bw+\bx)$ instead. The statement follows.
\end{proof}

\begin{prop}[Independence of the collapsing data]\label{prop:indep-data}
Let $(\Gamma,\psi)$ be a GBS graph and let $v$ be a redundant vertex. Let $e_1,\dots,e_n,e$ and $e_1',\dots,e_m',e'$ be two sets of collapsing data for $v$ and let $(\Delta,\phi)$ and $(\Delta',\phi')$ be the GBS graphs obtained by applying two projection moves with the data above respectively. Then, there is a sequence of edge sign-changes, slides, swaps, and connections going from $(\Delta',\phi')$ to $(\Delta,\phi)$.
\end{prop}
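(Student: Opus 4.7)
The plan is to reduce both projection moves to a single projection move using a common ``refined'' collapsing data obtained by merging $e_1,\dots,e_n,e$ and $e_1',\dots,e_m',e'$, invoking the preceding three lemmas as building blocks. Throughout I keep the notation of Definition~\ref{def:collapsing-data} for both data, writing $\ba_j,\bb_j$ for $e_j$ and $\ba_j',\bb_j'$ for $e_j'$.

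First, by iterated application of Lemma~\ref{indep-uselessedge}, I extend the first data $e_1,\dots,e_n,e$ by inserting $e_1',\dots,e_m'$ one at a time in the slot just before $e$. Insertion of $e_1'$ is legitimate since $\supp{\ba_1'}=\emptyset$, and insertion of $e_{j+1}'$ after $e_1,\dots,e_n,e_1',\dots,e_j'$ have been inserted is legitimate because $\supp{\ba_{j+1}'}\subseteq\supp{\bb_1'}\cup\dots\cup\supp{\bb_j'}$. Each such insertion preserves the projection up to slides and swaps; hence the projection associated with $e_1,\dots,e_n,e_1',\dots,e_m',e$ coincides with $(\Delta,\phi)$ up to slides and swaps. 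By the symmetric argument the projection associated with $e_1',\dots,e_m',e_1,\dots,e_n,e'$ coincides with $(\Delta',\phi')$ up to slides and swaps.

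Second, I reorder the first extended data to match the initial segment of the second. Any interleaving of the sequences $(e_1,\dots,e_n)$ and $(e_1',\dots,e_m')$ which preserves their internal orderings is itself a valid collapsing data, since the support conditions $\supp{\ba_j}\subseteq\supp{\bb_1}\cup\dots\cup\supp{\bb_{j-1}}$ and $\supp{\ba_j'}\subseteq\supp{\bb_1'}\cup\dots\cup\supp{\bb_{j-1}'}$ depend only on the relative order within each family. Therefore, via a sequence of adjacent transpositions (bubble-sort style), I pass from $e_1,\dots,e_n,e_1',\dots,e_m',e$ to $e_1',\dots,e_m',e_1,\dots,e_n,e$, and Lemma~\ref{indep-swapedges} ensures that each transposition changes the projection only by edge sign-changes, slides, and swaps.

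At this point both projections are connected, through edge sign-changes, slides, and swaps, to the projections using the collapsing data $e_1',\dots,e_m',e_1,\dots,e_n,e$ and $e_1',\dots,e_m',e_1,\dots,e_n,e'$ respectively; these two differ only in the final connecting edge, so Lemma~\ref{indep-lastedge} yields a sequence of slides, swaps, and connections between them. Concatenating the three sequences of moves produces the desired sequence from $(\Delta',\phi')$ to $(\Delta,\phi)$. The main obstacle is the combinatorial verification in the second step that every intermediate interleaving is indeed a valid collapsing data; once this is observed, the rest is a clean book-keeping of the three previous lemmas.
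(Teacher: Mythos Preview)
Your proof is correct and follows essentially the same route as the paper's: extend each collapsing data by the edges of the other via Lemma~\ref{indep-uselessedge}, reorder via Lemma~\ref{indep-swapedges}, and change the connecting edge via Lemma~\ref{indep-lastedge}. The only cosmetic differences are that the paper works unidirectionally (extend, swap last edge, permute, then remove the extra edges again via Lemma~\ref{indep-uselessedge}) rather than meeting in the middle as you do, and the paper applies Lemma~\ref{indep-lastedge} before the permutation step rather than after; your explicit verification that every order-preserving interleaving is valid collapsing data is a detail the paper leaves implicit.
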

\begin{proof}
We start with the set of collapsing data $e_1,\dots,e_n,e$. With an iterated application of Lemma \ref{indep-uselessedge}, we can add the edges $e_1,\dots,e_m$ at the sequence to obtain the set of collapsing data $e_1,\dots,e_n,e_1',\dots,e_m',e$. By Lemma \ref{indep-lastedge}, we can replace $e$ by $e'$ to obtain $e_1,\dots,e_n,e_1',\dots,e_m',e'$. Using Lemma \ref{indep-swapedges}, we permute the vectors and obtain $e_1',\dots,e_m',e_1,\dots,e_n,e'$. Finally, by Lemma \ref{indep-uselessedge}, we can remove the edges $e_1,\dots,e_n$, and we remain with the set of collapsing data $e_1',\dots,e_m',e'$. The statement follows.
\end{proof}

Given a GBS graph $(\Gamma,\psi)$ and a redundant vertex $v$, we can define a GBS graph $(\Gamma',\psi')$ obtained from $(\Gamma,\psi)$ removing the vertex $v$, using the projection move described in Section \ref{sec:projection}. The process would require choosing a set of collapsing data and a set of collapsing constants, but the above Proposition \ref{prop:indep-data} states that different choices produce graphs $(\Gamma',\psi')$ that differ only by a sequence of edge sign-changes, slides, swaps, and connections. When we perform a projection move without making explicit the collapsing data or constants, we call it \emph{a coarse projection map}, and the resulting GBSs, \emph{projections}. In this terminology, two projections (that remove a redundant vertex $v$) differ only by a sequence of edge sign-changes, slides, swaps, and connections.

\subsection{Coarse projection map and moves}

The following Proposition \ref{prop:projection-of-moves} states essentially that the coarse projection map commutes with edge sign-changes, slides, swaps, and connections.

\begin{prop}[Coarse projection and moves essentially commute]\label{prop:projection-of-moves}
Let $(\Gamma_0,\psi_0)$ be a GBS graph and let $v$ be a redundant vertex. Let $(\Gamma_0',\psi_0')$ be the graph obtained by applying a coarse projection map. Let $(\Gamma_1,\psi_1)$ be obtained from $(\Gamma_0,\psi_0)$ by applying an edge sign-change, slide, swap or a connection move; let $v$ be the corresponding redundant vertex in $(\Gamma_1,\psi_1)$, and let $(\Gamma_1',\psi_1')$ be the graph obtained by applying a coarse projection map. 

Then $(\Gamma_1',\psi_1)$ can be obtained from $(\Gamma_0',\psi_0')$ using a sequence of edge sign-changes, slides, swaps, and connections.
\end{prop}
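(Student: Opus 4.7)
The plan is to do case analysis on the type of move $M$ going from $(\Gamma_0,\psi_0)$ to $(\Gamma_1,\psi_1)$. The crucial enabling fact is Proposition \ref{prop:indep-data}: the coarse projection is well defined only up to edge sign-changes, slides, swaps, and connections, so in each of the two graphs I have complete freedom to pick the collapsing data, and by Lemmas \ref{indep-uselessedge}, \ref{indep-swapedges}, and \ref{indep-lastedge} I may extend it, reorder it, or replace its connecting edge. I will therefore fix a collapsing data for $v$ in $(\Gamma_0,\psi_0)$ adapted to $M$, and pick a matching collapsing data for $v$ in $(\Gamma_1,\psi_1)$, so that the two projections can be compared directly.

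In each case the easy subcase is when $M$ only touches edges disjoint from $\pA_v$ and from the collapsing data: then the same data serves for both projections, the identification $(v,\bx)\mapsto(u,\bd+\bw+\bx)$ of Definition \ref{defn:Projection_move} is untouched, and $M$ descends verbatim to a move of the same type taking $(\Gamma_0',\psi_0')$ to $(\Gamma_1',\psi_1')$. Slightly harder is the case where $M$ acts on edges of $\pA_v$ that are not part of the collapsing data: here I transport $M$ through the identification, obtaining a move of the same type on the translated edges in $\pA_u$. The sign-change subcases are all straightforward in this manner, since they only modify $\bbZ/2\bbZ$-components, which are preserved by the projection formulas up to an obvious adjustment at the edges derived from $e_1,\dots,e_n$.

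The core technical work concerns the remaining subcases of slide, swap, and connection when $M$ interacts with the collapsing data itself. For a slide, I would enlarge the data using Lemma \ref{indep-uselessedge} so that both the sliding edge and the edge it slides along belong to the collapsing data, and then reorder via Lemma \ref{indep-swapedges} so that the sliding edge sits just after the one it slides over; the explicit formulas in Definition \ref{defn:Projection_move} then show that the projected graph differs by at most a slide, with any residual discrepancy between the two projections absorbed by Theorem \ref{thm:controlled}. A swap is handled analogously by including both swapped edges in the data. For a connection, which is the most delicate case since it can involve the connecting edge $e$ itself, I would first apply Lemma \ref{indep-lastedge} to replace $e$ by a different connecting edge that is disjoint from the move, thereby reducing to one of the previous cases.

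The main obstacle is bookkeeping: each of the three non-trivial move types yields several subcases depending on which endpoints of the $M$-affected edges land in $\pA_v$ and which lie in the collapsing data. There is no new conceptual ingredient beyond Proposition \ref{prop:indep-data} and Theorem \ref{thm:controlled}, but verifying that in every subcase the two projections differ by an explicit sequence of edge sign-changes, slides, swaps, and connections requires working through each configuration in turn.
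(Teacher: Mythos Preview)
Your proposal is essentially the same approach as the paper's proof: case analysis on the move type, use of Lemmas \ref{indep-uselessedge}, \ref{indep-swapedges}, \ref{indep-lastedge} to absorb the edges touched by the move into the collapsing data, and appeal to Theorem \ref{thm:controlled} to reconcile the two projections. One small point: in the connection case you propose replacing the connecting edge by one disjoint from the move, but this can fail if the edge $d$ of the connection is the \emph{only} edge from $v$ to the outside; the paper instead incorporates the move's edges directly into the collapsing data (allowing the connection to involve $e_n,e$ themselves) and then invokes Theorem \ref{thm:controlled}, which handles all subcases uniformly.
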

\begin{proof}
If we are performing an edge sign-change from $(\Gamma_0,\psi_0)$ to $(\Gamma_1,\psi_1)$, then $(\Gamma_0',\psi_0')$ and $(\Gamma_1',\psi_1')$ are related by edge sign-change moves only.

Suppose that we are performing a slide move changing one endpoint of an edge $d'$ using another edge $d$. If $d,d'$ are not involved in the collapsing data that we are using, then the slide move induces a slide move on the projections, and we are done. If $d'$ is not involved in the collapsing data, but $d$ is, then the slide move becomes a composition of slide moves (each of them moving the endpoint of $d'$). If $d'$ is involved in the collapsing data, then using Lemmas \ref{indep-uselessedge} and \ref{indep-swapedges}, we can add $d$ to the collapsing data and we can move it before $d$. Thus we can assume that we are using collapsing data $e_1,\dots,e_n,e$ and constants, and that we are sliding either one endpoint of $e$ using the edge $e_\ell$, or one endpoint of $e_m$ using the edge $e_\ell$ for some $\ell<m$. In both cases, the statement is immediate using Theorem \ref{thm:controlled}.

Suppose that we are performing a swap move on the two edges $d$ and $d'$. If $d,d'$ are not involved in the collapsing data that we are using, then the swap move induces a swap move on the projections, and we are done. If one of $d,d'$ is involved in the collapsing data but the other is not, then we can use Lemmas \ref{indep-uselessedge} and \ref{indep-swapedges} to add the other one to the collapsing data, and to move the two edges $d,d'$ until they are adjacent in the collapsing data. Thus we now assume that we are using collapsing data $e_1,\dots,e_n,e$ and that we are performing the swap move on the two edges $e_m,e_{m+1}$. In this case, the statement follows from Theorem \ref{thm:controlled}.

Suppose that we are performing a connection move on the two edges $d,e$. If $d$ and $e$ are not involved in the collapsing data that we are using, then the connection move induces a connection move on the projections, and we are done. If $d$ is involved in the collapsing data but $e$ is not, then we can add $e$ to the collapsing data using Lemma \ref{indep-lastedge}. If $e$ is involved in the collapsing data but $d$ is not, then we can add $d$ to the collapsing data using Lemma \ref{indep-uselessedge}. If $d,e$ are both involved in the collapsing data that we are using, then we can make them consecutive in the collapsing data, using Lemma \ref{indep-swapedges}. Thus we can assume that we are using collapsing data $e_1,\dots,e_n,e$ and that we are performing the connection on either $e_m,e_{m+1}$ for some $1\le m\le n-1$, or on $e_n,e$. If we are performing the connection on $e_m,e_{m+1}$ for some $1\le m\le n-1$, then the statement follows from Theorem \ref{thm:controlled}. If we are performing the connection on $e_n,e$ then the statement  follows again from Theorem \ref{thm:controlled}.
\end{proof}

\subsection{Totally reduced GBS graphs}

We now introduce the notion of \textit{totally reduced} GBS graph, see Definition \ref{def:totally-reduced}. We point out that Condition \ref{itm:fully-reduced} of Definition \ref{def:totally-reduced} coincides with the notion of \textit{fully reduced} GBS graph introduced by Forester \cite{For06} (and it is also equivalent to requiring that the graph does not contain any redundant vertex). However, we slightly strengthen the definition of fully reduced graph by also requiring the (minor) technical Condition \ref{itm:controlling-edge}, which will be necessary to make the main Theorem \ref{thm:sequence-new-moves} hold. In any case, we prove in Proposition \ref{prop:compute-totally-reduced}, that there is an algorithm that brings any GBS graph to a totally reduced one.

\begin{defn}[Totally reduced]\label{def:totally-reduced}
A GBS graph $(\Gamma,\psi)$ with affine representation $\Lambda$ is called \textbf{totally reduced} if the following two conditions hold:
\begin{enumerate}
\item\label{itm:fully-reduced} For every vertex $v\in V(\Gamma)$, if $(v,\mathbf{0})\cnj(u,\bb)$ for $u\in V(\Gamma)$ and $\bb\in\pA$, then $u=v$.
\item\label{itm:controlling-edge} For every vertex $v\in V(\Gamma)$, in $\Lambda$ there is an edge $(v,\mathbf{0})\edge(v,\bw)$ with $\bw\in\pA$ such that, if $(v,\mathbf{0})\cnj(v,\bb)$ for $\bb\in\pA$, then $\mathbf{0},\bw$ controls $\bb$.
\end{enumerate}
\end{defn}

\begin{prop}\label{prop:compute-totally-reduced}
Let $(\Gamma,\psi)$ be a GBS graph. Then, there is a sequence of edge sign-changes, elementary expansions, elementary contractions, and slides changing $(\Gamma,\psi)$ into a totally reduced GBS graph $(\Gamma',\psi')$. Moreover there is an algorithm that, given $(\Gamma,\psi)$, computes the sequence of moves and $(\Gamma',\psi')$.
\end{prop}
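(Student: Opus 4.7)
The proof proceeds in two phases, corresponding to the two conditions of Definition \ref{def:totally-reduced}.

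\textbf{Phase 1 (Condition (1), fully reduced).} For each vertex $v$, I would use Proposition \ref{compute-conjugacy-class} to compute the conjugacy class $\cnjclass{(v,\mathbf{0})}$ as an explicit semilinear subset of $V(\Lambda)$, and Lemma \ref{compare-conjugacy-class} to algorithmically detect whether it contains any point of the form $(u,\bb)$ with $u \neq v$. If so, $v$ is redundant, and Proposition \ref{redundant-collapse} provides a set of collapsing data. I then apply the corresponding projection move (Definition \ref{defn:Projection_move}). By Proposition \ref{projection}, the projection is a composition of edge sign-changes, slides, swaps, inductions, and an elementary contraction; and by Lemmas \ref{induction-from-others} and \ref{swap-from-others} each swap and each induction is itself a composition of edge sign-changes, elementary expansions, elementary contractions, and slides. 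Hence the projection is expressible in the four permitted moves. Since each projection strictly decreases $\abs{V(\Gamma)}$, the procedure terminates after finitely many steps with a graph satisfying condition (1).

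\textbf{Phase 2 (Condition (2), controlling loop).} For each vertex $v$ of the fully reduced graph, compute (via Proposition \ref{compute-conjugacy-class}) the conjugacy class of $(v,\mathbf{0})$ inside $\pA_v$ as $\{\mathbf{b}_i + k_1\mathbf{w}_1 + \cdots + k_m\mathbf{w}_m : 1 \le i \le n,\ k_j\in\bbN\}$, and let $I_v = \bigcup_i \supp{\mathbf{b}_i} \cup \bigcup_j \supp{\mathbf{w}_j}$. The task is to install an edge in $\Lambda$ from $(v,\mathbf{0})$ to some $(v,\bw)$ with $\supp{\bw}\supseteq I_v$. Each generator $\mathbf{w}_j$ is witnessed by an explicit affine loop $\sigma_j$ at $(v,\mathbf{0})$ realizing the conjugacy $(v,\mathbf{0})\cnj (v,\mathbf{w}_j)$; this loop corresponds to a word $h_j \in \FG{\cG}$ with $h_j v h_j^{-1} = v^{\lambda_j}$, the prime factorization of $\abs{\lambda_j}$ encoding $\mathbf{w}_j$. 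I would install a new loop edge at $v$ with label $(1,\pm\lambda_j)$ by an ``expand, slide around the loop, contract'' cycle: an elementary expansion at $v$ creates an auxiliary pendant vertex with pendant edge of initial label $1$ and terminal label $\lambda_j$; a sequence of slides transports this pendant along the edges of $\sigma_j$; a final elementary contraction (with an edge sign-change if necessary) absorbs the pendant back into $v$. Repeating this for each $\mathbf{w}_j$ and base point, and then combining the resulting loops via further slides that multiply their terminal labels, produces a single loop $(v,\mathbf{0})\edge(v,\bw)$ with $\supp{\bw}\supseteq I_v$.

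\textbf{Main obstacle and algorithmicity.} The technical difficulty lies in Phase 2: the ``expand, slide, contract'' cycle must be arranged so that every intermediate graph along the way is a legal GBS graph, the terminal label of the pendant remains divisible (at each slide) by the initial label of the edge it is sliding along, and the final pendant carries terminal label $\pm 1$ so that the elementary contraction is legitimate. This requires an induction on the length of $\sigma_j$, chasing prime factorizations through each step in the spirit of the computations in Section \ref{sec:controlled-linear-algebra} but restricted to the four moves. Algorithmicity of the whole procedure is immediate since conjugacy classes are computable semilinear sets (Proposition \ref{compute-conjugacy-class}), witnessing affine paths are explicitly extractable (Lemma \ref{compare-conjugacy-class}), collapsing data and constants are extractable from the semilinear description, and every elementary move is constructive.
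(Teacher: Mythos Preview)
Your Phase~1 is essentially the paper's argument: detect redundancy via Proposition~\ref{compute-conjugacy-class}, extract collapsing data, run the projection move, and decompose it into the four permitted moves via Proposition~\ref{projection} and Lemmas~\ref{induction-from-others}--\ref{swap-from-others}.

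Your Phase~2, however, has a genuine gap. The ``expand, slide around the loop, contract'' cycle cannot \emph{install a new loop edge}: elementary expansions, contractions, and slides all preserve the rank of the underlying graph $\Gamma$ (equivalently the first Betti number, equivalently the difference $\tfrac{1}{2}\abs{E(\Gamma)}-\abs{V(\Gamma)}$). An elementary expansion adds one vertex and one edge; slides change neither count; the elementary contraction removes one vertex and one edge. So after your cycle the graph has exactly the same number of loops as before, and since the only edge you slid was the pendant $e_0$ that you then delete, you are literally back to the graph you started with. No controlling loop has been produced. The ``combining the resulting loops via further slides'' step therefore has nothing to combine.

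The paper's approach to Phase~2 avoids this by \emph{transforming existing edges} rather than trying to create a new one. Since $v$ is not redundant, any affine path from $(v,\mathbf{0})$ to a point $(v,\bw)$ of maximal support stays entirely inside $\pA_v$, so the edges $e_1,\dots,e_n$ appearing along it are already loops at $v$. One then runs the first half of the procedure in the proof of Proposition~\ref{projection} (the cascade of slides and swaps, but stopping before the final induction and contraction, since there is no outgoing edge $e$) on these existing loops. By Lemma~\ref{lemmaw} this converts $e_1$ into an edge $(v,\ba_1)\edge(v,\ba_1+\bw_n)$ with $\supp{\bw_n}\supseteq\supp{\bb_1}\cup\dots\cup\supp{\bb_n}\supseteq\supp{\bw}$; an edge sign-change then brings $\ba_1$ to $\mathbf{0}$, yielding the required controlling loop. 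The swaps used here decompose into the four permitted moves via Lemma~\ref{swap-from-others}, so nothing outside the allowed repertoire is needed.
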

\begin{proof}
Let $\Lambda$ be the affine representation of $(\Gamma,\psi)$. Given a vertex $v\in V(\Gamma)$, by Proposition \ref{compute-conjugacy-class}, we can algorithmically compute the conjugacy class of $(v,\mathbf{0})$. In particular, we can determine whether $v$ is redundant or not.

If $v$ is redundant, then using Lemma \ref{compare-conjugacy-class}, we can compute a set of collapsing data and constants for $v$. Using Proposition \ref{projection} and Lemmas \ref{induction-from-others}, \ref{swap-from-others}, and \ref{connection-from-others}, we can thus produce a sequence of edge sign-changes, slides, and elementary expansions and contractions, that perform the projection move of Section \ref{sec:projection}, removing the vertex $v$ and decreasing strictly the number of vertices of $\Gamma$.

If $v$ is not redundant, then by Proposition \ref{compute-conjugacy-class}, we can find a point $(v,\bw)\cnj (v,\mathbf{0})$, for $\bw\in\pA$, with the following property: for all $(v,\bw')\cnj (v,\mathbf{0})$ with $\bw'\in\pA$, we have $\supp{\bw'}\subseteq\supp{\bw}$. By Lemma \ref{compare-conjugacy-class}, we can algorithmically compute a path from $(v,\mathbf{0})$ to $(v,\bw)$, and let $e_1,\dots,e_n$ be the edges that appear in such path, in this order (but without repetitions). We now perform the procedure described in the proof of Proposition \ref{projection} and obtain an edge satisfying Condition \ref{itm:controlling-edge} of Definition \ref{def:totally-reduced}.
\end{proof}

\section{Sequences of moves}\label{sec:sequences-of-moves}

By Forester's results \cite{For02, For03} (see Theorems \ref{thm:Forester} and \ref{thm:GBS-JSJ}), if two GBS graphs $(\Gamma,\psi),(\Delta,\phi)$ represent GBS graphs of groups with isomorphic fundamental group (different from $\bbZ,\bbZ^2$ or the Klein bottle group), then it is possible to go from one to the other by sign-changes, elementary expansion, elementary contractions, and slides. However, a priori, there is no bound on the number of vertices and edges in the graphs that appear in this sequence.

In this section, we address this issue in Theorem \ref{thm:sequence-new-moves}. The theorem states that if two (totally reduced) GBS graphs represent isomorphic groups, then it is possible to go from one to the other by a sequence of sign-changes, inductions, slides, swaps, and connections. The main feature of the theorem is that the set of moves allowed in the sequence does not change the number of vertices of the graphs. This result allows us to (essentially) reduce the isomorphism problem for GBSs to the case of one-vertex graphs of groups. Furthermore, it has proven to be very helpful in deciding the isomorphism between concrete examples and we expect it to be key when addressing the general isomorphism problem for GBSs. 

The proof of Theorem \ref{thm:sequence-new-moves} is based on a systematic elimination of redundant vertices from sequences of moves. More precisely, we take a sequence of sign-changes, slides, and elementary expansions and contractions between the two groups, we choose a redundant vertex that appears in some of the graphs of the sequence, and we eliminate it by applying the projection move described in Section \ref{sec:projection}. This elimination procedure happens at the same time on all the graphs of the sequence: in order to make this formal, we need to introduce the notion of a \textit{story of a vertex along a sequence of moves}. Then we take advantage of Propositions \ref{prop:indep-data} and \ref{prop:projection-of-moves} and argue that, after the elimination procedure, we obtain another sequence of moves between the groups, but with fewer redundant vertices involved.

\subsection{Story of a vertex along a sequence of moves}\label{sec:story-of-a-vertex}

Let $(\Gamma,\psi)$ be a GBS graph. We first recall the behaviour of the moves with respect to the number of vertices in the graph. From the definition of the moves, we have that edge sign-changes, slides, swaps, and connections do not change the set of vertices of the graph. This means that, if $(\Gamma',\psi')$ is obtained from $(\Gamma,\psi)$ by an edge sign-change, a slide, a swap, or a connection, then we have $V(\Gamma')=V(\Gamma)$ and clearly, this extends to a sequence of these moves, i.e. if
$$(\Gamma_0,\psi_0),\dots,(\Gamma_N,\psi_N)$$
is a sequence of GBS graphs such that $(\Gamma_\beta,\psi_\beta)$ is obtained from $(\Gamma_{\beta-1},\psi_{\beta-1})$ by an edge sign-change, slide swap or connection, for $\beta=1,\dots,N$, then we have $V(\Gamma_0)=V(\Gamma_1)=\dots =V(\Gamma_N)$. 

On the other hand, if $(\Gamma',\psi')$ is obtained from $(\Gamma,\psi)$ by an elementary expansion, adding a valence-$1$ vertex $v$ and an edge, then we have $V(\Gamma')=V(\Gamma)\cup\{v\}$. Similarly, if $(\Gamma',\psi')$ is obtained from $(\Gamma,\psi)$ by an elementary contraction, eliminating a valence-$1$ vertex $v$ and an edge, then $V(\Gamma')=V(\Gamma)\setminus\{v\}$.

Similarly, if $(\Gamma',\psi')$ is obtained from $(\Gamma,\psi)$ by applying the projection move of Section \ref{sec:projection}, eliminating a redundant vertex $v$ from the graph, then we have $V(\Gamma')=V(\Gamma)\setminus\{v\}$. According to Proposition \ref{prop:indep-data}, a different choice of collapsing data and collapsing constants can lead to a different graph $(\Gamma'',\psi'')$, but in that case $(\Gamma',\psi')$ and $(\Gamma'',\psi'')$ are related by a sequence of edge sign-changes, slides, swaps, and connections; as argued above, such sequence of moves induces an equality $V(\Gamma')=V(\Gamma'')$: this equality agrees with the equality induced by the inclusions $V(\Gamma'),V(\Gamma'')\subseteq V(\Gamma)$.

\begin{lem}\label{powers-along-story}
Let
$$(\Gamma_0,\psi_0),\dots,(\Gamma_N,\psi_N)$$
be a sequence of GBS graphs such that $(\Gamma_\beta,\psi_\beta)$ is obtained from $(\Gamma_{\beta-1},\psi_{\beta-1})$ by an edge sign-change, elementary expansion, elementary contraction, slide, swap, connection, for $\beta=1,\dots,N$. Let $u,v$ be two vertices that belong to both $V(\Gamma_0)$ and $V(\Gamma_N)$. Then the following are equivalent:
\begin{enumerate}
\item In the fundamental group of the GBS graph of groups associated with $(\Gamma_0,\psi_0)$, $u$ is conjugate to a power of $v$.
\item In the fundamental group of the GBS graph of groups associated with $(\Gamma_N,\psi_N)$, $u$ is conjugate to a power of $v$.
\end{enumerate}
\end{lem}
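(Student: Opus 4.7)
The plan is to reduce, by induction on $N$ together with symmetry in the roles of the two endpoints of the sequence, to the case of a single move $(\Gamma_{\beta-1},\psi_{\beta-1}) \to (\Gamma_\beta,\psi_\beta)$. Each such move comes with an explicit isomorphism $\phi_\beta\colon \FG{\cG_{\beta-1}} \to \FG{\cG_\beta}$ of universal groups (given in the definitions of Section~\ref{sec:moves}), which restricts to an isomorphism of fundamental groups. The strategy is to track how the generators of the vertex groups transform under $\phi_\beta$, for vertices $w \in V(\Gamma_{\beta-1}) \cap V(\Gamma_\beta)$.

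The case analysis is short. For edge sign-change, slide, swap, connection, and elementary expansion/contraction, inspection of the defining formulas shows that $\phi_\beta$ affects only edge generators (together with, in the case of expansion, the single new vertex generator, which lies outside the common vertex set); hence every common vertex generator is fixed as an element of the universal group. The vertex sign-change at a vertex $x$ sends $x$ to $x^{-1}$ and fixes every other vertex generator. Composing along the whole sequence, the induced isomorphism $\Phi\colon \pi_1(\cG_0) \to \pi_1(\cG_N)$ sends the conjugacy class $[w_0]$ to $[w_N^{\epsilon_w}]$ for each $w \in V(\Gamma_0) \cap V(\Gamma_N)$, where $\epsilon_w \in \{\pm 1\}$ records the parity of the number of vertex sign-changes performed at $w$ along the sequence.

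To conclude, if $u$ is conjugate to $v^k$ in $\pi_1(\cG_0)$, then $[u_0] = [v_0^k]$; applying $\Phi$ yields $[u_N^{\epsilon_u}] = [v_N^{\epsilon_v k}]$, hence $[u_N] = [v_N^{\epsilon_u \epsilon_v k}]$, so $u$ is conjugate to a power of $v$ in $\pi_1(\cG_N)$. The converse follows by reversing the sequence and running the same argument, since the inverse of each of our moves is itself of one of the listed types.

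The main point requiring care, which I would address up front, is keeping the bookkeeping of basepoints honest: a priori $\pi_1(\cG_\beta, v_0^{(\beta)})$ sits inside $\FG{\cG_\beta}$ as a basepointed subgroup, while the isomorphism $\phi_\beta$ is defined on universal groups without reference to basepoints. As recalled in Section~\ref{sec:graphs-of-groups}, changing the basepoint replaces this subgroup by a conjugate one, and the conjugacy class $[w]$ of the generator of a vertex group is basepoint-independent; the statement of the lemma is therefore invariant under these choices, so after passing to conjugacy classes the argument goes through unambiguously.
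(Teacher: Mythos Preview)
Your proof is correct and follows exactly the paper's approach, which records the entire argument as the single line ``Induction on $N\ge0$.'' Two small inaccuracies that do not affect the argument: for elementary expansion and contraction the universal groups $\FG{\cG_{\beta-1}}$ and $\FG{\cG_\beta}$ are \emph{not} isomorphic (the larger one has an extra free factor coming from the new edge letter), though the induced map on fundamental groups is an isomorphism fixing the conjugacy class of each common vertex generator, which is all you actually use; and vertex sign-change is not among the moves listed in the lemma, so your $\epsilon_w$ is always $+1$ and that part of the discussion is superfluous.
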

\begin{proof}
Induction on $N\ge0$.
\end{proof}

\subsection{Ignoring elementary contraction moves}

The goal of this section is to prove Proposition \ref{no-contractions}, which is used to remove elementary contraction moves from the sequences.

\begin{defn}[Elementary subgraph]
Let $(\Gamma,\psi)$ be a GBS graph. An \textbf{elementary subgraph} of $(\Gamma,\psi)$ is a GBS graph $(\Delta,\phi)$ such that
\begin{enumerate}
\item $\Delta$ is a subgraph of $\Gamma$ and $\phi$ is the restriction of $\psi$.
\item The inclusion between the GBS graphs induces an isomorphism between the fundamental groups of the corresponding GBS graphs of groups.
\end{enumerate}
\end{defn}

\begin{lem}\label{elementary-subgraph1}
If $(\Delta,\phi)$ is an elementary subgraph of $(\Gamma,\psi)$, then $\rank{\Gamma}=\rank{\Delta}$.
\end{lem}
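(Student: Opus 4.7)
The plan is to identify $\rank{\Gamma}$ with the rank of a natural free quotient of $\pi_1(\cG_\Gamma)$ that can be computed from the underlying graph alone, do the analogous thing for $\Delta$, and argue that the hypothesised isomorphism matches the two ranks. Both graphs are finite and connected (every GBS graph is, by \Cref{def:graph-of-groups}), so $\pi_1(\Gamma)$ and $\pi_1(\Delta)$ are free of finite rank.

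First I would construct a surjection $q_\Gamma\colon\pi_1(\cG_\Gamma,v_0)\twoheadrightarrow\pi_1(\Gamma,v_0)$ by sending every vertex generator $v\in V(\Gamma)$ to $1$ and every edge $e\in E(\Gamma)$ to itself. This assignment extends to the universal group because each defining relation $u^{\psi(\ol e)}e=ev^{\psi(e)}$ becomes trivial, and it restricts to the sound subgroup at $v_0$. Surjectivity is clear: a based edge-path $(e_1,\dots,e_\ell)$ in $\Gamma$ lifts to the sound element $e_1\cdots e_\ell\in\pi_1(\cG_\Gamma,v_0)$ (all intermediate vertex-group factors equal to $1$). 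The analogous surjection $q_\Delta$ exists for $\Delta$, and, together with the inclusion-induced maps, it fits into a commutative square
\[
\begin{array}{ccc}
\pi_1(\cG_\Delta,v_0) & \xrightarrow{\ \cong\ } & \pi_1(\cG_\Gamma,v_0) \\
\ \ \downarrow q_\Delta & & \ \ \downarrow q_\Gamma \\
\pi_1(\Delta,v_0) & \xrightarrow{\ \iota\ } & \pi_1(\Gamma,v_0)
\end{array}
\]
whose top arrow is the isomorphism hypothesised in the definition of an elementary subgraph, and whose bottom arrow $\iota$ is induced by $\Delta\hookrightarrow\Gamma$. A routine diagram chase, using surjectivity of $q_\Gamma$ and $q_\Delta$, forces $\iota$ to be surjective, hence $\rank{\Gamma}\le\rank{\Delta}$.

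For the reverse inequality I would apply the standard spanning-tree argument: pick a spanning tree $T_\Delta$ of $\Delta$ and extend it to a spanning tree $T_\Gamma$ of $\Gamma$, which is possible since $\Gamma$ is connected. The free basis of $\pi_1(\Delta,v_0)$ indexed by $E(\Delta)\setminus T_\Delta$ is then sent by $\iota$ to a subset of the free basis of $\pi_1(\Gamma,v_0)$ indexed by $E(\Gamma)\setminus T_\Gamma$, so $\iota$ is injective and $\rank{\Delta}\le\rank{\Gamma}$. Combined with the previous inequality, this gives the desired equality. There is no real obstacle here; the only point worth verifying carefully is the well-definedness of $q_\Gamma$ at the universal-group level, which follows directly from the form of the GBS relations.
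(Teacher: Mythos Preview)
Your argument is correct. The key device---the natural surjection $q_\Gamma\colon\pi_1(\cG_\Gamma)\twoheadrightarrow\pi_1(\Gamma)$ killing the vertex generators---is well-defined exactly because the GBS relations become trivial, and the commutative square then forces $\iota$ to be surjective as you say. The spanning-tree step is also fine, though you could shorten it: for any subgraph of a graph the induced map on topological fundamental groups is injective, so $\rank\Delta\le\rank\Gamma$ requires no work at all.

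The paper's proof reaches the same conclusion by a more direct, ad hoc route: assuming $\rank\Gamma>\rank\Delta$, it picks a reduced based loop $(e_1,\dots,e_\ell)$ in $\Gamma$ not homotopic into $\Delta$ and observes that the element $e_1\cdots e_\ell\in\pi_1(\cG_\Gamma)$ cannot lie in the image of $\pi_1(\cG_\Delta)$, contradicting the assumed isomorphism. Implicitly this uses the normal-form theorem for graphs of groups (the homotopy class of the underlying edge path of a sound element is well-defined), which is exactly what your map $q_\Gamma$ encodes. So the two proofs rest on the same fact; yours packages it as an explicit homomorphism and a diagram, while the paper exhibits a single witnessing element. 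Your version is slightly longer but makes the mechanism more transparent and avoids any appeal to normal forms.
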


\begin{proof}
Let $\cG,\cH$ be the GBS graph of groups corresponding to $(\Gamma,\psi),(\Delta,\phi)$ respectively, and fix a basepoint in $\Delta$. If $\rank{\Gamma}>\rank{\Delta}$ then we can find a reduced path $(e_1,\dots,e_\ell)$ from the basepoint to itself which is not contained in $\Delta$. But then the word $e_1e_2\dots e_\ell$ represents an element of $\pi_1(\cG)$ which does not belong to $\pi_1(\cH)$ - a contradiction.
\end{proof}

\begin{lem}\label{elementary-subgraph2}
If $(\Delta,\phi)$ is an elementary subgraph of $(\Gamma,\psi)$, and $V(\Gamma)\supsetneq V(\Delta)$, then there is a valence-$1$ vertex $v\in V(\Gamma)\setminus V(\Delta)$ such that the unique edge $e$ with $\tau(e)=v$ satisfies $\psi(e)=\pm1$.
\end{lem}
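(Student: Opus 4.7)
The plan is to exploit Lemma~\ref{elementary-subgraph1} together with Euler characteristic bookkeeping to force the added piece of $\Gamma$ to be tree-like, extract a valence-$1$ new vertex, and finally constrain the label on its incident edge by a Bass--Serre amalgam argument.

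First I would set $k := \abs{V(\Gamma)} - \abs{V(\Delta)} \ge 1$. Since both $\Gamma$ and $\Delta$ are connected, Lemma~\ref{elementary-subgraph1} combined with $\rank(\cdot) = \abs{E(\cdot)}/2 - \abs{V(\cdot)} + 1$ gives $\abs{E(\Gamma)}/2 - \abs{E(\Delta)}/2 = k$. Now form the auxiliary graph $\Gamma''$ obtained from $\Gamma$ by collapsing the subgraph $\Delta$ to a single vertex $*$ and discarding all resulting loops at $*$ (which are precisely the edges of $\Delta$). Then $\Gamma''$ is connected, has $k+1$ vertices, and has exactly $k$ edge-pairs, so $\chi(\Gamma'') = 1$; hence $\Gamma''$ is a tree. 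In particular $\Gamma''$ has no loops, which tells us that no vertex of $V(\Gamma) \setminus V(\Delta)$ carries a loop in $\Gamma$, and since $k \ge 1$ the tree $\Gamma''$ has at least two leaves, at least one of which is different from $*$. Fix such a leaf $v \in V(\Gamma) \setminus V(\Delta)$ and let $e$ be the unique edge of $\Gamma$ with $\tau(e) = v$. Setting $u := \iota(e)$, the absence of loops at $v$ forces $u \ne v$.

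Next I would show $\psi(e) = \pm 1$. Let $\Gamma_0$ be the subgraph of $\Gamma$ obtained by deleting $v$ and the pair $\{e, \ol{e}\}$; since $v$ has valence $1$, $\Gamma_0$ is connected, and by construction $\Delta \subseteq \Gamma_0 \subseteq \Gamma$ as sub-graphs-of-groups. Inclusions of connected sub-graphs-of-groups induce injections on fundamental groups, so we obtain a chain
\[
    \pi_1(\Delta) \hookrightarrow \pi_1(\Gamma_0) \hookrightarrow \pi_1(\Gamma).
\]
By hypothesis the composite is an isomorphism, therefore both arrows are, and in particular $\pi_1(\Gamma_0) \to \pi_1(\Gamma)$ is an isomorphism. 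On the other hand, since $v$ has valence $1$ and $u \ne v$, the Bass--Serre structure at $v$ yields an amalgamated decomposition
\[
    \pi_1(\Gamma) \cong \pi_1(\Gamma_0) *_{\bbZ} \gen{v},
\]
where the amalgamated $\bbZ = G_e$ maps to $G_u \subseteq \pi_1(\Gamma_0)$ by multiplication by $\psi(\ol{e})$ and to $G_v = \gen{v}$ by multiplication by $\psi(e)$. For the inclusion $\pi_1(\Gamma_0) \hookrightarrow \pi_1(\Gamma)$ to be surjective, the inclusion $G_e \hookrightarrow G_v$ must itself be surjective, which forces $\psi(e) = \pm 1$.

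The only real subtlety is the injectivity of the inclusion $\pi_1(\Gamma_0) \hookrightarrow \pi_1(\Gamma)$, which is a standard consequence of Bass--Serre theory (a connected sub-graph-of-groups embeds via an invariant subtree of the universal cover), and the clean recognition of the amalgam on peeling off a valence-$1$ vertex; once these are in place, the proof is largely bookkeeping.
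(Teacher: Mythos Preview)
Your proof is correct and follows essentially the same two-step structure as the paper: first use $\rank\Gamma=\rank\Delta$ to locate a valence-$1$ vertex in $V(\Gamma)\setminus V(\Delta)$, then argue that the label at that vertex must be $\pm1$. The differences are in execution rather than strategy. For the first step, the paper simply asserts that rank equality plus $V(\Gamma)\supsetneq V(\Delta)$ forces a valence-$1$ vertex outside $\Delta$; your collapse-to-a-tree argument spells this out carefully and also cleanly rules out loops at $v$. For the second step, the paper writes down an explicit element $e_1\cdots e_\ell\, e\, v\, \ol{e}\,\ol{e}_\ell\cdots\ol{e}_1$ (a conjugate of the generator of $G_v$) and observes it lies in $\pi_1(\cG)\setminus\pi_1(\cH)$ when the label is not $\pm1$; your amalgam argument is the abstract version of the same observation, since in $\pi_1(\Gamma_0)*_{\bbZ}\gen{v}$ the factor $\pi_1(\Gamma_0)$ is proper exactly when $G_e\hookrightarrow G_v$ is proper. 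Your version is slightly more self-contained but relies on the (standard) injectivity of sub-graph-of-groups inclusions, which the paper avoids by directly exhibiting an element.
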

\begin{proof}
Let $\cG,\cH$ be the GBS graph of groups corresponding to $(\Gamma,\psi),(\Delta,\phi)$ respectively, and fix a basepoint in $\Delta$. By Lemma \ref{elementary-subgraph1} we have $\rank{\Gamma}=\rank{\Delta}$, and thus if $V(\Gamma)\supsetneq V(\Delta)$ then $V(\Gamma)\setminus V(\Delta)$ has to contain a valence $1$ vertex $v$. Let $e\in E(\Gamma)$ be the unique edge with $\tau(e)=v$. If $\psi(\ol{e})\not=\pm1$, then we consider a reduced path $(e_1,\dots,e_\ell,e)$ from the basepoint to $v$: the word $e_1\dots e_\ell ev\ol{e}\ol{e}_\ell\dots \ol{e}_1$ represents an element of $\pi_1(\cG)$ which does not belong to $\pi_1(\cH)$, contradiction.
\end{proof}

\begin{lem}\label{elementary-subgraph3}
If $(\Delta,\phi)$ is an elementary subgraph {\rm(}and in particular an induced subgraph{\rm)} of $(\Gamma,\psi)$ and $V(\Gamma)=V(\Delta)$, then $(\Gamma,\psi)=(\Delta,\phi)$.
\end{lem}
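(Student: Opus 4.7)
The plan is to use the rank computation together with the fact that underlying graphs of graphs of groups are connected by definition. Since $(\Delta,\phi)$ is a GBS graph, $\Delta$ is a connected graph; similarly $\Gamma$ is connected. For any finite connected graph $X$ (in the sense of Section \ref{sec:preliminaries}, where edges come with involutive reverses), the rank of its fundamental group is given by the Euler-characteristic formula
\[
\rank{X} \;=\; \tfrac{1}{2}\abs{E(X)} - \abs{V(X)} + 1.
\]

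First I would invoke Lemma \ref{elementary-subgraph1} to conclude $\rank{\Gamma} = \rank{\Delta}$. Combined with the hypothesis $V(\Gamma) = V(\Delta)$, the displayed formula forces $\abs{E(\Gamma)} = \abs{E(\Delta)}$. Since $\Delta$ is a subgraph of $\Gamma$, the inclusion $E(\Delta) \subseteq E(\Gamma)$ is an equality of finite sets, and the reverse and initial-vertex maps on $\Delta$ are the restrictions of those on $\Gamma$ (hence they coincide). Therefore $\Delta = \Gamma$ as graphs, and finally $\phi = \psi$ because $\phi$ is by definition the restriction of $\psi$ to $E(\Delta) = E(\Gamma)$.

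There is no real obstacle; the only subtlety worth checking is that both $\Gamma$ and $\Delta$ are indeed connected, which is built into the definition of a graph of groups (Definition \ref{def:graph-of-groups}), so the rank formula applies to each. Two or three lines of prose should suffice.
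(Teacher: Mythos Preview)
Your proposal is correct and follows the same approach as the paper, which simply says ``Follows from Lemma \ref{elementary-subgraph1}.'' You have spelled out the implicit reasoning---equal ranks plus equal vertex sets force equal edge counts via the Euler-characteristic formula, hence $E(\Delta)=E(\Gamma)$---which is exactly what the paper leaves to the reader.
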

\begin{proof}
Follows from Lemma \ref{elementary-subgraph1}.
\end{proof}

\begin{prop}[No elementary contractions]\label{no-contractions}
Let $(\Gamma_0,\psi_0),\dots,(\Gamma_N,\psi_N)$ be a sequence of GBS graphs, for $N\in\bbN$. For $\beta=1,\dots,N$, suppose that $(\Gamma_\beta,\psi_\beta)$ is obtained from $(\Gamma_{\beta-1},\psi_{\beta-1})$ by an edge sign-change, elementary expansion, elementary contraction, slide, swap or connection. Then there is a sequence of GBS graphs $(\Gamma_0',\psi_0'),\dots,(\Gamma_N',\psi_N')$ such that
\begin{enumerate}
\item For $\beta=1,\dots,N$ we have that $(\Gamma_\beta,\psi_\beta)$ is obtained from $(\Gamma_{\beta-1},\psi_{\beta-1})$ by the identity, an edge sign-change, an elementary expansion, a slide, a swap or a connection {\rm(}and notice that we do not allow for elementary contractions{\rm)}.
\item $(\Gamma_0',\psi_0')=(\Gamma_0,\psi_0)$ and $(\Gamma_\beta,\psi_\beta)$ is an elementary subgraph of $(\Gamma_\beta',\psi_\beta')$ for $\beta=1,\dots,N$.
\end{enumerate}
\end{prop}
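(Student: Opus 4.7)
The plan is to argue by induction on $N$, constructing the sequence $(\Gamma_\beta',\psi_\beta')$ step by step so that each time we encounter an elementary contraction in the original sequence we simply \emph{refuse} to perform it, keeping the corresponding valence-$1$ vertex and edge inside the larger graph. For $N=0$ set $(\Gamma_0',\psi_0')=(\Gamma_0,\psi_0)$, which is trivially an elementary subgraph of itself. For the inductive step, assume $(\Gamma_0',\psi_0'),\dots,(\Gamma_{\beta-1}',\psi_{\beta-1}')$ have been constructed with the two required properties, and consider the move from $(\Gamma_{\beta-1},\psi_{\beta-1})$ to $(\Gamma_\beta,\psi_\beta)$.

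Case 1: the move is an edge sign-change, elementary expansion, slide, swap, or connection. All the edges and vertices required to specify the move lie in $\Gamma_{\beta-1}\subseteq \Gamma_{\beta-1}'$, so the identical move can be performed on $(\Gamma_{\beta-1}',\psi_{\beta-1}')$; define $(\Gamma_\beta',\psi_\beta')$ to be the result. The edges and vertices of $\Gamma_{\beta-1}'\setminus\Gamma_{\beta-1}$ are untouched (the moves only alter the edges they explicitly name, and elementary expansion adds a new vertex that is disjoint from everything), so $\Gamma_\beta\subseteq\Gamma_\beta'$ as subgraphs. Case 2: the move is an elementary contraction, removing a valence-$1$ vertex $v_0$ and its unique edge $e_0$ (with $\psi(\overline{e_0})=1$) from $(\Gamma_{\beta-1},\psi_{\beta-1})$. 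Then set $(\Gamma_\beta',\psi_\beta')=(\Gamma_{\beta-1}',\psi_{\beta-1}')$, i.e.\ perform the identity move.

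It remains to verify that, in both cases, $(\Gamma_\beta,\psi_\beta)$ is an elementary subgraph of $(\Gamma_\beta',\psi_\beta')$. For the inclusion of underlying graphs this is immediate from the description above. For the fundamental group, we note that in Case 1 the commutative square
\[
\begin{array}{ccc}
\pi_1(\Gamma_{\beta-1}) & \xrightarrow{\,\cong\,} & \pi_1(\Gamma_\beta) \\
\downarrow\scriptstyle\cong & & \downarrow \\
\pi_1(\Gamma_{\beta-1}') & \xrightarrow{\,\cong\,} & \pi_1(\Gamma_\beta')
\end{array}
\]
(whose horizontal arrows are isomorphisms because the moves preserve the fundamental group, and whose left vertical arrow is an isomorphism by inductive hypothesis) forces the right vertical arrow, induced by the inclusion $\Gamma_\beta\hookrightarrow\Gamma_\beta'$, to be an isomorphism as well. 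In Case 2 the elementary contraction exhibits $(\Gamma_\beta,\psi_\beta)$ as an elementary subgraph of $(\Gamma_{\beta-1},\psi_{\beta-1})$, and transitivity (composition of two isomorphisms induced by inclusions is the isomorphism induced by the composed inclusion) yields that $(\Gamma_\beta,\psi_\beta)$ is an elementary subgraph of $(\Gamma_{\beta-1}',\psi_{\beta-1}')=(\Gamma_\beta',\psi_\beta')$.

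The only real content is Case 1: one must check that extending a move to the larger ambient graph leaves the extra valence-$1$ decorations undisturbed. This is essentially immediate from the combinatorial descriptions of slide, swap, and connection, which modify only the explicitly named edges; the remaining verifications (edge sign-change and expansion) are trivial. Consequently I do not expect any serious obstacle beyond being careful with the bookkeeping; the statement is really a formalisation of the intuition that contractions are reversible and can always be postponed indefinitely.
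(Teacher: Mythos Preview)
Your proof is correct and follows essentially the same approach as the paper: replace each elementary contraction by the identity, leave the corresponding valence-$1$ vertex in place, and verify by induction on $\beta$ that the resulting $(\Gamma_\beta',\psi_\beta')$ contains $(\Gamma_\beta,\psi_\beta)$ as an elementary subgraph. Your write-up is in fact more explicit than the paper's (which compresses the argument into a few sentences), particularly in spelling out the commutative square in Case~1 and the transitivity argument in Case~2.
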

\begin{proof}
Whenever we would perform an elementary contraction move to eliminate a vertex $v$, we instead do not perform any move, and we leave the vertex $v$ there. The vertex $v$ will appear as an extra vertex in all the subsequent graphs in the sequence (and it does not interfere with any subsequent edge sign-change, elementary expansion, slide, swap, or connection). This defines a new sequence of GBS graphs $(\Gamma_0',\psi_0'),\dots,(\Gamma_N',\psi_N')$ such that $(\Gamma_0',\psi_0')=(\Gamma_0,\psi_0)$ and such that $(\Gamma_\beta',\psi_\beta')$ contains $(\Gamma_\beta,\psi_\beta)$ as a subgraph for $\beta=1,\dots,N$. By induction in $\beta$ we have that $(\Gamma_\beta,\psi_\beta)$ is an elementary subgraph of $(\Gamma_\beta',\psi_\beta')$.
\end{proof}

\subsection{Removing redundant vertices from sequences of moves}

As we mentioned, if two GBS graphs $(\Gamma,\psi),(\Delta,\phi)$ represent GBS graphs of groups with isomorphic fundamental group (different from $\bbZ,\bbZ^2$ or the Klein bottle group), then it is possible to go from one to the other by sign-changes, elementary expansion, elementary contractions, and slides. However, the number of vertices and edges in the graphs in the sequence can grow. In Theorem \ref{thm:sequence-new-moves} we solve this problem.

\begin{thm}[Invariant number of vertices and edges]\label{thm:sequence-new-moves}
Let $(\Gamma,\psi),(\Delta,\phi)$ be totally reduced GBS graphs, and suppose that the corresponding GBS groups are isomorphic and different from $\bbZ,\bbZ^2,K$ {\rm(}the Klein bottle group{\rm)}. Then $\abs{V(\Gamma)}=\abs{V(\Delta)}$ and there is a sequence of slides, swaps, connections, sign-changes, and inductions going from $(\Delta,\phi)$ to $(\Gamma,\psi)$. Moreover, all the sign-changes and inductions can be performed at the beginning of the sequence.
\end{thm}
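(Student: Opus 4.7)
The plan is to start from the classical existence of a sequence of Forester's elementary deformations between the two graphs and iteratively eliminate the elementary contractions and the redundant vertices that arise along the way, using the projection-map technology developed in Sections \ref{sec:coarse-projection-map} and \ref{sec:sequences-of-moves}. More precisely, since $\pi_1\not\cong\bbZ,\bbZ^2,K$, Theorem \ref{thm:GBS-JSJ} tells us that $(\Gamma,\psi)$ and $(\Delta,\phi)$ are both JSJ decompositions, so by Theorem \ref{thm:Forester} they lie in the same deformation space and are connected by a sequence of elementary expansions, elementary contractions, slides, and sign-changes. I would first apply Proposition \ref{no-contractions} to obtain a new sequence $(\Gamma_0',\psi_0'),\dots,(\Gamma_N',\psi_N')$ with $(\Gamma_0',\psi_0')=(\Delta,\phi)$, no elementary contractions along the way, and $(\Gamma,\psi)$ embedded as an elementary subgraph of $(\Gamma_N',\psi_N')$.

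Next, I would use the story-of-a-vertex framework from Section \ref{sec:story-of-a-vertex} to pick a redundant vertex appearing somewhere in the sequence: any vertex of $(\Gamma_N',\psi_N')$ not in $V(\Gamma)$ is valence-$1$ with label $\pm1$ by Lemma \ref{elementary-subgraph2}, so it is redundant there (and by Lemma \ref{powers-along-story} it is redundant wherever it appears). I would then simultaneously apply the coarse projection map to every graph in the sequence where that vertex is present. Proposition \ref{prop:projection-of-moves} tells us that applying a projection essentially commutes with edge sign-changes, slides, swaps, and connections (producing only such moves on the projected graphs), while Proposition \ref{prop:indep-data} says that different choices of collapsing data change the projection only by such moves, so the sequence can be spliced together coherently. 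Whenever the vertex is created by an elementary expansion or eliminated by an elementary contraction in the original sequence, the projection absorbs those steps, replacing them by sequences of slides/swaps/connections/sign-changes plus one induction (coming from Proposition \ref{projection}).

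Iterating this procedure, each pass strictly decreases the total number of vertex-appearances in the sequence that are redundant, and after finitely many iterations we arrive at a sequence in which every intermediate graph has no redundant vertex. Since both endpoints $(\Delta,\phi)$ and $(\Gamma,\psi)$ are totally reduced, they themselves have no redundant vertices, so the resulting sequence uses only edge sign-changes, slides, swaps, connections, and inductions, none of which change $|V|$. This immediately gives $|V(\Gamma)|=|V(\Delta)|$ and a sequence of the required type. For the final clause, once the sequence consists only of those moves, I would push every sign-change and every induction past the subsequent slides, swaps, and connections to the beginning of the sequence: each such commutation is routine, since a slide/swap/connection composed with a sign-change or induction at a vertex can be rewritten as a sign-change/induction followed by a (slightly modified) slide/swap/connection by tracking what the isomorphism does to the relevant edge labels.

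The main obstacle will be the bookkeeping in the iterative elimination step: one must track a single redundant vertex across every graph in which it appears, verify that the projection data can be chosen consistently so that the pieces of the spliced sequence agree at the junction points (this is where Proposition \ref{prop:indep-data} does most of the work), and confirm that the relevant vertex remains redundant along its whole story after previous projections (which follows from Lemma \ref{powers-along-story} applied inductively). Once this bookkeeping is set up, Propositions \ref{projection}, \ref{prop:indep-data}, and \ref{prop:projection-of-moves} provide all the local replacements needed.
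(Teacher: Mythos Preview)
Your overall strategy matches the paper's, but there is a genuine gap in the elimination step. You claim that any vertex of $(\Gamma_N',\psi_N')$ not in $V(\Gamma)$ ``is redundant wherever it appears'' by Lemma~\ref{powers-along-story}. This is false for vertices $v\in V(\Delta)\setminus V(\Gamma)$. Such a $v$ is present already in $(\Gamma_0',\psi_0')=(\Delta,\phi)$, and since $(\Delta,\phi)$ is totally reduced, $v$ is \emph{not} redundant there. There is no contradiction with Lemma~\ref{powers-along-story}: the witness $u$ making $v$ redundant in $(\Gamma_N',\psi_N')$ need not lie in $V(\Delta)$, so the lemma does not transport redundancy back to the start. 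Consequently you cannot project $v$ away from $(\Delta,\phi)$, and your iterative scheme stalls. The paper therefore eliminates only the vertices in $V(\Gamma_N)\setminus(V(\Gamma)\cup V(\Delta))$ by projection, then carries out a separate structural analysis of $(\Omega_L,\omega_L)$: using Lemma~\ref{elementary-subgraph2} repeatedly it shows $V(\Omega_L)\setminus V(\Delta)=\{v_1,\dots,v_k\}\subseteq V(\Gamma)$ attached by edges $e_i$ with label $\pm1$ to vertices $u_i\in V(\Delta)$, and builds mutually inverse maps $\rho,\theta$ between $V(\Gamma)$ and $V(\Delta)$ from this. That bijection, not the mere absence of vertex-changing moves, is what proves $\abs{V(\Gamma)}=\abs{V(\Delta)}$.

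Your account of where the inductions come from is also off. Proposition~\ref{prop:projection-of-moves} shows that projecting along the sequence introduces only edge sign-changes, slides, swaps, and connections---no inductions. The inductions in the paper arise only at the very end, when the remaining vertices $u_i\in V(\Delta)\setminus V(\Gamma)$ are collapsed onto $v_i$ using the controlling edge $d_i$ guaranteed by Condition~\ref{itm:controlling-edge} of Definition~\ref{def:totally-reduced}; with collapsing data consisting of just $d_i,e_i$, the projection move is literally an induction (plus a possible vertex sign-change) followed by slides. This is why the inductions and sign-changes are already at the $\Delta$-end of the sequence: there is no need to ``push inductions past slides/swaps/connections'', a commutation which is not routine and which the paper never claims. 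Only the edge sign-changes are commuted (and those genuinely do commute with slides, swaps, and connections).
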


\begin{remark}
    In particular, since none of slide, swap, connection, sign-change and induction change the number of vertices/edges in the graph, in the sequence of moves provided by the above Theorem \ref{thm:sequence-new-moves}, all graphs will have the same number of vertices/edges.
\end{remark}

\begin{proof}
By Proposition \ref{no-contractions}, we can find a sequence
$$(\Gamma,\psi)=(\Gamma_0,\psi_0),(\Gamma_1,\psi_1),\dots,(\Gamma_N,\psi_N)$$
of GBS graphs such that $(\Gamma_N,\psi_N)$ contains $(\Delta,\phi)$ an elementary subgraph, and such that $(\Gamma_\beta,\psi_\beta)$ is obtained from $(\Gamma_{\beta-1},\psi_{\beta-1})$ by an edge sign-change, elementary expansion, slide, swap, or connection, for $\beta=1,\dots,N$; notice that elementary contractions are not allowed. In particular we have $V(\Gamma)=V(\Gamma_0)\subseteq V(\Gamma_1)\subseteq\dots \subseteq V(\Gamma_N)\supseteq V(\Delta)$.

Choose a vertex $u\in V(\Gamma_N)\setminus(V(\Gamma)\cup V(\Delta))$ and let $(\Gamma_\alpha,\psi_\alpha)$ be the first graph in the sequence where $u$ appears, for some $1\le\alpha\le N$. Notice that $u$ is redundant in $(\Gamma_\alpha,\psi_\alpha)$ (since it is added with an elementary expansion move), and thus by Lemma \ref{powers-along-story} it is redundant also in $(\Gamma_\beta,\psi_\beta)$ for all $\alpha\le\beta\le N$. We perform the projection move described in Section \ref{sec:projection} on the vertex $u$ at the same time along all the graphs $(\Gamma_\beta,\psi_\beta)$ with $\alpha\le\beta\le N$. By Proposition \ref{prop:projection-of-moves}, the projections of $(\Gamma_{\beta-1},\psi_{\beta-1}),(\Gamma_\beta,\psi_\beta)$ are related by a sequence of edge sign-changes, slides, swaps, and connections for all $\alpha\le\beta\le N$. Moreover we have that $(\Delta,\phi)$ is an elementary subgraph of the projection of $(\Gamma_N,\psi_N)$, since $u\not\in V(\Delta)$. Thus we get a new sequence
$$(\Gamma,\psi)=(\Gamma_0',\psi_0'),(\Gamma_1',\psi_1'),\dots,(\Gamma_{N'}',\psi_{N'}')$$
such that $(\Gamma_{N'}',\psi_{N'}')$ contains $(\Delta,\phi)$ as an elementary subgraph, and such that $(\Gamma_\beta',\psi_\beta')$ is obtained from $(\Gamma_{\beta-1}',\psi_{\beta-1}')$ by an edge sign-change, elementary expansion, slide, swap or connection, for $\beta=1,\dots,N'$. Note that, by doing this, the number of vertices in the last graph $(\Gamma_{N'}',\psi_{N'}')$ is one less than the number of vertices in $(\Gamma_N,\psi_N)$. We reiterate the same procedure for all vertices $u\in V(\Gamma_N,\psi_N)\setminus(V(\Gamma)\cup V(\Delta))$, and after a finite number of steps we obtain a sequence of GBS graphs
$$(\Gamma,\psi)=(\Omega_0,\omega_0),(\Omega_1,\omega_1),\dots,(\Omega_L,\omega_L)$$
with $L\in\bbN$, such that $(\Omega_L,\omega_L)$ contains $(\Delta,\phi)$ as an elementary subgraph, for $\beta=1,\dots,L$ we can go from $(\Gamma_{\beta-1},\psi_{\beta-1})$ to $(\Gamma_\beta,\psi_\beta)$ with an edge sign-change, elementary expansion, slide, swap or connection, and the set of vertices of every graph along the sequence is contained in $V(\Gamma)\cup V(\Delta)$. In particular $V(\Omega_L)=V(\Gamma)\cup V(\Delta)$.

Suppose that $V(\Omega_L)\supsetneq V(\Delta)$. Since $(\Delta,\phi)$ is an elementary subgraph of $(\Omega_L,\omega_L)$, by Lemma \ref{elementary-subgraph2}, we can find a valence-$1$ vertex $v_1\in V(\Omega_L)\setminus V(\Delta)$ such that the unique edge $e_1\in E(\Omega_L)$ with $\tau(e_1)=v_1$ satisfies $\omega_L(e_1)=\pm1$; let us call $\iota(e_1)=u_1$ the other endpoint of $e_1$. Since $V(\Omega_L)=V(\Gamma)\cup V(\Delta)$, it follows that $v_1\in V(\Gamma)$; by Lemma \ref{powers-along-story}, and since $(\Gamma,\psi)$ is totally reduced, we must also have $u_1\in V(\Delta)$. We now define $(\Omega_L^1,\omega_L^1)$ obtained from $\Omega_L$ by removing $v_1,e_1$ and we notice that $(\Delta,\phi)$ is an elementary subgraph of $(\Omega_L^1,\omega_L^1)$. If $V(\Omega_L^1)\supsetneq V(\Delta)$, then we reiterate the same reasoning and we find another valence-$1$ vertex $v_2\in V(\Omega_L^1)\setminus V(\Delta)$ such that the unique edge $e_2\in E(\Omega_L^1)$ with $\tau(e_2)=v_2$ satisfies $\psi(e_2)=\pm1$; we also call $u_2=\iota(e_2)$ the other endpoint of $e_2$. As before, we have $v_2\in V(\Gamma)$ and $u_2\in V(\Delta)$. We reiterate the same reasoning until we obtain a GBS graph $(\Omega_L^k,\omega_L^k)$, for some $k\in\bbN$, containing $(\Delta,\phi)$ as an elementary subgraph, and satisfying $V(\Omega_L^k)=V(\Delta)$. But then, by Lemma \ref{elementary-subgraph3}, we have $\Omega_L^k=\Delta$.

This proves that
$$V(\Omega_L)\setminus V(\Delta)=\{v_1,\dots,v_k\}$$
for some $k\in\bbN$ and $v_1,\dots,v_k\in V(\Gamma)$, and that
$$E(\Omega_L)\setminus E(\Delta)=\{e_1,\ol{e}_1,\dots,e_k,\ol{e}_k\}$$
such that $\tau(e_i)=v_i$ and $\iota(e_i)=u_i$ for some $u_i\in V(\Omega_L)\setminus V(\Gamma)$ and $\psi(e_i)=\pm1$, for $i=1,\dots,k$. In particular we can define a map $\rho:V(\Gamma)\rar V(\Delta)$ given by the identity on $V(\Gamma)\cap V(\Delta)$, and sending $v_i$ to $u_i$ for $i=1,\dots,k$. Notice that for $v\in V(\Gamma)$ the conjugacy class $[v]$ is a power of the conjugacy class $[\rho(v)]$. For every $u\in V(\Delta)$, there must be at least one vertex $v\in V(\Gamma)$ such that the conjugacy class $[u]$ is a power of $[v]$; thus we can define a map $\theta:V(\Delta)\rar V(\Gamma)$ sending each $u\in V(\Delta)$ to one such $v\in V(\Gamma)$. Since $(\Gamma,\psi),(\Delta,\phi)$ are totally reduced, we must have that $\rho$ and $\theta$ are inverse of each other. In particular this means that $\abs{V(\Gamma)}=\abs{V(\Delta)}$ and that $V(\Delta)\setminus V(\Gamma)=\{u_1,\dots,u_k\}$. This also means that for $u\in V(\Delta)$ we have that $\theta(u)$ is the \textit{unique} vertex in $V(\Gamma)$ such that the conjugacy class $[u]$ is a power of $[\theta(u)]$.

Since $\Delta$ is totally reduced, for $i=1,\dots,k$ we consider the vertex $u_i\in V(\Delta)\setminus V(\Gamma)$, and we can find an edge $d_i\in E(\Delta)$ satisfying Property \ref{itm:controlling-edge} of Definition \ref{def:totally-reduced}. Then we use the set of collapsing data $d_i,e_i$ for the vertex $u_i$ to perform the projection move of Section \ref{sec:projection} from $u_i$ to $v_i$. We call $(\Delta',\phi')$ the graph obtained performing such projection for all $k=1,\dots,n$. Notice that, since the collapsing data consists of only two edges $d_i,e_i$, in this case, the projection move coincides with the following: applying an induction move (plus potentially a vertex sign-change) at the vertex $u_i$ in $\Delta$, and then removing the edge $e_i$ and identifying $u_i$ and $v_i$, and then performing some slide moves to adjust the remaining edges. In particular, the graph $(\Delta',\phi')$ is obtained from $(\Delta,\phi)$ by performing an induction move (and potentially a vertex sign-change move) at some vertices, and then slide moves.

We now eliminate the vertices $u_1,\dots,u_k$ from the whole sequence of GBS graphs $(\Gamma,\psi)=(\Omega_0,\omega_0),\dots,(\Omega_L,\omega_L)$ using the projection move of Section \ref{sec:projection}. We obtain a sequence of GBS graphs going from $(\Gamma,\psi)$ to $(\Delta',\phi')$ and such that every two consecutive GBS graphs in the sequence are related by an edge sign-change, a slide, a swap or a connection; in fact, we are no longer using any elementary expansion, since we eliminated all the vertices except the ones in $V(\Gamma)$. In particular, we can go from $(\Gamma,\psi)$ to $(\Delta,\phi)$ with a sequence of edge sign-changes, slides, swaps, and connections, followed by inductions and vertex sign-changes at the end of the sequence. To conclude, we observe that edge sign-changes commute with slides, swaps, and connection, and thus they can be moved at the end of the sequence too.
\end{proof}

\begin{cor}\label{cor:sequence-now-moves}
Let $(\Gamma,\psi),(\Delta,\phi)$ be GBS graphs and suppose that the corresponding graphs of groups have isomorphic fundamental group. Then there is a sequence of elementary expansions, elementary contractions, slides, and sign-changes going from $(\Delta,\phi)$ to $(\Gamma,\psi)$. Moreover, the moves can be performed in such a way that every graph along the sequence has at most $\abs{V(\Gamma)}+2$ vertices.
\end{cor}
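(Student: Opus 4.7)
The plan is to reduce the corollary to Theorem \ref{thm:sequence-new-moves} and then translate the resulting sequence of ``high-level'' moves into Forester's elementary deformations using the explicit bounds recorded in the preceding lemmas.

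First, I would dispose of the excluded cases $\pi_1(\cG)\cong\bbZ,\bbZ^2,K$ by an ad hoc argument, since only a very restricted list of GBS graphs can represent these groups and the connecting sequences can be written by hand. I would also reduce to the case where both $(\Gamma,\psi)$ and $(\Delta,\phi)$ are totally reduced: if they are not, Proposition \ref{prop:compute-totally-reduced} provides sequences of sign-changes, elementary expansions, elementary contractions, and slides taking them to totally reduced graphs $(\Gamma',\psi')$ and $(\Delta',\phi')$, and one concatenates these with the sequence produced in the totally reduced case (the vertex bound $|V(\Gamma)|+2$ is then the right one, since totally reduced representatives minimize the vertex count within the isomorphism class).

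Once both GBS graphs are totally reduced, Theorem \ref{thm:sequence-new-moves} produces a sequence
\[
(\Delta,\phi)=(\Lambda_0,\mu_0),(\Lambda_1,\mu_1),\dots,(\Lambda_r,\mu_r)=(\Gamma,\psi)
\]
of slides, swaps, connections, sign-changes, and inductions, and moreover $|V(\Lambda_i)|=|V(\Gamma)|$ for every $i$, since none of these moves alters the vertex set. It remains to replace each move that is not already an elementary deformation of Forester by an equivalent finite sequence of sign-changes, elementary expansions, elementary contractions, and slides. This is exactly what Lemmas \ref{induction-from-others}, \ref{swap-from-others}, and \ref{connection-from-others} provide: each induction is expressed at the cost of adding at most one auxiliary vertex, while each swap and connection is expressed using at most two auxiliary vertices. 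Performing these expansions one transition at a time yields a new sequence composed only of elementary expansions, elementary contractions, slides, and sign-changes, and during the expansion of the move from $(\Lambda_{i-1},\mu_{i-1})$ to $(\Lambda_i,\mu_i)$ the intermediate graphs have at most $|V(\Gamma)|+2$ vertices, as required.

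The only genuinely delicate point is verifying that the bound $|V(\Gamma)|+2$ is global rather than just local along each expansion: between two consecutive translated blocks the graph is some $(\Lambda_i,\mu_i)$ with exactly $|V(\Gamma)|$ vertices, so the auxiliary vertices introduced in one block are removed before the next block begins, and the bound $|V(\Gamma)|+2$ holds uniformly along the full concatenated sequence. Combining with the initial reduction to totally reduced graphs (in which the vertex count only decreases) completes the proof.
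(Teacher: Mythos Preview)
Your proposal is correct and follows essentially the same approach as the paper: invoke Theorem~\ref{thm:sequence-new-moves} to obtain a sequence of slides, swaps, connections, sign-changes, and inductions, then expand each non-elementary move via Lemmas~\ref{induction-from-others}, \ref{swap-from-others}, and~\ref{connection-from-others}. The paper's proof is a single line citing exactly these ingredients; you have correctly filled in the details the paper leaves implicit, in particular the reduction to the totally reduced case (needed because the body's statement, unlike the introduction's Corollary~B, omits that hypothesis) and the observation that the auxiliary vertices introduced when expanding one move are removed before the next expansion begins, so the bound $|V(\Gamma)|+2$ is global.
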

\begin{proof}
Follows from Theorem \ref{thm:sequence-new-moves} and Lemmas \ref{induction-from-others}, \ref{swap-from-others} and \ref{connection-from-others}.
\end{proof}

\subsection{Reduction to the one-vertex case}

\begin{defn}[Set of primes of a graph]
    For a GBS graph $(\Gamma,\psi)$, define its \textbf{set of primes}
    $$\cP(\Gamma,\psi):=\{p\in\bbN \text{ prime } : p\divides \psi(e) \text{ for some } e\in E(\Gamma)\}.$$
\end{defn}

It is immediate to notice that the set of primes $\cP(\Gamma,\psi)$ is invariant when performing sign-changes, inductions, slides, swaps, and connections. For this reason, and thanks to Theorem \ref{thm:sequence-new-moves}, it makes sense to give the following definition.

\begin{defn}[Set of primes of a GBS]
    For a generalized Baumslag-Solitar group $G$, define its \textbf{set of primes} $\cP(G):=\cP(\Gamma,\psi)$ for any totally reduced GBS graph $(\Gamma,\psi)$ such that the corresponding GBS graph of groups $\cG$ satisfies $\pi_1(\cG)\cong G$.
\end{defn}

\begin{remark}
    For a generalized Baumslag-Solitar group $G$ and a prime $p$, we have that $p\in\cP(G)$ if and only if there are two elliptic elements $a,b\in G$ such that $a$ is not conjugate to a power of $b$ but $a^p$ is conjugate to a power of $b$. Here ``elliptic" means that the element stabilizes a vertex in some (equivalently every) action of $G$ on a tree $T$ with edge stabilizers and vertex stabilizers isomorphic to $\bbZ$. This provides an intrinsic characterization of $\cP(G)$, without relying on Theorem \ref{thm:sequence-new-moves}.
\end{remark}

\begin{remark}
    Given a GBS graph $(\Gamma,\psi)$, there is a homomorphism $q:\pi_1(\Gamma)\rar\bbQ\setminus\{0\}$, called the \textit{modular homomorphism}, from the fundamental group of $\Gamma$ (as a graph) to the multiplicative group $\bbQ\setminus\{0\}$. A loop $(e_1,\dots,e_\ell)$ in $\Gamma$ is sent to the rational number $\frac{\psi(\tau(e_1))}{\psi(\iota(e_1)}\cdots \frac{\psi(\tau(e_\ell))}{\psi(\iota(e_\ell)}$. We can define $\cP'(G)$ to be the set of prime numbers that appear with non-zero exponent in $q(\sigma)$ for some $\sigma\in\pi_1(\Gamma)$. We point out that $\cP'(G)\not=\cP(G)$ in general (as, for example, the former is trivial if $\Gamma$ is a tree, while the latter in general is not).
\end{remark}

\begin{defn}[Induction-free graph]
    A GBS graph $(\Gamma,\psi)$ is called \textbf{induction-free} if $\psi(e)\not=\pm1$ for every $e\in E(\Gamma)$.
\end{defn}

Notice that a GBS graph which is induction-free is in particular totally reduced. It follows from Theorem \ref{thm:sequence-new-moves} that, if two totally reduced GBS graphs represent isomorphic groups and one is induction-free, then the other has to be induction-free too. For induction-free GBS graphs, the isomorphism problem is reduced to deciding the existence of a sequence of slides, swaps, and connections going from a given GBS graph to another given one.

The following Theorem \ref{thm:one-vertex} shows that, up to guessing the initial sign-changes and inductions, the isomorphism problem for generalized Baumslag-Solitar groups can be reduced to the case of induction-free one-vertex GBS graphs.

\begin{thm}\label{thm:one-vertex}
    Let $(\Gamma_0,\psi_0)$ and $(\Gamma_1,\psi_1)$ be totally reduced GBS graphs and let $b:V(\Gamma_1)\rar V(\Gamma_0)$ be a bijection. Then there are induction-free one-vertex GBS graphs $(\Delta_0,\phi_0),(\Delta_1,\phi_1)$ such that the following are equivalent:
    \begin{enumerate}
        \item There is a sequence of slides, swaps, and connections going from $(\Gamma_0,\psi_0)$ to $(\Gamma_1,\psi_1)$ and such that the vertex $v\in V(\Gamma_1)$ corresponds to the vertex $b(v)\in V(\Gamma_0)$.
        \item There is a sequence of slides, swaps, and connections going from $(\Delta_0,\phi_0)$ to $(\Delta_1,\phi_1)$.
    \end{enumerate}
    Moreover, $(\Delta_0,\phi_0),(\Delta_1,\phi_1)$ can be constructed algorithmically from $(\Gamma_0,\psi_0),(\Gamma_1,\psi_1),b$.
\end{thm}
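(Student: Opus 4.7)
The construction of $(\Delta_0,\phi_0),(\Delta_1,\phi_1)$ will use a \emph{prime-tagging} trick. Write $V(\Gamma_0)=\{v_1,\dots,v_n\}$ and identify $V(\Gamma_1)$ with the same set via $v_j\leftrightarrow b^{-1}(v_j)$. Choose $n$ distinct primes $p_1,\dots,p_n$ outside of $\cP(\Gamma_0)\cup\cP(\Gamma_1)$ (the $n$ smallest such primes, for algorithmic concreteness). Define $\Delta_0$ to be the one-vertex GBS graph with vertex $w$ containing, for every edge $e$ of $\Gamma_0$ going from $v_j$ to $v_k$, a loop at $w$ with labels $(\psi_0(\bar e)\cdot p_j,\psi_0(e)\cdot p_k)$. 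Define $\Delta_1$ analogously from $\Gamma_1$ under the identification. Since every label in $\Delta_0,\Delta_1$ is divisible by some prime $p_j\ne 1$, the graphs are induction-free.

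The key observation is that the \emph{tagged structure} — each edge carrying exactly one of the primes $p_j$ at each of its two endpoints — is preserved by slides, swaps, and connections, and that such moves on a tagged graph correspond bijectively to the analogous moves on the underlying multi-vertex graph. For a slide in $\Delta$ of $\tilde d$ along $\tilde e$ with $\psi(\bar{\tilde e})=np_u$, the divisibility $\psi(\tilde d)=\ell\cdot np_u$ combined with the fact that $p_u\notin\cP(\Gamma_0)\cup\cP(\Gamma_1)$ forces the endpoint of $\tilde d$ being slid to already carry the tag $p_u$, and after the move to carry the tag $p_v$ coming from $\psi(\tilde e)=mp_v$. An analogous analysis for swaps shows that the condition $np_j\mid mp_k$ forces $j=k$, so swaps can occur only between loops sharing the same tag (i.e.\ loops at the same vertex of $\Gamma$). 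Similarly, connection moves must involve one edge tagged $(p_u,p_v)$ and a loop tagged $(p_v,p_v)$, producing edges tagged $(p_v,p_u)$ and $(p_u,p_u)$, again preserving the tagged structure.

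The implication $(1)\Rightarrow(2)$ is then routine: every slide, swap, or connection in a sequence $\Gamma_0\to\cdots\to\Gamma_1$ that respects the bijection $b$ translates move-for-move into the analogous operation on the corresponding tagged edges of $\Delta_0\to\cdots\to\Delta_1$, since the divisibility hypotheses carry through unchanged after multiplication by the tagging primes. Conversely, given a sequence of moves from $\Delta_0$ to $\Delta_1$, we argue by induction on the length of the sequence: each intermediate graph is tagged (by the preservation observation), so stripping the tags unambiguously defines a multi-vertex GBS graph on the vertex set $\{v_1,\dots,v_n\}$; the sequence of stripped graphs gives a sequence of slides, swaps, and connections from $\Gamma_0$ to $\Gamma_1$ that respects $b$ (as the vertex identifications are encoded by the primes themselves). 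The algorithmic statement is immediate from the explicit definition of $\Delta_0,\Delta_1$.

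The main technical point — and the step to verify with care — is the preservation-and-correspondence claim of the second paragraph: that no ``illegal'' slide, swap, or connection can be performed on a prime-tagged graph, and that every legal one mirrors a move on the underlying $\Gamma$. This reduces to a bookkeeping case analysis of the divisibility conditions defining the three moves in \Cref{sec:moves}, exploiting the fact that the tagging primes $p_j$ are external to all labels appearing in $\Gamma_0$ and $\Gamma_1$.
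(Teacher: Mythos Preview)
Your proposal is correct and takes essentially the same approach as the paper: the paper introduces exactly this prime-tagging construction, multiplying each label $\psi(e)$ by a fresh prime $p_{\tau(e)}$ recording the terminal vertex, and then observes (more tersely than you do) that slides, swaps, and connections on the tagged one-vertex graph correspond bijectively to the same moves on the original multi-vertex graph. Your more explicit case analysis of why the divisibility constraints force tag-compatibility is a welcome expansion of what the paper leaves as ``this behaves well with respect to the moves''.
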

\begin{remark}
    The above \Cref{thm:one-vertex} remains true if we add edge sign-changes to the list of moves that we are allowing.
\end{remark}
\begin{proof}
    Let $\cP=\cP(\Gamma_0,\psi_0)\cup\cP(\Gamma_1,\psi_1)$ be the set of primes that appear in at least one of $(\Gamma_0,\psi_0),(\Gamma_1,\psi_1)$. For every vertex $v\in V(\Gamma_0)$ choose a different prime number $p_v\not\in\cP$. Given a GBS graph $(\Gamma,\psi)$ and a bijection $a:V(\Gamma)\rar V(\Gamma_0)$, define the induction-free one-vertex GBS graph $(\Delta,\phi)$ as follows (see Figure \ref{fig:reduction-one-vertex}):
    \begin{enumerate}
        \item $V(\Delta)=\{*\}$
        \item $E(\Delta)=E(\Gamma)$ with the same reverse map. 
        \item For $e\in E(\Delta)$ we set $\phi(e)=p_{a(\tau(e))}\cdot \psi(e)$.
    \end{enumerate}
    Every label of every label on every edge of $\Delta$ carries exactly one prime $p_v$ for some $v\in V(\Gamma)$; the prime $p_v$ is meant to keep track of the fact that the endpoint of the edge was at $v$.
    
    This behaves well with respect to the moves: if we perform a slide, swap, or connection on two edges in $(\Gamma,\psi)$, then the graph $(\Delta,\phi)$ changes by a slide, swap or connection on the same edges. Conversely, if we perform a slide, swap, or connection on two edges in $(\Delta,\phi)$, then the same move can be performed in $(\Gamma,\psi)$. Thus sequences of slides, swaps, and connections on $(\Gamma,\psi)$ are in bijection with sequences of moves on the corresponding graphs $(\Delta,\phi)$.

    Define $(\Delta_0,\phi_0),(\Delta_1,\phi_1)$ as the graphs obtained from $(\Gamma_0,\psi_0),(\Gamma_1,\psi_1)$ respectively by the above construction. It follows that there is a sequence of slides, swaps, connections going from $(\Gamma_0,\psi_0)$ to $(\Gamma_1,\psi_1)$ if and only if there is a sequence of slides, swaps, connections going from $(\Delta_0,\phi_0)$ to $(\Delta_1,\phi_1)$. The statement  follows.
\end{proof}

\begin{figure}[H]
	\centering
	\begin{tikzpicture}[scale=1]
		
		\begin{scope}[shift={(0,0)}]
		\node (v1) at (0,0) {.};
		\node (v2) at (0,2) {.};
		\node (v3) at (2,1) {.};		
		\draw[-,out=45,in=90,looseness=30] (v3) to (v3);		
	 \node () at (0,-.2) {$v_1$};
	\node () at (0,2.2) {$v_2$};
	\node () at (0.2,1) {$e$};	
	\node () at (2,0.8) {$v_3$};
	\node () at (1.2,1.7) {$f$};
	\node () at (2.7,1.5) {$g$};		
	\node () at (1.2,-1) {$\Gamma$};
		\draw[-] (v1) to (v2);
		\draw[-] (v2) to (v3);
	\end{scope}

\begin{scope}[shift={(8,1)}]		
\node (p1) at (0.2,.2) {.};
\node (p2) at (-.4,.4) {.};
\node (p3) at (0,-0.4) {.};				
\node () at (0,0.1) {$p_1$};
\node () at (-0.6,.3) {$p_2$};
\node () at (-0.2,-0.5) {$p_3$};	
\draw (-.2,0) circle [radius=0.85]; 		
\draw[-,out=40,in=90,looseness=10] (p1) to (p2);
\draw[-,out=170,in=210,looseness=8] (p2) to (p3);	
\draw[-,out=10,in=-60,looseness=50] (p3) to (p3);		
	\node () at (-.1,-2) {$\Delta$};		
\end{scope}
	\end{tikzpicture}
	\caption{A one-vertex GBS graph that mimics the behaviour of another given GBS graph.}
	\label{fig:reduction-one-vertex}
\end{figure}
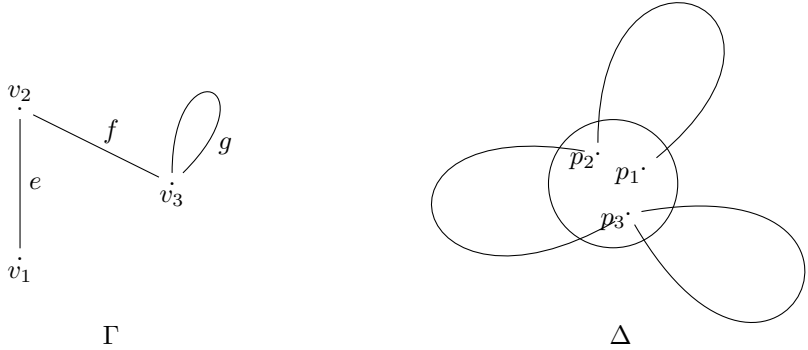

With a similar argument, it is also possible to assume that all the numbers on the edges are positive.

\begin{defn}
    A GBS graph $(\Gamma,\psi)$ is called \textbf{positive} if $\psi(e)>0$ for every $e\in E(\Gamma)$.
\end{defn}

\begin{prop}\label{prop:one-vertex-positive}
    Let $(\Delta_0,\phi_0),(\Delta_1,\phi_1)$ be one-vertex induction-free GBS graphs. Then there are one-vertex induction-free positive GBS graphs $(\Omega_0,\omega_0),(\Omega_1,\omega_1)$ such that the following are equivalent:
    \begin{enumerate}
        \item There is a sequence of slides, swaps, and connections going from $(\Delta_0,\phi_0)$ to $(\Delta_1,\phi_1)$.
        \item There is a sequence of slides, swaps, and connections going from $(\Omega_0,\omega_0)$ to $(\Omega_1,\omega_1)$.
    \end{enumerate}
    Moreover, $(\Omega_0,\omega_0),(\Omega_1,\omega_1)$ can be constructed algorithmically from $(\Delta_0,\phi_0),(\Delta_1,\phi_1)$.
\end{prop}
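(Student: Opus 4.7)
The strategy is to introduce a new prime $p \notin \cP(\Delta_0) \cup \cP(\Delta_1)$ and use it to encode the sign information carried by the $\bbZ/2\bbZ$-component of the affine representation, thereby making the labels positive. The natural first attempt is to set $\omega_i(e) := p^{s(\phi_i(e))} |\phi_i(e)|$, with $s(+) = 0$ and $s(-) = 1$. This gives labels which are positive integers of absolute value at least $2$, so $(\Omega_i,\omega_i)$ is a positive, induction-free one-vertex GBS graph; the choice of $p$, and hence the construction, is algorithmic.

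The easier direction (2) $\Rightarrow$ (1) is obtained by substituting $p \mapsto -1$ in every label throughout a given sequence of slides, swaps and connections in $\Omega$-land. Under this substitution $\omega_i(e)$ becomes $\phi_i(e)$, so $\Omega_i$ projects back to $\Delta_i$. Divisibility in $\bbZ$ is insensitive to signs, so the divisibility condition required by each of the three types of move is preserved by the projection, and the multiplicative update formulas for labels (in slides, swaps and connections) commute with the substitution. Hence the projected sequence is a valid sequence of moves from $(\Delta_0,\phi_0)$ to $(\Delta_1,\phi_1)$.

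The harder direction (1) $\Rightarrow$ (2) requires lifting each $\Delta$-move to an $\Omega$-move. Here the naive lift fails in general: the divisibility condition in $\Omega$-land is stricter than in $\Delta$-land because it also demands compatibility of $p$-exponents (for instance, sliding $d$ using $e$ requires $s_p(\omega(\bar e)) \le s_p(\omega(d))$), and even when the lift succeeds, the resulting $p$-exponents may end up outside $\{0,1\}$ and so fail to match those prescribed by $\Omega_1$. The construction must therefore be refined, either by adding uniform $p$-factors to create the slack needed for divisibility, or by first normalizing the labels $\phi_i(\bar e)$ via edge sign-changes (which commute with slides, swaps and connections up to propagation onto adjacent edges, so they can be absorbed into the construction of $\Omega_i$). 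Between each lifted move, auxiliary slides in $\Omega$-land reset the $p$-coordinates to the prescribed normal form; this is made possible by \Cref{thm:controlled} applied to the $p$-component of the affine representation, where the $p$-bearing edges serve as the controlling edges that yield the required flexibility. The main obstacle, and the place where the bulk of the technical work will go, is to design the construction together with the auxiliary moves so that both directions of the equivalence fit together coherently.
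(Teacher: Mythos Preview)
Your encoding idea and the direction (2) $\Rightarrow$ (1) are essentially right, but the proposal does not actually prove (1) $\Rightarrow$ (2): it ends by naming the obstacle rather than removing it. And the obstacle is real for the construction you wrote down. In your $(\Omega_i,\omega_i)$, the new prime $p$ appears only as a static factor $p^0$ or $p^1$ on each label; no edge has a nontrivial $p$-ratio between its two labels. Consequently no slide, swap or connection in $\Omega$-land can ever alter the $p$-exponent of any label, so there are no ``auxiliary slides to reset $p$-coordinates'' available, and there is no controlling edge for the $p$-component to which \Cref{thm:controlled} could be applied. The lift of a $\Delta$-slide with $\phi(\ol e)<0$ and $\phi(d)>0$ simply fails the divisibility test in $\Omega$, and nothing in your $\Omega$ can repair it.

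The paper fixes this by enlarging $\Omega$ with one extra loop $f$ whose labels are $r$ and $rq^2$, using two fresh primes: $q$ plays the role of your $p$, and the labels of the original edges are taken to be $r^2\cdot q^{2k_e}\phi(e)$ (resp.\ $-r^2\cdot q^{2k_e+1}\phi(e)$) with the exponent $k_e$ free. The edge $f$ has $q$-ratio $q^2$, so sliding other edges along $f$ adds or removes $q^2$ from their labels --- this is exactly the missing mechanism encoding $(-1)^2=1$, and it supplies the slack needed to lift any $\Delta$-move after first pumping $q$-exponents. The second prime $r$ (with exponent $1$ on $f$ versus $2$ on every other label) guarantees that $f$ itself can never be swapped, connected, or slid; the only move touching $f$ is a slide of another edge along it, which projects to the identity on $\Delta$. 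With this construction both directions are a direct case check, and no appeal to \Cref{thm:controlled} is needed.
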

\begin{remark}
    The above \Cref{prop:one-vertex-positive} remains true if we add edge sign-changes to the list of moves that we are allowing.
\end{remark}
\begin{proof}
    Let $\cP=\cP(\Delta_0)\cup\cP(\Delta_1)$ and choose two primes $q,r\not\in\cP$. Given a one-vertex induction-free GBS graph $(\Delta,\phi)$, and given integers $k_e\ge0$ for $e\in E(\Delta)$, define the one-vertex induction-free positive GBS graph $(\Omega,\omega)$ as follows:
    \begin{enumerate}
        \item $V(\Omega)=\{*\}$
        \item $E(\Omega)=E(\Delta)\cup\{f,\ol{f}\}$ adding one extra edge $f$ (and its reverse $\ol{f}$).
        \item For $e\in E(\Delta)$ we set $\omega(e)=r^2\cdot(q^{2k_e}\phi(e))$ if $\phi(e)>0$ and $\omega(e)=r^2\cdot(-q^{2k_e+1}\phi(e))$ if $\phi(e)<0$.
        \item We set $\omega(f)=r$ and $\omega(\ol{f})=rq^2$.
    \end{enumerate}
    Here the prime $q$ is substituted to the factor $(-1)$. The additional edge $f$ is added, allowing to add or remove $q^2$ from the labels of the other edges at will, by slide moves; this is an artificial way of encoding the fact that $(-1)^2=1$. The GBS graph $(\Omega,\omega)$ is defined up to multiplication by $q^2$ of the labels on each edge; GBS graphs obtained with different choices of the integers $k_e$ only differ by slide of the other edges along $f$. The prime $r$ is needed, in order to ensure that the graph is induction-free, and that no move can change the extra edge $f$.
    
    This behaves well with respect to the moves: if we perform a slide, swap, or connection on two edges in $(\Delta,\phi)$, then the graph $(\Omega,\omega)$ changes by a slide, swap or connection on the same edges (and possibly by a change of choice of the integers $k_e$ for $e\in E(\Delta)$). Conversely, suppose that we perform a slide, swap or connection on two edges in $(\Omega,\omega)$: if the move does not involve $f,\ol{f}$, then the same move can be performed in $(\Gamma,\psi)$; if the move involves $f,\ol{f}$, then it has to be a slide of another edge along $f$ or $\ol{f}$ (since every other label contains a factor $r^2$, both before and after the move), and thus it corresponds to a change in the choice of the integers $k_e$ for $e\in E(\Delta)$.

    Define $(\Omega_0,\omega_0),(\Omega_1,\omega_1)$ as the graphs obtained from $(\Delta_0,\phi_0),(\Delta_1,\phi_1)$ respectively by the above construction. It follows that there is a sequence of slides, swaps, connections going from $(\Delta_0,\phi_0)$ to $(\Delta_1,\phi_1)$ if and only if there is a sequence of slides, swaps, connections going from $(\Omega_0,\omega_0)$ to $(\Omega_1,\omega_1)$. The statement  follows.
\end{proof}

\subsection{Controlled one-vertex GBSs}

As a consequence of Theorem \ref{thm:sequence-new-moves}, and using the results of Section \ref{sec:controlled-linear-algebra}, we can solve the isomorphism problem for a family of flexible one-vertex GBS graphs.

\begin{defn}
    Let $(\Gamma,\psi)$ be a one-vertex GBS graph with affine representation $\Lambda$. We say that $(\Gamma,\psi)$ is \textbf{controlled} if $\Lambda$ contains an edge $\ba\edge\ba+\bw$ with $\ba,\bw\in\pA$, such that $\ba,\bw$ controls $\bb$ for every endpoint $\bb\in\pA$ of every other edge.
\end{defn}

\begin{prop}\label{prop:isomorphism-controlled}
    There is an algorithm that, given in input a one-vertex controlled GBS graphs $(\Gamma,\psi)$ and any GBS graph $(\Delta,\phi)$, decides
    \begin{enumerate}
        \item Whether there is a sequence of slides, swaps, connections going from one to the other.
        \item Whether there is a sequence of sign-changes, inductions, slides, swaps, connections going from one to the other.
    \end{enumerate}
\end{prop}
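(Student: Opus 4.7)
The plan is to combine the new-move sequence theorem (\Cref{thm:sequence-new-moves}) with the controlled-linear-algebra result (\Cref{thm:controlled}), through a computable invariant of the slide/swap/connection orbit.

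For part (1), first observe that slides, swaps, and connections preserve the number of vertices, the number of edges, and the set of primes $\cP$, so I will check these equalities and return NO if any fails. Assuming $(\Delta,\phi)$ is also one-vertex with the same number of edges, I will fix a controlling edge $\ba\edge\ba+\bw$ of $(\Gamma,\psi)$ and the remaining edges $\bb_i\edge\bb_i+\bx_i$ for $i=1,\dots,m$, and extract two invariants: the subgroup $H_\Gamma=\gen{\bw,\bx_1,\dots,\bx_m}\le\bA$, and the multiset $M_\Gamma=\{(\bx_i,(\bb_i-\ba)\bmod H_\Gamma)\}_i$, regarded up to the edge-reversal involution $(\bx,\bb)\mapsto(-\bx,\bb+\bx)$. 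A direct inspection of the move formulas in \Cref{sec:moves} shows that $(H_\Gamma,M_\Gamma)$ is invariant under all three moves. The core of the algorithm is the converse: whenever the same invariant can be extracted from $(\Delta,\phi)$ (by trying each of its edges as the tentative controlling edge), one uses slides and connections to promote that edge of $(\Delta,\phi)$ to a genuine controlling edge, and then applies \Cref{thm:controlled} to align the remaining edges with those of $(\Gamma,\psi)$.

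For part (2), I will first apply \Cref{prop:compute-totally-reduced} to $(\Delta,\phi)$ to obtain a totally reduced graph $(\Delta',\phi')$ via allowed moves (sign-changes, expansions, contractions, slides). By \Cref{thm:sequence-new-moves}, any sequence of sign-changes, inductions, slides, swaps, and connections between the two graphs can be rearranged so that all sign-changes and inductions are performed first, applied to $(\Delta',\phi')$. Vertex and edge sign-changes are finite in number, so the question reduces to enumerating the effect of inductions. An induction on a one-vertex graph translates all edge basepoints by some $\bw_1$ with $\bw_1\le k\bw$ for a loop-vector $\bw$. I will use the target invariant $(H_\Gamma,M_\Gamma)$ of part (1) as a ceiling: any candidate post-initial graph $(\Delta'',\phi'')$ must have its translated basepoints respect the cosets of $M_\Gamma$, which restricts $\bw_1$ to a computable finite set. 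For each such $(\Delta'',\phi'')$ I will invoke part (1) to decide slide/swap/connection equivalence with $(\Gamma,\psi)$, and return YES iff some choice succeeds.

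The principal obstacle will be extracting the effective finiteness bound on inductions in part (2). A priori the induction parameter $\ell\divides n^k$ is unbounded, so naive enumeration of candidate initial moves is infinite. The control hypothesis on $(\Gamma,\psi)$ is essential here: the subgroup $H_\Gamma$ and the multiset $M_\Gamma$ together rigidify the possible targets, so that a successful initial induction must place basepoints into a prescribed finite union of cosets within a bounded region of $\bA$. Making this bound explicit and algorithmic, together with verifying completeness of the invariant $(H_\Gamma,M_\Gamma)$ in part (1), will constitute the technical heart of the proof.
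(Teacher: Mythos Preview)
Your overall architecture for part (1) is right---compute a linear-algebra invariant, show it is preserved by the three moves, and use \Cref{thm:controlled} for the converse---but the two places where you wave your hands are exactly where the content lies.

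First, your invariant $M_\Gamma=\{(\bx_i,(\bb_i-\ba)\bmod H_\Gamma)\}_i$ is not move-invariant as stated: a single slide of $\bb_i\edge\bb_i+\bx_i$ along the controlling edge changes $\bx_i$ to $\bx_i+\bw$, and this is not covered by your edge-reversal involution. The paper's invariant is simply the multiset of cosets $\{\bb_i+H_\Gamma\}$ (equivalently, the number of edge-endpoints in each coset of $H_\Gamma$); since each $\bx_i$ already lies in $H_\Gamma$, recording it adds nothing. This is easily fixed, but you should check the invariance claim rather than assert it.

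Second, and more seriously, the step ``try each edge of $(\Delta,\phi)$ as the tentative controlling edge and promote it'' is not justified and in general does not work: after a sequence of moves there need not be any single edge of $(\Delta,\phi)$ whose basepoint is below all others and whose vector has full support. The paper's argument is different and is the real point: slides, swaps, and connections preserve the conjugacy relation on $V(\Lambda)$, so the affine path from $\ba$ to $\ba+\bw$ that existed in $\Lambda$ still exists in $\Lambda'$. Feeding the edges of that path into the slide/swap procedure of \Cref{projection} (stopping before the contraction) manufactures a controlling edge in $(\Delta,\phi)$ based at $\ba$. Only then can you invoke \Cref{thm:controlled}.

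For part (2), your route through \Cref{prop:compute-totally-reduced} and \Cref{thm:sequence-new-moves} is both unnecessary and shaky: the former uses expansions/contractions (not among the five moves you are allowed), and the latter does not assert that an arbitrary sequence can be \emph{rearranged}---it only produces one particular well-ordered sequence under a total-reducedness hypothesis you have not verified for $(\Delta,\phi)$. The paper's argument is direct and avoids all of this: sign-changes genuinely commute with the other moves (so enumerate them), and an induction leaves $H_\Gamma$ fixed while translating the entire multiset of cosets by a single vector. Thus the question for part (2) reduces to whether some uniform translation of the coset-multiset of $(\Delta,\phi)$ matches that of $(\Gamma,\psi)$, which is a finite check once $H_\Gamma$ is known. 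No bound on the induction parameter is needed, because only its residue modulo $H_\Gamma$ matters.
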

\begin{proof}
    We first deal with the case of sequences of slides, swaps, connections.

    Let $\Lambda$ be the affine representation of $(\Gamma,\psi)$. Let $\ba\edge\ba+\bw$ with $\ba,\bw\in\pA$ the edge in $\Lambda$ such that $\ba,\bw$ controls the endpoints of all the other edges. Let $\bb_i\edge\bc_i$ for $i=1,\dots,k$ be the other edges in $\Lambda$, so that $\ba,\bw$ controls $\bb_i,\bc_i$.
    
    It is easy to check that slides, swaps, connections preserve the number of edge and the subgroup $H=\gen{\bw,\bb_1-\bc_1,\dots,\bb_k-\bc_k}\le\bA$. It is also easy to check that for each coset $\bu+H$ of $H$ the moves slides, swaps, connections preserve the number of edges $e\in E(\Lambda)$ such that $\iota(e)\in \bu+H$.

    Let now $\Lambda'$ be the affine representation of $(\Delta,\phi)$, and suppose that we can go from $(\Gamma,\psi)$ to $(\Delta,\phi)$ by means of a sequence of slides, swaps, connections. In particular, $(\Delta,\phi)$ must have the same number of vertices and edges as $(\Gamma,\psi)$. As slides, swaps, connections do not affect the conjugacy relation, there must be an affine path in $\Lambda'$ from $\ba$ to $\ba+\bw$, and thus, by applying the same procedure as in Section \ref{sec:projection}, we can assume that $(\Delta,\phi)$ is controlled too. Let us say $\Lambda'$ has edges $\ba'\edge\ba'+\bw'$ and $\bb_i'\edge\bc_i'$ for $i=1,\dots,k$, such that $\ba',\bw'$ controls $\bb_i',\bc_i'$. We must have that $\gen{\bw',\bb_1'-\bc_1',\dots,\bb_k'-\bc_k'}=H$ otherwise there is no sequence of moves going from one to the other. Similarly, up to permuting the $\bb_i'$, we must have that $\bb_i-\bb_i'\in H$. Note that all of this can be checked algorithmically.

    But now Theorem \ref{thm:controlled} allows us to deduce that there is a sequence of slides, swaps, connections going from $(\Gamma,\psi)$ to $(\Delta,\phi)$, as desired.

    Sign changes commute with all the other moves, and can be applied in finitely many ways, so it is enough to make finitely many guesses. If induction can be applied, then we must have that $\ba\in\bbZ/2\bbZ\le\bA$. In this case, the induction will not change the subgroup $H$, but it will translate all the cosets $\bb_i+H$. It is easy to algorithmically determine whether there is such a translation that makes us fall into the previous case. The statement  follows.
\end{proof}

\begin{cor}\label{cor:iso-controlled}
    There is an algorithm that, given in input a one-vertex controlled GBS graphs $(\Gamma,\psi)$ and a GBS graph $(\Delta,\phi)$, decides whether the corresponding GBS groups are isomorphic.
\end{cor}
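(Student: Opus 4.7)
The plan is to assemble \Cref{cor:iso-controlled} from three previously established ingredients: Theorem \ref{thm:sequence-new-moves}, Proposition \ref{prop:compute-totally-reduced}, and Proposition \ref{prop:isomorphism-controlled}. Since Theorem \ref{thm:sequence-new-moves} excludes the groups $\bbZ,\bbZ^2$ and the Klein bottle group $K$, I would first address these as special cases. Each of them has a simple normal form as a GBS graph (respectively: a single vertex with no edges; a single vertex with a loop labeled $(1,1)$ or, after sign-change, the Klein bottle labels), and one can algorithmically recognize from any GBS graph whether its fundamental group is one of these three groups, for instance by bringing the graph to a totally reduced form via \Cref{prop:compute-totally-reduced} and inspecting the result. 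After handling these finitely many exceptional isomorphism tests, we may assume neither $\pi_1(\Gamma,\psi)$ nor $\pi_1(\Delta,\phi)$ is isomorphic to $\bbZ,\bbZ^2,K$.

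Next, I would observe that $(\Gamma,\psi)$, being one-vertex and controlled, is already totally reduced: condition \ref{itm:fully-reduced} of \Cref{def:totally-reduced} is automatic when $|V(\Gamma)|=1$, and condition \ref{itm:controlling-edge} is exactly the controlling hypothesis. I would then apply \Cref{prop:compute-totally-reduced} to algorithmically compute a totally reduced GBS graph $(\Delta',\phi')$ with $\pi_1(\Delta',\phi')\cong\pi_1(\Delta,\phi)$.

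Now invoke Theorem \ref{thm:sequence-new-moves}: the GBS groups are isomorphic if and only if $|V(\Gamma)|=|V(\Delta')|$ and there is a sequence of sign-changes, inductions, slides, swaps, and connections from $(\Gamma,\psi)$ to $(\Delta',\phi')$. The cardinality check is immediate: if $|V(\Delta')|\ne 1$, output ``non-isomorphic''. Otherwise, $(\Delta',\phi')$ is a one-vertex GBS graph, and the existence of the desired sequence of moves is decidable by \Cref{prop:isomorphism-controlled} (applied with $(\Gamma,\psi)$ as the controlled input).

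The substantive work — Euclidean-algorithm-style manipulation of edges via the swap, the self-slide (\Cref{self-slide}), and the reverse slide (\Cref{reverse-slide}), culminating in \Cref{thm:controlled} — has already been done in \Cref{sec:controlled-linear-algebra} and packaged into \Cref{prop:isomorphism-controlled}, so no further technical obstacle remains. The present corollary is essentially a bookkeeping consequence of the preceding results, with the only mild subtlety being the separate treatment of the three exceptional groups excluded by Theorem \ref{thm:sequence-new-moves}.
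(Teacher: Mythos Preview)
Your proof is correct and follows the same route as the paper, which simply cites Proposition~\ref{prop:isomorphism-controlled} and Theorem~\ref{thm:sequence-new-moves}; you have usefully filled in the details the paper omits (the exceptional groups $\bbZ,\bbZ^2,K$ and the reduction of $(\Delta,\phi)$ to a totally reduced graph via Proposition~\ref{prop:compute-totally-reduced}). One small imprecision: Condition~\ref{itm:controlling-edge} of Definition~\ref{def:totally-reduced} is not literally ``exactly the controlling hypothesis'' --- it asks for an edge based at the origin $(v,\mathbf{0})$ controlling the conjugacy class of $(v,\mathbf{0})$, whereas the controlling hypothesis places the edge at an arbitrary $\ba$ and asks it to control all edge endpoints --- but a one-vertex controlled graph does satisfy it: either the controlling basepoint $\ba$ has no positive prime component (so after an edge sign-change the controlling edge sits at $\mathbf{0}$ and witnesses the condition), or $\ba$ has a positive prime component, in which case no affine path leaves $(v,\mathbf{0})$ and the condition is vacuous.
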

\begin{proof}
    This follows from Proposition \ref{prop:isomorphism-controlled} and Theorem \ref{thm:sequence-new-moves}.
\end{proof}

\section{Comparing different deformation spaces}\label{sec:comparing-deformation-spaces}

The aim of this section is to prove that if there is an algorithm to solve the isomorphism problem for GBS groups, then the same algorithm can be applied also to much wider families of groups. In order to do this, we generalize the results of the previous sections; since the proofs are completely analogous, we only provide sketches.

\subsection{Moves on graphs of groups}

We recall that the conventions for graphs and graphs of groups have been given in Section \ref{sec:preliminaries}.

\begin{defn}[$\bbZ$-graph] \label{def:Z-graph}
     A \textbf{$\bbZ$-graph} is a triple $(\Gamma,\{G_v\}_{v\in V(\Gamma)},\psi)$ given by:
     \begin{enumerate}
         \item A finite connected graph $\Gamma$.
         \item A group $G_v$ for every $v\in V(\Gamma)$.
         \item A map $\psi:E(\Gamma)\rar\bigcup_{v\in V(\Gamma)}G_v$ sending every edge $e\in E(\Gamma)$ to an infinite-order element $\psi(e)\in G_{\tau(e)}$. 
     \end{enumerate}
\end{defn}

\begin{defn}[Cyclic vertex group]\label{def:cyclic-vertices}
    Let $(\Gamma,\{G_v\},\psi)$ be a $\bbZ$-graph. A vertex $v\in V(\Gamma)$ is called \textbf{cyclic} if $G_v\cong\bbZ$. We denote by $\cyclicV{\Gamma}$ and with $\notcyclicV{\Gamma}$ the set of cyclic and non-cyclic vertices of $\Gamma$ respectively.
\end{defn}

Given a graph of groups $\cG=(\Gamma,\{G_v\}_{v\in V(\Gamma)},\{G_e\}_{e\in E(\Gamma)},\{\psi_e\}_{e\in E(\Gamma)})$ with edge groups isomorphic to $\bbZ$, we choose a generator $z_e$ for every edge group $G_e\cong\bbZ$, and we define the $\bbZ$-graph $(\Gamma,\{G_v\}_{v\in V(\Gamma)},\psi)$ with $\psi(e)=\psi_e(z_e)$. Giving a graph of groups with edge groups isomorphic to $\bbZ$ is equivalent to giving the corresponding $\bbZ$-graph. As for GBS graphs, we can define a set of moves on $\bbZ$-graphs. Let $(\Gamma,\{G_v\}_{v\in V(\Gamma)},\psi)$ be a $\bbZ$-graph: each of the following moves induces an isomorphism at the level of the fundamental group of the corresponding graph of groups.

\vspace{0.1cm}

\textbf{Twist.} Let $d\in E(\Gamma)$ be an edge with $\tau(d)=v$ and let $h\in G_v$ be any element. Define the map $\psi':E(\Gamma)\rar\bigcup_{v\in V(\Gamma)}G_v$ given by $\psi'(d)=h\cdot\psi(d)\cdot\ol{h}$ and $\psi'(e)=\psi(e)$ for $e\not=d$. We say that $(\Gamma',\{G_v\}_{v\in V(\Gamma')},\psi')$ is obtained from $(\Gamma,\{G_v\}_{v\in V(\Gamma)},\psi)$ by a \textbf{twist}.

\vspace{0.1cm}

\textbf{Vertex coordinate-change.} Let $u\in V(\Gamma)$ be a vertex and let $i:G_u\rar H$ be an isomorphism of groups. Define the $\bbZ$-graph $(\Gamma,\{G_v'\}_{v\in V(\Gamma)},\psi')$ given by $G_u'=H$ and $G_v'=G_v$ for $v\not= u$, and by $\psi'(e)=i(\psi(e))$ if $\tau(e)=u$ and $\psi'(e)=\psi(e)$ otherwise. We say that $(\Gamma,\{G_v'\}_{v\in V(\Gamma)},\psi')$ is obtained from $(\Gamma,\{G_v\}_{v\in V(\Gamma)},\psi)$ by a \textbf{vertex coordinate-change}.

\vspace{0.1cm}

\textbf{Edge sign-change.} Let $d\in E(\Gamma)$ be an edge. Define the map $\psi':E(\Gamma)\rar\bigcup_{v\in V(\Gamma)}G_v$ such that $\psi'(e)=(\psi(e))^{-1}$ if $e=d,\ol{d}$ and $\psi'(e)=\psi(e)$ otherwise. We say that $(\Gamma',\{G_v\}_{v\in V(\Gamma')},\psi')$ is obtained from $(\Gamma,\{G_v\}_{v\in V(\Gamma)},\psi)$ by an \textbf{edge sign-change}.

\vspace{0.1cm}

\textbf{Elementary expansion.} Let $v\in V(\Gamma)$ be a vertex and let $g\in G_v$ be an infinite-order element. Define the graph $\Gamma'$ by adding a new vertex $v_0$ and a new edge $e_0$ going from $v_0$ to $v$; we set $G_{v_0}=\bbZ$ with $\psi(\ol{e}_0)=1\in G_{v_0}$ and $\psi(e_0)=g$. We say that $(\Gamma',\{G_v\}_{v\in V(\Gamma')},\psi')$ is obtained from $(\Gamma,\{G_v\}_{v\in V(\Gamma)},\psi)$ by an \textbf{elementary expansion}.

\vspace{0.1cm}

\textbf{Elementary contraction.} Let $v_0\in V(\Gamma)$ be a vertex with $G_{v_0}=\bbZ$ and suppose that there is a unique edge $e_0$ with $\iota(e_0)=v_0$; suppose also that $\tau(e_0)\not=v_0$ and $\psi(\ol{e}_0)$ is a generator for $G_{v_0}=\bbZ$. Define the graph $\Gamma'$ by removing the vertex $v_0$ and the edge $e_0$. We say that $(\Gamma',\{G_v\}_{v\in V(\Gamma')},\psi')$ is obtained from $(\Gamma,\{G_v\}_{v\in V(\Gamma)},\psi)$ by an \textbf{elementary contraction}. The elementary contraction move is the inverse of the elementary expansion move.

\vspace{0.1cm}

\textbf{Slide.} Let $d,e$ be distinct edges with $\tau(d)=\iota(e)=u$ and $\tau(e)=v$ for some $u,v\in V(\Gamma)$; suppose that $\psi(\ol{e})=g^n$ and $\psi(e)=h^m$ and $\psi(d)=g^{\ell n}$ for some $n,m,\ell\in\bbZ\setminus\{0\}$ and $g\in G_u$ and $h\in G_v$. Define the graph $\Gamma'$ by replacing the edge $d$ with an edge $d'$; we set $\iota(d')=\iota(d)$ and $\tau(d')=v$; we set $\psi(\ol{d}')=\psi(\ol{d})$ and $\psi(d')=h^{\ell m}$. We say that $(\Gamma',\{G_v\}_{v\in V(\Gamma')},\psi')$ is obtained from $(\Gamma,\{G_v\}_{v\in V(\Gamma)},\psi)$ by a \textbf{slide}.

\vspace{0.1cm}

\textbf{Induction.} Let $v\in V(\Gamma)$ be such that $G_v\cong\bbZ$ with generator $z_v$; let $e$ be an edge with $\iota(e)=\tau(e)=v$ and suppose that $\psi(\ol{e})=z_v$ and $\psi(e)=z_v^n$ for some $n\in\bbZ\setminus\{0\}$. Choose $\ell\in\bbZ\setminus\{0\}$ and $k\in\bbN$ such that $\ell\divides n^k$; define the map $\psi'$ equal to $\psi$ except on the edges $d\not=e,\ol{e}$ with $\tau(d)=v$, where we set $\psi'(d)=\psi(d)^\ell$. We say that $(\Gamma,\{G_v\}_{v\in V(\Gamma)},\psi')$ is obtained from $(\Gamma,\{G_v\}_{v\in V(\Gamma)},\psi)$ by an \textbf{induction}.

\vspace{0.1cm}

\textbf{Swap.} Let $e_1,e_2$ be distinct edges with $\iota(e_1)=\tau(e_1)=\iota(e_2)=\tau(e_2)=v$; suppose that $\psi(\ol{e}_1)=g^n$ and $\psi(e_1)=g^{\ell_1 n}$ and $\psi(\ol{e}_2)=g^m$ and $\psi(e_2)=g^{\ell_2 m}$ for some $g\in G_v$ and $n,m,\ell_1,\ell_2\in\bbZ\setminus\{0\}$ such that $n\divides m\divides \ell_1^{k_1} n, \ell_2^{k_2} n$ for some $k_1,k_2\in\bbN$. Define the graph $\Gamma'$ by substituting the edges $e_1,e_2$ with two edges $e_1',e_2'$; we set $\iota(e_1')=\tau(e_1')=\iota(e_2')=\tau(e_2')=v$; we set $\psi(\ol{e}_1')=g^m$ and $\psi(e_1')=g^{\ell_1 m}$ and $\psi(\ol{e}_2')=g^n$ and  $\psi(e_2')=g^{\ell_2 n}$. We say that $(\Gamma',\{G_v\}_{v\in V(\Gamma')},\psi')$ is obtained from $(\Gamma,\{G_v\}_{v\in V(\Gamma)},\psi)$ by a \textbf{swap move}.

\vspace{0.1cm}

\textbf{Connection.} Let $d,e$ be distinct edges with $\iota(d)=u$ and $\tau(d)=\iota(e)=\tau(e)=v$; suppose that  $\psi(\ol{d})=g^m$ and $\psi(d)=g^{\ell_1 n}$ and $\psi(\ol{e})=g^n$ and $\psi(e)=g^{\ell n}$ for some $g\in G_v$ and $m,n,\ell_1,\ell_2,\ell\in\bbZ\setminus\{0\}$ such that $\ell_1\ell_2=\ell^k$ for some $k\in\bbN$. Define the graph $\Gamma'$ by substituting the edges $d,e$ with two edges $d',e'$; we set $\iota(d')=v$ and $\tau(d')=\iota(e')=\tau(e')=u$; we set  $\psi(\ol{d}')=g^n$ and $\psi(d')=g^{\ell_2 m}$ and $\psi(\ol{e}')=g^m$ and $\psi(e')=g^{\ell m}$. We say that $(\Gamma',\{G_v\}_{v\in V(\Gamma')},\psi')$ is obtained from $(\Gamma,\{G_v\}_{v\in V(\Gamma)},\psi)$ by a \textbf{connection move}.

\begin{remark}
    Notice that elementary expansion and elementary contraction only allow adding and removing cyclic vertices to $\Gamma$. Similarly, induction can only be performed at cyclic vertices. On the contrary, all the other moves can, in principle, be performed at any vertex. In any case, no move changes the fact that a vertex is cyclic or not.
\end{remark}

\begin{lem}
    Let $(\Gamma,\{G_v\}_{v\in V(\Gamma)},\psi)$ be a $\bbZ$-graph. Then every induction, swap, and connection can be written as a sequence of elementary contraction, expansions, and slides. This can be done in such a way that every $\bbZ$-graph along the sequence has at most $\abs{V(\Gamma)}+1$ vertices for induction, and $\abs{V(\Gamma)}+2$ vertices for swap and connection.
\end{lem}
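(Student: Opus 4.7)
The plan is to adapt the proofs of Lemmas \ref{induction-from-others}, \ref{swap-from-others}, and \ref{connection-from-others} from the GBS setting to $\bbZ$-graphs, working essentially by the same explicit sequences of slides, expansions, and contractions. The crucial observation is that each of the three moves is \emph{local}: induction and swap take place at a single vertex $v$ with $G_v\cong\bbZ$ (required by the definitions), while the labels involved in the connection move are, on the relevant $v$-endpoint side, all powers of a single element $g\in G_v$. Hence, even when $G_v$ or the adjacent vertex groups are arbitrary, the manipulations on the relevant loops and endpoints happen inside a cyclic subgroup, so the arithmetic of exponents is identical to the integer arithmetic used in Section \ref{sec:moves}. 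All other edges and vertex groups in the ambient $\bbZ$-graph are untouched by the moves.

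First I would treat the induction move. Given $e$ a loop at $v$ with $\psi(\bar e)=z_v$ and $\psi(e)=z_v^n$, and parameters $\ell,k$ with $\ell\divides n^k$, I perform an elementary expansion adding a new cyclic vertex $v'$ (with $G_{v'}=\bbZ$) attached to $v$ by an auxiliary edge with carefully chosen labels that record a factor of $\bw-\bw_1$ (in the affine-representation language). I then slide every other edge with an endpoint at $v$ first along $e$ and then along the auxiliary edge, then perform two slides on $e$ itself, and finish with an elementary contraction removing $v$. This produces the required update of labels from $\psi(d)$ to $\psi(d)^\ell$ at edges $d\ne e,\bar e$ with $\tau(d)=v$, using at most one extra vertex at any time, as in Lemma \ref{induction-from-others}.

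Next, for the swap and connection moves, the strategy mirrors Lemmas \ref{swap-from-others} and \ref{connection-from-others} line by line. In each case, I introduce a single auxiliary cyclic vertex $u$ with $G_u=\bbZ$ via an elementary expansion, slide the two relevant edges (both loops at $v$ in the swap case; one loop at $v$ and one edge with $\tau=v$ in the connection case) so that their endpoints are parked at $u$, invoke (already proven) induction moves to reshuffle the exponents of the generator of $G_u$, then slide everything back and perform an elementary contraction to remove $u$. Since the auxiliary inductions are themselves derived moves using at most one extra vertex (by the induction case above), the overall excess is at most $\abs{V(\Gamma)}+2$, exactly as in the GBS case.

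The main potential obstacle is that, in the general setting, one might worry about interactions with non-cyclic vertex groups when doing the slides — for instance, if $\psi(d)=g^{\ell n}\in G_u$ for some $g\in G_u$ where $G_u$ is non-cyclic, one needs to make sure that $g$, $g^{\ell n}$, and $g^{\ell m}$ sit consistently inside $G_u$ after the slide. This is handled by the definition of the slide move on $\bbZ$-graphs itself, which only requires that the two labels on the sliding end be powers of a common element; the target-side label $\psi(e)=h^m\in G_v$ is copied to $h^{\ell m}$ without any constraint on $G_v$. Consequently, the explicit sequences of moves from Section \ref{sec:moves}, written in terms of manipulating integer exponents, transport verbatim to manipulations of exponents of $z_v$ (for induction and swap) or of $g\in G_v$ (for connection), yielding the claim with the stated vertex bounds.
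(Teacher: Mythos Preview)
Your proposal is correct and follows exactly the paper's approach: the paper's proof consists of the single sentence ``We use the same sequences of moves as in Lemmas \ref{induction-from-others}, \ref{swap-from-others} and \ref{connection-from-others},'' and you have simply unpacked why those sequences transfer verbatim to the $\bbZ$-graph setting. One small inaccuracy: the swap move on a $\bbZ$-graph does \emph{not} require $G_v\cong\bbZ$, only that the four labels be powers of a common $g\in G_v$; but this does not affect your argument, since (as you correctly describe) the induction step in the swap derivation is performed at the auxiliary cyclic vertex $u$, not at $v$.
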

\begin{proof}
    We use the same sequences of moves as in Lemmas \ref{induction-from-others}, \ref{swap-from-others} and \ref{connection-from-others}.
\end{proof}

\subsection{Collapsing data and the projection move}\label{sec:projection-2}

As in Section \ref{sec:projection}, we can define a projection move that allows us to remove some vertices. The map works exactly in the same way as for GBS graphs; we summarize the results here. In the next Section \ref{sec:redundant-2} we will see that the vertices that can be eliminated are exactly the redundant ones (see Definition \ref{def:redundant-2} below).

\begin{defn}\label{def:collapsing-data-2}
Let $(\Gamma,\{G_v\}_{v\in V(\Gamma)},\psi)$ be a $\bbZ$-graph and let $v\in \cyclicV{\Gamma}$ with $G_v=\gen{z}$. A \textbf{set of collapsing data} for $v$ is given by pairwise distinct edges $e_1,\dots,e_n,e$ of $\Gamma$ satisfying the following properties:
\begin{enumerate}
\item For $j=1,\dots,n$ we have that $e_j$ goes from $v$ to $v$, with $\psi(\ol{e}_j)=z^{a_j}$ and $\psi(e_j)=z^{b_j}$.
\item For $j=1,\dots,n$, every prime that divides $a_j$ also divides $b_{j-1}\cdot b_{j-2}\cdots b_1$.
\item The edge $e$ goes from $v$ to $u$ with $u\not=v$, with $\psi(\ol{e})=z^c$.
\item Every prime that divides $c$ also divides $b_n\cdot b_{n-1}\cdots b_1$.
\end{enumerate}
\end{defn}

Let $(\Gamma,\{G_v\}_{v\in V(\Gamma)},\psi)$ be a $\bbZ$-graph, and let $v\in \cyclicV{\Gamma}$ with $G_v=\gen{z}$. Consider the constant
$$K(v)=\max\{\nu_p(n) : p \text{ prime and } n\in\bbZ\setminus\{0\} \text{ with } \psi(d)=z^n \text{ for some } d\in E(\Gamma) \text{ with } \tau(d)=v\},$$
where $\nu_p(n)$ denotes the exponent of $p$ in the factorization of $n$.

\begin{defn}\label{def:collapsing-constants-2}
Let $(\Gamma,\{G_v\}_{v\in V(\Gamma)},\psi)$ be a $\bbZ$-graph, and let $v\in V(\Gamma)$ be a vertex with $G_v\cong\bbZ$. Let $e_1,\dots,e_n,e$ be a set of collapsing data. A \textbf{set of collapsing constants} is given by integers $k_1,\dots,k_n$ which are strictly bigger than $K(v)$.
\end{defn}

Let $(\Gamma,\{G_v\}_{v\in V(\Gamma)},\psi)$ be a $\bbZ$-graph, and let $v\in \cyclicV{\Gamma}$ with $G_v=\gen{z}$. Let $e_1,\dots,e_n,e$ be a set of collapsing data for $v$, where $\psi(\ol{e}_j)=z^{a_j}$ and $\psi(e_j)=z^{b_j}$ for $j=1,\dots,n$, and where $\psi(\ol{e})=z^c$ and $\tau(e)=u$, for some $u\in V(\Gamma)$ with $u\not=v$ and for integers $a_j,b_j,c\in\bbZ\setminus\{0\}$ as in Definition \ref{def:collapsing-data-2}. Let $k_1,\dots,k_n$ be a set of collapsing constants. We define $w_1=\frac{b_1}{a_1}$ and inductively $w_j=\frac{b_j}{a_j}\cdot w_{j-1}^{k_{j-1}}$ for $j=2,\dots,n$. As in Lemma \ref{lemmaw}, we have by induction that $w_j$ is a non-zero integer for $j=1,\dots,n$, and that every prime that divides $b_j\cdot b_{j-1}\cdots b_1$ also divides $w_j$. We also define the integer $w=w_n^{k_n}/c$.

We now define the $\bbZ$-graph $(\Gamma',\{G_v\}_{v\in V(\Gamma')},\psi')$ as follows. We consider the vertex $u=\tau(e)$ and the element $g=\psi(e)\in G_u$. We define the graph $\Gamma'$ by removing the vertex $v$ and the edge $e$. The groups associated with the other vertices remain unchanged. We substitute the edges $e_1,\dots,e_n$ with edges $e_1',\dots,e_n'$ as follows. We set $\iota(e_1')=\tau(e_1')=u$ and $\psi'(\ol{e}_1')=g$ and $\psi'(e_1')=g^{w_n}$. For $j=2,\dots,n$, we set $\iota(e_j')=\tau(e_j')=u$ and $\psi'(\ol{e}_j')=g^{wa_j}$ and $\psi'(e_j')=g^{wa_jw_{j-1}}$. Finally, for every other edge $d$ with $\tau_{\Gamma}(d)=v$ and $\psi(d)=z^x$, we set $\tau_{\Gamma'}(d)=u$ and $\psi'(d)=g^{wx}$. We say that $(\Gamma',\{G_v\}_{v\in V(\Gamma')},\psi')$ is obtained from $(\Gamma,\{G_v\}_{v\in V(\Gamma)},\psi)$ by a \textbf{projection move} (relative to the vertex $v$, the collapsing data $e_1,\dots,e_n,e$ and the collapsing constants $k_1,\dots,k_n$).

\begin{prop}\label{projection-2}
Every projection move can be written as a composition of edge sign-changes, slides, swaps, and inductions, followed by an elementary contraction.
\end{prop}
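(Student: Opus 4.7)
The plan is to mirror verbatim the proof of Proposition \ref{projection}, checking at each step that the moves remain valid in the broader $\bbZ$-graph setting. The crucial observation is that, by definition of a set of collapsing data, the redundant vertex $v$ is cyclic with $G_v = \gen{z}$ and the edges $e_1,\dots,e_n$ are all loops at $v$ whose labels are powers of $z$. Hence all manipulations on the $v$-side are governed by integer arithmetic on exponents, exactly as in the GBS case.

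I would begin with the same inductive swap-and-slide procedure as in Proposition \ref{projection}, rewriting the configuration
\[
\begin{cases}
\psi(\ol e_j)=z^{a_j},\ \psi(e_j)=z^{b_j}
\end{cases}\quad(j=1,\dots,n)
\]
progressively into
\[
\begin{cases}
\psi(\ol e_1)=z^{a_1},\ \psi(e_1)=z^{a_1 w_n}\\
\psi(\ol e_j)=z^{a_j},\ \psi(e_j)=z^{a_j w_{j-1}} \quad (j=2,\dots,n),
\end{cases}
\]
where $w_1=b_1/a_1$ and $w_j=(b_j/a_j)w_{j-1}^{k_{j-1}}$. At each step the swap move only requires the appropriate divisibility relations among the exponents of $z$, and these are exactly the ones granted by Definition \ref{def:collapsing-data-2} combined with the choice of constants $k_j>K(v)$; so the swap move at the cyclic vertex $v$ applies with no modification. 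Next I apply a single induction at the loop $e_1$ with multiplier $w=w_n^{k_n}/c$: since $v$ is cyclic and $w\divides w_n^{k_n}$, the induction is legal and has the effect of multiplying by $w$ the label (at $v$) of every other edge incident to $v$, including the label $z^c$ of $\ol e$, which becomes $z^{w_n^{k_n}}$.

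At this point the edge $e$ carries labels $\psi(\ol e)=z^{w_n^{k_n}}$ at $v$ and $\psi(e)=g\in G_u$ at $u$. Using $e_1$ as a sliding edge one more time, I can normalize the label of $\ol e$ to $z$, after which $e$ can be used to slide every remaining loop at $v$ across to $u$. Each such slide of an edge $d$ with $\psi(d)=z^x$ produces an edge incident to $u$ with label $g^x$, so after sliding all the $e_j'$ (and any other edge at $v$) across $e$, we obtain precisely the labels $g,g^{w_n},g^{wa_j},g^{wa_j w_{j-1}}$ prescribed by Section \ref{sec:projection-2}. A single edge sign-change may be needed at $e_1'$ to absorb the possible sign of $a_1$, exactly as in the GBS case. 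Finally, an elementary contraction of $e$ removes the now valence-$1$ cyclic vertex $v$ and the edge $e$, yielding the $\bbZ$-graph produced by the projection move.

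The hard part is bookkeeping rather than ideas: one must verify that the divisibility hypotheses for each induction, swap and slide along the way hold with the integer exponents $a_j, b_j, w_j, c, w$, and this is exactly what was verified in the GBS case (Lemma \ref{lemmaw} and the proof of Proposition \ref{projection}). Since all the relevant combinatorial identities involve only integer exponents at a cyclic vertex and powers of a single element $g\in G_u$ on the other side, the arguments transfer verbatim and no new input from the structure of $G_u$ is required.
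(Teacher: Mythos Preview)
Your proposal is correct and follows exactly the approach of the paper, whose proof consists of the single sentence ``We use the same sequence of moves as in Proposition~\ref{projection}.'' You have simply unpacked that sentence, verifying that because the collapsing vertex $v$ is cyclic and all the manipulations occur among powers of $z$ and of $g=\psi(e)$, the divisibility and slide/swap/induction hypotheses carry over verbatim from the GBS argument.
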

\begin{proof}
    We use the same sequence of moves as in Proposition \ref{projection}.
\end{proof}

In particular, a projection move induces an isomorphism at the level of fundamental groups of the corresponding graphs of groups.

\subsection{Redundant vertices and totally reduced graphs}\label{sec:redundant-2}

\begin{defn}\label{def:redundant-2}
    Let $\cG=(\Gamma,\{G_v\}_{v\in V(\Gamma)},\{G_e\}_{e\in E(\Gamma)},\{\psi_e\}_{e\in E(\Gamma)})$ be a graph of groups with edge groups isomorphic to $\bbZ$. A vertex $v\in V(\Gamma)$ is called \textbf{redundant} if $G_v\cong\bbZ$ and the generator $z$ defines the same conjugacy class as some element in some other vertex group {\rm(}i.e. $[z]=[g]$ for some $g\in G_u$ with $u\in V(\Gamma)$ and $u\not=v${\rm)}. For a $\bbZ$-graph $(\Gamma,\{G_v\}_{v\in V(\Gamma)},\psi)$, a vertex $v\in V(\Gamma)$ is called \textbf{redundant} if it is redundant in the corresponding graph of groups.
\end{defn}

\begin{prop}\label{prop:redundant-data-2}
    Let $(\Gamma,\{G_v\}_{v\in V(\Gamma)},\psi)$ be a $\bbZ$-graph, and let $v\in \cyclicV{\Gamma}$. Then $v$ is redundant if and only if it has a set of collapsing data.
\end{prop}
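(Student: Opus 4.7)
My plan is to adapt the proofs of Propositions \ref{redundant-collapse} and \ref{collapse-redundant} (the GBS case) to $\bbZ$-graphs with arbitrary vertex groups. The key observation is that, around a cyclic vertex $v \in V(\Gamma)$ with $G_v = \gen{z}$, each incident edge $d$ with $\tau(d) = v$ has a well-defined integer label on the $v$-side, namely $\psi(d) = z^k$ for some $k \in \bbZ \setminus \{0\}$. Since Definition \ref{def:collapsing-data-2} only constrains these $v$-side integer labels, the combinatorics of collapsing data at $v$ is essentially identical to the GBS case, regardless of the structure of the non-cyclic vertex groups elsewhere in the graph.

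For the forward direction (collapsing data implies redundant), I would appeal to Proposition \ref{projection-2}: given collapsing data $e_1, \ldots, e_n, e$ together with collapsing constants $k_1, \ldots, k_n$, the projection move described in Section \ref{sec:projection-2} is a composition of edge sign-changes, slides, swaps, inductions, and an elementary contraction, each of which induces an isomorphism at the level of the fundamental group. In the resulting $\bbZ$-graph the vertex $v$ has been eliminated and its generator $z$ has been identified with an element of $G_u$ where $u = \tau(e) \neq v$. Hence the conjugacy class $[z]$ coincides in $\pi_1(\cG)$ with the conjugacy class of an element of $G_u$, so $v$ is redundant. Alternatively, one can directly exhibit a conjugator of the form $x = \ol{e}_1^{K_1}\ol{e}_2^{K_2}\cdots \ol{e}_n^{K_n} \cdot e$ for appropriate positive exponents $K_j$ derived from the collapsing constants, mirroring the construction in the proof of Proposition \ref{collapse-redundant}.

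For the backward direction (redundant implies collapsing data), assume there exists $x \in \FG{\cG}$ with $xzx^{-1} \in G_u$ for some $u \neq v$, and choose $x$ of minimum length in some sound writing $x = g_0 e_1 g_1 \cdots e_\ell g_\ell$ representing a path from $v$ to $u$. By minimality, every intermediate endpoint $\tau(e_j)$ with $j < \ell$ must equal $v$: otherwise, if $\tau(e_j) = u' \neq v$, the truncated element $g_0 e_1 \cdots e_j$ would conjugate $z$ into the vertex group $G_{u'}$, contradicting the minimality of $\ell$. Therefore $e_1, \ldots, e_{\ell-1}$ are loops at $v$ and $e := e_\ell$ goes from $v$ to $u$. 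Writing $\psi(\ol{e}_j) = z^{a_j}$, $\psi(e_j) = z^{b_j}$, and $\psi(\ol{e}) = z^c$, and noting that the intermediate elements $g_1, \ldots, g_{\ell-1}$ lie in the abelian group $G_v = \gen{z}$, the conjugation equation can be analyzed inductively: crossing edge $e_j$ requires that the running exponent of $z$ accumulated so far is divisible by $a_j$. A direct bookkeeping argument shows that every prime dividing $a_j$ must divide $b_{j-1} \cdots b_1$, and every prime dividing $c$ must divide $b_n \cdots b_1$. Finally, removing repeated occurrences of edges yields pairwise distinct collapsing data in the sense of Definition \ref{def:collapsing-data-2}.

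The main obstacle I anticipate is the careful bookkeeping in the backward direction: translating the conjugacy equation in $\pi_1(\cG)$ into concrete prime-divisibility conditions in $\bbZ = G_v$, while correctly handling the intermediate elements $g_j \in G_v$ that shift the running exponent by powers of $z$. However, because minimality forces the path to remain at $v$ until the final edge, all the relevant arithmetic occurs inside $G_v \cong \bbZ$, and the situation reduces locally to the GBS setting of Proposition \ref{redundant-collapse}.
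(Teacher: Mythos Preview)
Your proposal is correct and follows essentially the same approach as the paper: for the backward direction both take a minimal-length conjugating path, use minimality to force all intermediate vertices to equal $v$, read off the prime-divisibility conditions from the edge relations, and then remove repeated edges; for the forward direction the paper gives the direct conjugator argument (your stated alternative) rather than routing through Proposition~\ref{projection-2}. One minor slip: if $x$ is $(v,u)$-sound then it is $x^{-1}zx$ that lies in $G_u$, not $xzx^{-1}$.
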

\begin{proof}
     Let $z$ be a generator of $G_v$. Let $\cG$ be the graph of groups corresponding to the $\bbZ$-graph $(\Gamma,\{G_v\}_{v\in V(\Gamma)},\psi)$.
     
     If there is a set of collapsing data $e_1,\dots,e_k,e$ for $v$, then we choose a set of collapsing constants $k_1,\dots,k_n$; we denote with $a_j,b_j,c\in\bbZ\setminus\{0\}$ as in Definition \ref{def:collapsing-data-2}, and $u=\tau(e)$, and with $w_j\in\bbZ\setminus\{0\}$ for $j=1,\dots,n$ as in Section \ref{sec:projection-2}. We observe that in the universal group $\FG{\cG}$ we have $ze_1=e_1z^{w_1}$ and thus $z$ defines the same conjugacy class as $z^{w_1}$, and thus also of $z^{w_1^{k_1}}$. Similarly, for $j=2,\dots,n$ we have $z^{w_{j-1}^{k_{j-1}}}e_j=e_jz^{w_j}$ and thus by induction we have that $z$ defines the same conjugacy class as $z_{w_j^{k_j}}$. In particular $z$ defines the same conjugacy class as $z^{w_n^{k_n}}$; but $z^{w_n^{k_n}}e=eg$ for some $g\in G_u$ and thus $z$ defines the same conjugacy class as $g$, meaning that $v$ is redundant.

     If $v$ is redundant, then there must be a path $(e_1,\dots,e_n)$ in $\Gamma$, going from $v$ to a vertex $u\not=v$, such that in the universal group $\FG{\cG}$ we have $ze_1\dots e_n=e_1\dots e_ng$ for some $g\in G_u$. We take such a path of minimum possible length, forcing $e_1,\dots,e_{n-1}$ to be edges from $v$ to $v$. For $j=1,\dots,n$ we must have that $\psi(\ol{e}_j)\divides\prod_{i=1}^{j-1}\frac{\psi(e_i)}{\psi(\ol{e}_i)}$; in particular, every prime that divides $\psi(\ol{e}_j)$ also divides the product $\prod_{i=1}^{j-1}\psi(e_i)$. Therefore, if we remove all the occurrences of the same edge from the sequence $e_1,\dots,e_n$ (leaving only the first occurrence of each edge), we obtain a set of collapsing data for $v$, as desired.
\end{proof}

\begin{defn}\label{def:totally-reduced-2}
A $\bbZ$-graph $(\Gamma,\{G_v\}_{v\in V(\Gamma)},\psi)$ is called \textbf{totally reduced} if the following conditions hold:
\begin{enumerate}
\item\label{itm:fully-reduced-2} It contains no redundant vertex.
\item\label{itm:controlling-edge-2} For every $v\in \cyclicV{\Gamma}$ with $G_v=\gen{z}$, there is an edge $e$ from $v$ to $v$ with $\psi(\ol{e})=z^{\pm1}$ and $\psi(e)=z^n$, and such that, if $z^{n'}$ defines the same conjugacy class as $z$, then every prime that divides $n'$ also divides $n$.
\end{enumerate}
\end{defn}

\begin{prop}\label{prop:compute-totally-reduced-2}
Let $(\Gamma,\{G_v\}_{v\in V(\Gamma)},\psi)$ be a $\bbZ$-graph. Then there is a sequence of edge sign-changes, elementary expansions, elementary contractions and slides changing $(\Gamma,\{G_v\}_{v\in V(\Gamma)},\psi)$ into a totally reduced $\bbZ$-graph $(\Gamma',\{G_v\}_{v\in V(\Gamma')},\psi')$. Moreover there is an algorithm that, given $(\Gamma,\{G_v\}_{v\in V(\Gamma)},\psi)$, computes the sequence of moves and $(\Gamma',\{G_v\}_{v\in V(\Gamma')},\psi')$.
\end{prop}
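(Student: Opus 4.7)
The plan is to adapt the argument of Proposition \ref{prop:compute-totally-reduced} to the broader setting of $\bbZ$-graphs, using the projection move developed in Section \ref{sec:projection-2} and the combinatorial characterization of redundancy from Proposition \ref{prop:redundant-data-2}. Since condition \ref{itm:fully-reduced-2} of Definition \ref{def:totally-reduced-2} is the absence of redundant vertices, and condition \ref{itm:controlling-edge-2} concerns only cyclic vertices, the procedure naturally splits into two phases.

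Phase one: eliminate redundant vertices. For each $v\in\cyclicV{\Gamma}$ with $G_v=\gen{z}$, I would decide whether $v$ is redundant, i.e.\ whether $[z]=[g]$ in $\pi_1(\cG)$ for some element $g$ of another vertex group. Given an assumption of algorithmic conjugacy at the level of $\pi_1(\cG)$ (which in our setting is the one provided by \Cref{def:algorithmic-conjugacy} in the hypotheses of \Cref{introthm:isomorphism-other-families} used together with the combinatorics of graphs of groups), one can search for a path $(e_1,\dots,e_n)$ in $\Gamma$ starting at $v$ and terminating at a different vertex $u$, so that $ze_1\cdots e_n=e_1\cdots e_n g$ for some $g\in G_u$. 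By the argument in Proposition \ref{prop:redundant-data-2}, removing repeated edges from such a path yields a set of collapsing data. Applying the projection move of Section \ref{sec:projection-2} relative to this data, and decomposing it via Proposition \ref{projection-2} into edge sign-changes, slides, swaps, inductions, and a final elementary contraction (which are then further rewritten, using the $\bbZ$-graph analogues of Lemmas \ref{induction-from-others}, \ref{swap-from-others}, \ref{connection-from-others}, as sequences of elementary expansions, elementary contractions, slides, and sign-changes), produces a $\bbZ$-graph with one fewer cyclic vertex. Iterating this process terminates after at most $|\cyclicV{\Gamma}|$ steps, since no move introduces new redundant vertices that were not already present.

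Phase two: enforce condition \ref{itm:controlling-edge-2} at each remaining cyclic vertex. Suppose $v\in\cyclicV{\Gamma}$ with $G_v=\gen{z}$ is not redundant. The set $M_v=\{n\in\bbZ\setminus\{0\}:[z^n]=[z]\text{ in }\pi_1(\cG)\}$ is a submonoid of $(\bbZ\setminus\{0\},\cdot)$ containing $\pm 1$, and the set of primes dividing some element of $M_v$ is finite (by the bounded-path argument used in Proposition \ref{prop:redundant-data-2}). Algorithmically enumerate conjugacy witnesses of $z$ with powers $z^{n'}$ to find a single exponent $n$ such that every prime occurring in some element of $M_v$ also divides $n$. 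Using the corresponding conjugating path and the same projection-style construction as in the proof of Proposition \ref{prop:compute-totally-reduced}, insert an edge $e$ from $v$ to $v$ with $\psi(\ol e)=z^{\pm 1}$ and $\psi(e)=z^n$; this is achieved through elementary expansions, slides and sign-changes. Performing this for every cyclic vertex yields a totally reduced $\bbZ$-graph $(\Gamma',\{G_v\},\psi')$.

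The main obstacle is the algorithmic detection of redundancy in full generality. Unlike the GBS case, where affine paths and Proposition \ref{compute-conjugacy-class} give a clean semilinear description of conjugacy classes, here the conjugacy problem in $\pi_1(\cG)$ must be handled using the vertex-group conjugacy algorithms and the graph-of-groups structure. Under the hypotheses relevant to the applications in Section \ref{sec:comparing-deformation-spaces} (in particular algorithmic conjugacy and unique roots), the search for collapsing data is effective because it reduces to checking divisibility relations among exponents occurring along bounded-length paths and to comparing conjugacy classes in the vertex groups; termination of phase one then gives finiteness of the whole procedure.
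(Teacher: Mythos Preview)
Your two-phase outline (eliminate redundant cyclic vertices via projection, then manufacture a controlling edge at each surviving cyclic vertex) matches the paper's structure. The gap is in how you detect redundancy and compute the controlling edge: you appeal to an algorithmic conjugacy problem for $\pi_1(\cG)$ (borrowed from the hypotheses of Theorem~\ref{thm:isomorphism-other-families}), but Proposition~\ref{prop:compute-totally-reduced-2} is stated with no such assumption on the vertex groups, so your argument proves a weaker statement than what is claimed.

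The paper avoids this by observing that everything needed is \emph{local to the cyclic vertex}. Fix $v\in\cyclicV{\Gamma}$ with $G_v=\gen{z}$ and let $\Gamma'$ be the subgraph consisting of $v$ together with all edges from $v$ to itself. Then $\Gamma'$ is a one-vertex GBS graph, and Proposition~\ref{compute-conjugacy-class} computes the conjugacy class of $z$ there using only integer arithmetic. The characterization in Proposition~\ref{prop:redundant-data-2} shows that $v$ is redundant in the full $\bbZ$-graph if and only if there is collapsing data in the sense of Definition~\ref{def:collapsing-data-2}; but that definition involves only the integers $a_j,b_j$ on self-loops and the integer $c=\psi(\ol e)$ on the $v$-side of some outgoing edge, never the internal structure of any non-cyclic $G_u$. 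So one checks redundancy by comparing the prime supports computed inside $\Gamma'$ against the finitely many values $c$ coming from edges leaving $v$. Likewise, when $v$ is not redundant, one applies Proposition~\ref{prop:compute-totally-reduced} (the GBS version) to $\Gamma'$ to produce the controlling edge required by Condition~\ref{itm:controlling-edge-2}; the resulting sequence of moves, being moves on the self-loops at $v$, lifts verbatim to moves on the ambient $\bbZ$-graph. No conjugacy algorithm in any non-cyclic vertex group, or in $\pi_1(\cG)$, is ever invoked.

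In short: replace your global conjugacy search with the restriction-to-$\Gamma'$ trick, and the proof goes through as stated.
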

\begin{proof}
    We choose a vertex $v\in \cyclicV{\Gamma}$ with $G_v=\gen{z}$, and we restrict our attention to the subgraph $\Gamma'$ of $\Gamma$ given by the vertex $v$ and all the edges going from $v$ to itself. We have that $\Gamma'$ is a GBS graph and by Proposition \ref{compute-conjugacy-class} we can compute the conjugacy class of $z$. We now look at the whole $\bbZ$-graph $\Gamma$ (and in particular at the edges going from $v$ to another vertex $u\not=u$), and we are able to determine whether or not $v$ is redundant, and if it is, a set of collapsing data. If $v$ is redundant, we use the set of collapsing data to remove the vertex with a projection move of Section \ref{sec:projection-2}. If $v$ is not redundant, then by Proposition \ref{prop:compute-totally-reduced-2} we can algorithmically compute a sequence of moves making $\Gamma'$ into a totally reduced GBS graph; the same sequence of moves applied on the $\bbZ$-graph $\Gamma$ will ensure Condition \ref{itm:controlling-edge-2} of Definition \ref{def:totally-reduced-2} for the vertex $v$. We repeat the same procedure for every vertex $v\in \cyclicV{\Gamma}$, and the statement  follows.
\end{proof}

\subsection{The coarse projection map}

\begin{prop}[Coarse projection map]\label{prop:indep-data-2}
Let $(\Gamma,\{G_v\}_{v\in V(\Gamma)},\psi)$ be a $\bbZ$-graph and let $v$ be a redundant vertex. Let $e_1,\dots,e_n,e$ and $e_1',\dots,e_m',e'$ be two sets of collapsing data for $v$ and let $(\Delta,\{G_v\}_{v\in V(\Delta)},\phi)$ and $(\Delta',\{G_v\}_{v\in V(\Delta')},\phi')$ be the $\bbZ$-graphs obtained with the two projection moves. Then there is a sequence of edge sign-changes, slides, swaps, and connections going from $(\Delta,\{G_v\}_{v\in V(\Delta)},\phi)$ to $(\Delta',\{G_v\}_{v\in V(\Delta')},\phi')$.
\end{prop}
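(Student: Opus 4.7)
The plan is to mirror, step by step, the argument used to prove Proposition \ref{prop:indep-data} in the GBS setting. Since the vertex $v$ being eliminated is cyclic with $G_v=\gen{z}$, and since the projection move of Section \ref{sec:projection-2} only modifies edges in a way that involves powers of the single element $g=\psi(e)\in G_u$ (where $u=\tau(e)$), the entire local picture around $v$ together with its image at $u$ takes place inside a cyclic subgroup of $G_u$. Consequently, all of the combinatorics of the proof takes place inside a copy of $\bA$, so the controlled linear algebra developed in Section \ref{sec:controlled-linear-algebra} (and in particular Theorem \ref{thm:controlled}) applies verbatim, provided that slides, swaps, and connections at powers of $g$ on $G_u$ behave exactly as in the GBS case — which is indeed guaranteed by the definitions of the moves on $\bbZ$-graphs.

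First I would prove the four analogues of Lemmas \ref{indep-constants}, \ref{indep-swapedges}, \ref{indep-uselessedge}, and \ref{indep-lastedge} in the $\bbZ$-graph setting. For each, the statement is the obvious transcription: two projection moves differing only by the choice of collapsing constants, by a transposition of two adjacent edges in the collapsing data, by the inclusion of an extra useless edge, or by the choice of the connecting edge $e$, yield graphs related by a sequence of edge sign-changes, slides, swaps, and (only in the last case) connections. The proofs are the same formal sequences of slides, swaps, and connections written in Lemmas \ref{indep-constants}--\ref{indep-lastedge}; the only check needed is that, after the projection move, all relevant edges at the target vertex $u$ have both endpoints given by powers of $g$, so one can rewrite them in an $\bA$-valued picture and invoke Theorem \ref{thm:controlled} exactly as before.

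Then, to combine these lemmas, I would follow the same chain of substitutions as in the proof of Proposition \ref{prop:indep-data}. Starting from the set of collapsing data $e_1,\dots,e_n,e$, I iteratively enlarge the collapsing data with the edges $e_1',\dots,e_m'$ (using the $\bbZ$-graph version of Lemma \ref{indep-uselessedge}), replace the connecting edge $e$ by $e'$ (using the version of Lemma \ref{indep-lastedge}), permute the $e_j$'s to the right of the $e_i'$'s (using the version of Lemma \ref{indep-swapedges}), and finally remove the now-useless $e_1,\dots,e_n$ (again using the version of Lemma \ref{indep-uselessedge}), leaving precisely the collapsing data $e_1',\dots,e_m',e'$. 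Each step produces a sequence of edge sign-changes, slides, swaps, and connections between the two intermediate resulting graphs, so concatenating them gives the desired sequence between $(\Delta,\{G_v\},\phi)$ and $(\Delta',\{G_v\},\phi')$.

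The main obstacle is genuinely only bookkeeping: one must check that when $u$ is not cyclic, the moves described in Section \ref{sec:controlled-linear-algebra} — which were originally formulated for the affine representation of a GBS graph — make sense and preserve the fundamental group in the $\bbZ$-graph setting. This reduces to the observation that the restriction of any move to loops at $u$ labeled by powers of $g$ is formally identical to the GBS move on $\gen{g}$, and that the only edge from the ``controlling" loop whose label $w$ drives the controlled linear algebra is the image of the first collapsing edge, which by construction exists and is a loop at $u$ whose label is a power of $g$. Once this verification is in place, Theorem \ref{thm:controlled} transfers mechanically, and the four analogous lemmas — together with their combination — go through exactly as in Section \ref{sec:coarse-projection-map}.
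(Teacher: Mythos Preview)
Your proposal is correct and follows essentially the same approach as the paper: the paper's proof is a single line stating ``We use the same sequence of moves as in Proposition~\ref{prop:indep-data}'', and your elaboration of what that entails (the four analogous lemmas and their combination, justified by the observation that all edges involved are labelled by powers of a single element $g\in G_u$) is precisely the content behind that line.
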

\begin{proof}
We use the same sequence of moves as in Proposition \ref{prop:indep-data}.
\end{proof}

\begin{prop}\label{prop:projection-of-moves-2}
Let $(\Gamma_0,\{G_v\}_{v\in V(\Gamma_0)},\psi_0)$ be a $\bbZ$-graph, let $v$ be a redundant vertex, and let $(\Gamma_0',\{G_v\}_{v\in V(\Gamma_0')},\psi_0')$ be the $\bbZ$-graph obtained with the projection move. Let $(\Gamma_1,\{G_v\}_{v\in V(\Gamma_1)},\psi_1)$ be obtained from $(\Gamma_0,\{G_v\}_{v\in V(\Gamma_0)},\psi_0)$ by a twist, edge sign-change, slide, swap or connection; let $v$ be the corresponding redundant vertex in $(\Gamma_1,\{G_v\}_{v\in V(\Gamma_1)},\psi_1)$, and let $(\Gamma_1',\{G_v\}_{v\in V(\Gamma_1')},\psi_1')$ be the graph obtained with the projection move. Then $(\Gamma_1',\{G_v\}_{v\in V(\Gamma_1')},\psi_1')$ can be obtained from $(\Gamma_0',\{G_v\}_{v\in V(\Gamma_0')},\psi_0')$ by twists, edge sign-changes, slides, swaps, and connections.
\end{prop}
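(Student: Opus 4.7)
The plan is to follow the same case-by-case strategy as in the proof of Proposition \ref{prop:projection-of-moves}, treating each type of move in turn, and to handle the new \textbf{twist} move as the only genuinely new case. Throughout, I would appeal to Proposition \ref{prop:indep-data-2} whenever I need to rearrange or enlarge the collapsing data at $v$, and to a $\bbZ$-graph version of Theorem \ref{thm:controlled} for the core commutation arguments; both are already available in this setting because inside the cyclic vertex group $G_v=\langle z\rangle$ all labels of the relevant edges are powers of a fixed element, so the linear-algebraic framework of Section \ref{sec:controlled-linear-algebra} applies to the exponents.

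For edge sign-change, slide, swap, and connection, I would copy the arguments of Proposition \ref{prop:projection-of-moves} verbatim. If the move involves edges disjoint from the collapsing data $e_1,\dots,e_n,e$, the move commutes with the projection (since the projection leaves those edges unchanged apart from the identification $v\leadsto u$). If some of the edges involved in the move appear in the collapsing data, I would first use the $\bbZ$-graph analogues of Lemmas \ref{indep-uselessedge}, \ref{indep-swapedges}, \ref{indep-lastedge} to add, permute, or replace edges in the collapsing data until the move concerns either two consecutive $e_i$'s or the pair $(e_n,e)$; in those positions, Theorem \ref{thm:controlled} produces the needed sequence of slides, swaps, and connections relating the two projections.

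The new case is the twist. If the twist is applied to an edge $d$ with $\tau(d)=v$, then since $G_v=\bbZ$ is abelian the move is trivial and there is nothing to prove. If $\tau(d)=u'$ for some vertex $u'\ne u,v$, then $d$ is unaffected by the projection and the same twist in $\Gamma_0'$ produces $\Gamma_1'$. If $\tau(d)=u$ and $d$ is not the connecting edge $e$, then $d$ survives the projection with the same label and endpoint, and the twist on $\Gamma_0'$ again does the job. The only substantive subcase is when $d=e$: here the twist replaces $g=\psi(e)\in G_u$ by $hgh^{-1}$, and in $\Gamma_0'$ all the new edges $e_1',\dots,e_n'$ as well as every translated edge at $u$ have labels that are powers of $g$. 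Since $(hgh^{-1})^k = hg^kh^{-1}$, replacing $g$ by $hgh^{-1}$ corresponds to twisting each of these edges at $u$ by $h$. Hence a single twist at $e$ in $\Gamma_0$ becomes a finite sequence of twists at $u$ in $\Gamma_0'$, yielding $\Gamma_1'$ as desired.

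The main obstacle I anticipate is bookkeeping in the twist-on-$e$ subcase, where one must verify that conjugation by $h$ propagates consistently to \emph{every} label created by the projection (the self-loops $e_j'$ as well as the migrated edges originally at $v$), and that the collapsing constants can be chosen to be the same in $\Gamma_0$ and $\Gamma_1$ so the two projections are genuinely related by twists rather than by some residual slide. Beyond this, I expect the rest of the argument to follow from the independence-of-collapsing-data results of Section \ref{sec:redundant-2} together with the linear-algebraic content already encapsulated in Theorem \ref{thm:controlled}.
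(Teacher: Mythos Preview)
Your proposal is correct and follows essentially the same approach as the paper: reduce the non-twist moves to Proposition \ref{prop:projection-of-moves}, and for the twist analyze cases according to whether $\tau(d)=v$ (trivial since $G_v$ is abelian), $d$ is outside the collapsing data (commutes with projection), or $d=e$ (propagate the conjugation by $h$ to every edge produced or moved by the projection, as a finite sequence of twists at $u$). Your anticipated obstacle is not a real issue: using the same collapsing data and constants in $\Gamma_0$ and $\Gamma_1$ is legitimate because the twist on $e$ only alters $\psi(e)\in G_u$ and leaves all labels at $v$ unchanged, so the two projections differ exactly by replacing $g$ with $hgh^{-1}$ throughout.
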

\begin{proof}
If the move from $(\Gamma_0,\{G_v\}_{v\in V(\Gamma_0)},\psi_0)$ to $(\Gamma_1,\{G_v\}_{v\in V(\Gamma_1)},\psi_1)$ is not a twist, then we use the same sequence of moves as in Proposition \ref{prop:projection-of-moves}. If it is a twist performed on an edge $d$ with $\tau(d)=v$, then it has no effect on the $\bbZ$-graph, and we are done. If it is a twist performed at the edge $d$ which is not involved in the set of collapsing data that we are using, then it commutes with the projection move, and we are done. Finally, if we are using a set of collapsing data $e_1,\dots,e_n,e$ and the twist is performed on the edge $e$, then $(\Gamma_1',\{G_v\}_{v\in V(\Gamma_1')},\psi_1')$ can be obtained from $(\Gamma_0',\{G_v\}_{v\in V(\Gamma_0')},\psi_0')$ by applying the same twist (i.e. with the same conjugating element) on all the edges which have been changed by the projection move.
\end{proof}

\subsection{Eliminating redundant vertices from sequences of moves}

\begin{lem}\label{lem:powers-along-story-2}
Let $\cG_\beta=(\Gamma_\beta,\{G_v\}_{v\in V(\Gamma_\beta)},\{G_e\}_{e\in E(\Gamma_\beta)},\{\psi_e\}_{e\in E(\Gamma_\beta)})$ be graph of groups with edge groups isomorphic to $\bbZ$ for $\beta=1, \dots, N$, such that the corresponding sequence:
$$(\Gamma_0,\{G_v\},\psi_0),\dots,(\Gamma_N,\{G_v\},\psi_N)$$
of $\bbZ$-graphs satisfies that $(\Gamma_\beta,\{G_v\},\psi_\beta)$ is obtained from $(\Gamma_{\beta-1},\{G_v\},\psi_{\beta-1})$ by a twist, edge sign-change, elementary expansion, elementary contraction, slide, swap, or connection, for $\beta=1,\dots, N$. 

Let $u,v$ be vertices that belong to both $V(\Gamma_0)$ and $V(\Gamma_N)$ {\rm(}i.e. they do not disappear with an elementary contraction along the sequence{\rm)} and such that $G_u\simeq \bbZ=\langle z_u\rangle$ and $G_v\simeq \bbZ=\langle z_v\rangle$. Then the following are equivalent:
\begin{enumerate}
\item $z_v$ is conjugate to a power of $z_u$ in $\cG_0$.
\item $z_v$ is conjugate to a power of $z_u$ in $\cG_N$.
\end{enumerate}
\end{lem}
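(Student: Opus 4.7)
The plan is to induct on $N$, reducing to the case $N=1$, and then checking the seven possible moves case by case. The base case $N=0$ is trivial, and the inductive step reduces immediately to the single-move case by applying the induction hypothesis to the sequence of length $N-1$ followed by the last move. Thus it suffices to show: if $\cG_1$ is obtained from $\cG_0$ by one of the moves twist, edge sign-change, elementary expansion, elementary contraction, slide, swap, or connection, and $u,v$ are vertices belonging to both $V(\Gamma_0)$ and $V(\Gamma_1)$ with cyclic vertex groups $\gen{z_u},\gen{z_v}$, then $[z_v]=[z_u^k]$ in $\pi_1(\cG_0)$ if and only if $[z_v]=[z_u^k]$ in $\pi_1(\cG_1)$.

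For each move I would appeal to the explicit isomorphism $\Phi:\FG{\cG_0}\rar\FG{\cG_1}$ that it induces (as written down in Section \ref{sec:moves} for the GBS setting and transferred to $\bbZ$-graphs in Section 8.1), together with the fact that this isomorphism descends to an isomorphism $\pi_1(\cG_0)\rar\pi_1(\cG_1)$ on the fundamental groups. In all seven cases the isomorphism is defined by formulas that rewrite the \emph{edges} being modified (plus introduce/remove one cyclic vertex in the expansion/contraction cases), while the natural inclusion $G_w\hookrightarrow\FG{\cG_0}$ of any vertex group $G_w$ associated to a vertex $w$ that persists through the move lands in $\FG{\cG_1}$ via the same generator. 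In particular, $\Phi$ sends the element $z_u\in G_u\subseteq\FG{\cG_0}$ to $z_u\in G_u\subseteq\FG{\cG_1}$, and likewise for $z_v$. Fixing a basepoint and an appropriate path connecting it to $u$ (respectively $v$) in both $\Gamma_0$ and $\Gamma_1$, the conjugacy classes $[z_u]$ and $[z_v]$ in $\pi_1(\cG_0)$ and $\pi_1(\cG_1)$ correspond under $\Phi$. The relation ``$z_v$ is conjugate to a power of $z_u$'' is then visibly preserved.

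The only points that require a bit of care are (i) the \textbf{twist} move, where $\Phi$ conjugates the edge letter $d$ by the chosen element $h\in G_{\tau(d)}$ but leaves all vertex group elements untouched, so conjugacy classes of vertex elements are unaffected; and (ii) the \textbf{elementary contraction}, which removes a vertex $v_0$ and an edge $e_0$. Since $u,v\in V(\Gamma_0)\cap V(\Gamma_1)$, neither equals $v_0$; the isomorphism is the identity on the remaining vertex groups (the contracted vertex's generator $\psi(\bar{e}_0)$ gets absorbed into the adjacent vertex group via the isomorphism $\psi_{\bar e_0}$), so again the classes $[z_u],[z_v]$ are preserved. For \textbf{slide}, \textbf{swap}, and \textbf{connection} the inverse formulas given in Section \ref{sec:moves} (and replicated for $\bbZ$-graphs) make it manifest that only the involved edges are rewritten.

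The main potential obstacle I anticipate is bookkeeping: for the swap and connection moves the rewriting formulas are long, and one must verify that no hidden conjugation on the vertex groups $G_u, G_v$ is introduced. However, inspecting the formulas shows each rewritten edge is a product of only the edges involved in the move (never of vertex-group generators from untouched vertices), so the restriction of $\Phi$ to any untouched vertex group is simply the identity. Chaining these single-move identifications along the sequence of length $N$ yields the equivalence of the two conjugacy statements in $\cG_0$ and $\cG_N$, completing the induction.
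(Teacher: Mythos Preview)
Your proposal is correct and follows exactly the same approach as the paper: the paper's proof is simply ``Induction on $N\ge0$.'' You have spelled out the case analysis that the paper leaves implicit, namely that each move induces an isomorphism of fundamental groups which restricts to the identity on the vertex groups at all surviving vertices, so the conjugacy relation between $z_u$ and powers of $z_v$ is preserved step by step.
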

\begin{proof}
Induction on $N\ge0$.
\end{proof}

\begin{defn}
Let $(\Gamma,\{G_v\}_{v\in V(\Gamma)},\psi)$ be a $\bbZ$-graph. An \textbf{elementary subgraph} is a $\bbZ$-graph  $(\Delta,\{G_v\}_{v\in V(\Delta)},\phi)$ such that
\begin{enumerate}
\item $\Delta$ is a subgraph of $\Gamma$ and $\phi$ is the restriction of $\psi$.
\item The inclusion between the $\bbZ$-graphs induces an isomorphism between the fundamental groups of the corresponding graphs of groups.
\end{enumerate}
\end{defn}

If $(\Delta,\{G_v\}_{v\in V(\Delta)},\phi)$ is an elementary subgraph of $(\Gamma,\{G_v\}_{v\in V(\Gamma)},\psi)$, then $\rank{\Gamma}=\rank{\Delta}$; moreover, if $V(\Gamma)=V(\Delta)$ then $\Gamma=\Delta$.

\begin{lem}\label{lem:elem-Z-subgraph}
If $(\Delta,\{G_v\}_{v\in V(\Delta)},\phi)$ is an elementary subgraph of $(\Gamma,\{G_v\}_{v\in V(\Gamma)},\psi)$, then there is a valence-$1$ vertex $v\in V(\Gamma)\setminus V(\Delta)$ with $G_v\cong\bbZ$ and such that the unique edge $e$ with $\tau(e)=v$ satisfies $\psi(e)=\pm1$.
\end{lem}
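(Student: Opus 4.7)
The plan is to adapt the proof of Lemma \ref{elementary-subgraph2} to the $\bbZ$-graph setting, combining rank-preservation for abstract graphs with the amalgamated product structure coming from a valence-$1$ vertex. Note that the assertion is only non-vacuous when $V(\Gamma) \supsetneq V(\Delta)$, which I treat as the case of interest; the opposite case gives $\Gamma = \Delta$ by the rank argument below, making the conclusion empty.

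First, I would prove the $\bbZ$-graph analog of Lemma \ref{elementary-subgraph1}, namely $\rank(\Gamma) = \rank(\Delta)$ as abstract graphs. Assuming the contrary, fix a basepoint in $\Delta$ and choose a reduced edge loop at the basepoint that uses some edge outside $\Delta$. The corresponding word lies in $\pi_1(\cG)$, but by the normal form theorem for graphs of groups (the generalization of Britton's Lemma) no such expression can be rewritten using only edges and vertex-group elements of $\cH$, contradicting $\pi_1(\cH) = \pi_1(\cG)$.

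With rank equality in hand, purely graph-theoretic considerations show that $\Gamma$ is obtained from $\Delta$ by attaching a disjoint union of trees, each rooted at a vertex of $\Delta$. In particular, there is a valence-$1$ vertex $v \in V(\Gamma) \setminus V(\Delta)$; let $e$ be the unique edge with $\tau(e)=v$, so that $u := \iota(e) \neq v$. Removing $v$ and $e$ yields a connected $\bbZ$-subgraph $\cG'$ containing $\cH$, and since $v$ is a leaf we have the amalgamated product decomposition
\[
\pi_1(\cG) \;\cong\; \pi_1(\cG') *_{G_e} G_v,
\]
where $G_e$ embeds into $\pi_1(\cG')$ via $\psi_{\bar e}$ and into $G_v$ via $\psi_e$.

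Since $\pi_1(\cH) \subseteq \pi_1(\cG') \subseteq \pi_1(\cG)$ and $\pi_1(\cH) = \pi_1(\cG)$, the inclusion $\pi_1(\cG') \hookrightarrow \pi_1(\cG)$ is an isomorphism. By the normal form theorem for amalgamated products, $A \hookrightarrow A *_C B$ is an isomorphism precisely when the map $C \to B$ is surjective; hence $\psi_e(G_e) = G_v$. Because $G_e \cong \bbZ$ and $\psi_e$ is injective, this forces $G_v \cong \bbZ$, and $\psi(e)$ must be a generator of $G_v$, i.e.\ $\psi(e) = \pm 1$. The main delicate step is the rank equality, which requires careful invocation of the normal form for graphs of groups with arbitrary vertex groups; once this is in place, the amalgamation argument is standard.
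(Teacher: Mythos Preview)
Your proof is correct and follows essentially the same strategy the paper indicates (``Analogous to Lemma~\ref{elementary-subgraph2}''): establish rank equality, locate a leaf in $V(\Gamma)\setminus V(\Delta)$, and use the normal form for graphs of groups to force the edge inclusion at that leaf to be surjective. The only difference is cosmetic: where the paper's approach (by analogy with Lemma~\ref{elementary-subgraph2}) would write down an explicit element $e_1\cdots e_\ell\, e\, g\, \ol{e}\, \ol{e}_\ell\cdots\ol{e}_1$ with $g\in G_v\setminus\psi_e(G_e)$ and invoke the normal form directly, you package the same computation as the abstract statement that $A\hookrightarrow A*_C B$ is an isomorphism iff $C\to B$ is onto; this has the minor advantage of handling the conditions $G_v\cong\bbZ$ and $\psi(e)=\pm1$ simultaneously.
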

\begin{proof}
Analogous to Lemma \ref{elementary-subgraph2}.
\end{proof}

\begin{prop}\label{prop:no-contractions-2}
Let $(\Gamma_0,\{G_v\},\psi_0),\dots,(\Gamma_N,\{G_v\},\psi_N)$ be a sequence of $\bbZ$-graphs, for $N\in\bbN$. For $\beta=1,\dots,N$, suppose that $(\Gamma_\beta,\{G_v\},\psi_\beta)$ is obtained from $(\Gamma_{\beta-1},\{G_v\},\psi_{\beta-1})$ by a twist, edge sign-change, elementary expansion, elementary contraction, slide, swap or connection. Then there is a sequence of $\bbZ$-graphs $(\Gamma_0',\{G_v\},\psi_0'),\dots,(\Gamma_N',\{G_v\},\psi_N')$ such that
\begin{enumerate}
\item For $\beta=1,\dots,N$ we have that $(\Gamma_\beta,\{G_v\},\psi_\beta)$ is obtained from $(\Gamma_{\beta-1},\{G_v\},\psi_{\beta-1})$ by the identity or a twist, edge sign-change, elementary expansion, slide, swap or connection {\rm(}and notice that we do not allow for elementary contractions{\rm)}.
\item $(\Gamma_0',\{G_v\},\psi_0')=(\Gamma_0,\{G_v\},\psi_0)$ and $(\Gamma_\beta,\{G_v\},\psi_\beta)$ is an elementary subgraph of $(\Gamma_\beta',\{G_v\},\psi_\beta')$ for $\beta=1,\dots,N$.
\end{enumerate}
\end{prop}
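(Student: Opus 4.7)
The plan is to mimic the proof of Proposition \ref{no-contractions} from the GBS setting essentially verbatim: whenever the sequence performs an elementary contraction, simply skip it and leave the valence-$1$ cyclic vertex and its edge in place as a harmless ``dangling tail''. Specifically, I would define the sequence $(\Gamma_\beta',\{G_v\},\psi_\beta')$ inductively by setting $(\Gamma_0',\{G_v\},\psi_0')=(\Gamma_0,\{G_v\},\psi_0)$; then, at step $\beta$, if the move $(\Gamma_{\beta-1},\{G_v\},\psi_{\beta-1})\rightsquigarrow(\Gamma_\beta,\{G_v\},\psi_\beta)$ is \emph{not} an elementary contraction, I apply the identical move to $(\Gamma_{\beta-1}',\{G_v\},\psi_{\beta-1}')$; if it is an elementary contraction, I apply the identity.

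The first thing to verify is that this construction is well-defined, meaning that each move in the new sequence is still a valid move on the current $\bbZ$-graph. This is immediate: any move appearing after an elementary contraction in the original sequence cannot refer to the vertex/edge that was contracted away (they no longer exist in the original sequence), and so translates unchanged to the extended graph $\Gamma_\beta'$, acting only on the ``core'' subgraph coinciding with $\Gamma_{\beta-1}$. In particular, the presence of the new twist move (compared to the GBS setting) causes no additional difficulty, since twists act locally on a single edge.

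Next, I would verify by induction on $\beta$ that $(\Gamma_\beta,\{G_v\},\psi_\beta)$ is an elementary subgraph of $(\Gamma_\beta',\{G_v\},\psi_\beta')$. The subgraph containment is clear from the construction. For the elementary property (isomorphism at the level of fundamental groups), I observe that $\Gamma_\beta'$ is obtained from $\Gamma_\beta$ by attaching a disjoint collection of cyclic valence-$1$ tails, each coming from a skipped elementary contraction, with the defining ``generator'' condition on the edge label. Performing the corresponding elementary contractions in reverse order transforms $\Gamma_\beta'$ back into $\Gamma_\beta$, and since each elementary contraction induces an isomorphism on fundamental groups of the associated graphs of groups, the inclusion $\Gamma_\beta\hookrightarrow\Gamma_\beta'$ does so as well.

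I do not expect any serious obstacle: the statement is a direct structural adaptation of Proposition \ref{no-contractions} to the broader $\bbZ$-graph setting, and the proof transfers without essential change. The only mild subtlety is checking that the inductive subgraph is genuinely \emph{elementary}, but this reduces to the observation above that the ``extra'' dangling tails can be contracted without affecting the fundamental group.
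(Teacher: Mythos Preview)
Your proposal is correct and follows essentially the same approach as the paper: the paper's proof of Proposition~\ref{prop:no-contractions-2} simply reads ``Analogous to Proposition~\ref{no-contractions}'', and Proposition~\ref{no-contractions} itself is proved exactly by skipping each elementary contraction and leaving the dangling valence-$1$ vertex in place, then observing by induction that the original graph sits as an elementary subgraph of the extended one. Your additional remarks about why twists cause no difficulty and why the dangling tails preserve the fundamental group are accurate and, if anything, make the argument more explicit than the paper's version.
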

\begin{proof}
Analogous to Proposition \ref{no-contractions}.
\end{proof}

\begin{thm}\label{thm:sequence-new-moves-2}
Suppose that a group $G$ acts on two trees $S,T$ with every edge stabilizer isomorphic to $\bbZ$ and with the same elliptic subgroups. Let $(\Gamma,\{G_v\}_{v\in V(\Gamma)},\psi),(\Delta,\{H_u\}_{u\in V(\Delta)},\phi)$ be the corresponding $\bbZ$-graphs, and suppose that they are totally reduced. Then there is a sequence of twists, slides, swaps, connections, edge sign-changes, vertex coordinate-changes, and inductions going from $(\Gamma,\{G_v\}_{v\in V(\Gamma)},\psi),(\Delta,\{H_u\}_{u\in V(\Delta)},\phi)$. Moreover, all the edge sign-changes, vertex sign-changes, and inductions can be performed at the beginning of the sequence.
\end{thm}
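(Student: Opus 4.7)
My plan is to follow the same architecture as the proof of Theorem \ref{thm:sequence-new-moves}, since all the key preliminaries have now been established in the more general setting. Since $S$ and $T$ have the same elliptic subgroups, they lie in the same deformation space, so by Theorem \ref{thm:Forester} there is a sequence of contractions and expansions from $S$ to $T$. Lifting this to the level of graphs of groups and choosing a graph-of-groups representative at every stage (which forces twists and vertex coordinate-changes to account for the ambiguity in Section \ref{sec:graphs-of-groups}), I get a sequence of $\bbZ$-graphs from $(\Gamma,\{G_v\},\psi)$ to $(\Delta,\{H_u\},\phi)$ whose consecutive terms differ by twists, vertex coordinate-changes, edge sign-changes, elementary expansions, elementary contractions, or slides. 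Applying Proposition \ref{prop:no-contractions-2}, I eliminate every elementary contraction in the sequence and obtain a new sequence $(\Gamma_0,\psi_0),\dots,(\Gamma_N,\psi_N)$ with $(\Gamma_0,\psi_0)=(\Gamma,\{G_v\},\psi)$ and with $(\Delta,\{H_u\},\phi)$ realized as an elementary subgraph of $(\Gamma_N,\psi_N)$, where the only new vertices introduced along the way are cyclic (since elementary expansions only add cyclic vertices).

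Next, I eliminate redundant vertices one at a time. Pick any $u\in V(\Gamma_N)\setminus(V(\Gamma)\cup V(\Delta))$ and let $\alpha$ be the index at which $u$ first appears; at that step $u$ is introduced by an elementary expansion, so $G_u\cong\bbZ$ and $u$ is redundant in $(\Gamma_\alpha,\psi_\alpha)$. By Lemma \ref{lem:powers-along-story-2} it remains redundant in $(\Gamma_\beta,\psi_\beta)$ for every $\beta\ge\alpha$, so I can apply the coarse projection map of Section \ref{sec:projection-2} simultaneously to every $(\Gamma_\beta,\psi_\beta)$ with $\beta\ge\alpha$. By Proposition \ref{prop:projection-of-moves-2}, consecutive projected graphs still differ by twists, vertex coordinate-changes, edge sign-changes, slides, swaps, connections, and elementary expansions; by Proposition \ref{prop:indep-data-2} different choices of collapsing data are equivalent up to those moves. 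Iterating, I arrive at a sequence $(\Omega_0,\omega_0),\dots,(\Omega_L,\omega_L)$ with $(\Omega_0,\omega_0)=(\Gamma,\{G_v\},\psi)$, with $(\Delta,\{H_u\},\phi)$ an elementary subgraph of $(\Omega_L,\omega_L)$, and with $V(\Omega_L)\subseteq V(\Gamma)\cup V(\Delta)$.

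Using Lemma \ref{lem:elem-Z-subgraph} iteratively, I peel off from $(\Omega_L,\omega_L)$ successive valence-$1$ cyclic vertices $v_1,\dots,v_k$ whose attaching edges carry the value $\pm1$; each $v_i$ lies in $V(\Gamma)\setminus V(\Delta)$, and by Lemma \ref{lem:powers-along-story-2} combined with the hypothesis that $(\Gamma,\{G_v\},\psi)$ is totally reduced, the neighbor $u_i$ of $v_i$ must lie in $V(\Delta)$. This induces a bijection between the cyclic vertices of $V(\Gamma)\setminus V(\Delta)$ and the vertices of $V(\Delta)\setminus V(\Gamma)$ that they absorb (and, after the symmetric argument, $|V(\Gamma)|=|V(\Delta)|$). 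Now in $(\Delta,\{H_u\},\phi)$ I use the edge from Condition \ref{itm:controlling-edge-2} of Definition \ref{def:totally-reduced-2} at each $u_i$, together with the valence-$1$ edge produced above, as a two-edge set of collapsing data; the corresponding projection move (which, per Proposition \ref{projection-2}, is a composition of an induction possibly preceded by an edge sign-change and a vertex coordinate-change, followed by slides and an elementary contraction) brings $(\Delta,\{H_u\},\phi)$ into a $\bbZ$-graph $(\Delta',\phi')$ matching the projection of $(\Omega_L,\omega_L)$. Applying this move to $(\Omega_L,\omega_L)$ therefore gives a sequence from $(\Gamma,\{G_v\},\psi)$ to $(\Delta',\phi')$ using only the allowed moves, and concatenating with the reverse of the induction/coordinate-change/sign-change steps yields a sequence ending at $(\Delta,\{H_u\},\phi)$.

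Finally, I move all edge sign-changes, vertex coordinate-changes, and inductions to the beginning of the sequence by observing that each of these moves commutes (in an elementary case-by-case verification) with twists, slides, swaps, and connections up to an adjustment of the parameters of the latter. The main obstacle I anticipate is precisely this commutation analysis in the generalized setting: unlike the GBS case, inductions interact with the chosen generators at cyclic vertices in a way that may require re-twisting adjacent edges, so I will need to verify carefully that pushing an induction past a twist or a vertex coordinate-change produces only further twists or vertex coordinate-changes (rather than forcing new inductions to reappear later). Once this is settled, the proof concludes exactly as in Theorem \ref{thm:sequence-new-moves}.
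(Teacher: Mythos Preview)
Your approach is essentially the same as the paper's, and the main architecture is correct. Two points deserve comment.

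First, a small imprecision: when you write ``Applying this move to $(\Omega_L,\omega_L)$ therefore gives a sequence from $(\Gamma,\{G_v\},\psi)$ to $(\Delta',\phi')$,'' you need to apply the projection eliminating $u_1,\dots,u_k$ to the \emph{entire} sequence $(\Omega_0,\omega_0),\dots,(\Omega_L,\omega_L)$, not just to the last term. This is what kills the remaining elementary expansions (those that introduced the $u_i$), and Proposition \ref{prop:projection-of-moves-2} is again what guarantees consecutive projected graphs stay related by the allowed moves. The paper does exactly this.

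Second, your proposed commutation argument for the ``moreover'' clause is unnecessary, and this is where your anticipated obstacle dissolves. By construction, $(\Delta',\phi')$ is obtained from $(\Delta,\{H_u\},\phi)$ by inductions and vertex coordinate-changes (possibly with sign-changes) followed by slides, since the two-edge collapsing data reduces the projection move to precisely that shape. So the sequence from $(\Delta,\{H_u\},\phi)$ to $(\Gamma,\{G_v\},\psi)$ already has its inductions and vertex coordinate-changes clustered at the start; no commutation past twists, swaps, or connections is required. The only remaining adjustment is to move the edge sign-changes, which commute with slides, swaps, and connections in an elementary way (as in the proof of Theorem \ref{thm:sequence-new-moves}). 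The paper's proof simply ends with ``the statement follows'' at this point for this reason.
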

\begin{proof}
We proceed as in the proof of Theorem \ref{thm:sequence-new-moves}. By Theorem \ref{thm:Forester} and by Proposition \ref{prop:no-contractions-2}, we can find a sequence
$$(\Gamma,\{G_v\},\psi)=(\Gamma_0,\{G_v\},\psi_0),(\Gamma_1,\{G_v\},\psi_1),\dots,(\Gamma_N,\{G_v\},\psi_N)$$
of $\bbZ$-graphs such that $(\Gamma_\beta,\{G_v\},\psi_\beta)$ is obtained from $(\Gamma_{\beta-1},\{G_v\},\psi_{\beta-1})$ by a twist, edge sign-change, elementary expansion, slide, swap, connection, for $\beta=1,\dots,N$; notice that elementary contractions are not allowed. We can also assume that $(\Gamma_N,\{G_v\},\psi_N)$ contains $(\Delta,\{G_v\},\phi)$ an elementary subgraph, up to applying vertex coordinate-changes on $\Delta$. In particular we have $V(\Gamma)=V(\Gamma_0)\subseteq V(\Gamma_1)\subseteq\dots \subseteq V(\Gamma_N)\supseteq V(\Delta)$.

For every vertex $u\in V(\Gamma_N)\setminus(V(\Gamma)\cup V(\Delta))$, we can now eliminate such vertex from the whole sequence of moves, as in the proof of Theorem \ref{thm:sequence-new-moves}, using Proposition \ref{prop:projection-of-moves-2}. We obtain a new sequence of $\bbZ$-graphs
$$(\Gamma,\{G_v\},\psi)=(\Omega_0,\{G_v\},\omega_0),(\Omega_1,\{G_v\},\omega_1),\dots,(\Omega_L,\{G_v\},\omega_L)$$
with $L\in\bbN$, such that $(\Omega_L,\{G_v\},\omega_L)$ contains $(\Delta,\{G_v\},\phi)$ as an elementary subgraph, for $\beta=1,\dots,L$ we can go from $(\Gamma_{\beta-1},\{G_v\},\psi_{\beta-1})$ to $(\Gamma_\beta,\{G_v\},\psi_\beta)$ with a twist, edge sign-change, elementary expansion, slide, swap or connection, and the set of vertices of every graph along the sequence is contained in $V(\Gamma)\cup V(\Delta)$. In particular $V(\Omega_L)=V(\Gamma)\cup V(\Delta)$.

Suppose that $V(\Omega_L)\supsetneq V(\Delta)$. Since $(\Delta,\{G_v\},\phi)$ is an elementary subgraph of $(\Omega_L,\{G_v\},\omega_L)$, by Lemma \ref{elementary-subgraph2} we can find a valence-$1$ vertex $v_1\in V(\Omega_L)\setminus V(\Delta)$ such that $G_{v_1}\cong\bbZ$ and the unique edge $e_1\in E(\Omega_L)$ with $\tau(e_1)=v_1$ satisfies $\omega_L(e_1)=\pm1$; let us also call $\tau(e_1)=u_1$ the other endpoint of $e_1$. Notice that we must have $v_1\in V(\Gamma)$; by Lemma \ref{lem:powers-along-story-2}, and since $(\Gamma,\{G_v\},\psi)$ is totally reduced, we must also have $u_1\in V(\Delta)$. We now define $(\Omega_L^1,\{G_v\},\omega_L^1)$ obtained from $\Omega_L$ by removing $v_1,e_1$ and we notice that $(\Delta,\{G_v\},\phi)$ is an elementary subgraph of $(\Omega_L^1,\{G_v\},\omega_L^1)$. If $V(\Omega_L^1)\supsetneq V(\Delta)$, then we reiterate the same reasoning and we find another valence-$1$ vertex $v_2\in V(\Omega_L^1)\setminus V(\Delta)$ such that $G_{v_2}\cong\bbZ$ and the unique edge $e_2\in E(\Omega_L^1)$ with $\tau(e_2)=v_2$ satisfies $\psi(e_2)=\pm1$; we also call $u_2=\iota(e_2)$ the other endpoint of $e_2$. As before, we have $v_2\in V(\Gamma)$ and $u_2\in V(\Delta)$. We reiterate the same reasoning until we obtain a GBS graph $(\Omega_L^k,\{G_v\},\omega_L^k)$, for some $k\in\bbN$, containing $(\Delta,\{G_v\},\phi)$ as an elementary subgraph, and satisfying $V(\Omega_L^k)=V(\Delta)$. But this implies that $\Omega_L^k=\Delta$.

This proves that
$$V(\Omega_L)\setminus V(\Delta)=\{v_1,\dots,v_k\}$$
for some $k\in\bbN$ and $v_1,\dots,v_k\in V(\Gamma)$ with $G_{v_1}\cong\dots \cong G_{v_k}\cong\bbZ$, and that
$$E(\Omega_L)\setminus E(\Delta)=\{e_1,\ol{e}_1,\dots,e_k,\ol{e}_k\}$$
such that $\tau(e_i)=v_i$ and $\iota(e_i)=u_i$ for some $u_i\in V(\Omega_L)\setminus V(\Gamma)$ and $\psi(e_i)=\pm1$, for $i=1,\dots,k$. In particular we can define a map $\rho:V(\Gamma)\rar V(\Delta)$ given by the identity on $V(\Gamma)\cap V(\Delta)$, and sending $v_i$ to $u_i$ for $i=1,\dots,k$. For every $u\in V(\Delta)$, the group $G_u$ must be contained, up to conjugation, in some group $G_v$ for some $v\in V(\Gamma)$; thus we can define a map $\theta:V(\Delta)\rar V(\Gamma)$ sending each $u\in V(\Delta)$ to one such $v\in V(\Gamma)$. Since $(\Gamma,\{G_v\},\psi),(\Delta,\{G_v\},\phi)$ are totally reduced, we must have that $\rho$ and $\theta$ are inverse of each other. In particular this means that $\abs{V(\Gamma)}=\abs{V(\Delta)}$ and that $V(\Delta)\setminus V(\Gamma)=\{u_1,\dots,u_k\}$.

Since $\Delta$ is totally reduced, for $i=1,\dots,k$ we consider the vertex $u_i\in V(\Delta)\setminus V(\Gamma)$, and we can find an edge $d_i\in E(\Delta)$ satisfying Property \ref{itm:controlling-edge-2} of Definition \ref{def:totally-reduced-2}. Then we use the set of collapsing data $d_i,e_i$ for the vertex $u_i$ to perform the projection move of Section \ref{sec:projection-2} from $u_i$ to $v_i$. We call $(\Delta',\{G_v\},\phi')$ the graph obtained performing such projection for all $k=1,\dots,n$. Notice that, since the collapsing data consists of only two edges $d_i,e_i$, in this case, the projection move coincides with the following: applying an induction move (plus potentially a vertex coordinate-change) at the vertex $u_i$ in $\Delta$, and then removing the edge $e_i$ and identifying $u_i$ and $v_i$, and then performing some slide moves to adjust the remaining edges. In particular, the graph $(\Delta',\{G_v\},\phi')$ is obtained from $(\Delta,\{G_v\},\phi)$ by performing an induction move (and potentially a vertex coordinate-change move) at some vertices, and then slide moves.

We finally eliminate the vertices $u_1,\dots,u_k$ from the whole sequence of $\bbZ$-graphs $(\Gamma,\{G_v\},\psi)=(\Omega_0,\{G_v\},\omega_0),\dots,(\Omega_L,\{G_v\},\omega_L)$ using the projection move of Section \ref{sec:projection-2}. We obtain a sequence of $\bbZ$-graphs going from $(\Gamma,\{G_v\},\psi)$ to $(\Delta',\{G_v\},\phi')$ and such that every two consecutive $\bbZ$-graphs in the sequence are related by a twist, an edge sign-change, a slide, a swap or a connection. The statement  follows.
\end{proof}

\subsection{Comparison of the isomorphism problems}

\begin{defn}
    An element $r\in G$ is called \textbf{root} if we cannot write $r=s^k$ for $s\in G$ and $k\ge2$.
\end{defn}

\begin{defn}\label{def:unique-roots}
    A group $G$ has \textbf{unique roots} if for every $g\in G\setminus\{1\}$ there is a unique root $r_g\in G$ such that $g=r_g^k$ for some $k\ge1$.
\end{defn}

\begin{lem}\label{lem:unique_roots}
    If $G$ has unique roots, then we have the following:
    \begin{enumerate}
        \item $G$ is torsion-free.
        \item If $g\in G\setminus\{1\}$ and $g^i$ is conjugate to $g^j$ for $i,j\in\bbZ$, then $i=j$.
    \end{enumerate}
\end{lem}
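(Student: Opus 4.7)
The plan is to reduce both statements to the uniqueness of the root of a nontrivial element. For part (1), I will argue by contradiction: assume $g \in G \setminus \{1\}$ has finite order $n \geq 2$, and let $r$ be the unique root of $g$ guaranteed by the hypothesis, so $g = r^k$ for some $k \geq 1$. Then $r^{kn} = g^n = 1$, so $r$ has finite order $m$. Since $r \neq 1$ (otherwise $g = 1$), we have $m \geq 2$, and then $r = r^{m+1}$ displays $r$ as a proper power of itself (with exponent $m + 1 \geq 3$), contradicting that $r$ is a root. Hence $G$ must be torsion-free.

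For part (2), I will combine torsion-freeness from part (1) with uniqueness of roots applied to $g^i$. Since $G$ is torsion-free and $g \neq 1$, the element $g$ has infinite order, so $g^i = 1$ iff $i = 0$. In particular, if $i = 0$ then $g^j$ is conjugate to $1$, which forces $g^j = 1$ and hence $j = 0$; symmetrically for $j = 0$. I may therefore assume $i, j \neq 0$, so $g^i, g^j \neq 1$. Let $r$ be the unique root of $g$ with $g = r^k$ for some $k \geq 1$. Then $g^i = r^{ki}$, so $r$ is a root of $g^i$. Writing $g^i = h g^j h^{-1} = (hrh^{-1})^{kj}$ and observing that conjugates of roots are roots (since $(hsh^{-1})^\ell = hs^\ell h^{-1}$ for every $\ell$), the element $hrh^{-1}$ is also a root of $g^i$. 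Uniqueness then gives $hrh^{-1} = r$, so $r^{ki} = g^i = r^{kj}$, whence $r^{k(i-j)} = 1$. Torsion-freeness forces $k(i-j) = 0$, and since $k \geq 1$, we conclude $i = j$.

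The only point worth flagging is that the argument implicitly uses two trivial but necessary observations: that conjugation preserves the property of being a root (no proper-power factorization survives conjugation), and that the definition of unique roots applies to $g^i$ only because $g^i \neq 1$, which is where part (1) is needed. Neither is a real obstacle; the proof is essentially a direct unwinding of the definitions once part (1) is in hand.
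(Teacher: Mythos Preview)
Your argument is essentially the paper's: pass to the root $r$ of $g$, use uniqueness of the root of $g^i$ to conclude $hrh^{-1}=r$, and finish with torsion-freeness. One small point (which the paper's own proof also glosses over, simply writing ``for some $i,j\ge 1$''): when you say ``$g^i=r^{ki}$, so $r$ is a root of $g^i$'' and likewise that $hrh^{-1}$ is a root of $g^i$ via the exponent $kj$, you are implicitly using $ki,kj\ge 1$. If $i$ and $j$ have opposite signs, uniqueness instead yields $r=hr^{-1}h^{-1}$, and a brief extra step is needed---for instance, from $hrh^{-1}=r^{-1}$ one gets $(rh)^2=h^2$, so uniqueness of the root of $h^2$ forces $r$ to be a power of the root of $h$, hence to commute with $h$, whence $r=r^{-1}$ and $r=1$, a contradiction. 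With that patch the proof is complete and matches the paper's in approach.
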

\begin{proof}
    If an element has finite order, then its root has also finite order. But a root element cannot have finite order, by definition of root (because it would be a proper power of itself). Thus $G$ is torsion-free.

    Suppose that $g^i=ag^j\ol{a}$ for some $i,j\ge1$ and $a\in G$. Let $g=r_g^k$ for $k\ge1$ and notice that $r_g^{ki}=(ar_g\ol{a})^{kj}$. But $r_g$ and $ar_g\ol{a}$ are both root and $ki,kj\ge1$, thus by the uniqueness of the root we must have $r_g=ar_g\ol{a}$ and so $a$ and $r_g$ commute. It follows that $r_g^{ki}=r_g^{kj}$ and thus $r_g^{k(i-j)}=1$; since $G$ is torsion-free and $k\ge 1$, this implies that $i=j$.
\end{proof}

\begin{defn}
    We say that a $\bbZ$-graph has \textbf{unique roots} if all of its vertex groups do.
\end{defn}

\begin{remark}
    Note that this is not the same as requiring that the fundamental group of the graph of groups has unique roots. For example, $\bbZ$ has unique roots, but GBSs do not (both existence and uniqueness can fail).
\end{remark}

Consider a graph of groups with vertex groups with unique roots and edge groups infinite cyclic. For every edge, we can take the generator of such edge, and look at its image in the adjacent vertex group: we want to choose, inside the vertex group, a root for such element. We do not want this list of roots to contain repetitions: if two edge groups are glued on (powers of) the same root, then we want to list the root only once. Moreover, since edge inclusions can be changed by conjugation without changing the corresponding Bass-Serre tree, we want conjugate roots to be considered the same. Finally, for technical reasons, we only want to consider roots inside non-cyclic vertex groups. This is all summarized in the following definition.

\begin{defn}\label{def:choice-of-roots}
    Let $(\Gamma,\{G_v\}_{v\in V(\Gamma)},\psi)$ be a $\bbZ$-graph with unique roots. 
    A \textbf{choice of non-cyclic roots} is a {\rm(}minimal{\rm)} finite set $\notcyclicR{\Gamma}$ satisfying the following property: for all $e\in E(\Gamma)$ with $\tau(e)\in\notcyclicV{\Gamma}$, there is a unique $\rho\in\notcyclicR{\Gamma}$ such that the root of $\psi(e)$ is conjugate to $\rho^{\pm1}$ in $G_{\tau(e)}$. 
    These are uniquely determined up to taking inverses and conjugates of the roots {\rm(}in their respective vertex groups{\rm)}.
\end{defn}

We notice that a twist, edge sign-change, vertex coordinate-change, slide, induction, swap or connection from $(\Gamma_0,\{G_v\}_{v\in V(\Gamma_0)},\psi_0)$ to $(\Gamma_1,\{G_v\}_{v\in V(\Gamma_1)},\psi_1)$ induces the following:
\begin{itemize}
    \item A bijection $b:\notcyclicV{\Gamma_1}\rar \notcyclicV{\Gamma_0}$.
    \item Isomorphisms $b_v:G_v\rar G_{b(v)}$ for all $v\in \notcyclicV{\Gamma_1}$.
    \item A bijection $\beta:\notcyclicR{\Gamma_1}\rar\notcyclicR{\Gamma_0}$.
\end{itemize}

\begin{thm}\label{thm:GBS-to-Zgraphs}
    Let $(\Gamma_0,\{G_v\}_{v\in V(\Gamma_0)},\psi_0)$ and $(\Gamma_1,\{G_v\}_{v\in V(\Gamma_1)},\psi_1)$ be totally reduced $\bbZ$-graphs with unique roots. Let $b:\notcyclicV{\Gamma_1}\rar \notcyclicV{\Gamma_0}$ be a bijection and let $\beta:\notcyclicR{\Gamma_1}\rar\notcyclicR{\Gamma_0}$ be a bijection which is induced by isomorphisms $b_v:G_v\rar G_{b(v)}$ for $v\in \notcyclicV{\Gamma_1}$. Then there are GBS graphs $(\Delta_0,\phi_0),(\Delta_1,\phi_1)$ such that the following are equivalent:
    \begin{enumerate}
        \item There is a sequence of twists, edge sign-changes, vertex coordinate-changes, slides, inductions, swaps, and connections going from $(\Gamma_0,\{G_v\}_{v\in V(\Gamma_0)},\psi_0)$ to $(\Gamma_1,\{G_v\}_{v\in V(\Gamma_1)},\psi_1)$ and inducing the bijections $b:\notcyclicV{\Gamma_1}\rar \notcyclicV{\Gamma_0}$ and $\beta:\notcyclicR{\Gamma_1}\rar\notcyclicR{\Gamma_0}$.
        \item There is a sequence of edge sign-changes, vertex sign-changes, slides, inductions, swaps, and connections going from $(\Delta_0,\phi_0)$ to $(\Delta_1,\phi_1)$.
    \end{enumerate}
    Moreover, $(\Delta_0,\phi_0),(\Delta_1,\phi_1)$ can be constructed algorithmically from the data $(\Gamma_0,\{G_v\}_{v\in V(\Gamma_0)},\psi_0),$ $(\Gamma_1,\{G_v\}_{v\in V(\Gamma_1)},\psi_1),b,\beta$.
\end{thm}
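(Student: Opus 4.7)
The plan is to adapt the prime-marker encoding from the proof of \Cref{thm:one-vertex}. First, I would fix, for each non-cyclic root $\rho \in \notcyclicR{\Gamma_0}$, a distinct prime $q_\rho$ that does not appear in the exponent of any label of $\Gamma_0$ or $\Gamma_1$; via $\beta$, the same prime is assigned to $\beta^{-1}(\rho) \in \notcyclicR{\Gamma_1}$. The GBS graph $\Delta_i$ is then obtained from $\Gamma_i$ by keeping the underlying graph, declaring every vertex group to be $\bbZ$, and relabeling each edge $e$ terminating at a vertex $v$ as follows: if $v$ is non-cyclic and $\psi_i(e)$ is conjugate to $\rho^n$ for some $\rho \in \notcyclicR{\Gamma_i}$, then set $\phi_i(e) = q_\rho \cdot n$; if $v$ is cyclic with generator $z_v$ and $\psi_i(e) = z_v^n$, then set $\phi_i(e) = n$. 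The bijections $b$ and $\beta$, together with the isomorphisms $b_v$, are used to match vertices and primes across $\Delta_0$ and $\Delta_1$. This construction is manifestly algorithmic.

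For the implication $(1) \Rightarrow (2)$, each $\bbZ$-graph move is carried to a GBS move on $\Delta$ in a straightforward way. Twists leave the encoding unchanged, since conjugation inside a vertex group preserves both the associated root (up to the ambiguity built into the definition of $\notcyclicR{\Gamma}$) and the exponent. A vertex coordinate-change either changes the chosen generator of a cyclic vertex group (becoming a GBS vertex sign-change in $\Delta$) or changes the chosen isomorphism for a non-cyclic vertex group, which preserves the prime-marker encoding by the compatibility of $b_v$ with $\beta$. Edge sign-changes, slides, inductions, swaps, and connections translate directly, because the divisibility, power, and identity conditions on the $\bbZ$-graph labels are exactly the same arithmetic conditions on the GBS labels, once the prime marker $q_\rho$ is factored out of the exponents.

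For $(2) \Rightarrow (1)$, the crucial observation is that the primes $q_\rho$ cannot be created or destroyed by slides, swaps, connections, inductions, or sign-changes. Every GBS divisibility condition of the form $\phi(\bar e) \mid \phi(d)$ forces the prime markers appearing on the two sides to be compatible, and this translates precisely into the ``same root'' compatibility required by the corresponding $\bbZ$-graph move. In particular, a GBS induction requires $\phi(\bar e) = \pm 1$, which in our encoding is possible only at vertices of $\Delta$ coming from cyclic vertices of $\Gamma$, matching the $\bbZ$-graph restriction that inductions occur only at cyclic vertices. A GBS slide, swap, or connection at a vertex $\tilde v$ coming from a non-cyclic $v$ therefore forces all edges involved to carry the same prime $q_\rho$, i.e.\ to be powers of the same root $\rho$ in $\Gamma$, which is exactly what the corresponding $\bbZ$-graph move demands.

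The main obstacle is to handle GBS vertex sign-changes applied at a vertex $\tilde v$ of $\Delta$ coming from a non-cyclic vertex $v$ of $\Gamma$: this move negates all labels at $\tilde v$ while leaving the opposite-endpoint labels unchanged, a partial inversion that, unlike the cyclic case, need not correspond to a single automorphism of $G_v$. The plan to resolve this is to show that, in the presence of the other allowed moves, a GBS vertex sign-change at such a $\tilde v$ can always be rewritten as a product of GBS edge sign-changes on each edge at $\tilde v$, combined with compensating vertex sign-changes propagated to the adjacent vertices; each individual GBS edge sign-change translates back to a $\bbZ$-graph edge sign-change, and the propagation eventually terminates at cyclic vertices where the accumulated sign-change is realized by a $\bbZ$-graph vertex coordinate-change. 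Chaining these observations, together with the algorithmic character of the construction, yields the desired equivalence.
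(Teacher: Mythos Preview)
Your prime-marker encoding is essentially the paper's approach, with one cosmetic difference: the paper collapses all non-cyclic vertices of $\Gamma$ into a single GBS vertex $*$, setting $V(\Delta)=\cyclicV{\Gamma}\sqcup\{*\}$, whereas you keep the underlying graph intact. This makes no real difference, since in both encodings every label at a non-cyclic endpoint carries exactly one marker prime $q_\rho$, and that prime already determines the vertex (the root $\rho$ lives in a single $G_v$). So the divisibility constraints governing slides, swaps, connections, and inductions are identical in the two encodings, and your translation of the moves in both directions is correct.

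The ``main obstacle'' you flag, however, is not an obstacle, and your proposed resolution is both unnecessary and flawed. Recall from Definition~\ref{def:choice-of-roots} that the elements of $\notcyclicR{\Gamma}$ are only determined up to conjugation \emph{and inverse}; in particular $\rho$ and $\rho^{-1}$ are assigned the same prime $q_\rho$. If $\psi(e)$ is conjugate to $\rho^{n}$ then it is equally conjugate to $(\rho^{-1})^{-n}$, so replacing the representative $\rho$ by $\rho^{-1}$ flips the sign of $n$ in the encoding $\phi(e)=q_\rho\cdot n$ while leaving $\psi(e)$ untouched. Hence a GBS vertex sign-change at a vertex $\tilde v$ coming from a non-cyclic $v$ corresponds on the $\bbZ$-graph side to the \emph{identity} move: it is merely a different choice of root representatives at $v$. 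No automorphism of $G_v$ is needed. (The paper handles this the same way, just at the single merged vertex $*$.)

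Your propagation argument, by contrast, does not work: edge sign-changes at the edges incident to $\tilde v$ flip the labels at the \emph{opposite} endpoints as well, and compensating with vertex sign-changes there flips all \emph{other} labels at those vertices, so the ``compensation'' does not stay local. If the non-cyclic vertices of $\Gamma$ sit on a cycle, or if there is a loop at $v$, the propagation never terminates. Simply drop this paragraph and replace it with the one-line observation above.
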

\begin{proof}
    For every edge $e\in E(\Gamma_0)$ consider the element $\psi_0(e)\in G_{\tau(e)}$ and take the unique root $\psi_0(e)=r_{\psi_0(e)}^{m_e}$ for $m_e\ge1$ integer; let $\cP(\Gamma_0)$ be the set of primes that appear in the factorization of $m_e$ for some $e\in E(\Gamma_0)$. Notice that $\cP(\Gamma_0)$ is invariant under twists, edge sign-changes, vertex coordinate-changes, slides, inductions, swaps, and connections. Define $\cP(\Gamma_1)$ similarly and let $\cP=\cP(\Gamma_0)\cup\cP(\Gamma_1)$. Take a choice of non-cyclic roots $\notcyclicR{\Gamma_0}$ (see Definition \ref{def:choice-of-roots}) and for every $\rho\in\notcyclicR{\Gamma_0}$ choose a different prime $p_\rho\not\in\cP$.

    Given a $\bbZ$-graph with unique roots $(\Gamma,\{G_v\},\psi)$, together with a bijection $a:\notcyclicV{\Gamma}\rar\notcyclicV{\Gamma_0}$ and $\alpha:\notcyclicR{\Gamma}\rar\notcyclicR{\Gamma_0}$, define the GBS graph $(\Delta,\phi)$ as follows:
    \begin{enumerate}
        \item $V(\Delta)=\cyclicV{\Gamma}\sqcup\{*\}$.
        \item $E(\Delta)=E(\Gamma)$ with the same reverse map.
        \item For $e\in E(\Gamma)$ with $\tau_\Gamma(e)\in\notcyclicV{\Gamma}$, we have that $\alpha(\psi(e))$ is conjugate to $\rho^k$ for a unique $\rho\in\notcyclicR{\Gamma_0}$ and $k\in\bbZ\setminus\{0\}$, and we set $\tau_\Delta(e)=*$ and $\phi(e)=p_\rho\cdot k$.
        \item For $e\in E(\Gamma)$ with $\tau_\Gamma(e)\in\cyclicV{\Gamma}$, we set $\tau_\Delta(e)=\tau_\Gamma(e)$ and $\phi(e)=\psi(e)$.
    \end{enumerate}
    This construction behaves well with respect to the moves: if we perform a twist, edge sign-change, vertex-coordinate change, slide, induction, swap, or connection on $(\Gamma,\{G_v\},\psi)$, then the graph $(\Delta,\phi)$ changes by identity, edge sign-change, identity, slide, induction, swap or connection respectively. Conversely, if we perform an edge sign-change, vertex sign-change slide, induction, swap or connection in $(\Delta,\phi)$, then the same move can be performed in $(\Gamma,\{G_v\},\psi)$, possibly after some twist. Thus sequences of moves on $(\Gamma,\{G_v\},\psi)$, considered up to twists and vertex-coordinate changes, are in bijection with sequences of moves on the corresponding graphs $(\Delta,\phi)$.

    Let $(\Delta_0,\phi_0),(\Delta_1,\phi_1)$ be the graphs obtained from $(\Gamma_0,\{G_v\},\psi_0),(\Gamma_1,\{G_v\},\psi_1)$ respectively by the above construction. It follows that there is a sequence of twists, edge sign-changes, vertex-coordinate changes, slides, inductions, swaps, and connections going from $(\Gamma_0,\{G_v\},\psi_0)$ to $(\Gamma_1,\{G_v\},\psi_1)$ if and only if there is a sequence of sign-changes, slides, inductions, swaps, connections going from $(\Delta_0,\phi_0)$ to $(\Delta_1,\phi_1)$. The statement  follows.
\end{proof}

\subsection{Reduction of the isomorphism problem to GBSs}

\begin{defn}\label{def:algorithmic-conjugacy}
    Let $\fF$ be a family of finitely presented torsion-free groups. We say that $\fF$ has \textbf{algorithmic conjugacy problem} if there is an algorithm that, given $G\in\fF$ and $g_1,g_2\in G$, decides whether $g_1$ is conjugated to $g_2$.
\end{defn}

\begin{defn}\label{def:algorithmic-isomorphism}
    Let $\fF$ be a family of finitely presented torsion-free groups. We say that $\fF$ has \textbf{algorithmic isomorphism problem with cyclic peripherals} if there is an algorithm that, given $G,H\in\fF$ and $k\ge0$ and ordered $k$-tuples $([g_1],\dots,[g_k]),([h_1],\dots,[h_k])$ of conjugacy classes in $G,H$ respectively, decides whether or not there is an isomorphism between $G$ and $H$ sending the conjugacy class $[g_i]$ to $[h_i]$ for $i=1,\dots,k$.
\end{defn}

\begin{defn}\label{def:algorithmic-unique-roots}
    Let $\fF$ be a family of finitely presented torsion-free groups. We say that $\fF$ has \textbf{algorithmic unique roots} if every $G\in\fF$ has unique roots and there is an algorithm that, given $G\in\fF$ and $g\in G$, computes the root $r_g\in G$.
\end{defn}

\begin{thm}\label{thm:isomorphism-other-families}
    Let $\fF$ be a family of finitely presented torsion-free groups. Suppose that we have the following conditions:
    \begin{enumerate}
        \item $\fF$ has algorithmic conjugacy problem {\rm(Definition \ref{def:algorithmic-conjugacy})}.
        \item $\fF$ has algorithmic isomorphism problem with cyclic peripherals {\rm(Definition \ref{def:algorithmic-isomorphism})}.
        \item $\fF$ is $\bbZ$-algorithmic {\rm(Definition \ref{def:Zalgorithmic})}.
        \item $\fF$ has algorithmic unique roots {\rm(Definition \ref{def:algorithmic-unique-roots})}.
    \end{enumerate}
    If there is an algorithm to decide the isomorphism problem for GBSs, then there is an algorithm to decide the isomorphism problem for the fundamental group of graphs of groups with vertex groups in $\fF$ and cyclic edge groups.
\end{thm}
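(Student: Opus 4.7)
The plan is to reduce the isomorphism problem in stages: first to the case of freely indecomposable groups, then to JSJ decompositions, then to (totally reduced) $\bbZ$-graphs, and finally via Theorem~\ref{thm:GBS-to-Zgraphs} to a finite collection of GBS isomorphism problems.

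First I would apply Proposition~\ref{prop:Diao-Feighn} (Diao--Feighn) to compute the Grushko decompositions of $\pi_1(\cG)$ and $\pi_1(\cH)$, which is available since $\fF$ is $\bbZ$-algorithmic and hence $1$-algorithmic. Two fundamental groups are isomorphic if and only if the free parts have the same rank and the freely indecomposable factors match as multisets. I next apply Proposition~\ref{prop:algorithmic-JSJ} to each freely indecomposable factor to either recognize it as the fundamental group of a closed surface (which is handled by the classification of surfaces plus comparison of abelianizations/Euler characteristics) or to produce an honest cyclic JSJ decomposition as a graph of groups with vertex groups in $\fF$ and cyclic edge groups. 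So it suffices to decide when two such JSJ decompositions $\cG$ and $\cH$ define isomorphic fundamental groups.

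Next, I would apply Proposition~\ref{prop:compute-totally-reduced-2} to transform both $\cG$ and $\cH$ into totally reduced $\bbZ$-graphs $(\Gamma,\{G_v\},\psi)$ and $(\Delta,\{H_u\},\phi)$. By Theorem~\ref{thm:GBS-JSJ} and the universal-ellipticity of the cyclic JSJ, any isomorphism between $\pi_1(\cG)$ and $\pi_1(\cH)$ must send the JSJ on one side to the JSJ on the other (up to deformation preserving elliptic subgroups), so Theorem~\ref{thm:sequence-new-moves-2} applies: the fundamental groups are isomorphic if and only if there is a sequence of twists, edge sign-changes, vertex coordinate-changes, slides, inductions, swaps, and connections from one totally reduced $\bbZ$-graph to the other. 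Any such sequence induces a bijection $b\colon\notcyclicV{\Delta}\rar\notcyclicV{\Gamma}$, isomorphisms $b_v\colon H_v\rar G_{b(v)}$, and a bijection $\beta\colon\notcyclicR{\Delta}\rar\notcyclicR{\Gamma}$ (the non-cyclic roots being computable using algorithmic unique roots).

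The algorithm then enumerates all (finitely many) bijections $b$. For each candidate $b$, the existence of isomorphisms $b_v\colon H_v\rar G_{b(v)}$ intertwining the induced peripheral structures (the conjugacy classes of the roots of the adjacent edge elements, coming from edges with endpoint in $\notcyclicV{\cdot}$) is decidable by the algorithmic isomorphism problem with cyclic peripherals, and the conjugacy conditions needed to set up those peripheral tuples are decidable by the algorithmic conjugacy problem. Each such isomorphism determines the induced bijection $\beta$ on non-cyclic roots. For every compatible choice of $(b,\{b_v\},\beta)$, I would apply Theorem~\ref{thm:GBS-to-Zgraphs} to construct GBS graphs $(\Delta_0,\phi_0),(\Delta_1,\phi_1)$ algorithmically and then run the assumed GBS isomorphism algorithm on them. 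The original groups are isomorphic if and only if at least one choice succeeds. The main obstacle will be keeping track of the bookkeeping that matches Theorem~\ref{thm:sequence-new-moves-2}'s abstract ``same elliptic subgroups'' condition with the concrete combinatorial data $(b,\{b_v\},\beta)$ fed into Theorem~\ref{thm:GBS-to-Zgraphs}; in particular, verifying that every isomorphism $\pi_1(\cG)\cong\pi_1(\cH)$ can be realized, after composing with an inner automorphism of the vertex groups, by a sequence of moves inducing one of the finitely many bijections we enumerate, which is precisely what totally reducing and then applying Theorem~\ref{thm:sequence-new-moves-2} delivers.
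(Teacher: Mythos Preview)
Your proposal is correct and follows essentially the same approach as the paper: reduce to JSJ decompositions via Propositions~\ref{prop:Diao-Feighn} and~\ref{prop:algorithmic-JSJ}, totally reduce via Proposition~\ref{prop:compute-totally-reduced-2}, enumerate the finitely many bijections $(b,\beta)$ on non-cyclic vertices and non-cyclic roots, verify realizability of $\beta$ using the algorithmic isomorphism problem with cyclic peripherals, and then feed each candidate into Theorem~\ref{thm:GBS-to-Zgraphs} and the assumed GBS oracle. Two small remarks: your citation of Theorem~\ref{thm:GBS-JSJ} is misplaced (that result is specific to GBS graphs; what you need is simply that cyclic JSJ decompositions form a single deformation space, which is immediate from the definition), and your phrasing ``each such isomorphism determines $\beta$'' obscures that you should enumerate the finitely many candidate $\beta$'s directly and then test each one with Definition~\ref{def:algorithmic-isomorphism}, rather than trying to extract $\beta$ from an isomorphism the oracle does not hand you explicitly.
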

\begin{proof}
    Let $\cG_0,\cG_1$ be graphs of groups with vertex groups in $\fF$ and cyclic edge groups. By Proposition \ref{prop:algorithmic-JSJ} we can assume that $\cG_0,\cG_1$ are JSJ decompositions of their respective fundamental groups, and by Proposition \ref{prop:compute-totally-reduced-2} we can assume that the corresponding $\bbZ$-graphs are totally reduced. It is easy to algorithmically check whether $\pi_1(\cG_0),\pi_1(\cG_1)$ are isomorphic to $\bbZ,\bbZ^2,K$ (the Klein bottle group), thus we assume that this is not the case. Let $(\Gamma_0,\{G_v\}_{v\in V(\Gamma_0)},\psi_0),(\Gamma_1,\{G_v\}_{v\in V(\Gamma_1)},\psi_1)$ be the corresponding $\bbZ$-graphs.

    There are finitely many bijections $b:\notcyclicV{\Gamma_1}\rar \notcyclicV{\Gamma_0}$; for each such $b$, there are finitely many bijections $\beta:\notcyclicR{\Gamma_1}\rar\notcyclicR{\Gamma_0}$, and for each of them, we can check algorithmically whether it is induced by isomorphisms $b_v:G_v\rar G_{b(v)}$. For each such pair $(b,\beta)$, we can use Theorem \ref{thm:GBS-to-Zgraphs}, plus the assumption that the isomorphism problem for GBS groups is decidable, to algorithmically check whether there is a sequence of moves going from $(\Gamma_0,\{G_v\}_{v\in V(\Gamma_0)},\psi_0)$ to $(\Gamma_1,\{G_v\}_{v\in V(\Gamma_1)},\psi_1)$ and inducing those bijections.

    If we find such a sequence of moves, then the fundamental groups of $\cG_0,\cG_1$ must be isomorphic. Conversely, if the fundamental groups of $\cG_0,\cG_1$ are isomorphic, then by Theorem \ref{thm:Forester} there must be such a sequence of moves, which will induce some bijections $(b,\beta)$. Thus if we do not find such a sequence of moves for any choice of $(b,\beta)$, then the fundamental groups of $\cG_0,\cG_1$ are not isomorphic. The statement  follows.
\end{proof}

In particular, we can choose the family of groups $\fF$ as follows:

\begin{prop}
The class $\fF$ of finitely presented, torsion-free relatively hyperbolic groups with nilpotent parabolic groups satisfies the conditions of {\rm \Cref{thm:isomorphism-other-families}}.  
\end{prop}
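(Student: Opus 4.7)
The plan is to verify each of the four conditions of \Cref{thm:isomorphism-other-families} separately, citing the relevant literature on relatively hyperbolic groups. Throughout, let $G$ denote a finitely presented torsion-free group which is hyperbolic relative to a finite collection of finitely generated nilpotent subgroups.

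First, I would check the algorithmic conjugacy problem. Nilpotent groups are polycyclic, and polycyclic groups are well known to have decidable conjugacy problem (Remeslennikov, Formanek). Bumagin's result (and its later refinements by Osin, Antolin--Ciobanu) shows that a relatively hyperbolic group has decidable conjugacy problem as soon as the parabolic subgroups do. This gives condition (1).

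Second, for the algorithmic unique roots condition, the argument is mostly algebraic. An element of $G$ of infinite order is either parabolic (conjugate into a peripheral subgroup) or loxodromic; this dichotomy is algorithmically detectable, and in the loxodromic case the centralizer is virtually cyclic, hence in our torsion-free setting cyclic, which forces a unique root (uniquely computable by solving a short word search bounded in terms of the translation length, which itself is computable). In the parabolic case, nilpotent torsion-free groups have unique roots (Mal'cev) and the root is computable because these groups are polycyclic with computable Hirsch length, so condition (4) holds. The only remaining subtlety is that a loxodromic element cannot equal a proper power of a parabolic one, which follows from the fact that parabolic subgroups are malnormal and loxodromic elements are not conjugate into them.

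Third, the $\bbZ$-algorithmic condition requires an algorithm that, given $G\in\fF$ and conjugacy classes $[h_1],\dots,[h_n]$, detects nontrivial splittings over $1$ or $\bbZ$ relative to the peripheral structure $\{\gen{h_1},\dots,\gen{h_n}\}$, along with closure of $\fF$ under such splittings. Closure follows from the Dahmani (and Mj--Reeves) combination theorem: an amalgam or HNN extension of torsion-free relatively hyperbolic groups with nilpotent parabolics over a cyclic subgroup is again relatively hyperbolic with nilpotent parabolics and torsion-free. Detection and computation of such splittings is the deepest ingredient and the expected main obstacle; it is provided by the algorithms of Dahmani--Groves (and Touikan) for computing relative JSJ decompositions of relatively hyperbolic groups with abelian (in particular nilpotent) parabolics over cyclic subgroups, which can be adapted to decide the existence of a single nontrivial splitting relative to an augmented peripheral structure. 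Recognition of free groups with a computable basis is classical (Whitehead, Stallings foldings applied to the presentation after ruling out nontrivial peripherals), and every finitely generated free group is hyperbolic (relative to the trivial peripheral structure) hence in $\fF$. This gives condition (3), from which condition (2) of \Cref{def:1algorithmic} also follows directly.

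Finally, for the algorithmic isomorphism problem with cyclic peripherals, I would invoke the isomorphism theorem of Dahmani--Guirardel (building on Sela, Dahmani--Groves) for relatively hyperbolic groups with (virtually) nilpotent parabolics: given two such groups with ordered tuples of conjugacy classes $([g_1],\dots,[g_k])$ and $([h_1],\dots,[h_k])$, one can decide whether there is an isomorphism carrying $[g_i]$ to $[h_i]$. The standard trick to encode the peripheral structure is to form a graph of groups adding one infinite cyclic vertex per $[g_i]$ attached by the inclusion $\gen{g_i}\hookrightarrow G$; two such groups with cyclic peripherals are isomorphic rel peripherals if and only if the two resulting torsion-free relatively hyperbolic groups with nilpotent parabolics are isomorphic, reducing condition (2) to the absolute isomorphism problem. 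The main obstacle here, as in condition (3), is ensuring that the cited algorithms remain valid in the relative setting under the technical hypotheses we impose; but both Dahmani--Groves and Dahmani--Guirardel explicitly allow for peripheral structures of this form, so the reduction goes through. With all four conditions verified, the proposition follows from \Cref{thm:isomorphism-other-families}.
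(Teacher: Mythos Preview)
Your overall strategy matches the paper's: verify each of the four conditions by citing known results. The paper is terser, citing Osin for (1) and (4), Dahmani--Touikan \cite{DT19} for (2), and Touikan \cite{Tou18} for (3).

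There are two genuine issues in your execution. First, for the closure clause of Definition~\ref{def:Zalgorithmic} you invoke the Dahmani combination theorem, but this is the wrong direction: the combination theorem shows that an amalgam of groups in $\fF$ over $\bbZ$ again lies in $\fF$, whereas what is required is that if $G\in\fF$ splits as $G_1*_\bbZ G_2$ (or $G_1*_\bbZ$) then the \emph{vertex groups} $G_i$ lie in $\fF$. The correct argument is that vertex groups of a cyclic splitting of a torsion-free relatively hyperbolic group inherit relative hyperbolicity, with parabolics that are subgroups of the original parabolics and hence still nilpotent.

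Second, your reduction trick for condition (2) does not work as stated. Attaching a $\bbZ$-vertex to $G$ via the inclusion $\gen{g_i}\hookrightarrow G$, with the edge group mapping isomorphically onto the leaf, produces an elementary refinement whose fundamental group is just $G$ again; it carries no record of the tuple $([g_1],\dots,[g_k])$, so the absolute isomorphism problem for the resulting groups cannot detect the peripheral data. The paper avoids this by citing \cite{DT19} directly, whose algorithm already solves the isomorphism problem with prescribed peripheral structure. (As a minor point, the relevant result in the relatively hyperbolic case is Dahmani--Touikan rather than Dahmani--Guirardel; the latter treats hyperbolic groups.)
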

\begin{remark}
    In particular this includes all finitely presented torsion-free toral relatively hyperbolic groups.
\end{remark}
\begin{proof}
By \cite{O06}, the conjugacy problem for $\fF$ is reduced to the conjugacy problem for the family of parabolic subgroups. But for nilpotent groups, the conjugacy problem is decidable, see \cite{Bl65}.

Similarly, by \cite{O06}, finding algorithmic unique roots for groups in $\fF$ is reduced to the same problem in the parabolic subgroups. But it is well-known that in torsion-free nilpotent groups roots are unique, see for instance \cite[Theorem 16.2.8]{KM79} and the proof can be made algorithmic (we refer the reader to \cite{MMNV22} for complexity results).

The isomorphism problem for $\fF$ is done in \cite{DT19}. For effective Grushko and JSJ decompositions, and thus for the property of being $\bbZ$-algorithmic, we refer the reader to \cite{Tou18}.
\end{proof}

\begin{cor}\label{cor:reduction_iso_relative_hyp_nilpotent}
If there is an algorithm to decide the isomorphism problem for GBSs, then there is an algorithm to decide the isomorphism problem for graphs of groups with vertex groups that are finitely presented, torsion-free relatively hyperbolic groups with nilpotent parabolic subgroups, and cyclic edge groups. 
\end{cor}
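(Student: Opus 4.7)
The plan is to deduce the corollary as a direct application of \Cref{thm:isomorphism-other-families} once we have verified that the class $\fF$ of finitely presented, torsion-free, relatively hyperbolic groups with nilpotent parabolic subgroups satisfies its four hypotheses. This verification is exactly the content of the preceding proposition, so the corollary reduces to citing it and invoking the theorem; I will therefore focus my outline on why $\fF$ meets each of the four algorithmic conditions.

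First, I would argue algorithmic conjugacy and algorithmic unique roots simultaneously, since both reduce to the parabolic subgroups via Osin's work. Specifically, for a finitely presented relatively hyperbolic group, the conjugacy problem and the root problem can both be reduced to the analogous problems in the parabolic subgroups (see \cite{O06}). For the parabolics, which are finitely generated torsion-free nilpotent, the conjugacy problem is decidable by Blackburn \cite{Bl65}, and roots exist, are unique, and can be computed effectively (this is classical, see \cite[Thm.~16.2.8]{KM79}, with algorithmic and complexity refinements in \cite{MMNV22}). Together, these give Conditions 1 and 4 of \Cref{thm:isomorphism-other-families}.

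Next I would address the isomorphism problem with cyclic peripherals (Condition 2). This is the deepest of the four, and the obvious reference is Dahmani--Touikan \cite{DT19}, which solves the isomorphism problem for the class $\fF$; the relative version (fixing finitely many conjugacy classes of cyclic peripheral subgroups) is handled by the same techniques, as the peripheral structure is incorporated via the usual relative framework. I expect this step to be the main obstacle to citing, since it requires the full strength of the rigidity results of \cite{DT19}, but once cited no further argument is needed. For Condition 3 ($\bbZ$-algorithmic), the required ingredients are algorithmic Grushko and algorithmic cyclic JSJ decompositions relative to arbitrary finite peripheral structures, plus recognition of $\bbZ$ and of free groups. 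These are furnished by Touikan \cite{Tou18} in the generality we need, together with standard facts about free groups (Whitehead's algorithm for surface recognition, already used in \Cref{prop:algorithmic-JSJ}).

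Having verified all four conditions, the corollary follows by a one-line application: assuming an algorithm for the isomorphism problem for GBSs, \Cref{thm:isomorphism-other-families} produces an algorithm for the isomorphism problem for graphs of groups with vertex groups in $\fF$ and cyclic edge groups, which is exactly the conclusion of \Cref{cor:reduction_iso_relative_hyp_nilpotent}. No further combinatorial argument is required beyond chaining these cited results together.
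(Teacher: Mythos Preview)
Your proposal is correct and follows essentially the same approach as the paper: the corollary is deduced immediately from \Cref{thm:isomorphism-other-families} after verifying the four hypotheses for the class $\fF$, and you cite exactly the same sources (\cite{O06} and \cite{Bl65} for conjugacy, \cite{O06} with \cite{KM79,MMNV22} for unique roots, \cite{DT19} for the isomorphism problem, \cite{Tou18} for effective Grushko and JSJ). The paper's own proof is equally brief and structured identically.
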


\begin{remark}
As both free and free abelian groups can be easily included in the family $\fF$, it would be interesting to know whether right-angled Artin groups can be included in $\fF$ too. Given any presentation for a group $G$, if we are guaranteed that $G$ is RAAG, then we can algorithmically find the (unique) canonical RAAG presentation for $G$ (by just brute-force enumerating all possible presentations for $G$, applying sequences of Tietze transformations to the given presentation).

Once that is done, the decidability of the conjugacy problems is well-known, see for instance \cite{Crisp2009TheCP}. Algorithmic unique roots are well-known too, and we refer the reader to \cite{Bau77}. Since the canonical RAAG presentation is unique, the isomorphism problem with prescribed cyclic peripherals is reduced to the automorphism orbit decidability for $k$-tuples of conjugacy classes, which is done in \cite{Day14}.

The Grushko decomposition of a RAAG $G_\Delta$ can be described in terms of the connected components of the graph $\Delta$, and thus RAAGs are $1$-algorithmic. An alternative argument can be given using Touikan's work \cite{Tou18}. The cyclic JSJ decomposition of a one-ended RAAG $G_\Delta$ can be described in terms of the cut-vertices of $\Delta$, see \cite{Clay14}. However a non-one-ended RAAG $G$ can be endowed with an arbitrary peripheral structure $\cP$ (so that $G$ one-ended relative to $\cP$), and deciding the existence of a $\bbZ$-splitting relative to such peripheral structure is, to the best of our knowledge, not known (for free groups it is already a hard problem, see \cite{Cas16}).
\end{remark}

\section{Additional considerations and questions}\label{sec:questions}

In this section, we formulate two questions for future research. The first question is about the study of the automorphism group of a GBS. The second is about the possibility of generalizing the results of this paper to wider families of groups.

\subsection{Automorphisms of GBSs}

Given two GBSs $G_1,G_2$, we consider the following two problems:

\begin{enumerate}
    \item\label{itm:iso-weak} Understand whether there exists an isomorphism from $G_1$ to $G_2$.
    \item\label{itm:iso-strong} List/characterize all the possible isomorphisms from $G_1$ to $G_2$.
\end{enumerate}

Problem \ref{itm:iso-weak} is the isomorphism problem, and it only requires a yes or a no as an answer; the object that we want to understand in this case is the quotient $\fD_G/\out{G}$ of the deformation space by the action of the outer automorphism group. On the other hand, Problem \ref{itm:iso-strong} is strictly stronger, and it is equivalent to studying the automorphism group of a given GBS; the object that we want to study for this problem is the deformation space $\fD_G$ (without taking any quotient). With Theorem \ref{thm:sequence-new-moves} we are showing that the new set of moves that we introduced is suitable for the study of Problem \ref{itm:iso-weak}; we are interested in knowing whether it can be used to study Problem \ref{itm:iso-strong} as well, as we now explain.

Let $(\Gamma,\psi)$ be a GBS graph with associated graph of groups $\cG$. Suppose that we apply a sign-change, elementary expansion, elementary contraction, slide, induction, swap or connection in order to obtain a new GBS graph $(\Gamma',\psi')$ with associated graph of groups $\cG'$. Each one of those moves induces an isomorphism $\pi_1(\cG)\rar\pi_1(\cG')$, given by the formulas provided in Section \ref{sec:moves}. Moreover, we can apply a twist move to the GBS graph $(\Gamma,\psi)$: this does not change the GBS graph, but it induces a non-identity isomorphism from $\pi_1(\cG)$ to itself (which consists essentially in multiplying and edge by an element in the adjacent vertex group). Finally, given two GBS graphs $(\Gamma,\psi),(\Delta,\phi)$ with corresponding graphs of groups $\cG,\cH$, a label preserving isomorphism $\Gamma\rar\Delta$ induces an isomorphism $\pi_1(\cG)\rar\pi_1(\cH)$.

Let now $(\Gamma_0,\psi_0),(\Gamma_1,\psi_1)$ be two GBS graphs, with corresponding graphs of groups $\cG_0,\cG_1$ (and suppose that they are not isomorphic to $\bbZ,\bbZ^2,K$). Forester's result \cite{For02}, together with the standard Bass-Serre correspondence, implies that for every isomorphism $\theta:\pi_1(\cG_0)\rar\pi_1(\cG_1)$, there is a sequence of moves (sign-changes, twists, expansions, contractions, slides), followed by a label-preserving isomorphism of GBS graphs, inducing exactly the isomorphism $\theta$. This means that one can try and use Forester's set of moves to study Problem \ref{itm:iso-strong}, as every isomorphism can be encoded as a sequence of moves.

On the other hand, it is not clear at all whether the new set of moves that we introduced satisfies the same property. Let $(\Gamma,\psi)$ be a GBS graph with a redundant vertex $v$, and let $(\Delta,\phi),(\Delta',\phi')$ be two GBS graphs obtained by eliminating $v$ with two projection moves with respect to different sets of data/constants (Section \ref{sec:projection}); let also $\cG,\cH,\cH'$ be the corresponding graphs of groups. By the sequence of moves described in Proposition \ref{projection}, we obtain two isomorphisms $\pi_1(\cG)\rar\pi_1(\cH)$ and $\pi_1(\cG)\rar\pi_1(\cH')$. On the other hand, Proposition \ref{prop:indep-data} tells us that there is a sequence of edge sign-changes, slides, swaps, and connections going from $(\Delta,\phi)$ to $(\Delta',\phi')$, inducing an isomorphism $\pi_1(\cH)\rar\pi_1(\cH')$. In general the diagram

\begin{center}
\begin{tikzpicture}[scale=0.6]
\node (G) at (0,3) {$\pi_1(\cG)$};
\node (H) at (-2,0) {$\pi_1(\cH)$};
\node (H') at (2,0) {$\pi_1(\cH')$};
\draw[->] (G) to (H);
\draw[->] (G) to (H');
\draw[->] (H) to (H');
\end{tikzpicture}
\end{center}

\noindent does not commute, and it is natural to wonder about the following:

\begin{question}
    Can the sequence of moves of {\rm Proposition \ref{prop:indep-data}} be chosen in such a way that the diagram commutes? Do we need to allow for twists? Do we need to define and allow for some other move {\rm(}e.g. some twisted version of swap and/or connection{\rm)}?
\end{question}

\begin{remark}
    The reader interested in the structure of the automorphism group of a GBS should look at \cite{Cla09,Lev07}. We point out that the automorphism group of a GBS is not finitely generated in general.
\end{remark}

\subsection{Deformation spaces with non-cyclic edge groups}

We gave evidence that generalized Baumslag-Solitar groups encode most of the structure of many other deformation spaces, and the techniques that we developed for GBSs also work in general, as long as the edge groups are isomorphic to $\bbZ$. One might be tempted to try and generalize the strategy adopted in this paper to the study of deformation spaces with different edge groups. As a test case, we are interested in graphs of $\bbZ^2$ (i.e. graphs of groups with all vertex groups and edge groups isomorphic to $\bbZ^2$); among torsion-free groups, $\bbZ^2$ can be considered the ``easiest" group after $\bbZ$.

\begin{remark}\label{rmk:isomorphism-undecidable}
    We point out that the isomorphism problem is known to be undecidable for graphs of $\bbZ^4$ (see \cite{Zim90,Lev08}): this is due to the fact that $\GL{4}{\bbZ}$ contains a finitely generated subgroup with unsolvable membership problem (which follows from the fact that $\GL{4}{\bbZ}$ contains $F_2\times F_2$). However, the same does not hold for $\GL{2}{\bbZ}$, and the isomorphism problem for graphs of $\bbZ^2$ is still open. Moreover, even if the isomorphism problem for graphs of $\bbZ^2$ would be undecidable, it might still be possible to generalize Theorem \ref{thm:sequence-new-moves}.
\end{remark}

\begin{remark}
    Graphs of $\bbZ^2$ also have a symbolic importance, as $\bbZ^2$ is the fundamental group of the torus, and this kind of splittings appears naturally in the study of $3$-manifolds; however, we point out that the JSJ decomposition for a $3$-manifolds is rigid in the strongest possible sense - the JSJ tori in an irreducible orientable compact manifold are uniquely determined up to isotopy, see Theorem 1.9 in \cite{Hat} - and thus none of our techniques is of any use there.
\end{remark}

The strategy would consist of the following steps: define a notion of redundant vertex, find a way of eliminating redundant vertices, and investigate what happens if a redundant vertex is eliminated in two different ways (trying to define a new set of moves - the analogue of swap and connection - and to generalize Theorem \ref{thm:sequence-new-moves}). During this process,ss, we encounter some obstructions: we now outline one of them.

A key step is the definition of an induction move. Suppose that we have a vertex $v$ and an edge $e$ with $\iota(e)=\tau(e)=v$. Suppose also that the vertex group at $v$ and the edge group at $e$ coincide, and the map $\psi_{\ol{e}}:G\rar G$ is the identity, while the map $\psi_e:G\rar G$ is given by an injective endomorphism $\varphi:G\rar G$. A simple version of the induction move can be obtained by taking any subgroup $\varphi(G)\sgr G'\sgr G$ and substituting $G$ with $G'$, as shown in Figure \ref{fig:new-induction}. A nested version of the induction move consists of substituting $G$ with a subgroup $G'$ such that $\varphi^k(G)\sgr G'\sgr G$ for some $k\in\bbN$; however, in order for the above move to make sense, we are forced to require an extra condition, namely $\varphi(G')\sgr G'$, as otherwise, the right-hand graph of the nested move in Figure \ref{fig:new-induction} does not even make sense. Notice that this extra condition was trivial in the case of GBSs, while it is non-trivial now.

This leads us to the following intriguing example. Consider $G=\bbZ^2$ with basis $(1,0),(0,1)$; let $\varphi:G\rar G$ be the injective endomorphism given by $\varphi((1,0))=(8,0)$ and $\varphi((0,1))=(0,4)$, and let $G'\sgr G$ be given by $G'=\gen{(16,0),(0,16),(2,2)}$. Consider the graph of groups as in Figure \ref{fig:counterexample}, where $G,G',\varphi$ are as above, and $K=\bbZ^2$ with the inclusion $G'\rar K$ being any map which is not an isomorphism. Since $\varphi^2(G)\sgr G'\sgr G$, it is easy to see that $G$ is conjugated to a subgroup of $K$; however, since $\varphi(G')\not\sgr G'$, we are not able to use induction to eliminate the vertex $G$ from the graph. Thus it seems natural to ask the following questions:

\begin{question}\label{quest:counterexample}
    Is there a sequence of expansions, contractions, and slides going from the graph of groups of {\rm Figure \ref{fig:counterexample}} to a $1$-vertex graph?
\end{question}

We suspect that the answer to the above Question \ref{quest:counterexample} is negative. This would mean that the notion of redundant vertex, which plays such a crucial role in this work, is being split into two non-equivalent notions. We say that a vertex is \textbf{weakly redundant} if its vertex group is contained, up to conjugation, in the vertex group at another vertex, and we say that it is \textbf{strongly redundant} if there is a sequence of expansions, contractions and slides eliminating that vertex. It is clear that if a vertex is strongly redundant then it is weakly redundant.

\begin{question}
    Are the two notions of being weakly redundant and strongly redundant equivalent for graphs of $\bbZ^2$? In case of a negative answer, is there some other intrinsic characterization for the strongly redundant vertices?
\end{question}

\begin{question}\label{quest:algorithmic-Z2}
    Is being weakly redundant {\rm(}resp. strongly redundant{\rm)} algorithmically decidable in a graph of $\bbZ^2$?
\end{question}

When trying to deal with this last Question \ref{quest:algorithmic-Z2}, the reader will very quickly end up studying the structure of the monoid of the endomorphisms of $\bbZ^2$. On the other hand, the undecidability of the isomorphism problem for graphs of $\bbZ^4$ is based exactly on the undecidability of problems inside the group of automorphisms of $\bbZ^4$ (see Remark \ref{rmk:isomorphism-undecidable}). 

\begin{figure}[H]
\centering
\begin{tikzpicture}[scale=1]

\begin{scope}[shift={(0,0)}]
\node (v) at (0,0) {$G$};
\draw[-,dashed] (-1.8,0.9) to (-1.2,0.6);
\draw[-] (-1.2,0.6) to (v);
\draw[-,dashed] (-1.8,0) to (-1.2,0);
\draw[-] (-1.2,0) to (v);
\draw[-,dashed] (-1.8,-0.9) to (-1.2,-0.6);
\draw[-] (-1.2,-0.6) to (v);
\node () at (-0.4,0.5) {$H$};
\node () at (-0.8,0.17) {$J$};
\node () at (-0.4,-0.5) {$K$};
\draw[->,out=-40,in=40,looseness=25] (v) to node[left]{$\varphi$} (v);
\node () at (0.6,-0.15) {$G$};
\node () at (0.5,0.8) {$\varphi(G)$};
\end{scope}

\draw[->] (4,0) to node[above]{induction} node[below]{(simple)} (6.5,0);

\begin{scope}[shift={(9,0)}]
\node (v) at (0,0) {$G'$};
\draw[-,dashed] (-1.8,0.9) to (-1.2,0.6);
\draw[-] (-1.2,0.6) to (v);
\draw[-,dashed] (-1.8,0) to (-1.2,0);
\draw[-] (-1.2,0) to (v);
\draw[-,dashed] (-1.8,-0.9) to (-1.2,-0.6);
\draw[-] (-1.2,-0.6) to (v);
\node () at (-0.7,0.7) {$\varphi(H)$};
\node () at (-1,0.17) {$\varphi(J)$};
\node () at (-0.7,-0.7) {$\varphi(K)$};
\draw[->,out=-40,in=40,looseness=25] (v) to node[left]{$\varphi$} (v);
\node () at (0.6,-0.15) {$G'$};
\node () at (0.5,0.8) {$\varphi(G')$};
\end{scope}

\begin{scope}[shift={(0,-3.5)}]
\node (v) at (0,0) {$G$};
\draw[-,dashed] (-1.8,0.9) to (-1.2,0.6);
\draw[-] (-1.2,0.6) to (v);
\draw[-,dashed] (-1.8,0) to (-1.2,0);
\draw[-] (-1.2,0) to (v);
\draw[-,dashed] (-1.8,-0.9) to (-1.2,-0.6);
\draw[-] (-1.2,-0.6) to (v);
\node () at (-0.4,0.5) {$H$};
\node () at (-0.8,0.17) {$J$};
\node () at (-0.4,-0.5) {$K$};
\draw[->,out=-40,in=40,looseness=25] (v) to node[left]{$\varphi$} (v);
\node () at (0.6,-0.15) {$G$};
\node () at (0.5,0.8) {$\varphi(G)$};
\end{scope}

\draw[->] (4,-3.5) to node[above]{induction} node[below]{(nested)} (6.5,-3.5);

\begin{scope}[shift={(9,-3.5)}]
\node (v) at (0,0) {$G'$};
\draw[-,dashed] (-1.8,0.9) to (-1.2,0.6);
\draw[-] (-1.2,0.6) to (v);
\draw[-,dashed] (-1.8,0) to (-1.2,0);
\draw[-] (-1.2,0) to (v);
\draw[-,dashed] (-1.8,-0.9) to (-1.2,-0.6);
\draw[-] (-1.2,-0.6) to (v);
\node () at (-0.7,0.7) {$\varphi^k(H)$};
\node () at (-1,0.17) {$\varphi^k(J)$};
\node () at (-0.7,-0.7) {$\varphi^k(K)$};
\draw[->,out=-40,in=40,looseness=25] (v) to node[left]{$\varphi$} (v);
\node () at (0.6,-0.15) {$G'$};
\node () at (0.5,0.8) {$\varphi(G')$};
\end{scope}
\end{tikzpicture}
\caption{The induction move in the general setting. Above you can see the simple version of induction, with $\varphi(G)\sgr G'\sgr G$. Below you can see the nested version of induction, with $\varphi^k(G)\sgr G'\sgr G$ and $\varphi(G')\sgr G'$.}\label{fig:new-induction}
\end{figure}
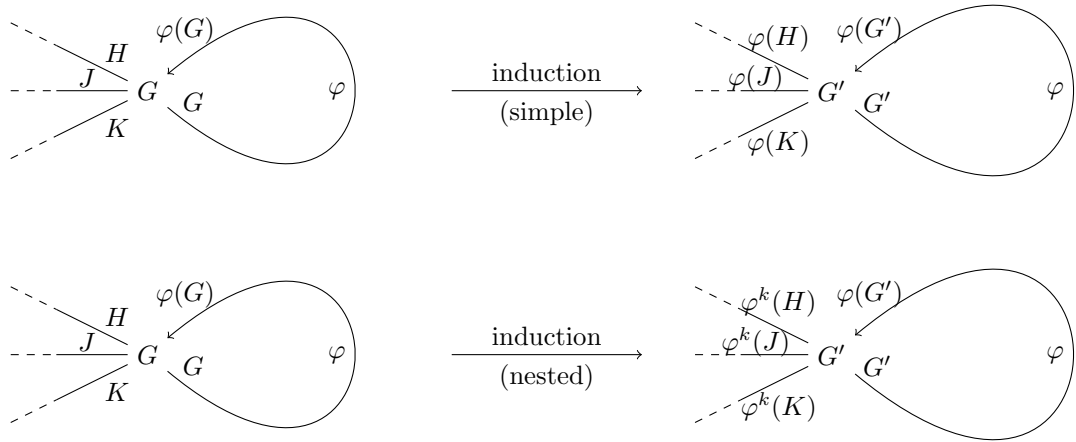

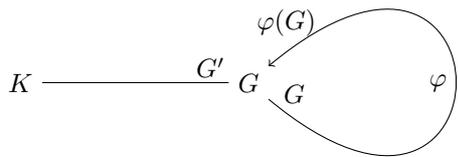
\begin{figure}[H]
\centering
\begin{tikzpicture}[scale=1]

\node (v) at (0,0) {$G$};
\node (u) at (-3,0) {$K$};
\draw[-] (u) to (v);
\node () at (-0.5,0.2) {$G'$};
\draw[->,out=-40,in=40,looseness=25] (v) to node[left]{$\varphi$} (v);
\node () at (0.6,-0.15) {$G$};
\node () at (0.5,0.8) {$\varphi(G)$};

\end{tikzpicture}
\caption{An example of a graph of $\bbZ^2$ with an obstruction to performing induction. Here $G=\bbZ^2$, the homomorphism $\varphi:G\rar G$ is given by $\varphi((1,0))=(8,0)$ and $\varphi((0,1))=(0,4)$, and $G'=\gen{(16,0),(0,16),(2,2)}$.}\label{fig:counterexample}
\end{figure}

\bibliographystyle{alpha}

\end{document}